\newcommand{\mycomment}[1]{}
\newenvironment{smallalign*}[2]{%
 \fontsize{#1}{#2}\selectfont
 \start@align\@ne\st@rredtrue\m@ne
}{%
 \endalign
}
\newcommand\scalemath[2]{\scalebox{#1}{\mbox{\ensuremath{\displaystyle #2}}}}
\DeclareSymbolFontAlphabet{\mathbb}{AMSb}
\DeclareSymbolFontAlphabet{\mathbbl}{bbold}
\newcommand{\vertiii}[1]{{\left\vert\kern-0.25ex\left\vert\kern-0.25ex\left\vert #1 
\right\vert\kern-0.25ex\right\vert\kern-0.25ex\right\vert}}
\newcommand*\bigcdot{\mathpalette\bigcdot@{.43}}
\newcommand*\bigcdot@[2]{\h{1pt}\mathbin{\vcenter{\hbox{\scalebox{#2}{$\m@th#1\bullet$}}}}\h{1pt}}
\newcommand{\AlignFootnote}[1]{%
  \ifmeasuring@
  \else
    \iffirstchoice@
      \footnote{#1}%
    \fi
  \fi}
\newcommand{\p}{\partial}
\theoremstyle{plain}
\newtheorem{thm}{\protect\theoremname}[section]
\theoremstyle{plain}
\newtheorem{lem}[thm]{\protect\lemmaname}
\theoremstyle{plain}
\newtheorem{prop}[thm]{\protect\propositionname}
\theoremstyle{thm}
\newtheorem{rem}[thm]{\protect\remarkname}
\theoremstyle{plain}
\theoremstyle{plain}
\newtheorem{definition}{Definition}[section]
\newtheorem{ass}{Assumption}[section]
\newcommand{\1}{\mbox{1\hspace{-1mm}I}}
\numberwithin{equation}{section}
\newcommand{\customfootnotetext}[2]{{
\renewcommand{\thefootnote}{#1}
\footnotetext[0]{#2}}}
\newcommand\pig[1]{\scalerel*[5.5pt]{\big#1}{ 
\ensurestackMath{\addstackgap[1.15pt]{\big#1}}}}
\newcommand\pigl[1]{\mathopen{\pig{#1}}}
\newcommand\pigr[1]{\mathclose{\pig{#1}}}
\newcommand{\pushright}[1]{\ifmeasuring@#1\else\omit\hfill$\displaystyle#1$\fi\ignorespaces}
\newcommand{\pushleft}[1]{\ifmeasuring@#1\else\omit$\displaystyle#1$\hfill\fi\ignorespaces}
\newcommand{\h}{\hspace}
\newcommand{\diff}{D} 
\newcommand{\indep}{\perp\!\!\!\perp}
\providecommand{\corollaryname}{Corollary}
\providecommand{\lemmaname}{Lemma}
\providecommand{\propositionname}{Proposition}
\providecommand{\remarkname}{Remark}
\providecommand{\theoremname}{Theorem}
\begin{document}
\selectlanguage{american}%
\global\long\def\1{\mbox{1\hspace{-1mm}I}}%

\title{Mean Field Type Control Problems, Some Hilbert-space-valued FBSDEs, and Related Equations}

\author{
Alain Bensoussan\textsuperscript{1}\\
International Center for Decision and Risk Analysis,\vspace{-2pt}\\
Naveen Jindal School of Management, University of Texas at Dallas\vspace{5pt}\\
Ho Man Tai\textsuperscript{2}\\
Department of Mathematics and Department of Statistics, \vspace{-2pt}\\
The Chinese University of Hong Kong\vspace{5pt}\\
Sheung Chi Phillip Yam\textsuperscript{3}\\
Department of Statistics, The Chinese University of Hong Kong\\
}
\customfootnotetext{1}{E-mail: axb046100@utdallas.edu}
\customfootnotetext{2}{E-mail: hmtai@link.cuhk.edu.hk}
\customfootnotetext{3}{E-mail: scpyam@sta.cuhk.edu.hk}


\maketitle
\begin{abstract}
\mycomment{
The objective of this article is to provide an equivalence of the theory recently developed in \cite{CDLL19}, but now for mean field type problems, following the approach of control theory on some Hilbert spaces first introduced by the authors in \cite{BGY20}. In the mentioned paper \cite{CDLL19}, the model was more limited and reduced to that of the paper \cite{GAS}. 
On the other hand, we do not work on the torus, but on the Euclidean space $\mathbb{R}^n$. 
}

In this article, we provide an original systematic global-in-time analysis of mean field type control problems on $\mathbb{R}^n$ with generic cost functionals  by the modified approach but not the same, firstly proposed in \cite{BGY20}, as the ``lifting'' idea introduced by P. L. Lions. As an alternative to the recent popular analytical method by tackling the master equation, we resolve the control problem in a certain proper Hilbert subspace of the whole space of $L^2$ random variables, it can be regarded as  tangent space attached at the initial probability measure. The present work also fills the gap of the global-in-time solvability and extends the previous works of \cite{BGY20,BY19} which only dealt with quadratic cost functionals  in control; the problem is linked to the global solvability of the Hilbert-space-valued forward-backward stochastic differential equation (FBSDE), which is solved by variational techniques here. We also rely on the Jacobian flow of the solution to this FBSDE to establish  the regularities of the value function, including its linearly functional differentiability, which leads to the classical well-posedness of the Bellman equation. Together with the linear functional derivatives and the gradient of the linear functional derivatives of the solution to the FBSDE, we also obtain the classical well-posedness of the master equation.
\end{abstract}

\noindent\textbf{Keywords:} mean field type control problem, Hilbert-space-valued FBSDEs, variational methods, Bellman equation, master equation, Jacobian flow

\noindent\textbf{AMS subject classifications (2020):} 	49N80, 49J55, 49K45, 60H30, 93E20

\tableofcontents
\section{Introduction}
\subsection{Background and Our Approach}

Every mean field type control problem, also known as a McKean-Vlasov control problem, involves a dynamical system whose state is affected by its own probability measure on $\mathbb{R}^{n}$. Numerous researchers have made significant contributions to the modeling and techniques in the form of books or journal papers. A fundamental model is the linear quadratic setting, which has been investigated by Bensoussan, Sung, Yam and Yung \cite{BSYY16} (also see Bensoussan, Frehse and Yam \cite{BFY13}) and Carmona, Delarue and Lachapelle \cite{CDL13}. They solved both the linear quadratic mean field control problem and the mean field games, where the former handled the $N$-dimensional case and the latter was in the $1$-dimensional case; further studies can be found in Pham \cite{P16}, Yong \cite{Y13}, and other references. Most of these works resolved the problems by relating the control problem to Riccati-type differential equations, which were then solved under certain positive definiteness of the cost functionals.

A natural approach to solving mean field control problems with general cost functionals and state dynamics is to characterize the control problem using the infinite-dimensional Bellman equation. Bensoussan, Frehse, and Yam \cite{BFY13, BFY15, BFY17}, with and without the presence of common noise, first derived and discussed the Bellman equation, as well as the master equation for mean field games, under the non-linear-quadratic setting. Lauri\`ere and Pironneau \cite{LP14} also provided an independent approach in this direction in an earlier work. However, due to the nonlinear dependence of the cost functionals on the mean field term of the state, the state process may be time inconsistent, meaning that the usual Bellman optimality principle does not hold. To address this, several researchers, including Cosso, Gozzi, Kharroubi, Pham, and Rosestolato \cite{CGKPR20, CGKPR21}, Djete, Possama\"i, and Tan \cite{DPT22}, and Pham and Wei \cite{PW17,PW18}, developed the dynamic programming principle in the general setting. For instance, Cosso, Gozzi, Kharroubi, Pham, and Rosestolato \cite{CGKPR20} and Pham and Wei \cite{PW17,PW18} demonstrated the verification theorem and proved that the value function satisfies the Bellman equation in the sense of viscosity. Moreover, Pham and Wei \cite{PW17,PW18} established the uniqueness of the viscosity solution by using the standard comparison principle for viscosity solutions (see \cite{FGS17}). Cosso, Gozzi, Kharroubi, Pham, and Rosestolato \cite{CGKPR21} verified the uniqueness by using the smooth variational principle in Wasserstein space. Wu and Zhang \cite{WZ20} introduced a new definition of viscosity solution, which differs from the Crandall-Lions definition by imposing the maximum/minimum condition on the compact set of semimartingale measures on the Wasserstein space. This modification aims to reduce the difficulty of establishing uniqueness.

Another approach to solving mean field control problems is to extend the stochastic (Pontryagin) maximum principle to the mean field setting, which allows us to specify the necessary conditions for optimality in terms of the corresponding Hamiltonian. However, due to the non-Markovian nature of the state dynamics involving the mean field term, this approach typically requires certain convexity of the Hamiltonian. This direction was first studied by Andersson and Djehiche \cite{AD11}, Li \cite{L12} in the convex action space, and Buckdahn-Djehiche and Li \cite{BDL11} in general settings by using the spike variation. Many researchers have extended the stochastic maximum principle to various settings, including \cite{ABC19,BLM17, CD15}. The sufficient condition for optimality involves the solvability of the forward dynamics together with the adjoint backward dynamics, subject to the first-order condition. This system is called the mean field (or McKean-Vlasov) forward-backward stochastic differential equations (FBSDEs). The mean field stochastic differential equations were studied in Buckdahn, Li, Peng, and Rainer \cite{BLPR17}. The mean field backward SDEs were first investigated in Buckdahn, Djehiche, Li, and Peng \cite{BDLP09} and Buckdahn, Li, and Peng \cite{BLP09}, while the well-posedness of the coupled mean-field FBSDEs was first studied in Carmona and Delarue \cite{CD13} for rather general assumptions, where they did not require convexity of the coefficients but imposed certain boundedness on the coefficients with respect to the state variable. Later, Carmona and Delarue \cite{CD15} removed this boundedness assumption and established the well-posedness of FBSDEs under the framework of linear forward dynamics and convex coefficients of the backward dynamics. Bensoussan, Yam, and Zhang \cite{BYZ15} also addressed the well-posedness of FBSDEs with another set of generic assumptions on the coefficients.

\mycomment{
The pure mean field type control problem is rare in the existing literature. In \cite{BY19}, under the linear quadratic setting, the control problem in the Wasserstein space is reformulated into that in the Hilbert space, with the Bellman equation solved without the use of Wasserstein space. If the smoothness of the coefficient are sufficient, then we can obtain the global well-posedness. The alternative would be to work with the space of probability measures directly. This is the approach considered in Gangbo and {\'S}wi{\k{e}}ch \cite{GS15}. Due to the absence of any local or common noise in \cite{BY19}, the randomness of the dynamics comes solely from the initial condition, thus resulting in a deterministic control problem over the Hilbert space. In fact, we can take any generic Hilbert space. The approach works remarkably well. Both the Bellman equation and the Master equation can be completely resolved. The fact that the Hilbert space is a space of $L^2$ random variables plays only a role at the level of interpretation, especially, when we want to check that we have solved the original problem. }

The objective of this work is to extend the results in our former works \cite{BGY19} and \cite{BGY20}, which considered the mean field control problem with separable cost functionals quadratic in control variable and nonlinear in measure variable, to the case with general cost functionals. Similar to \cite{BGY19} and \cite{BGY20},  there are two types of noises in the present study: the initial probability measures and the independent Wiener process that models the evolving local noise. The state of the system depends on both the initial condition and the local noise; meanwhile, the state of the system depends on both the initial condition and the local noise. Unlike our another former article \cite{BY19}, we cannot simply generalise the ``deterministic case'' dealt with in there, that is, the stochastic control problem cannot be naively handled in an arbitrary Hilbert space. To overcome this hurdle, we separate to study (1) the initial condition of a controlled diffusion process and (2) the evolutionary part of the process from two different perspectives; namely, we consider the initial condition of a probability measure as a parameter, while the evolving part generated by the local noise of Wiener process is treated as an element in a specially designated Hilbert space as first introduced in \cite{BGY20}. The big advantage of this approach is that it remains a control problem in a Hilbert space, which is not the whole space of $L^2$ random variables, but a tangent space-like structure attached to the initial probability measure. However, a more common approach, such as that in \cite{BLPR17} calculates the derivative of the value function with respect to $m$ by using its lifted version on a Hilbert space, and this differs from \cite{BFY17,BGY19,BGY20} and our approach here.  The present article generalises our former works of \cite{BGY19,BGY20} by adopting generic cost functionals , the theory developed in \cite{BGY20} only considers the quadratic running cost functional in control, besides we comprehend \cite{BGY20} with a complete global-in-time analysis; meanwhile, our present work also covers the discussions of the book \cite{CDLL19} now in $\mathbb{R}^n$ but not on a compact torus.

Recall that the inspiring idea of ``lifting'' was first introduced by Lions \cite{L14} in his lectures at Coll\`ege de France (also see Carmona and Delarue \cite{CD18}, Cardialaguet, Delarue, Lasry and Lions \cite{CDLL19}), for the purpose of studying alternatively the Wasserstein derivatives on the space of measures; it reformulates the problem on the Wasserstein metric space of measures to another one but now on a linear space. Instead of finding a gradient flow of measures, one can look for an optimal flow in Hilbert space of random variables, whose laws match the optimal flow of measures; we instead {\color{black} consider} the controlled stochastic processes in a space of random vector fields, each of which is attached to the initial probability measure via a pushforward operation on the space of measures. These processes can be considered tangent to the space of measures at the initial measure $m$. The optimal co-state then gives the gradient of the value function in the same Hilbert space, which then projects down to a derivative on the Wasserstein space of measures by letting the initial random field be the identity map. This {\color{black}Hilbert space approach} turns out to be quite powerful in obtaining the Bellman equation and the master equation of mean field type control problems as demonstrated in the previous work \cite{BGY20} for some special cases. More precisely, we here aim to separate the derivation into two parts: the optimal control on a Hilbert space on the one hand, and the differentiation with respect to the initial distribution on the other, so that we can avoid certain stringent technical assumptions on the cost functionals regarding to the differentiability with respect to the measure argument. Besides, the regularity of the value function on the Hilbert space can be translated to that of the corresponding problem on the Wasserstein metric space, which guarantees the equivalence of solving the master equation in the Hilbert space with that lies in the Wasserstein metric space.

\mycomment{ 
However, if one does not assume that the states' probability measures have densities, the natural function space for the state of the system is the Wasserstein metric space of probability measures on $\mathbb{R}^{n}$, which is the classical approach to tackle the mean field games problem, see \cite{GM22,GS15,WYY22}. Since Wasserstein metric space is not a vector space, the classical methods of control theory are difficult to apply. Using the Hilbert space of $L^2$ random variables simplifies considerably the mathematical technicalities and makes the problem more transparent. The difficulty is to recover to the original problem. For instance, one has to check whether the dependence of the value function with respect to the random variable is only through its probability measure.  However, the difficulty is to recover to the original problem, for instance, one has to check whether the dependence of the value function with respect to the random variable is only through its probability measure.}
\mycomment{
Then we derive an optimality principle of Dynamic Programming over the Wasserstein space and translate it into the Hilbert space of $L^2$ random variables and then we use a viscosity theory argument, which finally leads to a Bellman equation in the Hilbert space, to this end, there is an approach adopted in Pham and Wei \cite{PW17}. This, of course, is not needed when one solves the problem directly on the Wasserstein space.}
\mycomment{
We keep the Wasserstein space of probability measures, but we do not use the concept of the Wasserstein gradient, which seems difficult to extend to the second derivative; instead, we use the idea of functional derivative, which extends to the second order. Using the lifting concept, it is possible to work with the Hilbert space of $L^2$ random variables. The concept of the gradient in the Hilbert space is the Fr\'echet derivative. At this stage, an ``almost'' complete equivalence is possible. However, the situation does not carry over so nicely to second-order derivatives. So we have a second-order Fr\'echet derivative in the Hilbert space and a second-order functional derivative, but not a full equivalence. On the other hand, when both exist, we have formulas to transform one concept into the other. We called these formulas rules of correspondence, see (\ref{eq:2-217}) and (\ref{eq:2-232}). The advantage of the Hilbert space approach is that equations can be written in a more synthetic way. From these equations and the rules of correspondence, the equations with functional derivatives can be written, but their rigorous study requires a direct approach and specific assumptions. This is not fully satisfactory, because our use of the lifting approach was meant precisely to circumvent the direct study of these equations. }

In mean field type control problems under the usual strict convexity setting, a unique optimal control exists, as demonstrated in Proposition \ref{prop. convex of J}, through the use of variational techniques. The standard argument via the maximum principle guarantees the existence of the optimal control and is linked to the solvability of the associated FBSDEs for the dynamics of the state and costate (adjoint processes).  It is worth noting that in mean field games, quite differently, due to the lack of functional analytic arguments warranted in mean field control setting, we have to solve the corresponding FBSDE by gluing consecutive local solutions to obtain a global one, through bounding the   sensitivity of the adjoint process; see the recent work \cite{BTWY22} for the details. Our global results on an arbitrary time horizon rely on certain monotonicity assumptions on the cost functionals with respect to the measure argument and the usual convexity in control of coefficient functions. Some of these assumptions exhibit similarity to the displacement monotonicity concept used in \cite{GM22}, particularly in relation to (\ref{assumption, bdd of F F_T}), (\ref{assumption, bdd of DF, DF_T}), (\ref{assumption, bdd of D^2F, D^2F_T}), and (\ref{assumption, convexity of D^2F, D^2F_T}). Additionally, many of the practical assumptions adopted in Section \ref{sec. assumptions} to establish the classical solvability of the Bellman equation in Proposition \ref{prop bellman} are similar to the well-received Lasry-Lions monotonicity. Further details can be found in Remark \ref{rem LL mono}.

Moreover, the assumption we make on the boundedness of the second-order Fr\'echet derivative of $F(X \otimes m)$ with respect to the random variable $X$ in (\ref{assumption, bdd of D^2F, D^2F_T}) is not that restrictive, where $F$ is a functional on the space of probability measure and $X \otimes  m$ is the pushforward of the probability measure $m$ by $X$. Under this assumption, we see that $D_X^2 F(X \otimes m)$ has to be a bounded linear operator from $\mathcal{H}_{m}$ to $\mathcal{H}_{m}$ (see Section \ref{subsec:HILBERT-SPACE} for the official definitions of the second-order Fr\'echet derivative and $\mathcal{H}_m$). In the languages of linear functional derivative on $\mathbb{R}^n$, it is sufficient to assume that $\diff_x^2 \frac{d F}{d\nu}(m)(x)$ and $\diff_{\tilde{x}}\diff_x \frac{d^{\h{.5pt}2}\h{-.7pt}  F}{d\nu^2}(m)(x,\widetilde{x})$ are bounded for any $(m,x)$ and $(m,x,\widetilde{x})$, respectively. For instance, $F(m) := \int_{\mathbb{R}}\pig( x^2 + e^{-x^2} \pig)dm(x)$, clearly far from linear quadratic, satisfies our proposed assumptions. Another plausible but highly restrictive assumption is to assume that the third-order Fr\'echet derivative of $F(X \otimes m)$ with respect to $X$ is a bounded linear operator from $\mathcal{H}_m \times\mathcal{H}_m $ to $\mathcal{H}_m$; if one adopts it, it slightly reduces the technicality of our analysis, but it puts very restrictive limitations on $F$; indeed, under such assumption, the derivative $\diff_x^3 \frac{d F}{d\nu}(m)(x)$ must be zero, which certainly reduces the family of coefficient functions to be that of linear quadratic only but nothing more. Therefore, we must not involve this here. Our present setting adopts $F(X \otimes m)$ with bounded second-order Fr\'echet derivative only, which covers a wide range of practical applications, while it also greatly simplifies the mathematical technicalities, and makes the methodology more transparent. Most importantly, it motivates all the estimates involved when we formulate the problem on Wasserstein space, also see our author's latest work \cite{WYY22}, in return to allow a bit more general setting, both generic in driving dynamics and cost functionals, than the present one stated here, for instance, without assuming the boundedness of the second-order Fr\'echet derivative of $F(X \otimes m)$ but requiring the boundedness of the second-order Wasserstein gradient of $F(m)$, though more tedious analysis has to be involved as a sacrification. To this end, we also refer to another very interesting recent work \cite{GM22} which handled master equations on Wasserstein metric space under the same dynamically controlled process as formulated here in the absence of Brownian noise. Last but not least, we believed that the methodology of this work can be applied to more exotic frameworks, we here choose a convenient setting so that most of the ideas can be illustrated in a coherent manner.

\mycomment{
Moreover, our assumption on the uniform Lipschitz continuity of the second-order Fr\'echet derivative in (\ref{assumption, lip of 3rd derivative of F and FT}) is not that restrictive. It is sufficient to assume that the third-order of Fr\'echet derivative of $F(X \otimes m)$ with respect to the random variable $X$ is bounded, where $F$ is a functional on probability measure space and $X \otimes  m$ is the pushforward of the probability measure $m$ by $X$. Under this assumption, we see that $D_X^3 F(X \otimes m)$ must be a bounded linear operator from $L^2\times L^2$ to $L^2$. In the languages of linear functional derivative, it implies that $\diff_x^3 \dfrac{d F}{d\nu}(m)(x)=0$. For example, $F(m) = \int_{\mathbb{R}^n}|x|^2 dm(x)$ satisfies this kind of assumption. Our setting covers a wide range of applications, for instance, the linear quadratic case, although the problems on Wasserstein metric space, such as \cite{WYY22,GM22}, allow a more general setting than our lifting version.}

\subsection{Organisation of the Article}
The remainder of this article is organized as follows. In Section 2, we review basic calculus in the Wasserstein and Hilbert spaces. In Section 3, we introduce the mean field type control problem and establish its connection with the associated FBSDE. Section 4 verifies some regularities of the value function, which facilitates the formulation of the Bellman equation. We investigate the classical solvability of the Bellman and master equations in Section 5. Proofs of various assertions in Sections 2 to 5 are provided in the Appendix. We would like to emphasize that the methodology presented in this article can be extended to study problems where common noise is present, as well as cases with a generic driving drift function and more general cost functionals. We are currently working on such extensions, building on the techniques developed in this article and in \cite{WYY22}, which studies the generic first-order mean field control problem.

\section{\label{sec:FORMALISM}Calculus in Hilbert Space of $L^2$-Random Variables}

\subsection{2-Wasserstein Space of Probability Measures}

Consider the space of probability measures $m$'s on $\mathbb{R}^{n}$, each of which has a finite second moment, i.e. $\int_{\mathbb{R}^{n}}|x|^{2}dm(x)<\infty$. For any such $m$ and $m'$, the 2-Wasserstein metric of them is defined by 
$$ W_2(m,m') := \inf_{\pi \in \Pi_2(m,m')}
\left[ \int_{\mathbb{R}^n \times \mathbb{R}^n} |x-x'|^2\pi(dx,dx')\right]^{1/2},
$$
where $\Pi_2(m,m')$ represents the set of joint probability measures with respective marginals $m$ and $m'$. This space equipped with the 2-Wasserstein metric is denoted by $\mathcal{P}_{2}(\mathbb{R}^{n})$. The infimum is attainable such that there are random variables $\hat{X}_{m}$ and $\hat{X}_{m'}$ (they may be dependent) associated with $m$ and $m'$ respectively so that 
\begin{equation}
W^2_{2}(m,m'):=\mathbb{E}\left|\hat{X}_{m}-\hat{X}_{m'}\right|^{2}.
\label{eq:1-2}
\end{equation}
We also recall the fact that a sequence of measures $\big\{m_{k}\big\}_{k \in \mathbb{N}}$ converges to $m$ in $\mathcal{P}_{2}(\mathbb{R}^{n})$ if
and only if it converges in the sense of weak convergence and simultaneously
\begin{equation}
\int_{\mathbb{R}^{n}}|x|^{2}dm_{k}(x)\longrightarrow\int_{\mathbb{R}^{n}}|x|^{2}dm(x);
\label{eq:1010}
\end{equation}
one can refer to Villani's textbook \cite{V} for details.

\subsection{Functionals and Their Derivatives on $\mathcal{P}_{2}(\mathbb{R}^{n})$}
Consider a continuous functional $F(m)$ on $\mathcal{P}_{2}(\mathbb{R}^{n})$, we study the concept of several derivatives of $F(m)$. Firstly, the linear functional derivative of $F(m)$ at $m$ is a function
$(m,x) \in \mathcal{P}_{2}(\mathbb{R}^{n})\times \mathbb{R}^{n} \longmapsto \dfrac{dF}{d\nu}(m)(x)$, being continuous under the product topology, satisfying 
\begin{equation}
\int_{\mathbb{R}^{n}}\left|\dfrac{dF}{d\nu}(m)(x)\right|^{2}dm(x)\leq c(m),
\label{eq:2-104}
\end{equation}
for some positive constant $c(m)$
, depending locally on $m$, which is uniformly bounded on compacta; in the rest of this article, all other $c(m)$'s represent different constants yet possessing the same boundedness nature. Besides, the Fr\'echet derivative $L_F$ of $F(m)$ with respect to $m$ (if exists) is given as usual: for any $m$, $m'\in\mathcal{P}_{2}(\mathbb{R}^{n})$, we have
\begin{equation}
\left| \dfrac{F\pig(m+\epsilon(m'-m)\pig)-F(m)}{\epsilon} -L_F(m-m')\right| \longrightarrow 0\h{15pt} \text{ as $\epsilon \to 0$}.
\label{Frechet derviative of F}
\end{equation} 
For if the densities of $m$ and $m'$ exist, denoted by $f_m$ and $f_{m'}$ respectively. By Riesz representation theorem on $L^2(\mathbb{R}^n)$, there is a $L^2(\mathbb{R}^n)$ function $\dfrac{dF}{d\nu}(m)(x)$, called the functional derivative, such that
\begin{equation}
L_F(m-m') 
= L_F(f_m-f_{m'}) 
= \int_{\mathbb{R}^n} \dfrac{dF}{d\nu}(m)(x)\pig[f_m(x)-f_{m'}(x)\pig] dx 
=  \int_{\mathbb{R}^n}
\dfrac{dF}{d\nu}(m)(x)\pig[dm(x)-dm'(x)\pig].
\label{def. linear functional derivative}
\end{equation}
The subset of $\mathcal{P}_{2}(\mathbb{R}^{n})$ consisting of all measures $m \in \mathcal{P}_{2}(\mathbb{R}^{n})$ with $L^2(\mathbb{R}^n)$ densities is clearly dense in $\mathcal{P}_{2}(\mathbb{R}^{n})$, as long as $L_F$ is closable, the representation in (\ref{def. linear functional derivative}) can be extended to all measures. Regarding to (\ref{def. linear functional derivative}), the linear functional derivative $\dfrac{dF}{d\nu}(m)(x)$ is actually the linear functional derivative $\dfrac{\delta F}{\delta m}(m)(x)$ defined in Carmona and Delarue \cite{CD18}
\footnote{Formula (\ref{eq:2-2000})
yields, by replacing $X$ by $\mathcal{I}_x$, $
\dfrac{dF}{d\nu}(m)(x)-\dfrac{dF}{d\nu}(m)(0)=\int_{0}^{1}D_{X}F(\mathcal{I}  \otimes  m)(\theta x) \cdot x\,d\theta$, which can serve as a definition of the linear functional derivative when the Fr\'echet derivative exists.}.
Further, (\ref{Frechet derviative of F}) and (\ref{def. linear functional derivative}) give
\begin{equation}
\dfrac{d}{d\theta}F\pig(m+\theta(m'-m)\pig)
=\int_{\mathbb{R}^{n}}\dfrac{dF}{d\nu}\pig(m+\theta(m'-m)\pig)(x)\pig[dm'(x)-dm(x)\pig],\label{eq:2-210}
\end{equation}
 and hence
\begin{equation}
F(m')-F(m)=\int_{0}^{1}\int_{\mathbb{R}^{n}}\dfrac{dF}{d\nu}\pig(m+\theta(m'-m)\pig)(x)\pig[dm'(x)-dm(x)\pig]d\theta.
\label{eq:2-211}
\end{equation}
We next turn to the notion of the second-order derivative, if $\dfrac{d}{d\theta}F\pig(m+\theta(m'-m)\pig)$
is continuously differentiable in $\theta$, then we can write the formula 

\begin{equation}
\dfrac{d^{\h{.5pt}2}\h{-.7pt}F }{d\theta^{2}}\pig(m+\theta(m'-m)\pig)=\int_{\mathbb{R}^{n}}\int_{\mathbb{R}^{n}}\dfrac{d^{\h{.5pt}2}\h{-.7pt}  F}{d\nu^{2}}\pig(m+\theta(m'-m)\pig)(x,\tilde{x})\pig[dm'(x)-dm(x)\pig]\pig[dm'(\tilde{x})-dm(\tilde{x})\pig],
\label{eq:2-213}
\end{equation}
where the map $(m,x,\widetilde{x}) \in \mathcal{P}_2(\mathbb{R}^n)\times\mathbb{R}^n\times\mathbb{R}^n \longmapsto\dfrac{d^{\h{.5pt}2}\h{-.7pt}  F}{d\nu^{2}}(m)(x,\widetilde{x})$, called the second-order linear functional derivative, is continuous and satisfies

\begin{equation}
\int_{\mathbb{R}^{n}}\int_{\mathbb{R}^{n}}
\left|\dfrac{d^{\h{.5pt}2}\h{-.7pt}  F}{d\nu^{2}}(m)(x,\widetilde{x})\right|^{2}dm(x)dm(\widetilde{x})\leq c(m).
\label{eq:2-212}
\end{equation}
Moreover, we have the formula 
\begin{equation}
\begin{aligned}
F(m')-F(m)=\:&\int_{\mathbb{R}^{n}}\dfrac{dF}{d\nu}(m)(x)\pig[dm'(x)-dm(x)\pig]\\
&+\int_{0}^{1}\int_{0}^{1}\int_{\mathbb{R}^n}\int_{\mathbb{R}^n}\theta\dfrac{d^{\h{.5pt}2}\h{-.7pt}  F}{d\nu^{2}}
\pig(m+\lambda\theta(m'-m)\pig)(x,\tilde{x})\pig[dm'(x)-dm(x)\pig]
\pig[dm'(\tilde{x})-dm(\tilde{x})\pig]
d\lambda d\theta.
\end{aligned}
\label{eq:2-214}
\end{equation}
Formulas (\ref{def. linear functional derivative}) and (\ref{eq:2-210}) do not change if we
add a constant to $\dfrac{dF}{d\nu}(m)(x)$, and we assume
the normalisation that $\int_{\mathbb{R}^{n}}\dfrac{dF}{d\nu}(m)(x)dm(x)=0.$ Similarly, in Formulae (\ref{eq:2-213}) and (\ref{eq:2-212}), we can replace $\dfrac{d^{\h{.5pt}2}\h{-.7pt}  F}{d\nu^{2}}(m)(x,\tilde{x})$
by $\dfrac{1}{2}\left(\dfrac{d^{\h{.5pt}2}\h{-.7pt}  F}{d\nu^{2}}(m)(x,\tilde{x})+\dfrac{d^{\h{.5pt}2}\h{-.7pt}  F}{d\nu^{2}}(m)(\tilde{x},x)\right)$
without a change of value, and we simply assume that the function $(x,\widetilde{x})\longmapsto\dfrac{d^{\h{.5pt}2}\h{-.7pt}  F}{d\nu^{2}}(m)(x,\tilde{x})$
is symmetric. Likewise, adding a function of the form $\varphi(x)+\varphi(\tilde{x})$ to a symmetric function $\dfrac{d^{\h{.5pt}2}\h{-.7pt}  F}{d\nu^{2}}(m)(x,\tilde{x})$ would result no change in value. 

\subsection{Hilbert Space of $L^2$-Random Variables}\label{subsec:HILBERT-SPACE}
Our working Hilbert space is $\mathcal{H}_{m}:=L^{2}\big(\Omega,\mathcal{A},\mathbb{P};L_{m}^{2}(\mathbb{R}^{n};\mathbb{R}^{n})\big)$ of the atomless probability space $\big(\Omega,\mathcal{A},\mathbb{P}\big)$. An element of $\mathcal{H}_{m}$ is denoted by $X_{x} = X(x,\omega)$ which maps $\mathbb{R}^n \times \Omega$ to $\mathbb{R}^n$; for each $x$, $X_x$ is a $L^{2}(\Omega,\mathcal{A},\mathbb{P};\mathbb{R}^{n})$ random variable. The inner product over $\mathcal{H}_m$ is defined by 

\begin{equation}
\langle X,Y\rangle_{\mathcal{H}_m}
:=\mathbb{E}\left[
\int_{\mathbb{R}^{n}} X\cdot Y dm(x)\right] < \infty
\h{1pt},\h{10pt}
\text{for any $X$, $Y \in \mathcal{H}_m$.}
\label{eq:2-215}
\end{equation} 
The deterministic pushforward\footnote{To follow the usual custom, we use the notation $ \# $ to denote the pushforward operation in the measure space. Nevertheless, we use $\otimes$ instead of $ \# $ to emphasise the bilinear structure of $X\otimes m$ in the space of random variables $X$ and the space of signed measures $m$. These two notations have their own importance. In future work, we may use $ \otimes $ and $\otimes$ interchangeably.} probability measure of $m \otimes \mathbb{P}  $ by $X$ on $\mathbb{R}^{n}$ is denoted by $X \#  (m \otimes \mathbb{P} ) \in \mathcal{P}_{2}(\mathbb{R}^{n})$. As $\mathbb{P}$ is fixed, we simply suppress $\mathbb{P}$ and write  $X  \otimes m$ for short. For any test function $\varphi$, we have
\begin{equation}
\int_{\mathbb{R}^{n}}\varphi(\xi)dX \otimes  m(\xi)=\mathbb{E}\left(\int_{\mathbb{R}^{n}}\varphi(X_{x})dm(x)\right).
\label{eq:2-216}
\end{equation}
In addition, for any $X \in \mathcal{H}_m$, the norm of $X$ is written as $\|X\|_{\mathcal{H}_m} = \sqrt{\langle X,X\rangle_{\mathcal{H}_m}}$. Given any functional $F$ on $\mathcal{P}_{2}(\mathbb{R}^{n})$, the map $X \longmapsto F(X \otimes  m)$ is now a functional
on $\mathcal{H}_{m}$. Since $X \longmapsto X  \otimes  m$ is continuous
from $\mathcal{H}_{m}$ to $\mathcal{P}_{2}(\mathbb{R}^{n})$, the functional
will be continuous as long as $F(m)$ is continuous on $\mathcal{P}_{2}(\mathbb{R}^{n}).$ Its G\^ateaux derivative (if exists) $D_{X}F(X  \otimes  m)$, as an element of $\mathcal{H}_{m}$, is defined by 
\begin{equation}
\dfrac{F\pig((X+\epsilon Y)  \otimes  m\pig)-F(X  \otimes  m)}{\epsilon}
\longrightarrow
\pigl\langle D_{X}F(X  \otimes  m),Y \pigr\rangle_{\mathcal{H}_m}
\h{1pt}, \h{10pt} 
\text{as $\epsilon\rightarrow 0$, for any $Y\in\mathcal{H}_{m}$.}
\label{eq:2-160}
\end{equation}
Also, we have

\begin{equation}
\dfrac{d}{d\theta}F\pig((X+\theta Y)  \otimes  m\pig)
=\pigl\langle D_{X}F\big((X+\theta Y)  \otimes  m\big),Y\pigr\rangle_{\mathcal{H}_m}.
\label{d_theta F = D_X F}
\end{equation}
Suppose that $F(m)$ has a linear functional derivative $\dfrac{dF}{d\nu}(m)(x)$ which is differentiable in $x$ for each $m \in \mathcal{P}_2(\mathbb{R}^n)$. Then it is easy to convince oneself
that when $(m,x) \longmapsto \diff_x \dfrac{dF}{d\nu}(m)(x)$ is continuous
with the growth condition 

\begin{equation}
\left|\diff_x \dfrac{dF}{d\nu}(m)(x)\right|\leq c(m)(1+|x|),
\label{eq:2-162}
\end{equation}
where {\color{black}$D_x$ is the usual differential operator with respect to $x$ in $\mathbb{R}^n$} and $c(m)$ is bounded on compacta from $\mathcal{P}_2(\mathbb{R}^n)$, then 

\begin{equation}
D_{X}F(X  \otimes  m)(\cdot,\omega)=\diff_x \dfrac{dF}{d\nu}(X  \otimes  m)(x)\bigg|_{x=X(\cdot,\omega)},
\label{eq:2-217}
\end{equation}
which is an element of $\mathcal{H}_{m}$; we refer the readers to Section 2.1 in \cite{BFY17} for a rigorous proof about the above equality. Note that, we see that $D_{X}F(X  \otimes  m)(u,\omega)$ is $\sigma(X(\cdot,\cdot))$-measurable random variable, \mycomment{what does it mean? } accounted in the augmented probability space $(  \mathbb{R}^n \times \Omega,  \mathcal{B}\times\mathcal{A},m \otimes \mathbb{P})$. On the other hand, it can also be viewed as a limiting function of the linear combinations of products of a function of $X(u,\omega)$ and a multivariate function of several integrals of different test functions against the measure $X  \otimes  m$. The linear functional derivative can be computed by the formula
\begin{equation}
\dfrac{dF}{d\nu}(X  \otimes  m)(X_{u})-\dfrac{dF}{d\nu}(X  \otimes  m)(0)
=\int_{0}^{1}D_{X}F(X  \otimes  m)(\theta X_{u})\cdot X_{u}d\theta,
\h{15pt}
\text{ for any $u \in \mathbb{R}^n$.}
\label{eq:2-2000}
\end{equation}
The identity function $\mathcal{I}_{\:\bigcdot}$ is a particular choice of $X_{\bigcdot}$ being a constant random variable, such that for any $x \in \mathbb{R}^n$, $\mathcal{I}_{x}=x$ and hence $\mathcal{I}_x  \otimes  m=m$. Therefore, (\ref{eq:2-217}) becomes 

\begin{equation}
D_{X}F(m)(x,\omega)
=D_{X}F(\mathcal{I}  \otimes  m)(x,\omega)
=\diff_x \dfrac{dF}{d\nu}(\mathcal{I}  \otimes  m)(x)=\diff_x \dfrac{dF}{d\nu}(m)(x)
,
\label{eq:2-218}
\end{equation}
it is identical to the $L$-derivative $\partial_{m}F(m)(x)$ in Carmona and Delarue \cite{CD18}; the $L$-derivative can be interpreted
as the Fr\'echet derivative of the functional $F(X  \otimes  m)$ on
$\mathcal{H}_{m}$ evaluated at $X=\mathcal{I}_x$. On the other hand, the $L$-derivative can also be realised by taking a $X_{x}$ which is totally independent of $x$, by then $X  \otimes  m=\mathcal{L}_X$
(the law of $X$) for every $m$, so $F(X  \otimes  m)=F(\mathcal{L}_X)$, and then $D_{X}F(X  \otimes  m)=\partial_{\nu}F(\mathcal{L}_X)(X)$.

Now we consider all $X_x$ such that $\dfrac{X_{x}}{(1+|x|^{2})^{\frac{1}{2}}}\in L^{\infty}(\mathbb{R}^{n};L^{2}\big(\Omega,\mathcal{A},\mathbb{P};\mathbb{R}^{n})\big)$, which means that

\begin{equation}
\mathbb{E}|X_{x}|^{2}\leq c(X)(1+|x|^{2}),
\label{eq:2-219}
\end{equation}
where $c(X)$ is a positive constant possibly depending on $X(\cdot,\cdot)$ but independent of $x$. Then $X\in\mathcal{H}_{m}$
for every $m \in \mathcal{P}_2(\mathbb{R}^n)$, and so $X$ and $m$ can be considered as two separate
arguments in $F(X  \otimes  m)$. Further, we can consider the functional
$m\longmapsto F(X  \otimes  m)$ and its linear functional derivative $\dfrac{\partial F}{\partial m}(X  \otimes  m)(x) = \dfrac{\delta F}{\delta\mu}(X  \otimes  \mu)(x)\Bigg|_{\mu =m}$, which is distinguished from $\dfrac{dF}{d\nu}(X  \otimes  m)(x) = \dfrac{\delta F}{ \delta \nu}(\nu)(x)\Bigg|_{\nu=X  \otimes m}$; in fact, by Proposition 2.12. in \cite{BGY20}, when $m\longmapsto\dfrac{dF}{d\nu}(m)(x)$ is continuous and has at most quadratic growth $\left|\dfrac{dF}{d\nu}(m)(x)\right|
\leq c(m)(1+|x|^{2})$, we have
\begin{equation}
\dfrac{\partial F}{\partial m}(X  \otimes  m)(x)
=\mathbb{E}\left[\dfrac{dF}{d\nu}(X  \otimes  m)(X_{x})\right].
\label{eq:2-220}
\end{equation} 
\begin{rem}
To facilitate further development, we allow the abuse of notations that $D_X F(Y  \otimes  m)$ always mean the Fr\'echet derivative of $F(\h{1pt}\cdot\h{1pt}  \otimes  m)$ with respect to  $\:\cdot$ ; even for the extreme case, the symbol $D_{X}F(Y_{X}  \otimes  m)$ represents $D_XF(X  \otimes  m)\big|_{X=Y_X}$.
\end{rem}

\subsection{Second-Order G\^ateaux Derivative in the Hilbert Space}

A functional $F(X  \otimes  m)$ is said to have a second-order G\^ateaux derivative
in $\mathcal{H}_{m}$, denoted by $D_{X}^{2}F(X  \otimes  m)\in\mathcal{L}(\mathcal{H}_{m};\mathcal{H}_{m})$,
if for any $Y \in\mathcal{H}_{m}$, it holds that
\begin{equation}
\dfrac{\pigl\langle D_{X}F((X+\epsilon Y)  \otimes  m)-D_{X}F(X  \otimes  m),Y \pigr\rangle_{\mathcal{H}_m}}{\epsilon}
\longrightarrow
\pigl\langle D_{X}^{2}F(X  \otimes  m)(Y),Y\pigr\rangle_{\mathcal{H}_m}\h{1pt},\h{10pt}
\text{as $\epsilon\rightarrow 0$}.
\label{def second G derivative}
\end{equation}
For any $Z$ and $W \in \mathcal{H}_m$, $D_{X}^{2}F(X  \otimes  m)(Z)$ is generally defined by
\begin{equation}
\pigl\langle D_{X}^{2}F(X  \otimes  m)(Z),W\pigr\rangle_{\mathcal{H}_m}
=\dfrac{1}{4}\left[\pigl\langle D_{X}^{2}F(X  \otimes  m)(Z+W),Z+W\pigr\rangle_{\mathcal{H}_m}
-\pigl\langle D_{X}^{2}F(X  \otimes  m)(Z-W),Z-W\pigr\rangle_{\mathcal{H}_m}\right].
\label{eq:2-1}
\end{equation}
Therefore, $D_{X}^{2}F(X  \otimes  m)$ is clearly self-adjoint by definition. 
Also 
\begin{equation}
\dfrac{d}{d\theta}\pigl\langle D_{X}F((X+\theta Z)  \otimes  m),W\pigr\rangle_{\mathcal{H}_m}
=\pigl\langle D_{X}^{2}F((X+\theta Z)  \otimes  m)(Z),W\pigr\rangle_{\mathcal{H}_m}.
\label{eq:2-228}
\end{equation}
Then, (\ref{eq:2-228}) and  (\ref{d_theta F = D_X F}) imply
\begin{equation}
\dfrac{d^{\h{.5pt}2}\h{-.7pt} F}{d\theta^{2}}\pig((X+\theta Y)  \otimes  m\pig)
=\pigl\langle D_{X}^{2}F((X+\theta Y)  \otimes  m)(Y),Y\pigr\rangle_{\mathcal{H}_m}.
\label{eq:2-229}
\end{equation}
Hence, we can rewrite the Taylor's expansion for $F$ as follows:

\begin{equation}
F\pig((X+Y)  \otimes  m)\pig)
=F(X  \otimes  m)
+\pigl\langle D_{X}F(X  \otimes  m),Y\pigr\rangle_{\mathcal{H}_m}
+\int_{0}^{1}\int_{0}^{1}\theta\pigl\langle D_{X}^{2}F((X+\theta\lambda Y)  \otimes  m)(Y),Y\pigr\rangle_{\mathcal{H}_m}
d\theta d\lambda.
\label{eq:2-230}
\end{equation}
From (\ref{eq:2-217}), it yields $\pigl\langle D_{X}F(X  \otimes  m),W\pigr\rangle_{\mathcal{H}_m}=\mathbb{E}\left[\displaystyle\int_{\mathbb{R}^{n}}\diff_x \dfrac{dF}{d\nu}(X  \otimes  m)(X_{x}) \cdot W_{x}dm(x)\right]$, it follows that
\begin{align*}
&\dfrac{1}{\epsilon}
\pigl\langle D_{X}F((X+\epsilon Z)  \otimes  m) - D_{X}F(X\otimes  m)
,W\pigr\rangle_{\mathcal{H}_m}\\
&\h{100pt}=\dfrac{1}{\epsilon}
\mathbb{E}\left[\displaystyle\int_{\mathbb{R}^{n}}
\left(\diff_x \dfrac{dF}{d\nu}((X+\epsilon Z)  \otimes  m)(X_{x}+\epsilon Z_{x})
-\diff_x \dfrac{dF}{d\nu}(X  \otimes  m)(X_{x})\right) \cdot W_{x}dm(x)\right],
\end{align*}
further, by assuming the existences of $\dfrac{d^{\h{.5pt}2}\h{-.7pt}  F}{d\nu^{2}}(m)(x,\tilde{x})$, its derivatives $\diff_x \diff_{\tilde{x}}\dfrac{d^{\h{.5pt}2}\h{-.7pt}  F}{d\nu^{2}}(m)(x,\tilde{x})$ and $\diff^2_x \dfrac{dF}{d\nu}(m)(x)$, together with continuity
properties, taking limits of both sides will give 
\begin{equation}
\begin{aligned}
\pigl\langle D_{X}^{2}F(X  \otimes  m)(Z), W\pigr\rangle_{\mathcal{H}_m}
=\:&\mathbb{E}\left[\int_{\mathbb{R}^n} \diff^2_x \dfrac{dF}{d\nu}(X  \otimes  m)(X_{x})Z_{x} \cdot W_{x} dm(x)\right]\\
&+\mathbb{E}\left\{\widetilde{\mathbb{E}}
\left[\int_{\mathbb{R}^n}\int_{\mathbb{R}^n}
\diff_x \diff_{\tilde{x}}\dfrac{d^{\h{.5pt}2}\h{-.7pt}  F}{d\nu^{2}}(X  \otimes  m)(X_{x},\widetilde{X}_{\tilde{x}})\widetilde{Z}_{\tilde{x}}\cdot W_{x}dm(\tilde{x})dm(x)\right]\right\},
\end{aligned}
\label{eq:2-231}
\end{equation}
in which $\pig(\widetilde{X}_{\tilde{x}},\widetilde{Z}_{\tilde{x}}\pig)$ are
independent copies of $\big(X_{x},Z_{x}\big)$; also refer the detailed discussions in Section 2.1 of \cite{BFY17}. We can write, consequently, that

\begin{equation}
D_{X}^{2}F(X  \otimes  m)(Z)
=\diff^2_x \dfrac{dF}{d\nu}(X  \otimes  m)(X_{x})Z_{x}
+\widetilde{\mathbb{E}}\left[\int_{\mathbb{R}^{n}}\diff_x \diff_{\tilde{x}}\dfrac{d^{\h{.5pt}2}\h{-.7pt}  F}{d\nu^{2}}(X  \otimes  m)(X_{x},\widetilde{X}_{\tilde{x}})\widetilde{Z}_{\tilde{x}}dm(\tilde{x})\right].
\label{eq:2-232}
\end{equation}
We notice the following measurability property of $D_{X}^{2}F(X  \otimes  m)(Z)$:

\begin{equation}
D_{X}^{2}F(X  \otimes  m)(Z)=M(X)Z+L(X,Z),
\label{eq:2-2320}
\end{equation}
where $M(X)$ is a matrix-valued $\sigma(X(\cdot,\cdot))$-measurable random element,
and $L(X,Z)$ is a vector-valued $\sigma(X(\cdot,\cdot))$-measurable random element, accounted in the augmented probability space $(  \mathbb{R}^n \times \Omega,  \mathcal{B}\times\mathcal{A},m \otimes \mathbb{P})$, which depends functionally on $Z$ in a linear manner. Likewise if we take $X_{x}=\mathcal{I}_{x}=x$, then we obtain

\begin{equation}
D_{X}^{2}F(m)(Z)=\diff^2_x \dfrac{dF}{d\nu}(m)(x)Z_x
+\widetilde{\mathbb{E}}\left[\int_{\mathbb{R}^{n}}\diff_x \diff_{\tilde{x}}\dfrac{d^{\h{.5pt}2}\h{-.7pt}  F}{d\nu^{2}}(m)(x,\tilde{x})\widetilde{Z}_{\tilde{x}}dm(\tilde{x})\right].
\label{eq:2-233}
\end{equation}
From Formula (\ref{eq:2-232}), it follows immediately that if $Z$
is independent of $X$ and $\mathbb{E}(Z)=0$, then the second term vanishes and so

\begin{equation}
D_{X}^{2}F(X  \otimes  m)(Z)=\diff^2_x \dfrac{dF}{d\nu}(X  \otimes  m)(X_{x})Z_{x}.
\label{eq:2-234}
\end{equation}
In order to get $D_{X}^{2}F(X  \otimes  m)(X)\in\mathcal{H}_{m}$, we
need to assume 
\begin{equation}
\left|\diff_x \dfrac{dF}{d\nu}(m)(x)\right|\leq c(m)\h{1pt},\h{8pt}
\left|\diff_x \diff_{\tilde{x}}\dfrac{d^{\h{.5pt}2}\h{-.7pt}  F}{d\nu^{2}}(m)(x,\tilde{x})\right|
\leq c(m),
\label{eq:2-235}
\end{equation}
where $|\cdot|$ in (\ref{eq:2-235}) is the matrix norm defined by $|A|:=\displaystyle\sup_{x \in \mathbb{R}^n,|x|=1}|Ax|$ for any $A \in \mathbb{R}^{n \times n}$. 

\subsection{Stochastic Calculus in the Space of $L^2$-Random Variables}\label{sec, ito cal.}

The probability space $(\Omega,\mathcal{A},\mathbb{P})$ under consideration is sufficiently large that contains a standard Wiener process in $\mathbb{R}^{n}$, denoted by $w(t)$, for $t\geq 0$, together with some additional random variables, for instance, serving the initial condition, can be independent of the filtration $\mathcal{W}_{t}^{s}:=\sigma\big(w(\tau)-w(t);\tau \in [t,s]\big)$ generated by the Wiener process. The latter random variables can be considered as $\mathcal{W}^t_0$- (or $\mathcal{W}^t$- for short) measurable for any $t \geq 0$. For each $t \in [0,T]$, we let $X_{tx} = X(x,\omega,t)$ be an arbitrary element in $\mathcal{H}_{m}$, independent of
$\mathcal{W}_{t}^T$ ($\mathcal{W}_{t}$ for short). We shall refer $X_{t\h{.5pt}\bigcdot}  \otimes  m$ or $X_{t}  \otimes  m$ for short the pushforward probability measure in $\mathcal{P}_{2}(\mathbb{R}^{n})$. We also define $\mathcal{W}_{tX}^{s}:=\sigma(X_t)\bigvee\mathcal{W}_{t}^{s}$ and denote by $\mathcal{W}_{tX}$ the filtration generated by the $\sigma-$algebras
$\mathcal{W}_{tX}^{s}$, for $s \geq t$. We denote by $L_{\mathcal{W}_{tX}}^{2}(t,T;\mathcal{H}_{m})$
the subspace of $L^{2}(t,T;\mathcal{H}_{m})$ of all elements adapted to the filtration
$\mathcal{W}_{tX}$. Based on Lemma 3.1 in \cite{BGY20}, it is important to note that, because $X_t$
is independent of $\mathcal{W}^T_{t},$ there exists an isometry between
$L_{\mathcal{W}_{tX}}^{2}(t,T;\mathcal{H}_{m})$ and $L_{\mathcal{W}_{t}}^{2}(t,T;\mathcal{H}_{X_t  \otimes  m})$; indeed, for $Y(\cdot) \in L_{\mathcal{W}_{tX}}^{2}(t,T;\mathcal{H}_{m})$, there exists a random field $Y_{t\xi}(s) \in L_{\mathcal{W}_{t}}^{2}(t,T;\mathbb{R}^n)$ for $\xi\in \mathbb{R}^{n}$ and $s \geq t$ such
that $Y(s)=Y_{t\xi}(s)\pigr|_{\xi=X_{tx}}$ and so $\mathbb{E}\big|Y(s)\big|^{2}=\mathbb{E}\left(\mathbb{E}\big|Y_{t\xi}(s)\big|^{2}\pigr|_{\xi=X_{tx}}\right)$. It follows that

\begin{align*}
\int_{t}^{T}\big\|Y(s)\big\|^{2}_{\mathcal{H}_m}ds
=\int_{t}^{T}\mathbb{E}\left[\int_{\mathbb{R}^{n}}\big|Y(s)\big|^{2}dm(x)\right]ds
&=\int_{t}^{T}\mathbb{E}\left[\int_{\mathbb{R}^{n}}\mathbb{E}\big|Y_{t\xi }(s)\big|^{2}\Big|_{\xi=X_{tx}}dm(x)\right]ds\\
&=\int_{t}^{T}\mathbb{E}\left[\int_{\mathbb{R}^{n}}\big|Y_{t \xi}(s)\big|^{2}d\pig(X_t   \otimes  m\pig)(\xi)\right]ds.
\end{align*}

Let us consider for $j=1,2,\ldots,n$, coefficient functions $\eta_{tX}^{j}(\cdot) \in L_{\mathcal{W}_{tX}}^{2}(t,T;\mathcal{H}_{m}),$
so that each of them corresponds to the random field $\eta_{t \xi}^{j}(\cdot)\in L_{\mathcal{W}_{t}}^{2}(t,T;\mathcal{H}_{X_t  \otimes  m}),$
where $X_t\in\mathcal{H}_{m}.$ We define the stochastic integral 

\begin{equation}
\sum_{j=1}^{n}\int_{t}^{s}\eta_{tX}^{j}(\tau)dw_{j}(\tau)
=\sum_{j=1}^{n}\int_{t}^{s}\eta_{t\xi}^{j}(\tau)dw_{j}(\tau)\Big|_{\xi=X},
\label{eq:3-106}
\end{equation}
which consequently defines a process in $L_{\mathcal{W}_{tX}}^{2}(t,T;\mathcal{H}_{m})$
with 

\begin{equation}
\begin{aligned}
&\mathbb{E}\left[\int_{\mathbb{R}^{n}}
\left|\sum_{j=1}^{n}\int_{t}^{s}\eta_{tX}^{j}(\tau)dw_{j}(\tau)\right|^{2}dm(x)\right]
=\int_{\mathbb{R}^{n}}\mathbb{E}\left[\h{3pt}
\left|\sum_{j=1}^{n}\int_{t}^{s}\eta_{t\xi}^{j}(\tau)dw_{j}(\tau)\right|^{2}
d\big(X_t  \otimes  m\big)(\xi)\h{3pt}\right]\\
&=\int_{\mathbb{R}^{n}}\mathbb{E}\left[\sum_{j=1}^{n}\int_{t}^{s}\pigl|\eta_{t\xi}^{j}(\tau)\pigr|^{2}d\tau\,d\big(X_t  \otimes  m\big)(\xi)
\right]
=\mathbb{E}\left[\int_{\mathbb{R}^{n}}\sum_{j=1}^{n}\int_{t}^{s}\pigl|\eta_{tX}^{j}(\tau)\pigr|^{2}d\tau dm(x)\right].
\end{aligned}
\label{eq:3-107}
\end{equation}

\begin{definition}[\bf Generalized It\^o Process]
Given $X$ measurable to $\mathcal{W}^t_{0}$ and $a_{tX}(s)\in L_{\mathcal{W}_{tX}}^{2}(t,T;\mathcal{H}_{m})$, we
can define the It\^o process as, for a given $t \in [0,T)$, 

\begin{equation}
\mathbb{X}_{tX}(s)=X+\int_{t}^{s}a_{tX}(\tau)d\tau
+\sum_{j=1}^{n}\int_{t}^{s}\eta_{tX}^{j}(\tau)dw_{j}(\tau),\h{10pt} \text{ for $s \in [t,T]$,}
\label{state X_tX, genral}
\end{equation}
which belongs to $L_{\mathcal{W}_{tX}}^{2}(t,T;\mathcal{H}_{m}).$
\end{definition}
We can then verify the following differentiation rule
\begin{empheq}[left=\empheqbiglbrace]{align}
\dfrac{d}{ds}\big\|\mathbb{X}_{tX}(s)\big\|_{\mathcal{H}_{m}}^{2}&=2\pigl\langle \mathbb{X}_{tX}(s),a_{tX}(s)\pigr\rangle_{\mathcal{H}_m}+\sum_{j=1}^{n}\pigl\|\eta_{tX}^{j}(s)\pigr\|_{\mathcal{H}_{m}}^{2},\label{diff of H norm}\\
\big\|\mathbb{X}_{tX}(t)\big\|_{\mathcal{H}_{m}}^{2}&=\|X\|^{2}_{\mathcal{H}_{m}}. \nonumber
\end{empheq}
Fix $t \in [0,T)$, we now proceed to obtain an It\^o's formula for the evolution of
$F(\mathbb{X}_{tX}(s)  \otimes  m,s)$, where $m$ is still the initial distribution at time $0$ and $s \in [t,T]$. Further, we assume the following:
\begin{align}
&\begin{aligned}
&\h{5.5pt}\text{(i)\h{2pt} Let $L^2_{\mathcal{W}_t^{\indep}}(\mathcal{H}_m) \subset \mathcal{H}_m$ the subspace of $\mathcal{H}_m$ of all elements independent of $\mathcal{W}_t$, consider the}\\
&\text{\h{23pt}map $X \in L^2_{\mathcal{W}^{\indep}_t}(\mathcal{H}_m) \longmapsto F(X  \otimes  m,s)$, and it is twice G\^ateaux differentiable over $\mathcal{H}_{m}$;}
\end{aligned}\label{subspace of H_m indep. of W_t}\\
&\h{2.5pt}\text{(ii)} \h{5pt}\pig|F(X  \otimes  m,s+\epsilon)-F(X  \otimes  m,s)\pig|\leq C_{T}\epsilon\left(1+\|X\|^{2}_{\mathcal{H}_m}\right);\label{eq:3-120}\\
&\text{(iii)} \h{5pt} D_{X}F(X  \otimes  m,s_{k}) \text{ converges to } D_{X}F(X  \otimes  m,s)\:\text{in}\:\mathcal{H}_{m}\;\text{as}\:s_{k}\downarrow s;
\label{eq:3-121}\\
&\begin{aligned}
&\text{(iv)} \h{5pt} \text{For any $X \in L^2_{\mathcal{W}^{\indep}_t} (\mathcal{H}_m)$, }\\
&\h{70pt}\big\|D_{X}F(X  \otimes  m,s)\big\|_{\mathcal{H}_m}
\leq C_{T}\pig(1+\|X\|_{\mathcal{H}_m}\pig)
\h{10pt} \text{and} \h{10pt}
\big\|D_{X}^{2}F(X  \otimes  m,s)\big\|_{\mathcal{H}_m}\leq c.
\end{aligned}
\label{eq:3-123}
\end{align}
The second-order derivative is assumed to have the following continuity properties:
\begin{align}
&\begin{aligned}
&\h{3pt}\text{(v)\h{2pt} For any sequence $\big\{X_k\big\}_{k \in \mathbb{N}} \subset L^2_{\mathcal{W}_t^{\indep}}(\mathcal{H}_m)$ converging to 
$X \in L^2_{\mathcal{W}_t^{\indep}}(\mathcal{H}_m) $ as $s_{k}\downarrow s$, then }\\
&\text{\h{30pt} $D_{X}^{2}F(X_{k}  \otimes  m,s_{k})(Y)\longrightarrow D_{X}^{2}F(X  \otimes  m,s)(Y)\;\text{uniformly in}\:\mathcal{H}_{m}$ for all $Y$ in a bounded set;}
\end{aligned}\label{eq:3-124}\\
&\begin{aligned}
&\h{23pt} \text{Furthermore, if the sequence $\{Y_k\}_{k \in \mathbb{N}}$ is bounded in $\mathcal{H}_m$, then}\\
&\h{50pt} D_{X}^{2}F(X_{k}  \otimes  m,s_{k})(Y_{k})-D_{X}^{2}F(X  \otimes  m,s_{k})(Y_{k})\longrightarrow0,
\h{10pt}\text{in}\:
L^{1}\big(\Omega,\mathcal{A},\mathbb{P};L_{m}^{1}(\mathbb{R}^{n};\mathbb{R}^{n})\big);
\end{aligned}\label{eq:3-125}
\end{align}

\h{-6pt}\text{(vi)} \h{-2pt} The processes $a_{tX}(s)$, $\eta_{tX}^{j}(s)\in L_{\mathcal{W}_{tX}}^{2}(t,T;\mathcal{H}_{m})$ satisfy the following additional properties:
\begin{align}
&\text{(a)} \h{130pt} \sup_{s\in[t,T]}\mathbb{E}\left[\int_{\mathbb{R}^{n}}
\left(\sum_{j=1}^{n}\pigl|\eta_{tX}^{j}(s)\pigr|^{2}\right)^{2}dm(x)\right]<\infty;\nonumber\\
&\text{(b)} \h{50pt}
\dfrac{1}{\epsilon}\sum^n_{j=1}\mathbb{E}\left[\int_{s}^{s+\epsilon}\int_{\mathbb{R}^{n}}
\Big|\eta_{tX}^{j}(\tau)-\eta_{tX}^{j}(s)\Big|^{2}dm(x)d\tau\right]\longrightarrow0,\h{10pt}
\text{as }\:\epsilon \rightarrow 0,\text{ a.e. }s\in[t,T];\label{eq:3-112}\\
&\text{(c)}\h{90pt}
\dfrac{1}{\epsilon}\int_{s}^{s+\epsilon}a_{tX}(\tau)d\tau\longrightarrow a_{tX}(s)\h{10pt}\text{in}\:\mathcal{H}_{m},\text{ as }\:\epsilon\rightarrow 0,\text{ a.e. }s\in[t,T].\nonumber
\end{align}
We then state the following It\^o's lemma in the mean field setting with its proof in \hyperref[app, ito thm]{Appendix}.
\begin{thm}[\textbf{Mean Field It\^o's Formula}]
\label{ito thm} Under the assumptions of  (\ref{subspace of H_m indep. of W_t}) to (\ref{eq:3-112}), the function $s\longmapsto F\big(\mathbb{X}_{tX}(s)  \otimes  m,s\big)$ is a.e
differentiable on $(t,T)$, and we also have the change-of-variable formula:
\begin{equation}
\begin{aligned}
\dfrac{d}{ds}F(\mathbb{X}_{tX}(s)  \otimes  m,s)
=\:&\dfrac{\partial}{\partial s}F(\mathbb{X}_{tX}(s)  \otimes  m,s)
+\pigl\langle D_{X}F(\mathbb{X}_{tX}(s)  \otimes  m,s),a_{tX}(s)\pigr\rangle_{\mathcal{H}_m}\\
&+\dfrac{1}{2}\left\langle D_{X}^{2}F(\mathbb{X}_{tX}(s)  \otimes  m,s)\left(\sum_{j=1}^{n}\eta_{tX}^{j}(s)\mathcal{N}_s^j\right),\sum_{j=1}^{n}\eta_{tX}^{j}(s)\mathcal{N}_s^j\right\rangle_{\mathcal{H}_m},\text{ a.e. }\:s\in(t,T),
\end{aligned}
\label{eq:3-113}
\end{equation}
where $\mathcal{N}_s^j$'s are (being independent to each other)\footnote{We can enlarge the initial $\sigma$-algebra that also contains all these $\mathcal{N}^j_s$'s and independent of $X_t$} \mycomment{this footnote correct?} Gaussian random variables, everyone  with a mean $0$ and a unit variance, and each $\mathcal{N}_s^j$ is independent of the $\sigma$-algebra $\mathcal{W}_{tX}^{s}$. If one applies Formula (\ref{eq:2-232}) together with the discussion between (\ref{eq:2-233}) and (\ref{eq:2-234}), then we also have 

\begin{equation}
\begin{aligned}
\dfrac{d}{ds}F\pig(\mathbb{X}_{tX}(s)  \otimes  m,s\pig)
=\:&\dfrac{\partial}{\partial s}F\pig(\mathbb{X}_{tX}(s)  \otimes  m,s\pig)
+\left\langle \diff_x \dfrac{dF}{d\nu}\pig(\mathbb{X}_{tX}(s)  \otimes  m,s\pig)\big( \mathbb{X}_{tx}(s) \big),a_{tX}(s)\right\rangle_{\mathcal{H}_m}\\
&+\dfrac{1}{2}\sum_{j=1}^{n}\left\langle \diff^2_x \dfrac{dF}{d\nu}(\mathbb{X}_{tX}(s)  \otimes  m,s)(\mathbb{X}_{tX}(s))\eta_{tX}^{j}(s),\eta_{tX}^{j}(s)\right\rangle_{\mathcal{H}_m},\text{ a.e. }s\in(t,T).
\end{aligned}
\label{ito lemma in gradient form}
\end{equation}
\end{thm}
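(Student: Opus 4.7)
The plan is to lift the classical It\^o proof to $\mathcal{H}_m$ via the Taylor expansion (\ref{eq:2-230}). Fixing an $s \in (t,T)$ that is a Lebesgue point of every integrand appearing in (\ref{eq:3-112}), I decompose
\[
F\bigl(\mathbb{X}_{tX}(s+\epsilon) \otimes m,\, s+\epsilon\bigr) - F\bigl(\mathbb{X}_{tX}(s) \otimes m,\, s\bigr)
\]
into a pure time increment, controlled by (\ref{eq:3-120}) and (\ref{eq:3-121}) and producing the $\partial_s F$ contribution after a standard Rademacher argument, plus a spatial increment at fixed $s$. To the spatial increment I apply (\ref{eq:2-230}) with $X = \mathbb{X}_{tX}(s)$ and $Y = \Delta_\epsilon := \int_s^{s+\epsilon} a_{tX}(\tau)\,d\tau + \sum_{j=1}^n \int_s^{s+\epsilon} \eta_{tX}^j(\tau)\,dw_j(\tau)$, so that the spatial increment is a first-order piece $\bigl\langle D_X F(\mathbb{X}_{tX}(s) \otimes m, s),\, \Delta_\epsilon \bigr\rangle_{\mathcal{H}_m}$ plus the second-order remainder $\int_0^1\int_0^1 \theta \bigl\langle D_X^2 F((\mathbb{X}_{tX}(s) + \theta\lambda \Delta_\epsilon) \otimes m, s)(\Delta_\epsilon),\, \Delta_\epsilon \bigr\rangle_{\mathcal{H}_m}\,d\theta\,d\lambda$. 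Dividing by $\epsilon$ and letting $\epsilon \downarrow 0$ is the whole business.

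The first-order term splits into a drift piece that, by the Lebesgue point property in (\ref{eq:3-112})(c) and the bound on $D_X F$ in (\ref{eq:3-123}), yields $\bigl\langle D_X F(\mathbb{X}_{tX}(s) \otimes m, s),\, a_{tX}(s) \bigr\rangle_{\mathcal{H}_m}$, and a stochastic piece that vanishes exactly: indeed, $D_X F(\mathbb{X}_{tX}(s) \otimes m, s)$ is $\mathcal{W}_{tX}^s$-measurable while through the isometric identification of $L^2_{\mathcal{W}_{tX}}(t,T;\mathcal{H}_m)$ with $L^2_{\mathcal{W}_t}(t,T;\mathcal{H}_{X_t \otimes m})$ recalled after (\ref{state X_tX, genral}), each $\int_s^{s+\epsilon} \eta_{tX}^j(\tau)\,dw_j(\tau)$ has zero conditional expectation given $\mathcal{W}_{tX}^s$, so the $\mathcal{H}_m$-inner product against it is zero. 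The second-order remainder is the core: I first freeze the base point by replacing $D_X^2 F((\mathbb{X}_{tX}(s) + \theta\lambda \Delta_\epsilon) \otimes m, s)$ by $D_X^2 F(\mathbb{X}_{tX}(s) \otimes m, s)$ using (\ref{eq:3-124})--(\ref{eq:3-125}); within the resulting pure quadratic form, the drift$\times$drift part is $O(\epsilon^2)$ and the drift$\times$stochastic cross part is $O(\epsilon^{3/2})$ by Cauchy--Schwarz. For the surviving stochastic$\times$stochastic part, I freeze the integrand: $\int_s^{s+\epsilon} \eta_{tX}^j(\tau)\,dw_j(\tau) \approx \eta_{tX}^j(s)\bigl(w_j(s+\epsilon) - w_j(s)\bigr) = \sqrt{\epsilon}\,\eta_{tX}^j(s)\,\mathcal{N}_s^j$ with $\mathcal{N}_s^j$ i.i.d.\ standard Gaussians independent of $\mathcal{W}_{tX}^s$. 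Bilinearity of $D_X^2 F$ collapses the two $\sqrt{\epsilon}$'s into $\epsilon$, cancelling the $1/\epsilon$ prefactor, and $\int_0^1\int_0^1 \theta\,d\theta\,d\lambda = \tfrac{1}{2}$ supplies the coefficient, delivering (\ref{eq:3-113}). The alternative form (\ref{ito lemma in gradient form}) follows by inserting the representation (\ref{eq:2-232}) of $D_X^2 F$ into (\ref{eq:3-113}): the mixed term involving $\diff_x \diff_{\tilde x} \dfrac{d^{\,2} F}{d\nu^{2}}$ pairs an independent tilde-copy against the $\mathcal{N}_s^j$, and since the $\mathcal{N}_s^j$ are centred and mutually independent of the rest, this mixed piece averages to zero, leaving only the diagonal $\diff_x^2 \dfrac{dF}{d\nu}$ contribution.

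The main obstacle is the rigorous justification of the two approximations. First, replacing the stochastic integral by its frozen-coefficient counterpart requires $L^2$-smallness of $\int_s^{s+\epsilon}(\eta_{tX}^j(\tau) - \eta_{tX}^j(s))\,dw_j(\tau)$, which follows from (\ref{eq:3-112}) together with the fibered It\^o isometry (\ref{eq:3-107}); when this error is fed into the bilinear form $D_X^2 F$ (bounded on $\mathcal{H}_m$ by (\ref{eq:3-123})), the fourth-moment bound in assumption (vi)(a) is exactly what secures the convergence after Cauchy--Schwarz on the cross-products of stochastic integrals. Second, replacing $D_X^2 F$ at the perturbed argument by its value at $\mathbb{X}_{tX}(s) \otimes m$: because $\Delta_\epsilon$ is random, the uniform continuity on bounded deterministic sets in (\ref{eq:3-124}) does not suffice, and I must invoke (\ref{eq:3-125}) along the random sequence $\mathbb{X}_{tX}(s) + \theta\lambda\Delta_\epsilon \to \mathbb{X}_{tX}(s)$ in $\mathcal{H}_m$ to pass the second derivative through, applied to the directions $\sum_j \eta_{tX}^j(s) \mathcal{N}_s^j$, which sit in a bounded subset of $\mathcal{H}_m$ by (vi)(a). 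The a.e.\ differentiability of $s \mapsto F(\mathbb{X}_{tX}(s) \otimes m, s)$ on $(t,T)$ then follows simultaneously from these computations, the right-hand side of (\ref{eq:3-113}) being a locally integrable function of $s$.
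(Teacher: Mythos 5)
Your proof follows essentially the same route as the paper's: the same Taylor expansion (\ref{eq:2-230}), the same treatment of the increments (the martingale part of the first-order term killed by conditioning, the drift contributions to the quadratic remainder negligible by Cauchy--Schwarz, the base point of $D_{X}^{2}F$ frozen via (\ref{eq:3-125}) upgraded by the fourth-moment bound in (vi)(a) together with B\"urkholder--Davis--Gundy, the coefficient frozen via (\ref{eq:3-112})(b), the Gaussian identification of the rescaled Brownian increment), and the same cancellation of the mixed second-order term yielding (\ref{ito lemma in gradient form}). The one bookkeeping difference is that the paper takes the spatial increment at time $s+\epsilon$ and the time increment at the frozen point $\mathbb{X}_{tX}(s)$ so that (\ref{eq:3-120}) and Rademacher's theorem apply directly, whereas your ordering leaves the time increment at the moving point $\mathbb{X}_{tX}(s+\epsilon)$, which would require a joint continuity of $\partial_{s}F$ that is not assumed; swapping to the paper's ordering fixes this at no cost.
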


\section{Formulation and Characterisation of Mean Field Type Control Problem}
\subsection{Problem Formulation}
The space of controls is set as the Hilbert space $L_{\mathcal{W}_{tX}}^{2}(t,T;\mathcal{H}_{m}).$
A control is denoted by $v_{tX}(s),$ where the initial condition at time $t$ is $X=\mathbb{X}_{tX}(t)\in\mathcal{H}_{m}$, for any $X$ measurable to the filtration $\mathcal{W}_0^t$, being independent of $\mathcal{W}_{t}^{s}=\sigma(w(\tau)-w(t) : t\leq\tau\leq s)$ for any $s \in [t,T]$. So, we have a measurable random field $v_{t\xi}(s)$, $\xi\in \mathbb{R}^{n}$, and for each $\xi$, $v_{t\xi }(\cdot)\in L_{\mathcal{W}_{t}}^{2}(t,T;\mathbb{R}^{n})$ such that $v_{t\xi }(s)\Big|_{\xi=X=\mathbb{X}_{tX}(t)} = v_{tX }(s)$; then we have the equivalence: 
\begin{equation}
\begin{aligned}
\int_{t}^{T}\|v_{tX}(s)\|^{2}_{\mathcal{H}_m}ds
=\mathbb{E}\left[\int_{t}^{T}\int_{\mathbb{R}^{n}}
|v_{t \xi}(s)|^{2}d (X   \otimes  m)(\xi)ds\right]
&=\mathbb{E}\left[\int_{t}^{T}\int_{\mathbb{R}^{n}}\mathbb{E}\left(|v_{t\xi}(s)|^{2}\right)\Big|_{\xi=X}dm(x)ds\right]\\
&=\mathbb{E}\left[\int_{t}^{T}\int_{\mathbb{R}^{n}}|v_{tX}(s)|^{2}dm(x)ds\right].
\end{aligned}
\label{eq:4-100}
\end{equation}
The state denoted by $\mathbb{X}_{tX}(s)=\mathbb{X}_{tX}(s;v_{tX}(\cdot))$ associated with a
control $v_{tX}(\cdot)$ is defined as the  It\^o process in accordance with Section \ref{sec, ito cal.}
\begin{equation}
\mathbb{X}_{tX}(s)=X + \int_{t}^{s}v_{tX}(\tau)d\tau+\eta(w(s)-w(t)),\h{10pt} \text{ for all $s \in [t,T]$,}
\label{def. X_tX, constant sigma}
\end{equation}
where $\eta$ is a constant matrix valued in $\mathbb{R}^{n \times n}$. We see
that $\mathbb{X}_{tX}(s)$ belongs to $L_{\mathcal{W}_{tX}}^{2}(t,T;\mathcal{H}_{m}).$
More precisely, $\mathbb{X}_{tX}(s)=\mathbb{X}_{t\xi}(s)\Big|_{\xi=X}=\mathbb{X}_{t \xi}(s;v_{t\xi})\Big|_{\xi=X},$  where

\begin{equation}
\mathbb{X}_{t\xi}(s)=\xi+\int_{t}^{s}v_{t\xi}(\tau)d\tau+\eta(w(s)-w(t)),\h{10pt} \text{for $\xi\in \mathbb{R}^{n}$}.
\label{eq:3-501}
\end{equation}
The probability law on $\mathbb{R}^{n},$ $\mathbb{X}_{tX}(s)  \otimes  m$ is simply the
probability law of $\mathbb{X}_{tX}(s).$ It is also the probability law of
$\mathbb{X}_{t\xi}(s)$ when $\xi$ is equipped with a probability $\mathbb{X}_{t}  \otimes  m,$
more precisely $\mathbb{X}_{t\bigcdot}  \otimes  m.$ We aim to minimize the objective functional: 
\begin{equation}
 \mbox{\fontsize{10.4}{10}\selectfont\(
J_{tX}(v_{tX} )=\displaystyle\int_{t}^{T}\int_{\mathbb{R}^n}\mathbb{E}\pig[l\pig(\mathbb{X}_{tX}(s),v_{tX}(s)\pig)\pig]dm(x)
+F\pig(\mathbb{X}_{tX}(s)  \otimes  m\pig)ds+\int_{\mathbb{R}^n} \mathbb{E}\pig[h(\mathbb{X}_{tX}(T)\pig] d m(x)
+F_{T}\pig(\mathbb{X}_{tX}(T)  \otimes  m\pig),\)}
\label{eq:3-503}
\end{equation}
over $v \in L_{\mathcal{W}_{tX}}^{2}(t,T;\mathcal{H}_{m})$, where the cost functions $l(x,v):\mathbb{R}^n \times \mathbb{R}^n \to \mathbb{R}$ and $h(x) :\mathbb{R}^n \to \mathbb{R}$. 
The functional $J_{tX}(v_{tX})$
depends on $X$ only through its probability distribution; in particular, if we are dealing an It\^o process $\mathbb{X}_{tX}(s)$ with  $\mathbb{X}_{tX}(t)=X$, then $J_{tX}(v_{tX})$ depends on $X$ only through $X  \otimes  m$. For a fixed $X$ measurable to $\mathcal{W}^t_0$, the functional $v_{tX}\longmapsto J_{tX}(v_{tX})$
is defined on the Hilbert space $L_{\mathcal{W}_{tX}}^{2}(t,T;\mathcal{H}_{m}) \subset L^{2}(t,T;\mathcal{H}_{m})$.

\subsection{Assumptions} \label{sec. assumptions}

All the following constants are positive except that $c'_l$, $c'_h$, $c'$ and $c'_T$ can be allowed to be non-positive. For any $x$, $v$, $\xi$ and $\zeta \in \mathbb{R}^n$, we assume that $l(x,v)$ and $h(x)$ are twice differentiable and the following hold:
\begin{align}
\textbf{A(i)}& \h{35pt} |l(x,v)|\leq c_{l}\pig(1+|x|^{2}+|v|^{2}\pig)\h{10pt} \text{and}\h{10pt} 
|l_{x}(x,v)|,|l_{v}(x,v)|\leq c_{l}\pig(1+|x|^{2}+|v|^{2}\pigr)^{\frac{1}{2}};
\label{assumption, bdd of l, lx, lv} \\
\textbf{A(ii)}& \h{115pt}  |l_{xx}(x,v)|
\h{1pt},\h{10pt} |l_{vx}(x,v)|
\h{1pt},\h{10pt}|l_{vv}(x,v)|\leq c_{l};
\label{assumption, bdd of lxx, lvx, lvv}\\
\textbf{A(iii)}& \h{60pt} |h(x)|\leq c_{h}\pig(1+|x|^{2}\pig)
\h{1pt},\h{10pt} |h_{x}(x)|\leq c_{h}\pig(1+|x|^{2}\pig)^{\frac{1}{2}}
\h{1pt},\h{10pt} |h_{xx}(x)|\leq c_{h};
\label{assumption, bdd of h, hx, hxx}\\
\textbf{A(iv)}& \h{95pt} l_{xx}(x,v),l_{vx}(x,v),l_{vv}(x,v),h_{xx}(x)\:\text{are continuous};
\label{assumption, cts of lxx, lvx, lvv, hxx} \\
\textbf{A(v)}& \h{70pt} l_{xx}(x,v)\xi \cdot \xi
+2l_{vx}(x,v)\zeta \cdot\xi
+l_{vv}(x,v)\zeta \cdot \zeta 
\geq\lambda|\zeta|^{2}-c'_{l}|\xi|^{2} ; 
\label{assumption, convexity of l} \\
\textbf{A(vi)}& \h{160pt} h_{xx}\xi\cdot \xi\geq-c'_{h}|\xi|^{2};  \label{assumption, convexity of h}
\end{align}
The subscript notation, for example, $l_{xv}$ represents the second-order derivative $\p_v\pig[\p_xl(x,v)\pig]$. {\color{black}We describe next the assumptions on the functionals $m \longmapsto F(m)$
and $m \longmapsto F_T(m)$ on $\mathcal{P}_{2}(\mathbb{R}^n)$. We assume that, for any $m \in \mathcal{P}_2(\mathbb{R}^n)$, 
\begin{equation}
|F( m)|\leq c \left(1+ \int_{\mathbb{R}^n}|x|^2 dm(x) \right)
\h{1pt},\h{10pt}
|F_{T}(  m)|\leq c_{T} \left(1+ \int_{\mathbb{R}^n}|x|^2 dm(x) \right),
\label{assumption, bdd of F F_T, no lift}
\end{equation}
where $c$ and $c_T$ are independent of $m$. For any $m \in \mathcal{P}_2(\mathbb{R}^n)$, $\widetilde{x} \in \mathbb{R}^n$ and $x \in \mathbb{R}^n$, the functionals also satisfy
\begin{align}
\textbf{b(i)}& \h{70pt} 
\left|\diff_x \dfrac{dF}{d\nu}(m)(x)\right|\leq \dfrac{c}{\sqrt{2}}(1+|x|)
\h{1pt},\h{10pt}
\left|\diff_x \dfrac{dF_T}{d\nu}(m)(x)\right|\leq \dfrac{c_T}{\sqrt{2}}(1+|x|); 
\label{assumption, bdd of DF, DF_T, no lift}\\
\textbf{b(ii)}& \h{5pt} 
\mbox{\fontsize{10.3}{10}\selectfont\( \left|\diff^2_x \dfrac{dF}{d\nu}(m)(x)\right|\leq \dfrac{c}{2}\h{1pt},\h{5pt}
\left|\diff_x \diff_{\tilde{x}}\dfrac{d^{\h{.5pt}2}\h{-.7pt}  F}{d\nu^{2}}(m)(\tilde{x},x)\right|
\leq \dfrac{c}{2}\h{1pt},\h{5pt}
\left|\diff^2_x \dfrac{dF_T}{d\nu}(m)(x)\right|\leq \dfrac{c_T}{2}\h{1pt},\h{5pt}
\left|\diff_x \diff_{\tilde{x}}\dfrac{d^{\h{.5pt}2}\h{-.7pt}  F_T}{d\nu^{2}}(m)(\tilde{x},x)\right|
\leq \dfrac{c_T}{2}\)};
\label{assumption, bdd of D^2F, D^2F_T, no lift}\\
\textbf{b(iii)}& \h{5pt} 
\diff^2_x \dfrac{dF}{d\nu}(m)(x),\:
\diff_x \diff_{\tilde{x}}\dfrac{d^{\h{.5pt}2}\h{-.7pt}  F}{d\nu^{2}}(m)(x,\tilde{x})
\:\text{are continuous in $(m,x)$ and $(m,x,\tilde{x})$, respectively};
\label{assumption, cts of D^2F, no lift}\\
\textbf{b(iv)}& \h{5pt} 
\diff^2_x \dfrac{dF_T}{d\nu}(m)(x),\:
\diff_x \diff_{\tilde{x}}\dfrac{d^{\h{.5pt}2}\h{-.7pt}  F_T}{d\nu^{2}}(m)(x,\tilde{x})
\:\text{are continuous in $(m,x)$ and $(m,x,\tilde{x})$, respectively};
\label{assumption, cts of D^2F_T, no lift} \\
\textbf{b(v)$^*$} &  \h{80pt} 
\diff^2_x \dfrac{dF}{d\nu}(m)(x)x\cdot x\geq -c'|x|^2\h{1pt},\h{5pt}
\diff_x \diff_{\tilde{x}}\dfrac{d^{\h{.5pt}2}\h{-.7pt}  F}{d\nu^{2}}(m)
(x,\widetilde{x})
x \cdot \widetilde{x}
\geq 0,\nonumber\\
& \h{80pt}
\diff^2_x \dfrac{dF_T}{d\nu}(m)(x)x\cdot x\geq -c'_T|x|^2\h{1pt},\h{5pt}
\diff_x \diff_{\tilde{x}}\dfrac{d^{\h{.5pt}2}\h{-.7pt}  F_T}{d\nu^{2}}(m)
(x,\widetilde{x})
x \cdot \widetilde{x}
\geq 0.
\label{assumption, convexity of D^2F, D^2F_T, no lift}\\
&\h{-20pt}\textup{or}\nonumber\\
\textbf{b(v)$^\dagger$} &  \h{80pt} 
\diff^2_x \dfrac{dF}{d\nu}(m)(x)x\cdot x+
\diff_x \diff_{\tilde{x}}\dfrac{d^{\h{.5pt}2}\h{-.7pt}  F}{d\nu^{2}}(m)
(x,\widetilde{x})
x \cdot \widetilde{x}
\geq  -c'|x|^2,\nonumber\\
& \h{80pt}
\diff^2_x \dfrac{dF_T}{d\nu}(m)(x)x\cdot x
+\diff_x \diff_{\tilde{x}}\dfrac{d^{\h{.5pt}2}\h{-.7pt}  F_T}{d\nu^{2}}(m)
(x,\widetilde{x})
x \cdot \widetilde{x}
\geq -c'_T|x|^2.
\label{assumption, convexity of D^2F, D^2F_T, no lift 2}
\end{align}
\begin{rem}
{\color{black}Although mean field control problems and mean field games are different in nature, they share some common structures in their master equations. We compare our assumptions with two commonly used monotonicity conditions in the literature dealing with master equations in mean field games, namely, the Lasry-Lions monotonicity ((2.5) in \cite{CDLL19}) and the displacement monotonicity (Assumptions 3.2, 3.5 in \cite{GMMZ22}). The corresponding linear functional derivatives $\dfrac{dF}{d\nu}(m)(x)$ and $\dfrac{dF_T}{d\nu}(m)(x)$ of the running cost functional $F$ and the terminal cost functional $F_T$ in our formulation are equivalent to $F(x,m)$ and $G(x,m)$ introduced on page 20 in \cite{CDLL19} respectively, in terms of the master equation (see the respective master equations in \eqref{eq:7-5} in this work and (2.6) in \cite{CDLL19}). The corresponding Lasry-Lions monotonicity if formulated in our setting is equivalent to the following condition (see Remark 2.4 in \cite{GMMZ22}):

\begin{equation}
LLM_{\textup{modified}}:=\mathbb{E} \mathbb{\widetilde{E}}\Big[
\diff_x\diff_{\tilde{x}}\dfrac{d^2F}{d\nu^2}(X \otimes m)(X,\widetilde{X})\widetilde{Y}\cdot Y
  \Big] \geq 0,
  \label{LLM}
\end{equation}for any $X, Y \in \mathcal{H}_m$, where $(\widetilde{X},\widetilde{Y})$ is an independent copy of $(X,Y)$. Therefore, Assumption \textup{\bf b(v)$^*$}'s \eqref{assumption, convexity of D^2F, D^2F_T, no lift} is consistent with the modified Lasry-Lions monotonicity \eqref{LLM}. For the displacement monotonicity, we have to first define the Hamiltonian by
\begin{equation}
\widetilde{H}(x,m,p):=l\pig(x,u(x,p)\pig)+u(x,p)\cdot p + \dfrac{d F}{d\nu}(m)(x),
\label{def. ham}
\end{equation}
where $u(x,p)$ solves the first order condition $l_v\pig(x,u(x,p)\pig)+p=0$ (see Lemma \ref{lem G derivative of J} and Proposition \ref{prop eq. condition of optmality}). The corresponding displacement monotonicity if formulated in our setting is equivalent to the following condition (see Remark 2.4 in \cite{GMMZ22}):
\begin{align}
DM_{\textup{modified}}:&=\int_{\mathbb{R}^n}\int_{\mathbb{R}^n} 
\left\langle
\nabla_x\nabla_{\tilde{x}}\dfrac{d}{d\nu}\widetilde{H}(x,m,\varphi(x))(\widetilde{x})v(\widetilde{x})
+\nabla_{xx}\widetilde{H}(x,m,\varphi(x))v(x),v(x)
\right\rangle_{\mathbb{R}^n}dm(x)dm(\widetilde{x})\geq 0,
\label{DM}
\end{align}
for any $m \in \mathcal{P}_2(\mathbb{R}^n)$, $\varphi \in C^1$ and $v \in L^2_m$. Under our current setting, we substitute \eqref{def. ham} into to \eqref{DM} to see that
\fontsize{9.5pt}{11pt}\begin{align*}
&DM_{\textup{modified}}\\
&=\int_{\mathbb{R}^n}\int_{\mathbb{R}^n} 
\bigg\langle
\nabla_x\nabla_{\tilde{x}}\dfrac{d^2F}{d\nu^2}(m)(x,\widetilde{x})v(\widetilde{x})\\
&\h{55pt}+\left[l_{xx}\pig(x,u(x,p)\pig)+l_{xv}\pig(x,u(x,p)\pig)\nabla_x u(x,p)
+\nabla_{x}^2\dfrac{d F}{d\nu}(m)(x)\right]v(x),v(x)
\bigg\rangle_{\mathbb{R}^n}dm(x)dm(\widetilde{x})\\
&=\int_{\mathbb{R}^n}\int_{\mathbb{R}^n} 
\bigg\langle
\nabla_x\nabla_{\tilde{x}}\dfrac{d^2F}{d\nu^2}(m)(x,\widetilde{x})v(\widetilde{x})\\
&\h{55pt}+\left[l_{xx}\pig(x,u(x,p)\pig)-l_{xv}\pig(x,u(x,p)\pig)\pig[l_{xv}\pig(x,u(x,p)\pig)\pigr]^{-1}
l_{vx}\pig(x,u(x,p)\pig)
+\nabla_{x}^2\dfrac{d F}{d\nu}(m)(x)\right]v(x),v(x)
\bigg\rangle_{\mathbb{R}^n}dm(x)dm(\widetilde{x}).
\end{align*}\normalsize
With the aid of Assumptions {\bf A(v)}'s \eqref{assumption, convexity of l} and \textup{\bf b(v)$^*$}'s \eqref{assumption, convexity of D^2F, D^2F_T, no lift} (or \textup{\bf b(v)$^\dagger$}'s \eqref{assumption, convexity of D^2F, D^2F_T, no lift 2}), we obtain that
$$ DM_{\textup{modified}} \geq -(c'+c'_l)\int_{\mathbb{R}^n}|v(x)|^2 dm(x).$$
At the first glance, our assumptions appear to be weaker than the displacement monotonicity, but if we aim to solve the problem globally, the condition in \eqref{uniqueness condition} suggests that we need to assume $-(c'+c'_l) \geq 0$. Besides, the Hamiltonian in \cite{GMMZ22} is required to be $C^5$ and its third-order derivatives are uniformly bounded (translated to our setting this requirement is equivalent that $l(x,u)$, $\dfrac{dF}{d\nu}(m)(x)$ are $C^5$ and their third-order derivatives are uniformly bounded), which enforces the cost functionals to have linear growth at the most. In contrast, our assumptions allow models beyond linear quadratic, for example, $F(m) := \int_{\mathbb{R}}\pig( x^2 + e^{-x^2} \pig)dm(x)$ in Introduction. We also remark that the displacement monotonicity does not imply Lasry-Lions monotonicity, and vice versa, but they are overlapped in some situations, see more discussion in the work of \cite{GM22mono}.
}

\label{rem LL mono}
\end{rem}
\begin{rem}
The claims of this work remain valid if we assume $F$ and $F_T$ depend on $x$ in addition to the measure argument $m$, that is $F=F(x,m):\mathbb{R}^n \times \mathcal{P}_2(\mathbb{R}^n) \to \mathbb{R}$ and $F_T=F_T(x,m):\mathbb{R}^n \times \mathcal{P}_2(\mathbb{R}^n)\to \mathbb{R}$, together with certain standard assumptions on the  regularity in the spatial argument. It does not bring in any technical difficulty but here we assume $F=F(m)$ for the sake of convenience.
\end{rem}
\begin{rem}
We provide the corresponding properties on the functionals $X\longmapsto F(X  \otimes  m)$
and $X\longmapsto F_{T}(X  \otimes  m)$ on $\mathcal{H}_{m}$ based on Assumptions \textbf{b(i)} to \textbf{b(v)}. Note that the below assumptions are necessary but not sufficient for Assumptions \textbf{b(i)} to \textbf{b(v)}, in some sense, the followings are weaker assumptions. For any $m \in \mathcal{P}_2(\mathbb{R}^n)$ and $ X \in \mathcal{H}_m$, 
\begin{equation}
|F(X  \otimes  m)|\leq c\pig(1+\|X\|^{2}_{\mathcal{H}_m}\pig)
\h{1pt},\h{10pt}
|F_{T}(X  \otimes  m)|\leq c_{T}\pig(1+\|X\|^{2}_{\mathcal{H}_m}\pig),
\label{assumption, bdd of F F_T}
\end{equation}
where $c$ and $c_T$ are the same as in \eqref{assumption, bdd of F F_T, no lift}. The functionals have Fr\'echet derivatives up to second order on $\mathcal{H}_{m}$ such that for any $X \in \mathcal{H}_m$,
\begin{align}
\textup{\textbf{B(i)}}& \h{30pt} \|D_{X}F(X  \otimes  m)\|_{\mathcal{H}_m}
\leq c\pig(1+\|X\|_{\mathcal{H}_m}\pig)
\h{1pt},\h{10pt}
\|D_{X}F_{T}(X  \otimes  m)\|_{\mathcal{H}_m}\leq c_{T}\pig(1+\|X\|_{\mathcal{H}_m}\pig); 
\label{assumption, bdd of DF, DF_T}\\
\textup{\textbf{B(ii)}}& \h{100pt}  \pigl\|D_{X}^{2}F(X  \otimes  m)\pigr\|_{\mathcal{H}_m}\leq c
\h{1pt},\h{10pt}
\pigl\|D_{X}^{2}F_{T}(X  \otimes  m)\pigr\|_{\mathcal{H}_m}\leq c_{T};
\label{assumption, bdd of D^2F, D^2F_T}\\
\textup{\textbf{B(iii)}}& \h{5pt} X\longmapsto D_{X}^{2}F(X  \otimes  m)(Y)\:\text{is continuous as a function from}\:\mathcal{H}_{m}\:\text{to }\mathcal{H}_{m},\text{ for each fixed}\:Y;
\label{assumption, cts of D^2F}\\
\textup{\textbf{B(iv)}}& \h{5pt} X\longmapsto D_{X}^{2}F_{T}(X  \otimes  m)(Y)\:\text{is continuous as a function from}\:\mathcal{H}_{m}\:\text{to }\mathcal{H}_{m},\text{ for each fixed }Y;
\label{assumption, cts of D^2F_T} \\
\textup{\textbf{B(v)}}& \h{5pt}\textup{\textbf{(a)}} \h{5pt}\text{if there is a sequence}\:\big\{X_{k}\big\}_{k \in \mathbb{N}} \text{ converging to $X$ in }\mathcal{H}_{m}, \text{ and the sequence $\{Y_{k}\}_{k \in \mathbb{N}}$ is }\nonumber\\
& \h{22pt} \text{ bounded in norm of $\mathcal{H}_m$, then }\nonumber\\
&\h{35pt}D_{X}^{2}F(X_{k}  \otimes  m)(Y_{k})-D_{X}^{2}F(X  \otimes  m)(Y_{k})\longrightarrow0,\text{ in }L^{1}(\Omega,\mathcal{A},\mathbb{P};L_{m}^{1}(\mathbb{R}^{n};\mathbb{R}^{n}))
, \h{5pt} \text{and }\footnotemark
\label{assumption, properties of D^2F, D^2F_T}\\
& \h{42pt} D_{X}^{2}F_{T}(X_{k}  \otimes  m)(Y_{k})-D_{X}^{2}F_{T}(X  \otimes  m)(Y_{k})\longrightarrow0,\text{ in }L^{1}(\Omega,\mathcal{A},\mathbb{P};L_{m}^{1}(\mathbb{R}^{n};\mathbb{R}^{n});
\nonumber\\
\textbf{\textcolor{white}{B(v)}}& \h{5pt}\textup{\textbf{(b)}} \h{10pt} 
\pigl\langle D_{X}^{2}F(X  \otimes  m)(Y),Y \pigr\rangle_{\mathcal{H}_m} 
\geq-c'\|Y\|^{2}_{\mathcal{H}_m}\h{10pt}
\text{ and }\h{10pt}
\pigl\langle D_{X}^{2}F_T(X  \otimes  m)(Y),Y \pigr\rangle_{\mathcal{H}_m}\geq-c'_{T}\|Y\|^{2}_{\mathcal{H}_m},
\label{assumption, convexity of D^2F, D^2F_T}
\mycomment{
\textbf{B(vi)}& \h{5pt}
\textcolor{red}{\text{the second-order Fr\'echet derivatives of $F(X \otimes m)$ and $F_T(X \otimes m)$ are uniformly Lipschitz in $\mathcal{H}_m$}.} 
\label{assumption, lip of 3rd derivative of F and FT}}
\end{align}

\end{rem}

\addtocounter{footnote}{0}
\footnotetext{It means that 
$\mathbb{E}\left[
\int_{\mathbb{R}^n}
\big|D_{X}^{2}F(X_{k}  \otimes  m)(Y_{k})-D_{X}^{2}F(X  \otimes  m)(Y_{k})\big|dm(x)\right] \longrightarrow 0$ as $k \to 0$.}
\vspace{-10pt}
\mycomment{
\begin{rem}
Here we provide a set of sufficient assumptions under the setting of $\mathcal{P}_2(\mathbb{R}^n)$, which is sufficient to ensure (\ref{assumption, bdd of F F_T})-(\ref{assumption, convexity of D^2F, D^2F_T}) to hold. Let $F(m)$, $F_T(m) : \mathcal{P}_2(\mathbb{R}^n) \longmapsto \mathbb{R}$ be the running and terminal cost functionals respectively. For brevity, only the assumptions for $F$ are presented, and similar conditions hold when we replace $F$ by $F_T$. First, it is sufficient to assume that
$$ |F(m)|\leq c \left(1+ \int_{\mathbb{R}^n}|x|^2 dm(x) \right),$$
for (\ref{assumption, bdd of F F_T}) to be valid. By using (\ref{eq:2-162}), the boundedness requirement in (\ref{assumption, bdd of DF, DF_T}) holds if for any $x\in \mathbb{R}^n$,
$$\left|\diff_x \dfrac{dF}{d\nu}(m)(x)\right|\leq c(1+|x|).$$
To fulfil (\ref{assumption, bdd of D^2F, D^2F_T}) and (\ref{assumption, bdd of mean of int of D^2F}), by (\ref{eq:2-232}), it is sufficient to assume that
\begin{equation}
\left|\diff^2_x \dfrac{dF}{d\nu}(m)(x)\right|\leq c\h{1pt},\h{10pt}
\left|\diff_x \diff_{\tilde{x}}\dfrac{d^{\h{.5pt}2}\h{-.7pt}  F}{d\nu^{2}}(m)(\tilde{x},x)\right|
\leq c\h{1pt}, \h{15pt} \text{ for any $x$, $\tilde{x} \in \mathbb{R}^n$.}
\label{eq:3-32}
\end{equation}
The continuity property in (\ref{assumption, cts of D^2F}) can be obtained by assuming that $
\diff^2_x \frac{dF}{d\nu}(m)(x)$ and
$\diff_x \diff_{\tilde{x}}\frac{d^{\h{.5pt}2}\h{-.7pt}  F}{d\nu^{2}}(m)(x,\tilde{x})$
are continuous in $(m,x)$ and $(m,x,\tilde{x})$, respectively, due to (\ref{eq:2-232}); the continuity of $
\diff^2_x \frac{dF}{d\nu}(m)(x)$ and
$\diff_x \diff_{\tilde{x}}\frac{d^{\h{.5pt}2}\h{-.7pt}  F}{d\nu^{2}}(m)(x,\tilde{x})$ together with the bounds in (\ref{eq:3-32}) imply the regularity requirement of (\ref{assumption, properties of D^2F, D^2F_T}).
\mycomment{
it is sufficient to assume that for any $\{\mu_k\}_{k \in \mathbb{N}} \subset \mathcal{P}_2(\mathbb{R}^n)$, $\{X_{k}\}_{k \in \mathbb{N}} \subset \mathcal{H}_m$ and $\mu \in \mathcal{P}_2(\mathbb{R}^n)$ and bounded $\{Y_{k}\}_{k \in \mathbb{N}} \subset \mathcal{H}_m$ such that $W_2(\mu_k,\mu) \longrightarrow 0$ and $X_k \to X$ in $\mathcal{H}_m$, then 
\begin{align*}
\mathbb{E}\int_{\mathbb{R}^n}\Bigg|
\diff^2_x \dfrac{dF}{d\nu}(\mu_k)(X_{k,x})Y_{k,x}
&+\widetilde{\mathbb{E}}\left[\int_{\mathbb{R}^{n}}\diff_x \diff_{\tilde{x}}\dfrac{d^{\h{.6pt}2}F}{d\nu^{2}}(\mu_k)(X_{k,x},\widetilde{X}_{k,\tilde{x}})\widetilde{Y}_{k,\tilde{x}}dm(\tilde{x})\right]\\
&- \diff^2_x \dfrac{dF}{d\nu}(\mu)(X_{k,x})Y_{k,x}
+\widetilde{\mathbb{E}}\left[\int_{\mathbb{R}^{n}}\diff_x \diff_{\tilde{x}}\dfrac{d^{\h{.6pt}2}F}{d\nu^{2}}(\mu)(X_{k,x},\widetilde{X}_{k,\tilde{x}})\widetilde{Y}_{k,\tilde{x}}dm(\tilde{x})\right]\Bigg|dm(x)
\longrightarrow0.
\end{align*}}
While (\ref{assumption, convexity of D^2F, D^2F_T}) can be fulfilled by assuming  that, for any $x$, $\xi \in \mathbb{R}^n$ and $m \in \mathcal{P}_2(\mathbb{R}^n)$, 
\mycomment{
\begin{equation}
\mathbb{E}\left[\displaystyle\int_{\mathbb{R}^n}\diff^2_x \dfrac{dF}{d\nu}(X \otimes  m)(X_x)X_x\cdot X_xdm(x)\right]\geq -c'\|X\|_{\mathcal{H}_m}^2\h{10pt} \text{and}   
\label{LL mono1}
\end{equation}

\begin{equation}
\widetilde{\mathbb{E}}\mathbb{E}
\left[\int_{\mathbb{R}^n}\int_{\mathbb{R}^n}\diff_x \diff_{\tilde{x}}\dfrac{d^{\h{.5pt}2}\h{-.7pt}  F}{d\nu^{2}}(X \otimes m)
(X_x,\widetilde{X}_{\tilde{x}})
X_x \cdot \widetilde{X}_{\tilde{x}}
dm(x)dm(\widetilde{x})\right]
\geq 0.
\label{LL mono2}
\end{equation}}

\begin{equation}
\diff^2_x \dfrac{dF}{d\nu}(m)(\xi)\xi\cdot \xi\geq -c'|\xi|^2\h{10pt} \text{and}
\label{LL mono1}
\end{equation}

\begin{equation}
\widetilde{\mathbb{E}}\mathbb{E}
\left[\int_{\mathbb{R}^n}\int_{\mathbb{R}^n}\diff_x \diff_{\tilde{x}}\dfrac{d^{\h{.5pt}2}\h{-.7pt}  F}{d\nu^{2}}(X \otimes m)
(X_x,\widetilde{X}_{\tilde{x}})
X_x \cdot \widetilde{X}_{\tilde{x}}
dm(x)dm(\widetilde{x})\right]
\geq 0,
\label{LL mono2}
\end{equation}

\end{rem}
}

\mycomment{
\begin{rem}\label{rem LL mono}
Suppose $F(m) = \displaystyle\int_{\mathbb{R}^n} f(x,m) dm(x)$ with $f : \mathbb{R}^n \times \mathcal{P}_2(\mathbb{R}^n) \longrightarrow \mathbb{R}$ having derivatives $\diff_x f(x,m)$ and $\diff_x^2 f (x,m)$, and linear functional derivative $\dfrac{d\h{.3pt}f}{d\nu}(x,m)(\widetilde{x})$ such that its derivatives $\diff_x \dfrac{d\h{.3pt}f}{d\nu}(x,m)(\widetilde{x})$ and $\diff_{\tilde{x}} \dfrac{d\h{.3pt}f}{d\nu}(x,m)(\widetilde{x})$ are all continuous in $x$ for each fixed $m \in \mathcal{P}_2(\mathbb{R}^n)$ and $\widetilde{x} \in \mathbb{R}^n$. Define $\phi(x,m):=\dfrac{d F}{d\nu}(m)(x) = f(x,m) + \displaystyle\int_{\mathbb{R}^n} \dfrac{d\h{.3pt}f}{d\nu}(\widetilde{x},m)(x)dm(\widetilde{x})$. By noting (\ref{eq:2-231}), Assumption \textbf{B(v)(b)}'s (\ref{assumption, convexity of D^2F, D^2F_T}) is equivalent to
\begin{align*}
\pigl\langle D_{X}^{2}F(X  \otimes  m)(Y_x),Y_x \pigr\rangle_{\mathcal{H}_m}
=\:&\mathbb{E}\left[ \int_{\mathbb{R}^n}
\diff^2_x \dfrac{dF}{d\nu}(X  \otimes  m)(X_{x})Y_{x}\cdot Y_{x} dm(x)\right]\\
&+\mathbb{E}\widetilde{\mathbb{E}}\left[ \int_{\mathbb{R}^n}\int_{\mathbb{R}^{n}}\diff_x \diff_{\tilde{x}}\dfrac{d^{\h{.5pt}2}\h{-.7pt}  F}{d\nu^{2}}(X  \otimes  m)(X_{x},\widetilde{X}_{\tilde{x}})\widetilde{Y}_{\tilde{x}}
\cdot Y_x dm(\tilde{x})dm(x)
\right]\\
=\:&\mathbb{E}\left[ \int_{\mathbb{R}^n}
\diff^2_x \phi(X_{x},X  \otimes  m)Y_{x}\cdot Y_{x} dm(x)\right]\\
&+\mathbb{E}\widetilde{\mathbb{E}}\left[ \int_{\mathbb{R}^n}\int_{\mathbb{R}^{n}}
\diff_x \diff_{\tilde{x}}\dfrac{d \phi}{d\nu}(X_x,X  \otimes  m)(\widetilde{X}_{\tilde{x}})\widetilde{Y}_{\tilde{x}} \cdot
Y_x dm(\tilde{x})dm(x)
\right]\\
\geq\:& -c' \|Y\|^2_{\mathcal{H}_m},
\end{align*}
as long as $\diff^2_x\phi(x,m)y\cdot y \geq -c'|y|^2$ for all $m \in \mathcal{P}_2(\mathbb{R}^n)$, $x \in \mathbb{R}^n$, $y \in \mathbb{R}^n$, and 
\begin{equation}
\mathbb{E}\widetilde{\mathbb{E}}\left[ \int_{\mathbb{R}^n}\int_{\mathbb{R}^{n}}
\diff_x \diff_{\tilde{x}}\dfrac{d \phi}{d\nu}(X_x,X  \otimes  m)(\widetilde{X}_{\tilde{x}})\widetilde{Y}_{\tilde{x}} \cdot
Y_x dm(\tilde{x})dm(x)
\right]\geq 0.
\label{LL mono}
\end{equation} 
The condition in (\ref{LL mono}) is equivalent to Lasry-Lions monotonicity assumption (2.5) in \cite{CDLL19} now applying to $\phi(x,m)$.
\end{rem}}

}

\subsection{Differentiability and Convexity of Objective Function}
We now study the {\color{black} differentiability and convexity} of the objective functional  $v_{tX} \longmapsto J_{tX}(v_{tX})$ as one on the Hilbert space $L_{\mathcal{W}_{tX}}^{2}(t,T;\mathcal{H}_{m})$, we have 
\begin{lem}
\label{lem G derivative of J}Under the assumptions (\ref{assumption, bdd of l, lx, lv}), (\ref{assumption, bdd of h, hx, hxx}) 
and (\ref{assumption, bdd of DF, DF_T}), for any $v_{tX} \in L^2_{\mathcal{W}_{tX}}(t,T;\mathcal{H}_{m})$, the functional $J_{tX}(v_{tX})$ has a G\^ateaux derivative, given by:

\begin{equation}
D_{v}J_{tX}(v_{tX})(s)=l_{v}(\mathbb{X}_{tX}(s),v_{tX}(s))+\mathbb{Z}_{tX}(s)
\h{1pt},\h{10pt}  \text{for $s \in [t,T]$,} 
\label{eq:3-5}
\end{equation}
where $\mathbb{Z}_{tX}(s)\in L_{\mathcal{W}_{tX}}^{2}(t,T;\mathcal{H}_{m})$, being the solution to the backward stochastic differential equation (BSDE)
\begin{equation}
\left\{
\begin{aligned}
-d \h{1pt} \mathbb{Z}_{tX}(s)
&=\Big[l_{x}\pig(\mathbb{X}_{tX}(s),v_{tX}(s)\pig)+D_{X}F(\mathbb{X}_{tX}(s)  \otimes  m)\Big]ds
-\sum_{j=1}^{n}\mathbbm{r}_{tX,j}(s)dw_{j}(s)\h{1pt},\h{10pt}
\text{ for $s\in [t,T]$;}\\
\mathbb{Z}_{tX}(T) &= h_x\big(\mathbb{X}_{tX}(T)\big) 
+D_X F_T\big(\mathbb{X}_{tX}(T)  \otimes  m\big), 
\end{aligned}\right.
\label{def, backward SDE}
\end{equation}
for the adapted process $\mathbbm{r}_{tX,j}(s)\in L_{\mathcal{W}_{tX}}^{2}(t,T;\mathcal{H}_{m})$ with $j=1,2,\ldots,n$. 
\end{lem}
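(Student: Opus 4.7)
\medskip

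\noindent\textbf{Proof proposal.} The plan is to compute the G\^ateaux derivative of $J_{tX}$ in the standard way, by perturbing the control, identifying the first variation of the state, and then using an It\^o product-rule duality with the BSDE \eqref{def, backward SDE} to remove the state variation and package everything against the test direction.

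First, fix a test direction $\widetilde{v}_{tX}\in L^2_{\mathcal{W}_{tX}}(t,T;\mathcal{H}_m)$ and set $v^{\varepsilon}_{tX}:=v_{tX}+\varepsilon\widetilde{v}_{tX}$. From \eqref{def. X_tX, constant sigma} the perturbed state satisfies $\mathbb{X}^{\varepsilon}_{tX}(s)-\mathbb{X}_{tX}(s)=\varepsilon\int_t^s\widetilde{v}_{tX}(\tau)d\tau$, so the first variation
\[
Y_{tX}(s):=\int_t^s\widetilde{v}_{tX}(\tau)d\tau
\]
is a purely absolutely continuous process in $L^2_{\mathcal{W}_{tX}}(t,T;\mathcal{H}_m)$ with $Y_{tX}(t)=0$ and $dY_{tX}(s)=\widetilde{v}_{tX}(s)ds$ (no Wiener part). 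I would then differentiate each of the four terms of \eqref{eq:3-503} in $\varepsilon$ at $\varepsilon=0$, using Taylor expansion term by term: the uniform bounds on $l_x,l_v,h_x$ from \eqref{assumption, bdd of l, lx, lv}, \eqref{assumption, bdd of h, hx, hxx} and the boundedness of $D_XF,D_XF_T$ from \eqref{assumption, bdd of DF, DF_T} give the integrable dominations needed to exchange $\lim_{\varepsilon\to 0}$ with $\mathbb{E}$ and with time integration (together with \eqref{d_theta F = D_X F} applied to the lifted functionals $F(\,\cdot\,\otimes m)$ and $F_T(\,\cdot\,\otimes m)$). This yields
\begin{equation*}
\begin{aligned}
\dfrac{d}{d\varepsilon}\bigg|_{\varepsilon=0}J_{tX}(v^{\varepsilon}_{tX})
=\;&\int_t^T\pigl\langle l_x(\mathbb{X}_{tX}(s),v_{tX}(s))+D_XF(\mathbb{X}_{tX}(s)\otimes m),Y_{tX}(s)\pigr\rangle_{\mathcal{H}_m}ds\\
&+\int_t^T\pigl\langle l_v(\mathbb{X}_{tX}(s),v_{tX}(s)),\widetilde{v}_{tX}(s)\pigr\rangle_{\mathcal{H}_m}ds\\
&+\pigl\langle h_x(\mathbb{X}_{tX}(T))+D_XF_T(\mathbb{X}_{tX}(T)\otimes m),Y_{tX}(T)\pigr\rangle_{\mathcal{H}_m}.
\end{aligned}
\end{equation*}

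Next, I would invoke the standard theory of BSDEs on the parameterised probability space $(\mathbb{R}^n\times\Omega,\mathcal{B}\otimes\mathcal{A},m\otimes\mathbb{P})$ to obtain a unique adapted pair $(\mathbb{Z}_{tX},\mathbbm{r}_{tX})$ solving \eqref{def, backward SDE}: the driver $l_x(\mathbb{X}_{tX}(s),v_{tX}(s))+D_XF(\mathbb{X}_{tX}(s)\otimes m)$ lies in $L^2_{\mathcal{W}_{tX}}(t,T;\mathcal{H}_m)$ thanks to \eqref{assumption, bdd of l, lx, lv} and \eqref{assumption, bdd of DF, DF_T}, and the terminal condition is square-integrable by \eqref{assumption, bdd of h, hx, hxx} and \eqref{assumption, bdd of DF, DF_T}; the fibrewise formulation from \eqref{eq:3-106}--\eqref{eq:3-107} then reduces the problem to a classical BSDE indexed by $x\in\mathbb{R}^n$.

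The key computation is an It\^o product rule applied to $\mathbb{Z}_{tX}(s)\cdot Y_{tX}(s)$. Since $Y_{tX}$ is of bounded variation, the cross-variation with the Wiener integral $\sum_j\mathbbm{r}_{tX,j}dw_j$ vanishes, so integrating from $t$ to $T$, taking expectation on $\Omega$ and then $dm(x)$ gives
\begin{equation*}
\begin{aligned}
\pigl\langle\mathbb{Z}_{tX}(T),Y_{tX}(T)\pigr\rangle_{\mathcal{H}_m}
=\;&-\int_t^T\pigl\langle l_x(\mathbb{X}_{tX}(s),v_{tX}(s))+D_XF(\mathbb{X}_{tX}(s)\otimes m),Y_{tX}(s)\pigr\rangle_{\mathcal{H}_m}ds\\
&+\int_t^T\pigl\langle\mathbb{Z}_{tX}(s),\widetilde{v}_{tX}(s)\pigr\rangle_{\mathcal{H}_m}ds.
\end{aligned}
\end{equation*}
Substituting $\mathbb{Z}_{tX}(T)=h_x(\mathbb{X}_{tX}(T))+D_XF_T(\mathbb{X}_{tX}(T)\otimes m)$ into the left-hand side and combining with the display for $\frac{d}{d\varepsilon}J_{tX}(v^\varepsilon_{tX})|_{\varepsilon=0}$ collapses all terms containing $Y_{tX}$, leaving exactly $\int_t^T\langle l_v(\mathbb{X}_{tX}(s),v_{tX}(s))+\mathbb{Z}_{tX}(s),\widetilde{v}_{tX}(s)\rangle_{\mathcal{H}_m}ds$. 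Since $\widetilde{v}_{tX}$ was arbitrary in $L^2_{\mathcal{W}_{tX}}(t,T;\mathcal{H}_m)$, formula \eqref{eq:3-5} follows.

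The main obstacle I anticipate is the legitimacy of differentiating the lifted functional $\varepsilon\longmapsto F(\mathbb{X}^\varepsilon_{tX}(s)\otimes m)$ under the time integral and taking the G\^ateaux limit uniformly in $s$: one has to control $\|D_XF((\mathbb{X}_{tX}(s)+\theta\varepsilon Y_{tX}(s))\otimes m)\|_{\mathcal{H}_m}$ by an $L^2$-in-$s$ function independent of small $\varepsilon$, which is exactly what \eqref{assumption, bdd of DF, DF_T} delivers together with Gr\"onwall applied to $\mathbb{E}\|\mathbb{X}^\varepsilon_{tX}(s)\|_{\mathcal{H}_m}^2$; a secondary care point is to verify that the stochastic integral $\int_t^T Y_{tX}(s)\cdot\sum_j\mathbbm{r}_{tX,j}(s)dw_j(s)$ is a true martingale when taking expectation, which is immediate because $Y_{tX}$ is continuous and bounded in $\mathcal{H}_m$ on $[t,T]$ while $\mathbbm{r}_{tX,j}\in L^2_{\mathcal{W}_{tX}}(t,T;\mathcal{H}_m)$.
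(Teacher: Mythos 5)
Your proposal is correct and follows essentially the same route as the paper: compute the first variation of each term of $J_{tX}$ under the perturbation $v_{tX}+\varepsilon\widetilde{v}_{tX}$, then use the duality (integration by parts / It\^o product rule with the finite-variation process $Y_{tX}$) against the BSDE solution to collapse the terms involving the state variation. The only cosmetic difference is that the paper constructs $(\mathbb{Z}_{tX},\mathbbm{r}_{tX})$ explicitly by applying the martingale representation theorem to $\mathbb{E}(\Gamma_{t\xi}\mid\mathcal{W}_t^s)$ for the integrated driver $\Gamma_{tX}$, whereas you invoke standard linear BSDE existence theory on the fibred space — which, for a driver independent of the unknowns, amounts to the same argument.
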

Its proof can be found in \hyperref[{app, lem G derivative of J}]{Appendix}. We denote $\mathbbm{r}_{t X}(s):=
\pig( \mathbbm{r}_{t X,1}(s), \mathbbm{r}_{t X,2}(s),\ldots,
\mathbbm{r}_{t X,n}(s)\pig)$ the matrix having $\mathbbm{r}_{t X,j}(s)$ as its $j$-th column vector. We next state the existence and uniqueness results of the optimal control.
\begin{prop}[\bf \color{black}{Existence and Uniqueness}]
We assume (\ref{assumption, bdd of l, lx, lv}),
(\ref{assumption, bdd of lxx, lvx, lvv}),
(\ref{assumption, bdd of h, hx, hxx}), (\ref{assumption, convexity of l}), (\ref{assumption, convexity of h}), (\ref{assumption, bdd of F F_T}), (\ref{assumption, bdd of DF, DF_T}), (\ref{assumption, bdd of D^2F, D^2F_T}), (\ref{assumption, convexity of D^2F, D^2F_T}) and 
\begin{equation}
c_0:=\lambda
- (c'_T+c'_h)_+T
- \left(c'_{l}+c'\right)_+\dfrac{T^2}{2}>0,
\label{uniqueness condition}
\end{equation}
\color{black}{
then the functional $J_{tX}(v_{tX})$ is strictly convex in $v_{tX}$, and coercive in $v_{tX}$ in the sense that $J_{tX}(v_{tX})\longrightarrow\infty$ as $\int_{t}^{T}\|v_{tX}(s)\|^{2}_{\mathcal{H}_{m}}ds$ $\rightarrow\infty$, here $a_+:=\max\{a,0\}$ for any $a \in \mathbb{R}$. Consequently, there is a unique optimal control $u_{tX}(s)\in L_{\mathcal{W}_{tX}}^{2}(t,T;\mathcal{H}_{m})$ to the control problem that minimizes $J_{tX}(v_{tX})$ subject to $v_{tX}(s) \in L_{\mathcal{W}_{tX}}^{2}(t,T;\mathcal{H}_{m})$ and $\mathbb{X}_{tX}(s;v_{tX})$ satisfying (\ref{eq:3-501}).} 
\label{prop. convex of J}
\end{prop}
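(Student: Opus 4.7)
The plan is the direct method of the calculus of variations on the Hilbert space $L^2_{\mathcal{W}_{tX}}(t,T;\mathcal{H}_m)$: derive a uniform positive lower bound for the Hessian of $J_{tX}$, deduce strict convexity and coercivity, and then invoke weak compactness together with weak lower semi-continuity to obtain a unique minimiser.

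First, I exploit the fact that the state map $v_{tX}\longmapsto\mathbb{X}_{tX}(\cdot\,;v_{tX})$ is affine: by (\ref{def. X_tX, constant sigma}),
$$\mathbb{X}_{tX}(s;v_{tX}+\epsilon w_{tX})=\mathbb{X}_{tX}(s;v_{tX})+\epsilon\,Y_{tX}(s),\h{8pt}\text{where}\h{8pt} Y_{tX}(s):=\int_t^s w_{tX}(\tau)\,d\tau,$$
for any admissible direction $w_{tX}\in L^2_{\mathcal{W}_{tX}}(t,T;\mathcal{H}_m)$. Differentiating the expression for $D_v J_{tX}$ from Lemma~\ref{lem G derivative of J} once more in the direction $w_{tX}$ (the stochastic-integral parts of the BSDE for $\mathbb{Z}_{tX}$ drop out under the inner product since they are martingale increments), the resulting Hessian quadratic form equals
\begin{align*}
\pigl\langle D_v^2 J_{tX}(v_{tX})(w_{tX}),w_{tX}\pigr\rangle
=\,&\int_t^T\mathbb{E}\!\int_{\mathbb{R}^n}\!\pig[l_{xx}Y\cdot Y+2 l_{xv}\,Y\cdot w+l_{vv}\,w\cdot w\pig]dm\,ds\\
&+\int_t^T \pigl\langle D_X^2 F(\mathbb{X}_{tX}(s)\otimes m)(Y_{tX}(s)),Y_{tX}(s)\pigr\rangle_{\mathcal{H}_m}ds\\
&+\mathbb{E}\!\int_{\mathbb{R}^n}\!h_{xx}(\mathbb{X}_{tX}(T))Y_{tX}(T)\cdot Y_{tX}(T)\,dm\\
&+\pigl\langle D_X^2 F_T(\mathbb{X}_{tX}(T)\otimes m)(Y_{tX}(T)),Y_{tX}(T)\pigr\rangle_{\mathcal{H}_m}.
\end{align*}

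Next, I bound each summand from below using the convexity hypotheses: (\ref{assumption, convexity of l}) for the first line, (\ref{assumption, convexity of h}) for the third, and (\ref{assumption, convexity of D^2F, D^2F_T}) for the $F$- and $F_T$-terms. Combining them yields
$$\pigl\langle D_v^2 J_{tX}(v_{tX})(w_{tX}),w_{tX}\pigr\rangle\geq \lambda\!\!\int_t^T\!\!\|w_{tX}\|^2_{\mathcal{H}_m}ds-(c'_l+c')_+\!\!\int_t^T\!\!\|Y_{tX}(s)\|^2_{\mathcal{H}_m}ds-(c'_h+c'_T)_+\|Y_{tX}(T)\|^2_{\mathcal{H}_m}.$$
Jensen's inequality applied to $Y_{tX}(s)=\int_t^s w_{tX}(\tau)\,d\tau$ gives $\|Y_{tX}(s)\|^2_{\mathcal{H}_m}\leq(s-t)\!\int_t^s\!\|w_{tX}(\tau)\|^2_{\mathcal{H}_m}d\tau$; hence $\|Y_{tX}(T)\|^2_{\mathcal{H}_m}\leq T\!\int_t^T\!\|w_{tX}\|^2_{\mathcal{H}_m}ds$ and, by Fubini, $\int_t^T\!\|Y_{tX}(s)\|^2_{\mathcal{H}_m}ds\leq\frac{T^2}{2}\!\int_t^T\!\|w_{tX}\|^2_{\mathcal{H}_m}ds$. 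Substituting produces precisely
$$\pigl\langle D_v^2 J_{tX}(v_{tX})(w_{tX}),w_{tX}\pigr\rangle\geq c_0\!\int_t^T\!\|w_{tX}(s)\|^2_{\mathcal{H}_m}\,ds,$$
with $c_0>0$ defined in (\ref{uniqueness condition}). This uniform lower bound delivers strict convexity and, via a second-order Taylor expansion around $v_{tX}=0$ combined with Cauchy--Schwarz on the linear term $\pigl\langle D_v J_{tX}(0),v_{tX}\pigr\rangle$, coercivity of $J_{tX}$.

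To conclude, I apply the standard variational argument on the Hilbert space $L^2_{\mathcal{W}_{tX}}(t,T;\mathcal{H}_m)$: any minimising sequence $(v^k_{tX})$ is bounded by coercivity; Banach--Alaoglu extracts a weakly convergent subsequence with limit $u_{tX}$; weak lower semi-continuity of the continuous convex functional $J_{tX}$ gives $J_{tX}(u_{tX})\leq\liminf_k J_{tX}(v^k_{tX})=\inf J_{tX}$, so $u_{tX}$ is a minimiser, and strict convexity forces uniqueness. The chief obstacle is the rigorous justification of the second-variation formula above, specifically the differentiation of the measure-dependent terms $D_X F(\mathbb{X}_{tX}\otimes m)$ and $D_X F_T(\mathbb{X}_{tX}(T)\otimes m)$ along perturbations of $v_{tX}$; this requires Assumptions \textbf{B(ii)}--\textbf{B(v)(a)} together with a dominated-convergence argument parallel to the one used to obtain the first derivative in Lemma~\ref{lem G derivative of J}.
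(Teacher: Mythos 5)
Your proposal is correct and follows essentially the same route as the paper: the paper establishes the strong convexity constant $c_0$ by proving monotonicity of the first variation, $\int_t^T\langle D_vJ_{tX}(v^1)-D_vJ_{tX}(v^2),v^1-v^2\rangle_{\mathcal{H}_m}ds\geq c_0\int_t^T\|v^1-v^2\|^2_{\mathcal{H}_m}ds$, via the mean value theorem, which is the integrated form of your second-variation bound and uses exactly the same convexity hypotheses and the same Jensen/Cauchy--Schwarz estimates on $\|Y_{tX}(T)\|^2_{\mathcal{H}_m}$ and $\int_t^T\|Y_{tX}(s)\|^2_{\mathcal{H}_m}ds$. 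The coercivity and the concluding direct-method argument (bounded minimising sequence, weak compactness, weak lower semi-continuity of the closed convex sublevel sets, uniqueness from strict convexity) coincide with the paper's Parts 1B and 2.
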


Its proof is put in \hyperref[{app, prop. convex of J}]{Appendix}. {\color{black} In the rest of the article, we assume that the condition (\ref{uniqueness condition}) always holds.}

\subsection{Necessary and Sufficient Condition of Optimality}\label{sec. Necessary and Sufficient Condition of Optimality}

According to Proposition \ref{prop. convex of J}, there is a {\color{black} unique optimal control $u_{tX}(s)$ which satisfies the first condition $D_{v}J_{tX}(u_{tX}) \equiv 0$} in Lemma \ref{lem G derivative of J}. Particularly, the optimal control $u_{tX}(s)$ is a feedback one; to this end, we define the Lagrangian $L:\mathbb{R}^{n} \times\mathbb{R}^{n}\times\mathbb{R}^{n}\rightarrow \mathbb{R}$ by
\begin{equation}
L(x,v,p):=l(x,v)+v\cdot p.
\label{eq:4-41}
\end{equation}
Then, from Assumption \textbf{A(v)}'s (\ref{assumption, convexity of l}), for each $x$, $p \in \mathbb{R}^n$, the function $v\longmapsto L(x,v,p)$ is strictly convex and coercive in $v$, therefore it attains its unique minimum at $v=u(x,p).$ 
We further define the Hamiltonian 
\begin{equation}
H(x,p):=\inf_{v \in \mathbb{R}^n}L(x,v,p)=l\pig(x,u(x,p)\pig)+u(x,p)\cdot p
\label{def. Hamiltonian}
\end{equation}
which is clearly $C^{1}(\mathbb{R}^n \times \mathbb{R}^n)$ with derivatives $H_{x}(x,p)=l_{x}(x,u(x,p))$ and $H_{p}(x,p)=u(x,p)$. Therefore, we can write $u_{tX}(s)$ in the following feedback form: 

\begin{equation}
u_{tX}(s)=H_{p}\pig(\mathbb{Y}_{tX}(s),D_{X}V(\mathbb{Y}_{tX}(s)  \otimes  m,s)\pig)=H_{p}\pig(\mathbb{Y}_{tX}(s),\mathbb{Z}_{tX}(s)\pig),
\label{eq:4-45}
\end{equation}
where we shall verify the last equality, that $D_{X}V(\mathbb{Y}_{tX}(s)  \otimes  m,s)=\mathbb{Z}_{tX}(s)$ in (\ref{D_X V(s)=Z(s)}). Calling $\mathbb{Y}_{tX}(s)$
the optimal state and $\pig(\mathbb{Z}_{tX}(s),\mathbbm{r}_{tX}(s)\pig)$ the solution of
the BSDE (\ref{def, backward SDE}) under the influence of $\mathbb{Y}_{tX}$, {\color{black} we have the following proposition.
\begin{prop}
For $t \in [0,T)$ and $X \in \mathcal{H}_m$ measurable to $\mathcal{W}^{t}_0$. The process $u_{tX}$ is the optimal control to the control problem that minimizes $J_{tX}(v_{tX})$ subject to $v_{tX}(s) \in L_{\mathcal{W}_{tX}}^{2}(t,T;\mathcal{H}_{m})$ and $\mathbb{X}_{tX}(s;v_{tX})$ satisfying (\ref{eq:3-501}) if and only if the process $\pig(\mathbb{Y}_{tX}(s), \mathbb{Z}_{tX}(s), u_{tX}(s), \mathbbm{r}_{tX,j}(s)\pig)$ is the solution in $L_{\mathcal{W}_{tX}}^{2}(t,T;\mathcal{H}_{m})$ to the system, for $s \in [t,T]$, 
\begin{empheq}[left=\empheqbiglbrace]{align}
\mathbb{Y}_{tX}(s)=&\:X+\int_{t}^{s}u_{tX}(\tau)d\tau+\eta(w(s)-w(t));
\label{forward FBSDE}\\
\mathbb{Z}_{tX}(s)=&\:h_{x}(\mathbb{Y}_{tX}(T))+D_{X}F_{T}(\mathbb{Y}_{tX}(T)  \otimes  m)
+\int^T_s\Big[l_x\pig(\mathbb{Y}_{tX}(\tau),u_{tX}(\tau)\pig)+D_{X}F(\mathbb{Y}_{tX}(\tau)  \otimes  m)\Big]d\tau
\label{backward FBSDE}\\
&-\int^T_s\sum_{j=1}^{n}\mathbbm{r}_{tX,j}(\tau)dw_{j}(\tau);
\nonumber
\end{empheq}

\begin{flalign}
\text{subject to}&&l_v\pig(\mathbb{Y}_{tX}(s),u_{tX}(s)\pig)+\mathbb{Z}_{tX}(s)=0.&&
\label{1st order condition}
\end{flalign}
\label{prop eq. condition of optmality}
\end{prop}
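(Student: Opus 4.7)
The plan is to derive the equivalence directly from the strict convexity result of Proposition \ref{prop. convex of J} together with the explicit form of the G\^ateaux derivative in Lemma \ref{lem G derivative of J}. Under the standing convexity/coercivity conclusions of Proposition \ref{prop. convex of J}, the functional $v_{tX}\longmapsto J_{tX}(v_{tX})$ is strictly convex, coercive, and G\^ateaux differentiable on the Hilbert space $L^{2}_{\mathcal{W}_{tX}}(t,T;\mathcal{H}_m)$; by standard convex analysis this implies that $u_{tX}$ is the unique minimizer of $J_{tX}$ if and only if $D_{v}J_{tX}(u_{tX})\equiv 0$ as an element of $L^{2}_{\mathcal{W}_{tX}}(t,T;\mathcal{H}_m)$. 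This reduces the proof to translating the zero-derivative condition into the FBSDE system (\ref{forward FBSDE})--(\ref{1st order condition}).

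For the necessity (``only if'') direction, suppose $u_{tX}$ is optimal. Let $\mathbb{Y}_{tX}(s):=\mathbb{X}_{tX}(s;u_{tX})$ denote the controlled state driven by $u_{tX}$ via (\ref{eq:3-501}); this is exactly (\ref{forward FBSDE}). Apply Lemma \ref{lem G derivative of J} with the control $u_{tX}$ and state $\mathbb{Y}_{tX}$: the associated adjoint $(\mathbb{Z}_{tX},\mathbbm{r}_{tX})\in L^{2}_{\mathcal{W}_{tX}}(t,T;\mathcal{H}_m)$ solves the linear BSDE (\ref{def, backward SDE}), which, rewritten in integral form with $\mathbb{Y}_{tX}$ replacing $\mathbb{X}_{tX}$, is precisely (\ref{backward FBSDE}). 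The G\^ateaux gradient identity (\ref{eq:3-5}) combined with $D_{v}J_{tX}(u_{tX})\equiv 0$ yields $l_v(\mathbb{Y}_{tX}(s),u_{tX}(s))+\mathbb{Z}_{tX}(s)=0$ for a.e.\ $s$, which is (\ref{1st order condition}). Thus the quadruple $(\mathbb{Y}_{tX},\mathbb{Z}_{tX},u_{tX},\mathbbm{r}_{tX})$ solves the system.

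For the sufficiency (``if'') direction, assume a quadruple $(\mathbb{Y}_{tX},\mathbb{Z}_{tX},u_{tX},\mathbbm{r}_{tX})$ in $L^{2}_{\mathcal{W}_{tX}}(t,T;\mathcal{H}_m)$ solves (\ref{forward FBSDE})--(\ref{1st order condition}). By (\ref{forward FBSDE}), $\mathbb{Y}_{tX}$ is the controlled state $\mathbb{X}_{tX}(\cdot;u_{tX})$ driven by $u_{tX}$, so by the uniqueness of solutions to the linear BSDE (\ref{def, backward SDE}) with input $(\mathbb{Y}_{tX},u_{tX})$, the process $\mathbb{Z}_{tX}$ coincides with the adjoint appearing in Lemma \ref{lem G derivative of J} evaluated at the candidate $u_{tX}$. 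Substituting into (\ref{eq:3-5}) and invoking (\ref{1st order condition}) then gives $D_{v}J_{tX}(u_{tX})\equiv 0$ in $L^{2}_{\mathcal{W}_{tX}}(t,T;\mathcal{H}_m)$. By the first paragraph, $u_{tX}$ is the unique minimizer.

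The main (and essentially only) delicate point is the identification used in the sufficiency direction: the BSDE appearing as the adjoint in Lemma \ref{lem G derivative of J} at a generic admissible control is the same equation as (\ref{backward FBSDE}) with the generic state substituted; this requires well-posedness of the linear BSDE (\ref{def, backward SDE}) under Assumptions \textbf{A(i)}--\textbf{A(iii)} and \textbf{B(i)}, which follows from the linear growth bounds on $l_x$, $h_x$, $D_X F$, $D_X F_T$ and a standard contraction argument in $L^{2}_{\mathcal{W}_{tX}}(t,T;\mathcal{H}_m)$. Once this well-posedness is granted, the proof reduces to book-keeping between Lemma \ref{lem G derivative of J}, Proposition \ref{prop. convex of J}, and the displayed FBSDE system, with no additional technical obstruction.
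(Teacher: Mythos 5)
Your proposal is correct and follows essentially the same route as the paper's proof: both directions reduce to the equivalence ``$u_{tX}$ is optimal $\iff D_{v}J_{tX}(u_{tX})\equiv 0$'' via the convexity of $J_{tX}$ from Proposition \ref{prop. convex of J} (the paper spells out the sufficiency direction with an explicit convexity contradiction, while you cite standard convex analysis), and then translate the vanishing of the G\^ateaux derivative from Lemma \ref{lem G derivative of J} into the system (\ref{forward FBSDE})--(\ref{1st order condition}).
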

}

\begin{proof}
Suppose $u_{tX}$ is the optimal control, Lemma \ref{lem G derivative of J} and Theorem 7.2.12 in \cite{DM07} imply
$$
D_{v}J_{tX}(v_{tX})(s)=l_{v}(\mathbb{X}_{tX}(s),v_{tX}(s))+\mathbb{Z}_{tX}(s) = 0,
$$
where the process $\pig(\mathbb{Y}_{tX}(s), \mathbb{Z}_{tX}(s), u_{tX}(s), \mathbbm{r}_{tX,j}(s)\pig)$ satisfies \eqref{forward FBSDE}-\eqref{1st order condition}.

Suppose $u_{tX}$ solves the first order condition \eqref{1st order condition} such that $l_{v}(\mathbb{X}_{tX}(s),v_{tX}(s))+\mathbb{Z}_{tX}(s) =0$, where $\pig(\mathbb{Y}_{tX}(s), \mathbb{Z}_{tX}(s),$\\$ \mathbbm{r}_{tX,j}(s)\pig)$ solves \eqref{forward FBSDE}-\eqref{backward FBSDE} correspondingly. If $J_{tX}(v)$ does not attain its minimum value at $v=u_{tX}$, then there is $u^\dagger$ such that $J_{tX}(u^\dagger) < J_{tX}(u_{tX})$. The convexity of $J_{tX}(v)$ in $v$ implies that $J_{tX}(\theta u^\dagger + (1-\theta) u_{tX}) \leq \theta J_{tX}( u^\dagger)+(1-\theta)J_{tX}(  u_{tX})$ for any $\theta \in [0,1]$, that is,
$$ \dfrac{J_{tX}(\theta u^\dagger + (1-\theta) u_{tX}) - J_{tX}( u^\dagger)}{\theta} \leq J_{tX}(u^\dagger) - J_{tX}(u_{tX})<0.$$
It contradicts that $\left.\dfrac{d}{d\theta} J_{tX}(u_{tX}+\theta(u^\dagger- u_{tX}) ) \right|_{\theta=0}=0$.
\end{proof}

We remark that there is a unique $u(x,z)$ such that $l_v\big(x,u(x,z)\big)+z=0$ for any $x$, $z \in \mathbb{R}^n$, based on Assumption \textbf{A(v)}'s \eqref{assumption, convexity of l} and the inverse function theorem. Moreover, as discussed, $L_{\mathcal{W}_{tX}}^{2}(t,T;\mathcal{H}_{m})$ is
isometric to $L_{\mathcal{W}_{t}}^{2}(t,T;\mathcal{H}_{X  \otimes  m})$, there exist measurable random fields  $\mathbb{Y}_{t\xi}(s)$, $\mathbb{Z}_{t\xi}(s)$, $u_{t\xi}(s)$, $\mathbbm{r}_{t\xi,j}(s) \in L_{\mathcal{W}_{t}}^{2}(t,T;\mathcal{H}_{X_t  \otimes  m})$
such that $\mathbb{Y}_{tX}(s)=\mathbb{Y}_{t\xi}(s)\Big|_{\xi=X},\:\mathbb{Z}_{tX}(s)=\mathbb{Z}_{t\xi}(s)\Big|_{\xi=X}$, $u_{tX}(s)=u_{t\xi}(s)\Big|_{\xi=X}$, $\mathbbm{r}_{tX,j}(s)=\mathbbm{r}_{t\xi,j}(s)\Big|_{\xi=X}$.
These random fields $\mathbb{Y}_{t\xi}(s),\mathbb{Z}_{t\xi}(s),u_{t\xi}(s),\mathbbm{r}_{t\xi,j}(s)$
are solutions to the following FBSDE system, for $s \in [t,T]$, 
\begin{empheq}[left=\empheqbiglbrace]{align}
\mathbb{Y}_{t\xi}(s)=&\:\xi+\int_{t}^{s}u_{t\xi}(\tau)d\tau+\eta(w(s)-w(t)),
\label{eq:3-9}\\
\mathbb{Z}_{t\xi}(s)=&\:h_{x}(\mathbb{Y}_{t\xi}(T))+D_{X}F_{T}(\mathbb{Y}_{t\bigcdot}(T)  \otimes (X  \otimes  m))\label{eq:3-10}\\
&+\int^T_s\Big[l_x\pig(\mathbb{Y}_{t\xi}(\tau),u_{t\xi}(\tau)\pig)+D_{X}F(\mathbb{Y}_{t\bigcdot}(\tau)  \otimes (X  \otimes  m))\Big]d\tau
-\int^T_s\sum_{j=1}^{n}\mathbbm{r}_{t\xi,j}(\tau)dw_{j}(\tau),\nonumber
\end{empheq}
subject to $
l_v\pig(\mathbb{Y}_{t\xi}(s),u_{t\xi}(s)\pig)+\mathbb{Z}_{t\xi}(s)=0.$ We notice that $\mathbb{Y}_{t\xi}(s),\mathbb{Z}_{t\xi}(s),u_{t\xi}(s),\mathbbm{r}_{t\xi,j}(s)$
depend on $m$, at time $t$, only through $X  \otimes  m=X_t  \otimes  m.$
\begin{rem}
Equations (\ref{forward FBSDE})-(\ref{1st order condition}) are defined only up to a null set. If we consider infinitely many random variables $X$, the processes may not be defined. By considering the deterministic $\xi$ in (\ref{eq:3-9})-(\ref{eq:3-10}), we avoid this kind of measurability problem by putting $\xi=X$.
\end{rem}

We can express the value function 

\begin{equation}
\begin{aligned}
V(X  \otimes  m,t)=\:&\int_{t}^{T}\mathbb{E}
\left[\int_{\mathbb{R}^{n}}l\pig(\mathbb{Y}_{tX}(s),u_{tX}(s)\pig)dm(x)ds\right]
+\int_{t}^{T}F(\mathbb{Y}_{tX}(s)  \otimes  m)ds\\
&+\mathbb{E}\left[\int_{\mathbb{R}^{n}}h(\mathbb{Y}_{tX}(T))dm(x)\right]
+F_{T}(\mathbb{Y}_{tX}(T)  \otimes  m).
\end{aligned}
\label{def value function}
\end{equation}
This quantity depends only on the probability measure $X  \otimes  m=X_t  \otimes  m$ and $t.$ It can be written as follows

\begin{equation}
\begin{aligned}
V(X  \otimes  m,t)=\:&\int_{t}^{T}\mathbb{E}\left[\int_{\mathbb{R}^{n}}l\pig(\mathbb{Y}_{t\xi}(s),u_{t\xi}(s)\pig)d\big(X  \otimes  m\big)(\xi)\right]
+\int_{t}^{T}F(\mathbb{Y}_{t \cdot}(s)  \otimes (X  \otimes  m))ds\\
&+\mathbb{E}\left[\int_{\mathbb{R}^{n}}h(\mathbb{Y}_{t\xi}(T))d\big(X  \otimes  m\big)(\xi)\right]
+F_{T}(\mathbb{Y}_{t \cdot }(T)  \otimes (X  \otimes  m)).
\end{aligned}
\label{def value function, with xi}
\end{equation}

\begin{rem}
\label{rem4-1}We may enlarge the subspace of $L^{2}(t,T;\mathcal{H}_{m})$ of controls in which $u_{tX}$ remains optimal; to this point, this will facilitate the relevant comparison principle and the study of the regularity
of $X\longmapsto V(X  \otimes  m,t)$ in Sections \ref{sec. Analytic Properties of Value Function} to \ref{sec. master equation}, by introducing an additional initial element in $\mathcal{H}_{m}$, denoted by $\tilde{X}$ which is also independent
of $\mathcal{W}_{t}$ but not necessary with $X$. To see this, define the $\sigma$-algebras $\mathcal{W}_{t X\tilde{X}}^{s}=\sigma(X,\tilde{X})\bigvee\mathcal{W}_{t}^{s}$
and the filtration $\mathcal{W}_{tX\tilde{X}}:=\mathcal{W}^T_{t X\tilde{X}}$ generated by the $\sigma$-algebras
$\mathcal{W}_{tX\tilde{X}}^{s}$ for all $s \in [t,T]$. If we change the space of controls
from $L_{\mathcal{W}_{tX}}^{2}(t,T;\mathcal{H}_{m})$ to $L_{\mathcal{W}_{tX\tilde{X}}}^{2}(t,T;\mathcal{H}_{m})$, one
checks easily that the system of Equations (\ref{forward FBSDE}), (\ref{backward FBSDE}) and (\ref{1st order condition}) solve the necessary conditions of optimality of
the control problem still with the enlarged $\sigma$-algebras, and by the uniqueness
of the solution of the necessary conditions, the optimal control remains the same as before, except that we can now interpret the solution on the enlarged $\sigma$-algebras with all formulae unchanged. Of course, instead of adding one $\tilde{X}$, one could
add a finite number of elements of $\mathcal{H}_{m}$, as long as all of these are independent
of $\mathcal{W}_{t}.$
\end{rem}

\subsection{Regularity of Solutions to FBSDE (\ref{forward FBSDE})-(\ref{1st order condition}) and its Jacobian Flow}\label{sec. J Flow}
We now give the upper bounds of $\mathbb{Y}_{t X}(s), \mathbb{Z}_{t X}(s), u_{t X}(s), \mathbbm{r}_{t X,j}(s)$.
\begin{prop}[\bf Bounds of FBSDE's Solution]
\label{prop bdd of Y Z u r} Under the assumptions of Proposition \ref{prop. convex of J}, for any $X \in \mathcal{H}_m$ measurable to $\mathcal{W}^{t}_0$ and $s\in(t,T)$, the solution quadruple $\pig(\mathbb{Y}_{t X}(s), \mathbb{Z}_{t X}(s), u_{t X}(s), \mathbbm{r}_{t X,j}(s)\pig)$ to the FBSDE (\ref{forward FBSDE})-(\ref{1st order condition}) on $[t,T]$ satisfies
\begin{align} \|\mathbb{Y}_{t X}(s)\|_{\mathcal{H}_m},\:\|\mathbb{Z}_{t X}(s)\|_{\mathcal{H}_m},\:\|u_{t X}(s)\|_{\mathcal{H}_m},\:
\left[\displaystyle\sum^n_{j=1}\int_{t}^{T}\pigl\|\mathbbm{r}_{t X,j}(s)\pigr\|^{2}_{\mathcal{H}_m}ds\right]^{1/2}
\leq C_4\pig(1+\|X\|_{\mathcal{H}_m}\pig),
\label{bdd Y, Z, u, r}
\end{align}
where $C_{4}$ is a positive constant depending only on $n$, $\lambda$, $\eta$, $c$, $c_l$, $c_h$, $c_T$, $c'$, $c_l'$, $c_h'$, $c_T'$ and $T$.
\end{prop}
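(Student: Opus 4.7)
The plan is to bootstrap from an $L^2$-energy estimate on $u_{tX}$ obtained by optimality, to successive pointwise-in-$s$ estimates on $\mathbb{Y}_{tX}$ and $\mathbb{Z}_{tX}$, then to recover $u_{tX}$ pointwise through the first-order condition \eqref{1st order condition}, and finally to bound the martingale integrand $\mathbbm{r}_{tX}$ via the It\^o isometry.

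\textbf{Step 1 (energy bound on the control).} By optimality, $J_{tX}(u_{tX})\le J_{tX}(0)$. Evaluating $J_{tX}(0)$ along the trivial control gives $\mathbb{X}_{tX}(s)=X+\eta(w(s)-w(t))$, so the quadratic growth conditions \eqref{assumption, bdd of l, lx, lv}, \eqref{assumption, bdd of h, hx, hxx} and \eqref{assumption, bdd of F F_T} yield $J_{tX}(0)\le C(1+\|X\|_{\mathcal{H}_m}^{2})$. On the other hand, the strict-convexity/coercivity computation inside the proof of Proposition~\ref{prop. convex of J}, which is exactly where the combination $c_0$ in \eqref{uniqueness condition} appears, produces a lower bound of the form $J_{tX}(v)\ge c_0\int_t^T\|v\|_{\mathcal{H}_m}^{2}d\tau-C(1+\|X\|_{\mathcal{H}_m}^{2})$. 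Subtracting gives $\int_t^T\|u_{tX}(\tau)\|_{\mathcal{H}_m}^{2}d\tau\le C(1+\|X\|_{\mathcal{H}_m}^{2})$.

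\textbf{Step 2 (pointwise bound on $\mathbb{Y}_{tX}$).} Taking the $\mathcal{H}_m$-norm of \eqref{forward FBSDE}, using Cauchy--Schwarz on the drift integral and the fact that $w(s)-w(t)$ is independent of $X$, one obtains
$$\|\mathbb{Y}_{tX}(s)\|_{\mathcal{H}_m}\le \|X\|_{\mathcal{H}_m}+\sqrt{T-t}\,\left(\int_t^T\|u_{tX}\|_{\mathcal{H}_m}^{2}d\tau\right)^{1/2}+|\eta|\sqrt{n(s-t)},$$
which by Step~1 is at most $C(1+\|X\|_{\mathcal{H}_m})$ uniformly in $s\in[t,T]$.

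\textbf{Step 3 (pointwise bound on $\mathbb{Z}_{tX}$).} Taking conditional expectation $\mathbb{E}[\,\cdot\,|\mathcal{W}_{tX}^{s}]$ of \eqref{backward FBSDE} kills the It\^o integral and gives $\mathbb{Z}_{tX}(s)=\mathbb{E}\big[h_x(\mathbb{Y}_{tX}(T))+D_XF_T(\mathbb{Y}_{tX}(T)\otimes m)+\int_s^T(l_x+D_XF)d\tau\,\big|\,\mathcal{W}_{tX}^{s}\big]$. By contractivity of conditional expectation in $\mathcal{H}_m$, one bounds $\|\mathbb{Z}_{tX}(s)\|_{\mathcal{H}_m}$ by the $\mathcal{H}_m$-norms of the terminal and driver pieces. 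The terminal piece is controlled by the growth of $h_x$ in \eqref{assumption, bdd of h, hx, hxx} and of $D_XF_T$ in \eqref{assumption, bdd of DF, DF_T} together with Step~2; the integral piece is bounded by $\int_t^T(\|l_x(\mathbb{Y}_{tX},u_{tX})\|_{\mathcal{H}_m}+\|D_XF(\mathbb{Y}_{tX}\otimes m)\|_{\mathcal{H}_m})d\tau$, and by \eqref{assumption, bdd of l, lx, lv}, \eqref{assumption, bdd of DF, DF_T}, Steps~1--2 and Cauchy--Schwarz, this is at most $C(1+\|X\|_{\mathcal{H}_m})$.

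\textbf{Step 4 (pointwise bound on $u_{tX}$) and Step 5 ($\mathbbm{r}_{tX}$).} From \eqref{assumption, convexity of l} the map $v\mapsto l_v(x,v)$ is $\lambda$-strongly monotone, so the implicit solution $u(x,z)$ of $l_v(x,u)+z=0$ is globally Lipschitz in $(x,z)$ with $|u(x,z)|\le C(1+|x|+|z|)$; applying this pointwise and taking $\mathcal{H}_m$-norms gives $\|u_{tX}(s)\|_{\mathcal{H}_m}\le C(1+\|\mathbb{Y}_{tX}(s)\|_{\mathcal{H}_m}+\|\mathbb{Z}_{tX}(s)\|_{\mathcal{H}_m})\le C(1+\|X\|_{\mathcal{H}_m})$. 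For the last estimate, rewriting \eqref{backward FBSDE} between $t$ and $T$,
$$\int_t^T\sum_{j=1}^{n}\mathbbm{r}_{tX,j}(\tau)dw_j(\tau)=\mathbb{Z}_{tX}(T)-\mathbb{Z}_{tX}(t)+\int_t^T\left[l_x(\mathbb{Y}_{tX},u_{tX})+D_XF(\mathbb{Y}_{tX}\otimes m)\right]d\tau,$$
and applying the It\^o isometry fibrewise in $x\in\mathbb{R}^n$ (followed by integration against $dm$) yields $\sum_{j=1}^{n}\int_t^T\|\mathbbm{r}_{tX,j}(\tau)\|_{\mathcal{H}_m}^{2}d\tau\le C(1+\|X\|_{\mathcal{H}_m}^{2})$.

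\textbf{Main obstacle.} The crux is Step~1: extracting a \emph{strictly positive} coercivity constant on $\int\|u\|_{\mathcal{H}_m}^{2}d\tau$ from $J_{tX}$. This is precisely where the smallness condition \eqref{uniqueness condition} is indispensable, because the monotonicity-type hypothesis \textbf{B(v)(b)}'s \eqref{assumption, convexity of D^2F, D^2F_T} on the mean field running and terminal costs supplies only a $-c'$ and $-c'_T$ lower bound that must be absorbed by $\lambda$ after propagation through the state equation with the Gronwall factors $T$ and $T^{2}/2$ visible in $c_0$. Once $c_0>0$ is in hand, Steps~2--5 propagate routinely through the forward and backward dynamics.
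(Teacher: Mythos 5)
Your proof is correct, but it reaches the key intermediate estimate $\int_t^T\|u_{tX}\|_{\mathcal{H}_m}^2\,ds\le C(1+\|X\|_{\mathcal{H}_m}^2)$ by a genuinely different route from the paper. The paper never invokes the value comparison $J_{tX}(u_{tX})\le J_{tX}(0)$; instead it computes $\langle \mathbb{Z}_{tX}(t),X\rangle_{\mathcal{H}_m}$ in two ways (once by the product rule applied to $\langle \mathbb{Z},\mathbb{Y}-\eta(w(\cdot)-w(t))\rangle_{\mathcal{H}_m}$, once from the BSDE representation), subtracts, and telescopes against the zero-control trajectory $X+\eta(w(s)-w(t))$; the monotonicity assumptions \textbf{A(v)}, \textbf{A(vi)}, \textbf{B(v)(b)} then produce $\lambda\int\|u\|^2$ on the left, and \eqref{uniqueness condition} absorbs the negative terms. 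That identity is exactly the strong monotonicity of $D_vJ_{tX}$ evaluated at the optimum against the zero control, so your value-gap argument and the paper's duality argument are two faces of the same convexity structure and both hinge on $c_0>0$. Your route costs you two extra (routine) items that you should make explicit: an upper bound on $J_{tX}(0)$, and a bound $\int_t^T\|D_vJ_{tX}(0)(s)\|_{\mathcal{H}_m}^2\,ds\le C(1+\|X\|_{\mathcal{H}_m}^2)$ needed to absorb the linear term $\int\langle D_vJ_{tX}(0),v\rangle_{\mathcal{H}_m}$ in the coercivity inequality of Proposition~\ref{prop. convex of J}; the latter requires estimating the zero-control adjoint $\mathbb{Z}^0_{tX}$, which is essentially your Step~3 run once more along the explicit uncontrolled state. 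The paper's route avoids introducing this auxiliary adjoint, and the same duality/telescoping computation is what gets recycled for the difference processes in Lemma~\ref{lem, Existence of J flow} and the Lipschitz estimates of Proposition~\ref{prop4-2}, which is why the authors set it up this way. Your Steps~3 and~5 (conditional-expectation representation of $\mathbb{Z}_{tX}$ for the sup bound, then It\^o isometry for $\mathbbm{r}_{tX,j}$) replace the paper's single It\^o expansion of $\|\mathbb{Z}(s)\|_{\mathcal{H}_m}^2$, which delivers both bounds at once; your version is slightly longer but equally valid, since $X$ is independent of $\mathcal{W}_t$ so $w$ remains a Brownian motion for the enlarged filtration and the isometry applies fibrewise in $x$. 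Steps~2 and~4 coincide with the paper's.
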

\begin{proof}
For simplicity, without any cause of ambiguity, we omit the subscripts $tX$ in $\mathbb{Y}_{tX}(s)$, $\mathbb{Z}_{tX}(s)$, $u_{tX}(s)$ and $\mathbbm{r}_{tX}(s)$. Recalling (\ref{backward FBSDE}) that describes the dynamics of $\mathbb{Z}(s)$ and (\ref{1st order condition}) that characterises $\mathbb{Z}(s)$, an application of product rule gives
\begin{align*}
d\Big\langle \mathbb{Z}(s),\mathbb{Y}(s)-\eta\big(w(s)-w(t)\big)\Big\rangle_{\mathcal{H}_m}
\h{-5pt}=&-\Big\langle l_{x}(\mathbb{Y}(s),u(s))+D_{X}F(\mathbb{Y}(s)  \otimes  m),\mathbb{Y}(s)-\eta\big(w(s)-w(t)\big)\Big\rangle_{\mathcal{H}_m} \h{-3pt} ds\\
&-\Big\langle l_{v}(\mathbb{Y}(s),u(s)),u(s)\Big\rangle_{\mathcal{H}_m} \h{-3pt} ds.
\end{align*}
Integrating from $t$ to $T,$ we obtain 

\begin{equation}
\begin{aligned}
\h{-4pt}\pigl\langle \mathbb{Z}(t),X\pigr\rangle_{\mathcal{H}_m}
=\:&\int_{t}^{T}\Big\langle l_{v}(\mathbb{Y}(s),u(s)),u(s)\Big\rangle_{\mathcal{H}_m}ds
+\int_{t}^{T}\Big\langle l_{x}(\mathbb{Y}(s),u(s))+D_{X}F(\mathbb{Y}(s)  \otimes  m),\mathbb{Y}(s)\Big\rangle_{\mathcal{H}_m}ds
\\
&-\int_{t}^{T}\Big\langle l_{x}(\mathbb{Y}(s),u(s))+D_{X}F(\mathbb{Y}(s)  \otimes  m),
\eta\big(w(s)-w(t)\big)\Big\rangle_{\mathcal{H}_m}ds\\
&+\Big\langle h_{x}(\mathbb{Y}(T))+\:D_XF_{T}(\mathbb{Y}(T)  \otimes  m),\mathbb{Y}(T)-\eta\big(w(T)-w(t)\Big\rangle_{\mathcal{H}_m}.
\end{aligned}
\label{eq:ApB2}
\end{equation}
On the other hand, using the definition of dynamic of $\mathbb{Z}(s)$ in (\ref{backward FBSDE}), we see
\begin{equation}
\mbox{\fontsize{10.3}{10}\selectfont\(
\pigl\langle \mathbb{Z}(t),X\pigr\rangle_{\mathcal{H}_m}
=\left\langle \displaystyle\int_{t}^{T}
l_{x}(\mathbb{Y}(s),u(s))+D_{X}F(\mathbb{Y}(s)  \otimes  m)ds,X \right\rangle_{\mathcal{H}_m}
+\Big\langle h_{x}(\mathbb{Y}(T))+\:D_XF_{T}(\mathbb{Y}(T)  \otimes  m),X\Big\rangle_{\mathcal{H}_m}\)}.
\label{eq:ApB3}
\end{equation}
Therefore, subtracting (\ref{eq:ApB3}) from (\ref{eq:ApB2}), we have

\begin{equation}
\begin{aligned}
0=\:&\int_{t}^{T}\Big\langle l_{v}(\mathbb{Y}(s),u(s)),u(s)\Big\rangle_{\mathcal{H}_m}ds
+\int_{t}^{T}\Big\langle l_{x}(\mathbb{Y}(s),u(s)) +D_{X}F(\mathbb{Y}(s)  \otimes  m),\mathbb{Y}(s)
-X
-\eta\big(w(s)-w(t)\big)\Big\rangle_{\mathcal{H}_m}ds\\
&+\Big\langle h_{x}(\mathbb{Y}(T))+\:D_XF_{T}(\mathbb{Y}(T)  \otimes  m),\mathbb{Y}(T)-X-\eta\big(w(T)-w(t)\Big\rangle_{\mathcal{H}_m}
\end{aligned}
\label{eq:ApB4}
\end{equation}
After telescoping, we obtain 
\begin{align*}
0=&\:\int_{t}^{T}\Big\langle l_{v}(\mathbb{Y}(s),u(s))-l_{v}(X+\eta(w(s)-w(t)),0),u(s)\Big\rangle_{\mathcal{H}_m}ds\\
&+\int_{t}^{T}\Big\langle l_{x}(\mathbb{Y}(s),u(s))-
l_{x}(X+\eta(w(s)-w(t)),0)+D_{X}F(\mathbb{Y}(s)  \otimes  m)-D_{X}F((X+\eta(w(s)-w(t))) \otimes  m),\\
&\h{350pt}\mathbb{Y}(s)-X-\eta\big(w(s)-w(t)\big)\Big\rangle_{\mathcal{H}_m}ds\\
&+\Big\langle h_{x}(\mathbb{Y}(T))-h_{x}\pig(X+\eta\big(w(T)-w(t)\big)\pig)
+D_XF_{T}(\mathbb{Y}(T)  \otimes  m)
-D_{X}F_T \pig((X+\eta(w(T)-w(t)))  \otimes  m\pig),\\
&\h{350pt}\mathbb{Y}(T)-X-\eta(w(T)-w(t))\Big\rangle_{\mathcal{H}_m}\\
&+\int_{t}^{T}\Big\langle l_{v}(X+\eta(w(s)-w(t)),0),u(s)\Big\rangle_{\mathcal{H}_m}ds\\
&+\int_{t}^{T}\Big\langle l_{x}(X+\eta(w(s)-w(t)),0)+D_{X}F((X+\eta(w(s)-w(t)))  \otimes  m),\mathbb{Y}(s)-X-\eta\big(w(s)-w(t)\big)\Big\rangle_{\mathcal{H}_m}ds\\
&+\left\langle h_{x}\pig(X+\eta\big(w(T)-w(t)\big)\pig)
+D_{X}F_T\pig((X+\eta(w(T)-w(t)))  \otimes  m\pig),\int_{t}^{T}u(s)ds\right\rangle_{\mathcal{H}_m}.
\end{align*}
From the mean value theorem, Assumptions \textbf{A(iii)}'s (\ref{assumption, bdd of h, hx, hxx}), \textbf{A(iv)}'s (\ref{assumption, convexity of l}), \textbf{A(vi)}'s (\ref{assumption, convexity of h}) and \textbf{B(i)}'s (\ref{assumption, bdd of DF, DF_T}), 
we have 

\begingroup
\allowdisplaybreaks
\begin{align*}
\lambda\int_{t}^{T}\|u(s)\|^{2}_{\mathcal{H}_m}ds
\leq\:& 
(c_{l}'+c')\int_{t}^{T}\|\mathbb{Y}(s)-X-\eta(w(s)-w(t))\|^{2}_{\mathcal{H}_m}ds
+(c_h'+c_{T}')\left\|\int_{t}^{T}u(s)ds\right\|_{\mathcal{H}_m}^{2}\\
&+c_{l}\pig(1+\|X+\eta(w(T)-w(t))\|_{\mathcal{H}_m}\pig)
\displaystyle\int_{t}^{T}\|u(s)\|_{\mathcal{H}_m}ds\\
&+(c_{l}+c)\pig(1+\|X+\eta(w(T)-w(t))\|_{\mathcal{H}_m}\pig)\int_{t}^{T}\|\mathbb{Y}(s)-X-\eta(w(s)-w(t))\|_{\mathcal{H}_m}ds\\
&+(c_{h}+c_{T})\pig(1+\|X+\eta(w(T)-w(t))\big\|_{\mathcal{H}_m}\pig)
\int_{t}^{T}\|u(s)\|_{\mathcal{H}_m}ds.
\end{align*}
\endgroup
Applications of Cauchy-Schwarz and Young's inequalities further give
\begin{equation*}
\begin{aligned}
\lambda\int_{t}^{T}\|u(s)\|^{2}_{\mathcal{H}_m}ds
\leq\:&(c_{l}'+c')_+\dfrac{T^2}{2}\int_{t}^{T}\|u(s)\|^{2}_{\mathcal{H}_m}ds
+\left[(c_h'+c_{T}')_++\dfrac{1}{4\kappa_4}\right]T\int_{t}^{T}\|u(s)\|_{\mathcal{H}_m}^2ds\\
&+\kappa_4 c_{l}^2\pig(1+\|X\|_{\mathcal{H}_m}+\big\|\eta(w(T)-w(t))
\big\|_{\mathcal{H}_m}\pig)^2\\
&+\kappa_5(c_{l}+c)^2\pig(1+\|X\|_{\mathcal{H}_m}+\big\|\eta(w(T)-w(t))
\big\|_{\mathcal{H}_m}\pig)^2
+\dfrac{T^2}{4\kappa_5}\int_{t}^{T}\|u(s)\|^2_{\mathcal{H}_m}ds\\
&+\kappa_6 (c_{h}+c_{T})^2\pig(1+\|X\|_{\mathcal{H}_m}+\big\|\eta(w(T)-w(t))
\big\|_{\mathcal{H}_m}\pigr)^2
+\dfrac{T}{4\kappa_6}\int_{t}^{T}\|u(s)\|_{\mathcal{H}_m}^2ds,
\end{aligned}
\end{equation*}
for some positive constants $\kappa_i$ determined later, with $i=3,4,5,6$. It is equivalent to
\begin{equation}
\begin{aligned}
&\Bigg\{\lambda 
-\left[(c_h'+c_{T}')_++\dfrac{1}{4\kappa_4}+\dfrac{T}{4\kappa_5}+\dfrac{1}{4\kappa_6}\right]T-(c_{l}'+c')_+\dfrac{T^2}{2} \Bigg\}
\int_{t}^{T}\|u(s)\|^{2}_{\mathcal{H}_m}ds\\
&\leq \kappa_4 c_{l}^2\pig(1+\|X\|_{\mathcal{H}_m}+|\eta|\sqrt{nT}\pigr)^2
+\kappa_5(c_{l}+c)^2\pig(1+\|X\|_{\mathcal{H}_m}+|\eta|\sqrt{nT}\pigr)^2
+\kappa_6 (c_{h}+c_{T})^2\pig(1+\|X\|_{\mathcal{H}_m}+|\eta|\sqrt{nT}\pigr)^2\\
&\leq 2\pig[\kappa_4 c_{l}^2 + \kappa_5(c_{l}+c)^2
+\kappa_6 (c_{h}+c_{T})^2\pig]
\|X\|_{\mathcal{H}_m}^2
+4\pig[\kappa_4 c_{l}^2 + \kappa_5(c_{l}+c)^2
+\kappa_6 (c_{h}+c_{T})^2\pig](1+|\eta|^2nT).
\end{aligned}
\label{4055}
\end{equation}
Taking suitable $\kappa_i$ with $i=3,4,5,6$ and using (\ref{uniqueness condition}) to ensure that the coefficient in the first line of (\ref{4055}) is strictly positive, we thus conclude that 

\begin{equation}
\int_{t}^{T}\|u(s)\|^{2}_{\mathcal{H}_m}ds
\leq A_1\pig(1+\|X\|^{2}_{\mathcal{H}_m}\pig),
\label{bdd int |u|^2}
\end{equation}
for some $A_1>0$ depending on $n$, $\lambda$, $\eta$, $c$, $c_l$, $c_h$, $c_T$, $c'$, $c_l'$, $c_h'$, $c_T'$ and $T$. Next, from the dynamics of $\mathbb{Y}(s)$ in (\ref{forward FBSDE}), it yields that
\begin{equation}
\|\mathbb{Y}(s)-X\|^{2}_{\mathcal{H}_m}
\leq(s-t)(1+\kappa_7)\int_{t}^{T}\|u(\tau)\|^{2}_{\mathcal{H}_m}d\tau
+n|\eta|^{2}(s-t)\left(1+\dfrac{1}{\kappa_7}\right),
\label{est. |Ys-X|^2}
\end{equation}
for some $\kappa_7>0$. Integrating (\ref{est. |Ys-X|^2}) with respect to $s$, we obtain
\begin{equation}
\int_{t}^{T}\|\mathbb{Y}(s)-X\|^{2}_{\mathcal{H}_m}ds
\leq\dfrac{(1+\kappa_7)T^{2}}{2}\int_{t}^{T}\|u(\tau)\|^{2}_{\mathcal{H}_m}d\tau+\dfrac{n|\eta|^{2}T^{2}}{2}
\left(1+\dfrac{1}{\kappa_7}\right).
\label{eq:ApB6}
\end{equation}
From (\ref{bdd int |u|^2}) and  (\ref{est. |Ys-X|^2}), we deduce that 
\begin{equation}
\sup_{s\in(t,T)}\big\|\mathbb{Y}(s)\big\|^2_{\mathcal{H}_m}
\leq 2\sup_{s\in(t,T)}\big\|\mathbb{Y}(s)-X\big\|_{\mathcal{H}_m}^2 + 2\big\|X\big\|^2_{\mathcal{H}_m}
\leq A_2\pig(1+\|X\|_{\mathcal{H}_m}^2\pig).
\label{bdd |Y|^2}
\end{equation}
From (\ref{def, backward SDE}) and It\^o's formula in (\ref{diff of H norm}), we
obtain 

\[
\dfrac{d}{ds}\big\|\mathbb{Z}(s)\big\|^{2}_{\mathcal{H}_m}
=-2\Big\langle \mathbb{Z}(s),l_x\pig(\mathbb{Y}(s),u(s)\pig)+D_{X}F(\mathbb{Y}(s)  \otimes  m)\Big\rangle_{\mathcal{H}_m}
+\sum_{j=1}^{n}\big\|\mathbbm{r}_{j}(s)\big\|^{2}_{\mathcal{H}_m}
\]
and thus 
\begin{align*}
\big\|\mathbb{Z}(s)\big\|^{2}_{\mathcal{H}_m}
&+\displaystyle\sum_{j=1}^{n}\displaystyle\int_{s}^{T}\big\|\mathbbm{r}_{j}(\tau)\big\|^{2}_{\mathcal{H}_m}d\tau\\
&=\pigl\|h_{x}(\mathbb{Y}(T))+D_{X}F_{T}(\mathbb{Y}(T) \otimes m)\pigr\|^{2}_{\mathcal{H}_m}+2\int_{s}^{T}\Big\langle \mathbb{Z}(\tau),l_{x}(\mathbb{Y}(\tau),u(\tau))+D_{X}F(\mathbb{Y}(\tau)  \otimes  m)\Big\rangle_{\mathcal{H}_m}d\tau.
\end{align*}
Using Assumptions \textbf{A(i)}'s (\ref{assumption, bdd of l, lx, lv}), \textbf{A(iii)}'s (\ref{assumption, bdd of h, hx, hxx}), \textbf{B(i)}'s (\ref{assumption, bdd of DF, DF_T}) and (\ref{1st order condition}), it follows that

\begin{align}
&\h{-15pt}\sup_{s\in(t,T)}\big\|\mathbb{Z}(s)\big\|_{\mathcal{H}_m}^2
+\sum_{j=1}^{n}\int_{t}^{T}\big\|\mathbbm{r}_{j}(s)\big\|^{2}_{\mathcal{H}_m}ds
\label{bdd Z}\\
\leq\:& 2(c_h^2+2c_T^2)\pig(1+\big\|Y(T)\big\|_{\mathcal{H}_m}^2\pig) 
+3c_l^2\int^T_{t} 1+\big\|\mathbb{Y}(s)\big\|_{\mathcal{H}_m}^2 +
\big\|u(s)\big\|_{\mathcal{H}_m}^2ds
+4c^2\int^T_{t} 1 +
\big\|\mathbb{Y}(s)\big\|_{\mathcal{H}_m}^2ds.
\nonumber
\end{align}
Using (\ref{bdd |Y|^2}) and (\ref{bdd int |u|^2}), we obtain the required upper bounds of $\displaystyle\sup_{s\in(t,T)}\big\|\mathbb{Z}(s)\big\|_{\mathcal{H}_m}^2$ and $\displaystyle\sum_{j=1}^{n}\int_{t}^{T}\big\|\mathbbm{r}_{j}(s)\big\|^{2}_{\mathcal{H}_m}ds$. Finally, from (\ref{1st order condition}), the assumptions (\ref{assumption, bdd of l, lx, lv}),
(\ref{assumption, convexity of l}), as well as Young's inequality, we have 

\begin{equation}
\lambda\big\|u(s)\big\|_{\mathcal{H}_m}^2
\leq \dfrac{\lambda}{4}\big\|u(s)\big\|_{\mathcal{H}_m}^2
+\dfrac{c_{l}^2}{\lambda}\pig(1+\|\mathbb{Y}(s)\|^{2}_{\mathcal{H}_m}\pig)
+\dfrac{1}{\lambda}\big\|\mathbb{Z}(s)\big\|_{\mathcal{H}_m}^2
+\dfrac{\lambda}{4}\big\|u(s)\big\|_{\mathcal{H}_m}^2.
\label{eq:ApB100}
\end{equation}
The results in (\ref{bdd |Y|^2}) and (\ref{bdd Z}) imply that

\begin{equation}
\sup_{s\in(t,T)}\|u(s)\|_{\mathcal{H}_m}\leq A_3\pig(1+\|X\|_{\mathcal{H}_m}\pig).
\label{bdd u}
\end{equation}
This concludes Proposition \ref{prop bdd of Y Z u r}. 
\end{proof} Given $X$, $\Psi \in L^2_{\mathcal{W}^{\indep}_{t}} (\mathcal{H}_m) \subset \mathcal{H}_{m}$, we aim to check the existence of the G\^ateaux derivative of the solution, with respect to the initial condition $X$ in the direction of $\Psi$, over $[t,T]$. Define $\mathcal{W}_{tX\Psi}^{s}:=\sigma(X,\Psi)\bigvee\mathcal{W}_{t}^{s}$ and denote $\mathcal{W}_{tX\Psi}$ the filtration generated by
the $\sigma$-algebras $\mathcal{W}_{tX\Psi}^{s}$. For $\epsilon >0 $, we define the difference processes 
\begin{equation}
\begin{aligned}
\Delta^\epsilon_\Psi\mathbb{Y}_{tX}(s)&:=\dfrac{\mathbb{Y}_{t,X+\epsilon\Psi}(s)-\mathbb{Y}_{tX}(s)}{\epsilon}
\h{1pt},\h{5pt} 
\Delta^\epsilon_\Psi \mathbb{Z}_{tX} (s)
:=\dfrac{\mathbb{Z}_{t,X+\epsilon\Psi}(s)-\mathbb{Z}_{tX}(s)}{\epsilon},\\
\Delta^\epsilon_\Psi u_{tX}(s)&:=\dfrac{u_{t,X+\epsilon\Psi}(s)-u_{tX}(s)}{\epsilon}
\h{1pt},\h{5pt} 
\Delta^\epsilon_\Psi\mathbbm{r}_{tX,j}(s):=\dfrac{\mathbbm{r}_{t,X+\epsilon\Psi,j}(s)-\mathbbm{r}_{tX,j}(s)}{\epsilon}.
\end{aligned}
\label{def diff process}
\end{equation}
We next aim to bound them.
\begin{lem} Let $X$, $\Psi \in L^2_{\mathcal{W}^{\indep}_{t}} (\mathcal{H}_m) \subset \mathcal{H}_{m}$, $\epsilon>0$ and $s\in [t,T]$. Under the assumptions of Proposition \ref{prop. convex of J},  the difference processes defined in (\ref{def diff process}) satisfy
\begin{align} \pig\|\Delta^\epsilon_\Psi\mathbb{Y}_{tX}(s)\pigr\|_{\mathcal{H}_m},\:
\pig\|\Delta^\epsilon_\Psi \mathbb{Z}_{tX} (s)\pigr\|_{\mathcal{H}_m},\:
\pig\|\Delta^\epsilon_\Psi u_{t X}(s)\pigr\|_{\mathcal{H}_m},\:
\left[\displaystyle\sum^n_{j=1}\int_{t}^{T}\pigl\|\Delta^\epsilon_\Psi\mathbbm{r}_{tX,j}(s)\pigr\|^{2}_{\mathcal{H}_m}ds\right]^{1/2}
\leq C_4'\|\Psi\|_{\mathcal{H}_m},
\label{bdd Y, Z, u, r, epsilon}
\end{align}
where $C_{4}'$ is a positive constant depending only on $\lambda$, $\eta$, $c$, $c_l$, $c_h$, $c_T$, $c'$, $c_l'$, $c_h'$, $c_T'$ and $T$.
\end{lem}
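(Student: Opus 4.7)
The plan is to mimic the energy/pairing argument of Proposition~\ref{prop bdd of Y Z u r}, but applied to the \emph{linearised} FBSDE satisfied by the difference processes. Write $\widehat{Y}:=\Delta^\epsilon_\Psi\mathbb{Y}_{tX}$, $\widehat{Z}:=\Delta^\epsilon_\Psi\mathbb{Z}_{tX}$, $\widehat{u}:=\Delta^\epsilon_\Psi u_{tX}$, $\widehat{\mathbbm{r}}_j:=\Delta^\epsilon_\Psi\mathbbm{r}_{tX,j}$. Subtracting the two copies of (\ref{forward FBSDE}), the noise term $\eta(w(s)-w(t))$ cancels, so
\[
\widehat{Y}(s)=\Psi+\int_t^s\widehat{u}(\tau)\,d\tau.
\]
For the backward equation and for the first-order condition, I would use the fundamental theorem of calculus to write every finite difference of $l_x$, $l_v$, $h_x$, $D_XF$, $D_XF_T$ as an integral along the straight segment joining $(\mathbb{Y}_{tX},u_{tX})$ and $(\mathbb{Y}_{t,X+\epsilon\Psi},u_{t,X+\epsilon\Psi})$ in $\mathcal{H}_m$. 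For instance, setting $l^\theta_{\bigcdot\bigcdot}(s):=l_{\bigcdot\bigcdot}\!\pig(\mathbb{Y}_{tX}(s)+\theta\epsilon\widehat{Y}(s),u_{tX}(s)+\theta\epsilon\widehat{u}(s)\pig)$ and $F^\theta(s):=D_X^2F\pig((\mathbb{Y}_{tX}(s)+\theta\epsilon\widehat{Y}(s))\otimes m\pig)$, the linearised system reads
\[
-d\widehat{Z}(s)=\left[\int_0^1\!\pig(l^\theta_{xx}(s)\widehat{Y}(s)+l^\theta_{xv}(s)\widehat{u}(s)\pig)d\theta+\int_0^1\!F^\theta(s)(\widehat{Y}(s))\,d\theta\right]\!ds-\sum_j\widehat{\mathbbm{r}}_j(s)\,dw_j(s),
\]
with terminal datum $\widehat{Z}(T)=\int_0^1\!h^\theta_{xx}(T)\,d\theta\cdot\widehat{Y}(T)+\int_0^1\!F_T^\theta(T)(\widehat{Y}(T))\,d\theta$, and the linearised first-order condition
\[
\int_0^1\! l^\theta_{vx}(s)\,d\theta\cdot\widehat{Y}(s)+\int_0^1\! l^\theta_{vv}(s)\,d\theta\cdot\widehat{u}(s)+\widehat{Z}(s)=0.
\]

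Next, I would apply the product/It\^o rule to $\langle\widehat{Z}(s),\widehat{Y}(s)-\Psi\rangle_{\mathcal{H}_m}$ on $[t,T]$. Since $\widehat{Y}-\Psi=\int_t^s\widehat{u}\,d\tau$ has no martingale part, after integration and taking expectations (cancelling the stochastic integrals against $\widehat{\mathbbm{r}}_j$) one obtains
\[
\pigl\langle\widehat{Z}(T),\widehat{Y}(T)-\Psi\pigr\rangle_{\mathcal{H}_m}+\int_t^T\!\pig\langle\text{driver of }\widehat{Z},\widehat{Y}-\Psi\pigr\rangle_{\mathcal{H}_m}\!ds=\int_t^T\!\langle\widehat{Z},\widehat{u}\rangle_{\mathcal{H}_m}ds.
\]
Now substitute the linearised first-order condition to rewrite $\langle\widehat{Z},\widehat{u}\rangle$, and telescope away the $-\Psi$ terms exactly as in (\ref{eq:ApB4}). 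Grouping the $l$-terms gives the quadratic form $l_{xx}\widehat{Y}\!\cdot\!\widehat{Y}+2l_{vx}\widehat{u}\!\cdot\!\widehat{Y}+l_{vv}\widehat{u}\!\cdot\!\widehat{u}$ (integrated in $\theta$), which by Assumption \textbf{A(v)}'s (\ref{assumption, convexity of l}) is bounded below by $\lambda|\widehat{u}|^2-c_l'|\widehat{Y}|^2$; the $F$-terms produce $\pigl\langle F^\theta(s)(\widehat{Y}),\widehat{Y}\pigr\rangle_{\mathcal{H}_m}\!\geq-c'\|\widehat{Y}\|_{\mathcal{H}_m}^2$ by Assumption \textbf{B(v)(b)}'s (\ref{assumption, convexity of D^2F, D^2F_T}); similarly the terminal contributions are controlled by $-c'_h\|\widehat{Y}(T)\|_{\mathcal{H}_m}^2$ and $-c'_T\|\widehat{Y}(T)\|_{\mathcal{H}_m}^2$. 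Bounding $\|\widehat{Y}(s)\|_{\mathcal{H}_m}^2\leq 2\|\Psi\|_{\mathcal{H}_m}^2+2T\int_t^T\|\widehat{u}\|_{\mathcal{H}_m}^2d\tau$ and invoking the standing assumption (\ref{uniqueness condition}) to absorb the $\widehat{u}$-terms on the left, one deduces
\[
c_0\int_t^T\|\widehat{u}(s)\|_{\mathcal{H}_m}^2\,ds\leq C\,\|\Psi\|_{\mathcal{H}_m}^2.
\]
This yields the bound on $\widehat{u}$, and hence on $\widehat{Y}$ directly from its integral expression.

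Finally, for $\widehat{Z}$ and $\widehat{\mathbbm{r}}_j$ I would apply (\ref{diff of H norm}) to $\|\widehat{Z}(s)\|_{\mathcal{H}_m}^2$ and integrate from $s$ to $T$, exactly as in (\ref{bdd Z}), using the boundedness of $h_{xx}$, $D_X^2F$, $D_X^2F_T$, $l_{xx}$, $l_{xv}$ from Assumptions \textbf{A(ii)}'s (\ref{assumption, bdd of lxx, lvx, lvv}), \textbf{A(iii)}'s (\ref{assumption, bdd of h, hx, hxx}), \textbf{B(i)}--\textbf{B(ii)} to dominate the driver and terminal datum by the already-controlled quantities $\|\widehat{Y}\|_{\mathcal{H}_m}$ and $\|\widehat{u}\|_{\mathcal{H}_m}$. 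The main subtlety will be keeping track of the operator-valued term $\int_0^1 F^\theta(s)(\widehat{Y}(s))\,d\theta$: here I need to view the segment $\mathbb{Y}_{tX}+\theta\epsilon\widehat{Y}$ as an element of $\mathcal{H}_m$, invoke the uniform operator-norm bound in (\ref{assumption, bdd of D^2F, D^2F_T}) to dominate in $\mathcal{H}_m$-norm, and use the monotonicity in (\ref{assumption, convexity of D^2F, D^2F_T}) pointwise in $\theta$ (which survives the $\theta$-averaging by linearity). All constants produced are uniform in $\epsilon>0$, so we obtain (\ref{bdd Y, Z, u, r, epsilon}) with $C_4'$ depending only on the stated data.
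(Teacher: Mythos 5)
Your proposal is correct and is essentially the paper's own (omitted) argument: the paper likewise writes the finite-difference system \eqref{finite diff. forward}--\eqref{1st order finite diff.} via integrals of second derivatives along the segment and then states that one applies It\^o's lemma to $\pigl\langle \Delta^\epsilon_\Psi\mathbb{Z}_{tX}(s),\Delta^\epsilon_\Psi\mathbb{Y}_{tX}(s)\pigr\rangle_{\mathcal{H}_m}$ and repeats the duality/convexity steps of Proposition \ref{prop bdd of Y Z u r}; your joint-segment parametrisation even makes the application of Assumption \textbf{A(v)} cleaner, since all of $l^\theta_{xx},l^\theta_{vx},l^\theta_{vv}$ are evaluated at the same point for each $\theta$. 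One caution on the absorption step: you should keep the quadratic form in $(\widehat{Y}-\Psi,\widehat{u})$ after the telescoping (as in \eqref{eq:ApB4}), since $\|\widehat{Y}(s)-\Psi\|^2_{\mathcal{H}_m}\leq (s-t)\int_t^s\|\widehat{u}\|^2_{\mathcal{H}_m}d\tau$ reproduces exactly the constant $c_0$ of \eqref{uniqueness condition}, whereas the bound $\|\widehat{Y}(s)\|^2_{\mathcal{H}_m}\leq 2\|\Psi\|^2_{\mathcal{H}_m}+2T\int_t^T\|\widehat{u}\|^2_{\mathcal{H}_m}d\tau$ that you write doubles the coefficients multiplying $\int_t^T\|\widehat{u}\|^2_{\mathcal{H}_m}$ and could fail to be absorbed for borderline parameters even though $c_0>0$; the leftover cross terms linear in $\Psi$ are then handled by Young's inequality with small parameters exactly as with the $\kappa_i$'s in \eqref{4055}.
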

\begin{rem}
We emphasise that $C_4'$ is independent of the choice of $\epsilon$.
\end{rem}
\begin{proof}
    From (\ref{forward FBSDE})-(\ref{1st order condition}), the quadruple $\pig( \Delta^\epsilon_\Psi \mathbb{Y}_{tX} (s),
\Delta^\epsilon_\Psi \mathbb{Z}_{tX} (s),
\Delta^\epsilon_\Psi u_{tX}(s),
\Delta^\epsilon_\Psi\mathbbm{r}_{tX}(s)\pig)$ solves  the system \vspace{-15pt}

\begin{empheq}[left=\h{-10pt}\empheqbiglbrace]{align}
\Delta^\epsilon_\Psi \mathbb{Y}_{tX} (s)
=\:&\Psi+\int_{t}^{s}\Delta^\epsilon_\Psi u_{tX}(\tau)d\tau;
\label{finite diff. forward}\\
\Delta^\epsilon_\Psi \mathbb{Z}_{tX} (s)
=\:&
\displaystyle\int_{0}^{1}h_{xx}\pig(\mathbb{Y}_{t X}(T)+\theta\epsilon \Delta^\epsilon_\Psi \mathbb{Y}_{tX} (T)\pig)\Delta^\epsilon_\Psi \mathbb{Y}_{tX} (T)\nonumber\\ &\h{100pt}+D_{X}^{2}F_{T}\pig((\mathbb{Y}_{t X}(T)+\theta\epsilon \Delta^\epsilon_\Psi \mathbb{Y}_{tX} (T)) \otimes  m\pig)(\Delta^\epsilon_\Psi \mathbb{Y}_{tX} (T))d\theta\nonumber\\
&+\int^T_s\int_{0}^{1}l_{xx}\pig(\mathbb{Y}_{t X}(\tau)+\theta\epsilon \Delta^\epsilon_\Psi \mathbb{Y}_{tX} (\tau),u_{t X}(\tau) \pig)\Delta^\epsilon_\Psi \mathbb{Y}_{tX} (\tau)\nonumber\\
&\pushright{+l_{xv}\pig(\mathbb{Y}_{t,X+\epsilon\Psi}(\tau),u_{t X}(\tau)+\theta\epsilon \Delta^\epsilon_\Psi u_{tX}(\tau)\pig)\Delta^\epsilon_\Psi u_{tX}(\tau)d\theta d\tau}\h{-1pt}\nonumber\\
&+\int^T_s\int_{0}^{1}D_{X}^{2}F\pig((\mathbb{Y}_{t X}(\tau)+\theta\epsilon \Delta^\epsilon_\Psi \mathbb{Y}_{tX} (\tau)) \otimes  m\pig)(\Delta^\epsilon_\Psi \mathbb{Y}_{tX} (\tau))d\theta d\tau\nonumber\\
&-\int^T_s\sum_{j=1}^n \Delta^\epsilon_\Psi \mathbbm{r}_{t X,j}(\tau)dw_{j}(\tau),
\label{finite diff. backward}
\end{empheq}
meanwhile,\vspace{-15pt}

\begin{equation}
\scalemath{0.91}{
\int_{0}^{1}l_{vx}\pig(\mathbb{Y}_{t X}(s)+\theta\epsilon \Delta^\epsilon_\Psi \mathbb{Y}_{tX} (s),u_{t X}(s)\pig)
\Delta^\epsilon_\Psi \mathbb{Y}_{tX} (s)
+l_{vv}\pig(\mathbb{Y}_{t,X+\epsilon\Psi}(s) ,u_{t X}(s)+\theta\epsilon \Delta^\epsilon_\Psi u_{tX}(s)\pig)
\Delta^\epsilon_\Psi u_{tX}(s)d\theta+\Delta^\epsilon_\Psi \mathbb{Z}_{tX} (s)=0.}
\label{1st order finite diff.}
\end{equation}
The proof actually follows the same steps as in the proof for Proposition \ref{prop bdd of Y Z u r} by applying It\^o lemma to the inner product $\pig\langle \Delta^{\epsilon_j}_\Psi  \mathbb{Z}_{tX}(s),\Delta^{\epsilon_j}_\Psi  \mathbb{Y}_{tX}(s)  \pigr\rangle_{\mathcal{H}_m} $ and using the above equations, we omit it here.
\end{proof}

\mycomment{
$\pig( D \mathbb{Y}_{tX}^\Psi(s),
D \mathbb{Z}_{tX}^\Psi(s),
D u_{tX}^\Psi(s),
D \mathbbm{r}_{tX}^\Psi(s)\pig)$}
\begin{lem}
Let $X$, $\Psi \in L^2_{\mathcal{W}^{\indep}_{t}} (\mathcal{H}_m) \subset \mathcal{H}_{m}$. Under the assumptions of Proposition \ref{prop. convex of J}, the processes 
$ \Delta^\epsilon_\Psi \mathbb{Y}_{tX}(s),
\Delta^\epsilon_\Psi u_{tX}(s)$ and $\Delta^\epsilon_\Psi \mathbb{Z}_{tX}(s)$ converge in $L_{\mathcal{W}_{t X\Psi}}^{\infty}(t,T;\mathcal{H}_{m})$, and $\Delta^\epsilon_\Psi \mathbbm{r}_{tX,j}(s)$ converge in $L_{\mathcal{W}_{t X\Psi}}^{2}(t,T;\mathcal{H}_{m})$, as $\epsilon \to 0$. They are called the Jacobian flow (G\^ateaux derivative) of the solution to the FBSDE (\ref{forward FBSDE})-(\ref{1st order condition}). The limit of $ \pig(\Delta^\epsilon_\Psi \mathbb{Y}_{tX}(s)$, $\Delta^\epsilon_\Psi \mathbb{Z}_{tX}(s)$, $\Delta^\epsilon_\Psi \mathbbm{r}_{tX,j}(s)\pig)$ is the unique solution in $L_{\mathcal{W}_{t X\Psi}}^{2}(t,T;\mathcal{H}_{m})$ of the alternative FBSDE
\begingroup
\allowdisplaybreaks
\begin{equation}
\scalemath{0.95}{
\h{-10pt}\left\{
\begin{aligned}
D \mathbb{Y}_{tX}^\Psi  (s)
&= \Psi
+\displaystyle\int_{t}^{s}\Big[ \diff_y  u(\mathbb{Y}_{tX},\mathbb{Z}_{tX})(\tau)\Big] 
\Big[D \mathbb{Y}_{tX}^\Psi  (\tau) \Big]d\tau
+\int_{t}^{s} 
\Big[\diff_z  u (\mathbb{Y}_{tX},\mathbb{Z}_{tX}) (\tau)\Big] 
\Big[D \mathbb{Z}_{tX}^\Psi  (\tau)\Big]  d\tau;\\
D \mathbb{Z}_{tX}^\Psi (s)
&=h_{xx}(\mathbb{Y}_{tX}(T))D \mathbb{Y}_{tX}^\Psi (T)
+D_{X}^2F_{T}(\mathbb{Y}_{tX}(T)  \otimes  m)(D \mathbb{Y}_{tX}^\Psi (T))\\
&\h{10pt}+\displaystyle\int^T_s\bigg\{l_{xx}(\mathbb{Y}_{tX}(\tau),u(\tau))D \mathbb{Y}_{tX}^\Psi (\tau)
+l_{xv}(\mathbb{Y}_{tX}(\tau),u(\tau))\Big[ \diff_y  u(\mathbb{Y}_{tX},\mathbb{Z}_{tX})(\tau)\Big] 
D \mathbb{Y}_{tX}^\Psi (\tau)\\
&\h{45pt}+l_{xv}(\mathbb{Y}_{tX}(\tau),u(\tau))
\Big[ \diff_z  u (\mathbb{Y}_{tX},\mathbb{Z}_{tX})(\tau)\Big] 
D \mathbb{Z}_{tX}^\Psi (\tau)
+D^2_{X}F(\mathbb{Y}_{tX}(\tau)  \otimes  m)\pig(
D\mathbb{Y}_{tX}^\Psi (\tau)\pig)\bigg\}d\tau \h{-30pt}\\
&\h{10pt}-\int^T_s\sum_{j=1}^{n}D\mathbbm{r}^\Psi_{tX,j}(\tau)dw_{j}(\tau),
\end{aligned}\right.}
\label{J flow of FBSDE}
\end{equation}
\endgroup
\label{lem, Existence of J flow}
\end{lem}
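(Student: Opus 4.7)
The plan is to first establish that the linear FBSDE system (\ref{J flow of FBSDE}) admits a unique solution in $L^{2}_{\mathcal{W}_{tX\Psi}}(t,T;\mathcal{H}_m)$, and then to show that the difference quotients in (\ref{def diff process}) converge strongly to this solution. Note that (\ref{1st order finite diff.}) is the finite-difference analogue of a relation that, by the implicit function theorem applied to $l_v(y,u)+z=0$ and Assumption \textbf{A(v)}'s (\ref{assumption, convexity of l}), takes the clean limit form $Du^{\Psi}_{tX} = [\diff_y u(\mathbb{Y}_{tX},\mathbb{Z}_{tX})]\,D\mathbb{Y}^{\Psi}_{tX} + [\diff_z u(\mathbb{Y}_{tX},\mathbb{Z}_{tX})]\,D\mathbb{Z}^{\Psi}_{tX}$; substituting this back into (\ref{J flow of FBSDE}) reduces the system to a linear coupled FBSDE in $(D\mathbb{Y}^{\Psi}_{tX},D\mathbb{Z}^{\Psi}_{tX},D\mathbbm{r}^{\Psi}_{tX,j})$.

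For existence and uniqueness of this linear FBSDE, I would mimic the proof of Proposition \ref{prop bdd of Y Z u r}: apply It\^o's product rule to $\langle D\mathbb{Z}^{\Psi}_{tX}(s), D\mathbb{Y}^{\Psi}_{tX}(s)-\Psi\rangle_{\mathcal{H}_m}$, integrate over $[t,T]$, and use the convexity/monotonicity Assumptions \textbf{A(v)}, \textbf{A(vi)}, \textbf{B(v)(b)} together with the smallness condition (\ref{uniqueness condition}) to obtain the energy estimate \[ \|D\mathbb{Y}^{\Psi}_{tX}\|, \|D\mathbb{Z}^{\Psi}_{tX}\|, \|Du^{\Psi}_{tX}\|, \Big[\sum_j\!\int_t^T\!\|D\mathbbm{r}^{\Psi}_{tX,j}\|^2\Big]^{1/2} \lesssim \|\Psi\|_{\mathcal{H}_m}. \] Applying the same estimate to the difference of two putative solutions yields uniqueness; existence follows via a Picard iteration or, equivalently, from the variational principle of Proposition \ref{prop. convex of J} applied to the linear-quadratic perturbed functional whose Euler--Lagrange system is precisely (\ref{J flow of FBSDE}).

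For the convergence step, I would subtract (\ref{J flow of FBSDE}) from the finite-difference system (\ref{finite diff. forward})--(\ref{1st order finite diff.}) and apply the very same energy estimate to the resulting FBSDE satisfied by $(\Delta^{\epsilon}_{\Psi}\mathbb{Y}_{tX} - D\mathbb{Y}^{\Psi}_{tX}, \Delta^{\epsilon}_{\Psi}\mathbb{Z}_{tX} - D\mathbb{Z}^{\Psi}_{tX}, \ldots)$. This FBSDE carries inhomogeneous forcing terms arising from the mismatches $\int_0^1 [l_{xx}(\mathbb{Y}_{tX}+\theta\epsilon\Delta^{\epsilon}_{\Psi}\mathbb{Y}_{tX},u_{tX}) - l_{xx}(\mathbb{Y}_{tX},u_{tX})]\,d\theta$, and analogously for $l_{xv}$, $l_{vv}$, $h_{xx}$, and the operators $D^2_X F$, $D^2_X F_T$. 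By Assumptions \textbf{A(ii)}--\textbf{A(iv)} and \textbf{B(ii)}--\textbf{B(iv)} these integrands are bounded and continuous, while the uniform bound (\ref{bdd Y, Z, u, r, epsilon}) gives $\epsilon\Delta^{\epsilon}_{\Psi}\mathbb{Y}_{tX}\to 0$ strongly in $\mathcal{H}_m$; dominated convergence then forces all forcing terms to vanish in $L^2$ as $\epsilon\downarrow 0$, and the energy inequality upgrades this to convergence in $L^{\infty}_{\mathcal{W}_{tX\Psi}}(t,T;\mathcal{H}_m)$ for $(\Delta^{\epsilon}_{\Psi}\mathbb{Y},\Delta^{\epsilon}_{\Psi}\mathbb{Z},\Delta^{\epsilon}_{\Psi}u)$ and in $L^{2}_{\mathcal{W}_{tX\Psi}}(t,T;\mathcal{H}_m)$ for $\Delta^{\epsilon}_{\Psi}\mathbbm{r}_{tX,j}$.

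The principal obstacle is the passage to the limit in the nonlocal terms $\int_0^1 D^2_X F((\mathbb{Y}_{tX}+\theta\epsilon\Delta^{\epsilon}_{\Psi}\mathbb{Y}_{tX})\otimes m)(\Delta^{\epsilon}_{\Psi}\mathbb{Y}_{tX})\,d\theta$ and its terminal counterpart, because these are bounded linear operators acting on the difference quotient, whose base point also drifts with $\epsilon$. This is precisely where Assumption \textbf{B(v)(a)}'s (\ref{assumption, properties of D^2F, D^2F_T}) is indispensable: it guarantees that $D^2_X F(X_k\otimes m)(Y_k)-D^2_X F(X\otimes m)(Y_k) \to 0$ in $L^1(\Omega;L^1_m)$ whenever $X_k\to X$ in $\mathcal{H}_m$ and $\{Y_k\}$ is bounded, which, combined with the continuity Assumption \textbf{B(iii)} (and \textbf{B(iv)} for the terminal term), converts the uniform bound on $\Delta^{\epsilon}_{\Psi}\mathbb{Y}_{tX}$ into the desired convergence of the nonlocal forcing.
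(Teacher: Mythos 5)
Your overall architecture (well-posedness of the linear FBSDE, then convergence of the difference quotients) is reasonable, and your energy estimate for uniqueness of \eqref{J flow of FBSDE} is essentially what the paper does. But the convergence step has a genuine gap. After subtracting the two systems, the inhomogeneous forcing terms have the form $\big[Q_\epsilon - Q_0\big]\big(\Delta^\epsilon_\Psi\mathbb{Y}_{tX}\big)$, where $Q_\epsilon$ is, e.g., $\int_0^1 l_{xx}(\mathbb{Y}_{tX}+\theta\epsilon\Delta^\epsilon_\Psi\mathbb{Y}_{tX},u_{tX})\,d\theta$ or $\int_0^1 D_X^2F((\mathbb{Y}_{tX}+\theta\epsilon\Delta^\epsilon_\Psi\mathbb{Y}_{tX})\otimes m)\,d\theta$. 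Your claim that these vanish in $L^2$ by dominated convergence does not follow from the stated hypotheses. For the local terms, $Q_\epsilon-Q_0$ is uniformly bounded and tends to zero $m\otimes\mathbb{P}$-a.e.\ (along a subsequence), but it multiplies a vector $\Delta^\epsilon_\Psi\mathbb{Y}_{tX}$ that is only bounded in $\mathcal{H}_m$ \emph{and changes with} $\epsilon$; without equi-integrability of $|\Delta^\epsilon_\Psi\mathbb{Y}_{tX}|^2$ one cannot conclude $\mathbb{E}\int |Q_\epsilon-Q_0|^2|\Delta^\epsilon_\Psi\mathbb{Y}_{tX}|^2\,dm\to 0$ (take $Q_\epsilon-Q_0$ an indicator of a shrinking set on which $|\Delta^\epsilon_\Psi\mathbb{Y}_{tX}|^2$ concentrates its mass). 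For the nonlocal terms the situation is worse: Assumption \textbf{B(v)(a)}'s \eqref{assumption, properties of D^2F, D^2F_T} only delivers convergence in $L^1(\Omega,\mathcal{A},\mathbb{P};L^1_m)$, which cannot be paired against the merely $L^2$-bounded difference $\Delta^\epsilon_\Psi\mathbb{Y}_{tX}-D\mathbb{Y}^\Psi_{tX}$ in the energy identity; that pairing requires the other factor to be essentially bounded, not just square-integrable. So the energy inequality for the difference system does not close.

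This is exactly why the paper takes a different, two-stage route. It first extracts \emph{weak} limits of the difference quotients via Banach--Alaoglu and identifies them with the unique solution of \eqref{J flow of FBSDE}: in the weak formulation the problematic forcing terms are tested against a \emph{fixed} pointwise-bounded $\varphi\in L^\infty$, for which the $L^1$-convergence of \textbf{B(v)(a)} suffices. It then upgrades to strong convergence not by estimating the difference system directly, but by a duality/convexity argument: the It\^o product identity for $\langle\Delta^{\epsilon}_\Psi\mathbb{Z}_{tX}(t),\Psi\rangle_{\mathcal{H}_m}$ and $\langle D\mathbb{Z}^\Psi_{tX}(t),\Psi\rangle_{\mathcal{H}_m}$ shows that a quadratic form $\mathscr{J}^\dagger_{\epsilon}$ in the differences tends to zero, while Assumptions \textbf{A(v)}, \textbf{A(vi)}, \textbf{B(v)(b)} and \eqref{uniqueness condition} bound $\mathscr{J}^\dagger_{\epsilon}$ from below by $\int_t^T\|\Delta^\epsilon_\Psi u_{tX}-Du^\Psi_{tX}\|^2_{\mathcal{H}_m}\,ds$ up to vanishing remainders. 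To repair your proof you would either need to supply a uniform higher-moment (hence equi-integrability) bound on $\Delta^\epsilon_\Psi\mathbb{Y}_{tX}$, which the paper does not establish, or adopt the weak-convergence-plus-convexity identification scheme.
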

\begin{rem}\label{rem def of Du = DY +DZ}
We call that the control process $u_{t X}(s)$ satisfying the first order condition (\ref{1st order condition}) is a map $u(y,z)$ evaluated at $(y,z) = (\mathbb{Y}_{t X}(s), \mathbb{Z}_{t X}(s))$ such that it is solved implicitly in the equation $l_v(y,u(y,z))+z=0$. The Jacobian matrices $\diff_y  u $ and $\diff_z  u $, with respect to the first and second arguments respectively, are bounded by
$$
\pig| \diff_y  u  \pig| 
\leq \dfrac{c_l}{\lambda} 
\h{15pt} \text{ and } \h{15pt}
\pig| \diff_z  u  \pig| \leq \dfrac{1}{\lambda},
$$
which can be verified by differentiating the first order condition (\ref{1st order condition}).
\end{rem}
\mycomment{
\begin{rem}
For any $\Psi \in L^2_{\mathcal{W}^{\indep}_{tX}} (\mathcal{H}_m)$ such that $\mathbb{E}(\Psi)=0$, then the backward equation in the system (\ref{J flow of FBSDE}) becomes 
\begin{align*}
&\h{-10pt}D\mathbb{Z}^{\Psi}_{t X}(s)\\
=&\:h_{xx}(\mathbb{Y}_{t X}(T))
D\mathbb{Y}^{\Psi}_{t X}(T)
+\diff^2_x \dfrac{dF_{T}}{d\nu}(\mathbb{Y}_{t X}(T)  \otimes  m)(\mathbb{Y}_{t X}(T))D\mathbb{Y}^{\Psi}_{t X}(T)\nonumber\\
&+\int^T_s
\bigg\{l_{xx}(\mathbb{Y}_{t X}(\tau),u(\tau))D \mathbb{Y}_{t X}^\Psi (\tau)
+l_{xv}(\mathbb{Y}_{t X}(\tau),u(\tau))\Big[ \diff_y  u(\mathbb{Y}_{t X},\mathbb{Z}_{t X})(\tau)\Big] 
D \mathbb{Y}_{t X}^\Psi (\tau)\nonumber\\
&\h{35pt}+l_{xv}(\mathbb{Y}_{t X}(\tau),u(\tau))
\Big[ \diff_z  u (\mathbb{Y}_{t X},\mathbb{Z}_{t X})(\tau)\Big]D \mathbb{Z}_{t X}^\Psi (\tau)
+\diff^2_x 
\dfrac{dF}{d\nu}(\mathbb{Y}_{t X}(\tau)  \otimes  m)(\mathbb{Y}_{t X}(\tau))D\mathbb{Y}^\Psi_{t X}(\tau)\Bigg\}d\tau\h{-30pt}
\nonumber\\
&-\int^T_s\sum_{j=1}^{n}D\mathbbm{r}^\Psi_{t X,j}(\tau)dw_{j}(\tau);
\end{align*}
indeed, from formula (\ref{eq:2-232}), it is sufficient to show that
\[
\widetilde{\mathbb{E}}\left[\int_{\mathbb{R}^{n}}\diff_x \diff_{\tilde{x}}
\dfrac{d^{\h{.5pt}2}\h{-.7pt}  F}{d\nu^{2}}(\mathbb{Y}_{t X}(s)  \otimes  m)\pig(\mathbb{Y}_{t X}(s),\widetilde{\mathbb{Y}}_{t\tilde{X}}(s)\pig)
D\widetilde{\mathbb{Y}}^{\tilde{\Psi}}_{t\tilde{X}}(s)dm(\tilde{x})\right]=0.
\]
Referring to the system (\ref{J flow of FBSDE, X=I, matrix}) and relation (\ref{eq:4-178}), there is a matrix-valued process $D_X\widetilde{\mathbb{Y}}_{t \tilde{X}}
\in $\\$L_{\mathcal{W}_{t\tilde{X}}}^{2}(t,T;\mathcal{L}(\mathcal{H}_{m};\mathcal{H}_{m}))$ such that we can write 
$D\widetilde{\mathbb{Y}}_{t\tilde{X}}^{\tilde{\Psi}}
=\pig[D_X\widetilde{\mathbb{Y}}_{t \tilde{X}}\pig]
\widetilde{\Psi}$. Therefore, due to the fact that $\widetilde{\mathbb{E}}(\widetilde{\Psi})=0$ and $\widetilde{\Psi}$ is independent of $\mathcal{W}_{t\tilde{X}}$, we have  
\[
\int_{\mathbb{R}^{n}}
\widetilde{\mathbb{E}}\left[\diff_x \diff_{\tilde{x}}\dfrac{d^{\h{.5pt}2}\h{-.7pt}  F}{d\nu^{2}}(\mathbb{Y}_{tX}(s)  \otimes  m)\pig(\mathbb{Y}_{tX}(s),\widetilde{\mathbb{Y}}_{t\tilde{X}}(s)\pig)
D_X\widetilde{\mathbb{Y}}_{t \tilde{X}}\right]
\widetilde{\mathbb{E}}(\widetilde{\Psi})
dm(\tilde{x})=0.
\]
\end{rem}}
The proof of Lemma \ref{lem, Existence of J flow} can be found in \hyperref[app, Existence of J flow]{Appendix}.
\mycomment{
\textcolor{red}{
The idea of proof consists of first establishing the existence of solution to (\ref{J flow of FBSDE}) can apply the standard martingale representation theorem and fixed point arguments to  $(\mathscr{D}\mathbb{Y}, \mathscr{D}\mathbb{Z})$ to (\ref{J flow of FBSDE}). Since the system (\ref{J flow of FBSDE}) is linear, the proof is standard and thus omitted here. Then, we can easily verify that the solution $(\mathscr{D}\mathbb{Y}, \mathscr{D}\mathbb{Z})$ to (\ref{J flow of FBSDE}) is exactly the derivatives of $\mathbb{Y}_{t,X}(s)$ and $\mathbb{Z}_{t,X}(s)$ with respect to the initial random variable $X$ by considering, for example, 
$$ \dfrac{\mathbb{Y}_{t,X+\varepsilon\Psi}(s)
- \mathbb{Y}_{t,X}(s)}{\varepsilon} - \mathscr{D}\mathbb{Y},$$
following the details as in \cite{PP92}. }}

\begin{lem}
 Let $X$, $\Psi \in L^2_{\mathcal{W}^{\indep}_{t}} (\mathcal{H}_m) \subset \mathcal{H}_{m}$ and $s\in [t,T]$. Under the assumptions of Proposition \ref{prop. convex of J}, the Jacobian flow $\pig(D \mathbb{Y}_{tX}^\Psi(s),
D \mathbb{Z}_{tX}^\Psi(s),D u_{tX}^\Psi(s), $ $D \mathbbm{r}_{tX,j}^\Psi(s)\pig)$ is linear in $\Psi$, and each of them is partially continuous in $X$ for a given $\Psi$. Thus, their Fr\'echet derivatives exist, and they are denoted by $D \mathbb{Y}_{tX}(s),
D \mathbb{Z}_{tX}(s),D u_{tX}(s)$, $D \mathbbm{r}_{tX,j}(s)$, respectively.
\label{lem, Existence of Frechet derivatives}
\end{lem}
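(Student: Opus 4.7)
The plan is to proceed in three steps corresponding to the three claims (linearity in $\Psi$, partial continuity in $X$, and upgrade from G\^ateaux to Fr\'echet derivative). First, I would establish linearity of the Jacobian flow in $\Psi$. Inspecting the FBSDE (\ref{J flow of FBSDE}), the coefficients $h_{xx}$, $D_X^2 F_T$, $l_{xx}$, $l_{xv}$, $\diff_y u$, $\diff_z u$, and $D_X^2 F$ depend on $(\mathbb{Y}_{tX},\mathbb{Z}_{tX},u_{tX})$ only, while $\Psi$ enters solely as the initial condition. Because the system is \emph{linear} in the unknowns, for any scalars $\alpha,\beta$ and $\Psi_1,\Psi_2 \in L^2_{\mathcal{W}^{\indep}_t}(\mathcal{H}_m)$, the tuple $\alpha(D\mathbb{Y}^{\Psi_1}_{tX},D\mathbb{Z}^{\Psi_1}_{tX},Du^{\Psi_1}_{tX},D\mathbbm{r}^{\Psi_1}_{tX}) + \beta(D\mathbb{Y}^{\Psi_2}_{tX},\ldots)$ solves (\ref{J flow of FBSDE}) with initial datum $\alpha\Psi_1+\beta\Psi_2$; uniqueness from Lemma \ref{lem, Existence of J flow} yields the linearity of $\Psi \longmapsto D\mathbb{Y}^\Psi_{tX}$ (and of the three other components).

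Second, I would prove partial continuity in $X$. Fix $\Psi$ and take $X_k \to X$ in $\mathcal{H}_m$. As a preliminary step I would show stability of the underlying FBSDE itself: applying It\^o's lemma to $\pig\langle \mathbb{Z}_{tX_k}(s)-\mathbb{Z}_{tX}(s),\mathbb{Y}_{tX_k}(s)-\mathbb{Y}_{tX}(s)\pigr\rangle_{\mathcal{H}_m}$ in the same vein as in Proposition \ref{prop bdd of Y Z u r}, and invoking Assumptions \textbf{A(v)}, \textbf{B(v)(b)} together with (\ref{uniqueness condition}) to absorb quadratic terms on the left-hand side, I obtain $\mathbb{Y}_{tX_k}(s),\mathbb{Z}_{tX_k}(s),u_{tX_k}(s) \to \mathbb{Y}_{tX}(s),\mathbb{Z}_{tX}(s),u_{tX}(s)$ in $\mathcal{H}_m$ uniformly in $s$, and $\mathbbm{r}_{tX_k,j} \to \mathbbm{r}_{tX,j}$ in $L^2$. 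Once this is in hand, I would repeat the same procedure on the difference of the Jacobian flows $(D\mathbb{Y}^\Psi_{tX_k},\ldots)-(D\mathbb{Y}^\Psi_{tX},\ldots)$, the equation for which has coefficient-differences that converge to zero by Assumption \textbf{A(iv)}'s continuity of $l_{xx},l_{xv},l_{vv},h_{xx}$, the formulae for $\diff_y u,\diff_z u$ in Remark \ref{rem def of Du = DY +DZ}, and crucially Assumptions \textbf{B(iii)}, \textbf{B(iv)}, \textbf{B(v)(a)} for the terms $D_X^2 F$ and $D_X^2 F_T$. The uniform estimate (\ref{bdd Y, Z, u, r, epsilon}) (which passes to the limit $\epsilon \downarrow 0$) provides the a~priori bound required to close the stability argument.

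Third, with G\^ateaux differentiability supplied by Lemma \ref{lem, Existence of J flow}, linearity of $\Psi \longmapsto D\mathbb{Y}^\Psi_{tX}$ (and its companions) in $\Psi$, boundedness of this linear map as an operator from $L^2_{\mathcal{W}^{\indep}_t}(\mathcal{H}_m) \subset \mathcal{H}_m$ into $\mathcal{H}_m$ (obtained by letting $\epsilon \downarrow 0$ in (\ref{bdd Y, Z, u, r, epsilon})), and continuity of this operator in the base point $X$ from step two, I appeal to the classical criterion in Banach-space calculus that a G\^ateaux derivative which depends continuously (in operator norm) on the base point is actually a Fr\'echet derivative. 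This yields the Fr\'echet derivatives $D\mathbb{Y}_{tX}(s)$, $D\mathbb{Z}_{tX}(s)$, $Du_{tX}(s)$, $D\mathbbm{r}_{tX,j}(s)$ as bounded linear operators on $L^2_{\mathcal{W}^{\indep}_t}(\mathcal{H}_m)$, completing the proof.

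The main obstacle will be handling the nonlinear-in-measure terms $D_X^2 F$ and $D_X^2 F_T$ inside the stability estimate for step two: because Assumption \textbf{B(v)(a)}'s (\ref{assumption, properties of D^2F, D^2F_T}) only provides convergence of $D_X^2 F(X_k\otimes m)(Y_k)-D_X^2 F(X\otimes m)(Y_k)$ in $L^1(\Omega;L^1_m)$ rather than in $\mathcal{H}_m$, I will need to pair these terms against the uniformly bounded Jacobian-flow differences in the $\mathcal{H}_m$ inner product and use the uniform $\mathcal{H}_m$-boundedness (combined with an equi-integrability argument drawn from Assumption \textbf{B(ii)}'s (\ref{assumption, bdd of D^2F, D^2F_T})) to bootstrap from $L^1$ convergence to vanishing of the pairing. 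This step is delicate but mirrors the analogous argument in the proof of the It\^o formula (Theorem \ref{ito thm}) via hypotheses (\ref{eq:3-124})--(\ref{eq:3-125}).
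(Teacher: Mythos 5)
Your proposal follows essentially the same route as the paper's proof: linearity of the Jacobian flow is obtained from the linearity of the FBSDE (\ref{J flow of FBSDE}) together with the uniqueness statement of Lemma \ref{lem, Existence of J flow}; partial continuity in $X$ is obtained by the same energy estimate on the difference of Jacobian flows (It\^o's lemma applied to $\pigl\langle D\mathbb{Z}^{\Psi}_{tX_k}-D\mathbb{Z}^{\Psi}_{tX},\,D\mathbb{Y}^{\Psi}_{tX_k}-D\mathbb{Y}^{\Psi}_{tX}\pigr\rangle_{\mathcal{H}_m}$, the convexity Assumptions \textbf{A(v)}, \textbf{A(vi)}, \textbf{B(v)(b)} and condition (\ref{uniqueness condition})), with the stability of the underlying solution read off from (\ref{bdd Y, Z, u, r, epsilon}); and the Fr\'echet upgrade is the same appeal to the G\^ateaux-plus-continuity criterion (the paper cites Proposition 3.2.15 of \cite{DM07} together with separability of $\mathcal{H}_m$). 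The only point where you over-engineer is the treatment of the $D_X^2F$ and $D_X^2F_T$ terms: after the Young-inequality decomposition the operator differences act on the \emph{fixed} element $D\mathbb{Y}^{\Psi}_{tX}$, so the fixed-$Y$ continuity of Assumptions \textbf{B(iii)}--\textbf{B(iv)} suffices and the delicate $L^1$-to-pairing bootstrap via \textbf{B(v)(a)} that you anticipate is not needed.
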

The proof can be found in \hyperref[app, Existence of Frechet derivatives]{Appendix}.

\section{Regularities of Value Function and its Functional Derivatives}\label{sec. Analytic Properties of Value Function}
In this section, the quadruple $\pig(\mathbb{Y}_{tX}(s), \mathbb{Z}_{tX}(s), u_{tX}(s), \mathbbm{r}_{tX}(s)\pig)$ is the solution to the FBSDE (\ref{forward FBSDE})-(\ref{1st order condition}) and $V(X \otimes m,t)$ is the corresponding value function defined in (\ref{def value function}).
\subsection{Growth and Regularities of Value Function and its Derivatives}
Due to the quadratic growths in $x$ of the running and terminal cost functionals, we have the same growth for the value function in $X$.
\begin{prop}[\bf Quadratic Growth of $V$ in $X$]
\label{prop bdd of V} Under the assumptions of Proposition \ref{prop. convex of J}, for any $X \in \mathcal{H}_m$ measurable to $\mathcal{W}^t_0$, we have 
\begin{align}
|V(X  \otimes  m,t)|\leq C_{5}\pig(1+\|X\|^{2}_{\mathcal{H}_m}\pig),
\label{bdd V}
\end{align}
where $C_5$ is a positive constants depending only on $n$, $\lambda$, $\eta$, $c$, $c_l$, $c_h$, $c_T$, $c'$, $c_l'$, $c_h'$, $c_T'$ and $T$.
\end{prop}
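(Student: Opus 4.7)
The plan is to bound each of the four contributions to $V(X\otimes m,t)$ separately using the quadratic growth assumptions on the coefficients, then combine them via the a priori estimates on the optimal trajectory from Proposition \ref{prop bdd of Y Z u r}. There is essentially no real difficulty here: the heavy lifting has already been done in establishing the bounds on $(\mathbb{Y}_{tX},u_{tX})$, and what remains is bookkeeping with constants.

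First I would treat the running control cost $\int_t^T\mathbb{E}\pig[\int_{\mathbb{R}^n} l(\mathbb{Y}_{tX}(s),u_{tX}(s))dm(x)\pigr]ds$. Assumption \textbf{A(i)}'s (\ref{assumption, bdd of l, lx, lv}) gives the pointwise inequality $|l(y,v)|\le c_l(1+|y|^2+|v|^2)$; taking expectations, integrating against $m$, and using Fubini turns the $dm \otimes d\mathbb{P}$ integrals into squared $\mathcal{H}_m$-norms, so this term is dominated by $c_l\int_t^T\pig(1+\|\mathbb{Y}_{tX}(s)\|_{\mathcal{H}_m}^2+\|u_{tX}(s)\|_{\mathcal{H}_m}^2\pig)ds$. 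For the running measure functional $\int_t^T F(\mathbb{Y}_{tX}(s)\otimes m)ds$, the growth bound (\ref{assumption, bdd of F F_T, no lift}) applied to the pushforward $\mathbb{Y}_{tX}(s)\otimes m$ together with identity (\ref{eq:2-216}) taken with $\varphi(\xi)=|\xi|^2$ yields $|F(\mathbb{Y}_{tX}(s)\otimes m)|\le c\pig(1+\|\mathbb{Y}_{tX}(s)\|_{\mathcal{H}_m}^2\pig)$.

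The two terminal contributions $\mathbb{E}\pig[\int_{\mathbb{R}^n} h(\mathbb{Y}_{tX}(T))dm(x)\pigr]$ and $F_T(\mathbb{Y}_{tX}(T)\otimes m)$ are handled identically, using the quadratic growth of $h$ from \textbf{A(iii)}'s (\ref{assumption, bdd of h, hx, hxx}) and the bound on $F_T$ from (\ref{assumption, bdd of F F_T, no lift}); together they contribute at most $(c_h+c_T)\pig(1+\|\mathbb{Y}_{tX}(T)\|_{\mathcal{H}_m}^2\pig)$.

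Finally, substituting the a priori estimates $\|\mathbb{Y}_{tX}(s)\|_{\mathcal{H}_m}, \|u_{tX}(s)\|_{\mathcal{H}_m}\le C_4(1+\|X\|_{\mathcal{H}_m})$ from Proposition \ref{prop bdd of Y Z u r}, valid uniformly for $s\in[t,T]$, and using the elementary inequality $(1+\|X\|_{\mathcal{H}_m})^2\le 2(1+\|X\|_{\mathcal{H}_m}^2)$, each of the four contributions is bounded by a constant times $1+\|X\|_{\mathcal{H}_m}^2$, with the $s$-integrals producing at worst an additional factor of $T$. Collecting the constants into a single $C_5$ depending on $n,\lambda,\eta,c,c_l,c_h,c_T,c',c_l',c_h',c_T',T$ yields the stated quadratic growth estimate $|V(X\otimes m,t)|\le C_5(1+\|X\|_{\mathcal{H}_m}^2)$.
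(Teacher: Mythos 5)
Your proposal is correct and follows essentially the same route as the paper's own (very brief) proof: apply the quadratic growth assumptions on $l$, $h$, $F$, $F_T$ to each of the four terms in (\ref{def value function}) and then insert the a priori bounds on $\mathbb{Y}_{tX}$ and $u_{tX}$ from Proposition \ref{prop bdd of Y Z u r}. The only cosmetic difference is that you derive the lifted bound $|F(\mathbb{Y}_{tX}(s)\otimes m)|\le c(1+\|\mathbb{Y}_{tX}(s)\|^2_{\mathcal{H}_m})$ from (\ref{assumption, bdd of F F_T, no lift}) via (\ref{eq:2-216}), whereas the paper invokes the already-lifted form (\ref{assumption, bdd of F F_T}) directly.
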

\begin{proof}
Recall Formula (\ref{def value function}), the value function reads
$$
\mbox{\fontsize{9.7}{10}\selectfont\(
V(X  \otimes  m,t)=\displaystyle\int_{t}^{T}\mathbb{E}
\left[\int_{\mathbb{R}^{n}}l\pig(\mathbb{Y}_{tX}(s),u_{tX}(s)\pig)dm(x)\right]
+F(\mathbb{Y}_{tX}(s)  \otimes  m)ds
+\mathbb{E}\left[\int_{\mathbb{R}^{n}}h(\mathbb{Y}_{tX}(T))dm(x)\right]
+F_{T}(\mathbb{Y}_{tX}(T)  \otimes  m).\)}
$$
Applying Assumptions \textbf{A(i)}'s (\ref{assumption, bdd of l, lx, lv}), \textbf{A(iii)}'s (\ref{assumption, bdd of h, hx, hxx}), (\ref{assumption, bdd of F F_T}) and the bounds of $\mathbb{Y}_{tX}$, $u_{tX}$ in Proposition \ref{prop bdd of Y Z u r}, we can then deduce the estimate (\ref{bdd V}).
\end{proof}

Next, we have the Lipschitz nature of the derivative $D_X V(X  \otimes  m,t)$ with respect to
$X \in L^2_{\mathcal{W}_t^{\indep}}(\mathcal{H}_m)$ which is defined in (\ref{subspace of H_m indep. of W_t}). 
\begin{prop}[\bf Lipschitz Continuity of $D_X V$ in $X$]
\label{prop4-2} Under the assumptions in Proposition \ref{prop. convex of J}, the functional $X\in L^2_{\mathcal{W}_t^{\indep}}(\mathcal{H}_m)
\longmapsto V(X  \otimes  m,t)$ is Fr\'echet differentiable
and 

\begin{equation}
D_{X}V(X  \otimes  m,t)=\mathbb{Z}_{tX}(t).
\label{D_X V = Z}
\end{equation}
Moreover, for any $X^{1},X^{2}\in L^2_{\mathcal{W}_t^{\indep}}(\mathcal{H}_m)$, the derivative is Lipschitz continuous such that
\begin{equation}
\pigl\|D_{X}V(X^{1}  \otimes  m,t)-D_{X}V(X^{2}  \otimes  m,t)\pigr\|_{\mathcal{H}_m}
\leq C_6 \big\|X^{1}-X^{2}\big\|_{\mathcal{H}_m},
\label{lip cts of D_XV in X}
\end{equation}
where $C_6$ depends only on $\lambda$, $c$, $c_l$, $c_h$, $c_T$, $c'$, $c_h'$, $c_T'$, $c_l'$ and $T$.
\label{prop D_X V = Z and lip of D_X V}
\end{prop}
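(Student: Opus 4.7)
The plan is to prove the three claims in sequence: existence of the Fréchet derivative, its identification with $\mathbb{Z}_{tX}(t)$, and the Lipschitz bound (\ref{lip cts of D_XV in X}).

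For the identification $D_X V(X \otimes m, t) = \mathbb{Z}_{tX}(t)$, the core idea is an Itô-duality argument pairing the forward Jacobian flow from Lemma \ref{lem, Existence of J flow} against the backward adjoint equation (\ref{backward FBSDE}). First I would compute the Gâteaux derivative of $V$ along a direction $\Psi \in L^2_{\mathcal{W}_t^{\indep}}(\mathcal{H}_m)$ by differentiating each term in (\ref{def value function}) under the integral sign, which, using the linear functional derivative formulas from Section \ref{subsec:HILBERT-SPACE}, produces
$$
\int_t^T \left[ \pigl\langle l_x + D_X F, D\mathbb{Y}_{tX}^{\Psi} \pigr\rangle_{\mathcal{H}_m} + \pigl\langle l_v, D u_{tX}^{\Psi} \pigr\rangle_{\mathcal{H}_m} \right] ds + \pigl\langle h_x + D_X F_T, D\mathbb{Y}_{tX}^{\Psi}(T) \pigr\rangle_{\mathcal{H}_m}.
$$
Then I would apply the mean-field Itô formula (Theorem \ref{ito thm}) to the pairing $s \longmapsto \langle \mathbb{Z}_{tX}(s), D\mathbb{Y}_{tX}^{\Psi}(s)\rangle_{\mathcal{H}_m}$. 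Crucially, $D\mathbb{Y}_{tX}^{\Psi}$ has no Brownian component in (\ref{J flow of FBSDE}), so no cross quadratic-variation terms arise, and the $\mathbbm{r}_{tX,j} dw_j$ integrals vanish upon taking expectation inside $\langle \cdot, \cdot \rangle_{\mathcal{H}_m}$. Substituting the terminal datum $\mathbb{Z}_{tX}(T) = h_x + D_X F_T$, the backward drift $-(l_x + D_X F)$, the first-order condition $l_v = -\mathbb{Z}_{tX}$, and the identity $D u_{tX}^{\Psi} = \diff_y u \cdot D\mathbb{Y}_{tX}^{\Psi} + \diff_z u \cdot D\mathbb{Z}_{tX}^{\Psi}$ obtained by differentiating (\ref{1st order condition}), everything telescopes to $\pigl\langle \mathbb{Z}_{tX}(t), \Psi \pigr\rangle_{\mathcal{H}_m} = \frac{d}{d\epsilon} V((X + \epsilon \Psi) \otimes m, t)|_{\epsilon = 0}$ for every $\Psi$, identifying the Gâteaux derivative as $\mathbb{Z}_{tX}(t)$.

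The Fréchet differentiability and the Lipschitz estimate (\ref{lip cts of D_XV in X}) then follow together from the finite-difference bound (\ref{bdd Y, Z, u, r, epsilon}), whose constant $C_4'$ is uniform in $\epsilon > 0$. Choosing $\epsilon = 1$, $X = X^2$, and $\Psi = X^1 - X^2$, this bound at time $s = t$ yields
$$
\pigl\| \mathbb{Z}_{tX^1}(t) - \mathbb{Z}_{tX^2}(t) \pigr\|_{\mathcal{H}_m} \leq C_4' \pigl\| X^1 - X^2 \pigr\|_{\mathcal{H}_m},
$$
which is precisely (\ref{lip cts of D_XV in X}) with $C_6 = C_4'$. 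In particular, $X \longmapsto \mathbb{Z}_{tX}(t)$ is (Lipschitz) continuous from $L^2_{\mathcal{W}_t^{\indep}}(\mathcal{H}_m)$ into $\mathcal{H}_m$, and combined with the linearity in $\Psi$ of the Jacobian flow (Lemma \ref{lem, Existence of Frechet derivatives}), this upgrades the Gâteaux derivative to a Fréchet derivative in the usual way.

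The main obstacle I expect is the careful bookkeeping in the Itô-duality step: verifying that the cross terms genuinely vanish because $D\mathbb{Y}_{tX}^{\Psi}$ has no diffusion component, and confirming that each stochastic integral $\int \mathbbm{r}_{tX,j}\cdot D\mathbb{Y}_{tX}^{\Psi} dw_j$ and $\int \mathbb{Z}_{tX}\cdot D\mathbbm{r}_{tX,j}^{\Psi} dw_j$ has integrable enough integrand (via the a priori bounds (\ref{bdd Y, Z, u, r}) and (\ref{bdd Y, Z, u, r, epsilon})) to have zero expectation after the $\mathcal{H}_m$-inner product. Justifying the interchange of $\partial_\epsilon$ and the expectation inside each term of (\ref{def value function}), which hinges on uniform-in-$\epsilon$ control of the difference quotients already provided by (\ref{bdd Y, Z, u, r, epsilon}), together with continuity of the second derivatives of the cost functionals (Assumptions \textbf{A(iv)}, \textbf{B(iii)}--\textbf{B(iv)}), completes the argument by dominated convergence.
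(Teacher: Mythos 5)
Your proposal is correct, but it reaches the two conclusions by a genuinely different route from the paper. For the identification $D_X V(X\otimes m,t)=\mathbb{Z}_{tX}(t)$, the paper never touches the Jacobian flow: it runs a two-sided suboptimality comparison, inserting the control $u^2$ with initial datum $X^1$ (trajectory $\mathbb{Y}^2(s)+X^1-X^2$) to get $V(X^1\otimes m,t)-V(X^2\otimes m,t)\le\langle\mathbb{Z}^2(t),X^1-X^2\rangle_{\mathcal{H}_m}+C\|X^1-X^2\|^2_{\mathcal{H}_m}$, then swaps the roles of $X^1,X^2$ and controls the extra term $\langle\mathbb{Z}^1(t)-\mathbb{Z}^2(t),X^1-X^2\rangle_{\mathcal{H}_m}$ via the FBSDE duality and the convexity condition \eqref{uniqueness condition}; the sandwich yields the quadratic remainder bound \eqref{eq:ApB19} and hence Fr\'echet differentiability in one shot. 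Your envelope-theorem argument (pair the backward equation against the Jacobian flow of Lemma \ref{lem, Existence of J flow}, use $l_v=-\mathbb{Z}$ to telescope) is valid and is essentially the computation the paper reserves for the second-order derivative in Proposition \ref{prop DXX V}, but it costs you the full strong-convergence machinery of Lemma \ref{lem, Existence of J flow} plus a separate G\^ateaux-to-Fr\'echet upgrade via continuity of the derivative, whereas the paper's comparison argument needs only the a priori bounds of Proposition \ref{prop bdd of Y Z u r}. For the Lipschitz estimate the paper rederives everything from the coupled systems for $X^1,X^2$ (estimates \eqref{eq:ApB200}--\eqref{est int |u1-u2|}), while you invoke \eqref{bdd Y, Z, u, r, epsilon} with $\epsilon=1$ and $\Psi=X^1-X^2$ --- a shortcut the paper itself sanctions and uses in the proof of Lemma \ref{lem, Existence of Frechet derivatives}, so this is a legitimate economy, with the caveat that you are then leaning on a lemma whose proof the paper only sketches. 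Do make sure, in the It\^o-duality step, that you justify passing the limit in the difference quotient of $J_{tX}$ (both the state and the optimal control vary with $X$); the uniform bounds you cite suffice for this, as you note.
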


The proof can be found in \hyperref[app, prop DX V = Z and lip of DX V]{Appendix}. As a consequence of the optimality principle mentioned in Section \ref{Bellman Equation}, we have the time consistency, namely, the control $u_{tX}(\tau)$, for $\tau \in [s,T)$ with $s \in [t,T)$, remains optimal for the problem with initial conditions $(s,\mathbb{Y}_{tX}(s)$) and thus we also have 
\begin{equation}
D_{X}V(\mathbb{Y}_{tX}(s)  \otimes  m,s)
=\mathbb{Z}_{s\mathbb{Y}_{tX}(s)}(s)
=\mathbb{Z}_{tX}(s),
\label{D_X V(s)=Z(s)}
\end{equation}
due to the flow property (\ref{flow property}). We then proceed on the regularity in time of the value function and its derivative.
\begin{prop}[\bf H\"older Continuity in Time of $V$ and $D_X V$]\label{prop4-5}
With the assumptions in Proposition \ref{prop. convex of J}, it holds that for any $X\in L^2_{\mathcal{W}_t^{\indep}}(\mathcal{H}_m)$ and $t_1$, $t_2 \in [0,T]$, 

\begin{equation}
\pig|V(X  \otimes  m,t_1)-V(X  \otimes  m,t_2)\pig|
\leq C_{7}\pig(1+\|X\|^{2}_{\mathcal{H}_m}\pig)\pig|t_1-t_2\pig|
\h{10pt}\text{and}
\label{eq:4-26}
\end{equation}

\begin{equation}
\pig\|D_{X}V(X  \otimes  m,t_1)-D_{X}V(X  \otimes  m,t_2)\pigr\|_{\mathcal{H}_m}
\leq C_{8}\Big(\big|t_1-t_2\big|^{\frac{1}{2}} 
+ \|X\|_{\mathcal{H}_m} \big|t_1-t_2\big|
\Big),
\label{cts of D_X V in t}
\end{equation}
where $C_7$ and $C_8$ depend only on $n$, $\lambda$, $\eta$, $c$, $c_l$, $c_h$, $c_T$, $c'$, $c_l'$, $c_h'$, $c_T'$ and $T$.
\label{prop holder of V D_X V in time}
\end{prop}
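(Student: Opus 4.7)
The plan rests on two tools already in hand: the identification $D_{X}V(X\otimes m,t)=\mathbb{Z}_{tX}(t)$ together with the Lipschitz continuity of $D_{X}V$ in $X$ furnished by Proposition~\ref{prop D_X V = Z and lip of D_X V}, and the dynamic-programming identity
\[
V(X\otimes m,t_{1})-V(\mathbb{Y}_{t_{1}X}(t_{2})\otimes m,t_{2})=\int_{t_{1}}^{t_{2}}\left\{\mathbb{E}\pig[\textstyle\int_{\mathbb{R}^{n}}l(\mathbb{Y}_{t_{1}X},u_{t_{1}X})dm\pig]+F(\mathbb{Y}_{t_{1}X}\otimes m)\right\}ds,
\]
which follows from the optimality principle invoked at~\eqref{D_X V(s)=Z(s)}. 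WLOG take $0\le t_{1}<t_{2}\le T$ and set $h:=t_{2}-t_{1}$; by Proposition~\ref{prop bdd of Y Z u r} together with assumptions~\eqref{assumption, bdd of l, lx, lv} and~\eqref{assumption, bdd of F F_T} this integral is $O(h)(1+\|X\|_{\mathcal{H}_m}^{2})$. For~\eqref{eq:4-26} it then suffices to control $V(\mathbb{Y}_{t_{1}X}(t_{2})\otimes m,t_{2})-V(X\otimes m,t_{2})$. Taylor-expanding along the segment from $X$ to $\mathbb{Y}_{t_{1}X}(t_{2})$ inside $\mathcal{H}_{m}$ (both endpoints lie in $L^{2}_{\mathcal{W}_{t_{2}}^{\indep}}(\mathcal{H}_{m})$) and invoking Proposition~\ref{prop D_X V = Z and lip of D_X V} gives
\[
V(\mathbb{Y}_{t_{1}X}(t_{2})\otimes m,t_{2})-V(X\otimes m,t_{2})=\pigl\langle D_{X}V(X\otimes m,t_{2}),\mathbb{Y}_{t_{1}X}(t_{2})-X\pigr\rangle_{\mathcal{H}_m}+R,
\]
with $|R|\le \tfrac{1}{2}C_{6}\|\mathbb{Y}_{t_{1}X}(t_{2})-X\|_{\mathcal{H}_m}^{2}$. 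Splitting $\mathbb{Y}_{t_{1}X}(t_{2})-X=\int_{t_{1}}^{t_{2}}u_{t_{1}X}d\tau+\eta\pig(w(t_{2})-w(t_{1})\pig)$, the crucial structural fact is that $D_{X}V(X\otimes m,t_{2})=\mathbb{Z}_{t_{2}X}(t_{2})$ is $\sigma(X)$-measurable (no stochastic integral has accrued at $s=t_{2}$) and hence independent of the zero-mean Brownian increment, so the Brownian contribution to the inner product vanishes. The drift contribution is $O(h)(1+\|X\|_{\mathcal{H}_m}^{2})$ by Cauchy--Schwarz and Proposition~\ref{prop bdd of Y Z u r}, while $|R|$ is $O(h)(1+\|X\|_{\mathcal{H}_m}^{2})$ via $\|\eta(w(t_{2})-w(t_{1}))\|_{\mathcal{H}_m}^{2}=|\eta|^{2}nh$, which yields~\eqref{eq:4-26}.

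For~\eqref{cts of D_X V in t} the starting point is $D_{X}V(X\otimes m,t_{i})=\mathbb{Z}_{t_{i}X}(t_{i})$ and integrating the backward dynamics~\eqref{backward FBSDE} from $t_{1}$ to $t_{2}$:
\[
\mathbb{Z}_{t_{1}X}(t_{1})=\mathbb{Z}_{t_{1}X}(t_{2})+\int_{t_{1}}^{t_{2}}\pig[l_{x}(\mathbb{Y}_{t_{1}X},u_{t_{1}X})+D_{X}F(\mathbb{Y}_{t_{1}X}\otimes m)\pig]ds-\sum_{j=1}^{n}\int_{t_{1}}^{t_{2}}\mathbbm{r}_{t_{1}X,j}dw_{j}.
\]
Both $\mathbb{Z}_{t_{1}X}(t_{1})$ and $\mathbb{Z}_{t_{2}X}(t_{2})$ are $\sigma(X)$-measurable; since the Brownian increments on $[t_{1},t_{2}]$ are independent of $\sigma(X)$, conditioning on $\sigma(X)$ annihilates the It\^o integral and leaves
\[
\mathbb{Z}_{t_{1}X}(t_{1})-\mathbb{Z}_{t_{2}X}(t_{2})=\mathbb{E}\pig[\mathbb{Z}_{t_{1}X}(t_{2})-\mathbb{Z}_{t_{2}X}(t_{2})\,\big|\,\sigma(X)\pig]+\mathbb{E}\pigl[\textstyle\int_{t_{1}}^{t_{2}}\pig[l_{x}+D_{X}F\pig]ds\,\big|\,\sigma(X)\pigr].
\]
Taking $\mathcal{H}_m$-norms and using the $L^{2}$-contractivity of conditional expectation, the drift contribution is $O(h)(1+\|X\|_{\mathcal{H}_m})$ via~\eqref{assumption, bdd of l, lx, lv},~\eqref{assumption, bdd of DF, DF_T} and Proposition~\ref{prop bdd of Y Z u r}. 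For the remaining contribution, the flow identity~\eqref{D_X V(s)=Z(s)} rewrites $\mathbb{Z}_{t_{1}X}(t_{2})=D_{X}V(\mathbb{Y}_{t_{1}X}(t_{2})\otimes m,t_{2})$, whence Proposition~\ref{prop D_X V = Z and lip of D_X V} bounds it by $C_{6}\|\mathbb{Y}_{t_{1}X}(t_{2})-X\|_{\mathcal{H}_m}\le C_{6}\pig[C_{4}h(1+\|X\|_{\mathcal{H}_m})+|\eta|\sqrt{nh}\pig]$. Adding the two estimates delivers the required $C_{8}(h^{1/2}+\|X\|_{\mathcal{H}_m}h)$.

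The main obstruction throughout is that the Brownian increment $\eta(w(t_{2})-w(t_{1}))$ is only of size $\sqrt{h}$ in $\mathcal{H}_{m}$; a naive first-order Fr\'echet expansion of $V$, or a direct $L^{2}$ estimate of the martingale term in the BSDE, would therefore only deliver the weaker rate $\sqrt{h}$ for $V$ in time, which is insufficient for~\eqref{eq:4-26}. The Lipschitz-in-time bound for $V$, and the sharp mixed scale $h^{1/2}+\|X\|h$ for $D_{X}V$, are both recovered by the same structural observation that the endpoint evaluations $D_{X}V(X\otimes m,t_{i})=\mathbb{Z}_{t_{i}X}(t_{i})$ are $\sigma(X)$-measurable: either the inner product with the Brownian increment vanishes by independence, or the martingale term drops under $\mathbb{E}[\,\cdot\,|\sigma(X)]$. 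Once this cancellation is harnessed, the remaining computations are routine given the quantitative bounds collected in Propositions~\ref{prop bdd of Y Z u r} and~\ref{prop D_X V = Z and lip of D_X V}.
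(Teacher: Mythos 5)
Your proposal is correct and follows essentially the same route as the paper's own proof: the dynamic-programming identity plus a first-order expansion with the Lipschitz bound on $D_XV$ for \eqref{eq:4-26}, with the key cancellation coming from the $\sigma(X)$-measurability of $D_XV(X\otimes m,t_2)=\mathbb{Z}_{t_2X}(t_2)$ against the Brownian increment; and for \eqref{cts of D_X V in t}, integrating the BSDE over $[t_1,t_2]$, conditioning on $X$ to kill the martingale term, and controlling $\mathbb{Z}_{t_1X}(t_2)-\mathbb{Z}_{t_2X}(t_2)$ via the flow property and Proposition \ref{prop D_X V = Z and lip of D_X V}. The only cosmetic difference is that the paper phrases the last step as $\mathbb{Z}_{tX}(t+\epsilon)=\mathbb{Z}_{t+\epsilon,\mathbb{Y}_{tX}(t+\epsilon)}(t+\epsilon)$ rather than through $D_XV$ evaluated along the flow, which is the same identity.
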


The proof can be found in \hyperref[app, prop holder of V DX V in time]{Appendix}.
\begin{rem}
\label{rem7-1} With $t_1<t_2$, since $\mathbb{Z}_{t_1X}(t_2)=\mathbb{Z}_{t_2,\mathbb{Y}_{t_1X}(t_2)}(t_2)$ (see (\ref{flow property})), together with (\ref{D_X V = Z}), (\ref{lip cts of D_XV in X}), (\ref{cts of D_X V in t}) and (\ref{est. |Ys-X|^2}), we have $
\pig\|\mathbb{Z}_{t_1X}(t_2)-\mathbb{Z}_{t_2X}(t_2)\pigr\|_{\mathcal{H}_m}\leq C_{6}\pig\|\mathbb{Y}_{t_1X}(t_2)-X\pigr\|_{\mathcal{H}_m}$ and $\pig\|\mathbb{Z}_{t_1X}(t_2)-\mathbb{Z}_{t_1X}(t_1)\pigr\|_{\mathcal{H}_m}
\leq C_{8}'\Big(\big|t_1-t_2\big|^{\frac{1}{2}} + \big|t_1-t_2\big|\|X\|_{\mathcal{H}_m}\Big)$.
\end{rem}

\subsection{Second-order G\^ateaux Derivative}

In the following proposition, we characterise the second-order G\^ateaux derivative of the value function by the Jacobian flow as the solution to the backward equation in (\ref{J flow of FBSDE}).
\begin{prop}
Let $X$, $\Psi\in\mathcal{H}_{m} \in L^2_{\mathcal{W}^{\indep}_t} (\mathcal{H}_m)$. Under the assumptions (\ref{assumption, bdd of l, lx, lv})-(\ref{assumption, convexity of h}), (\ref{assumption, bdd of F F_T})-(\ref{assumption, convexity of D^2F, D^2F_T}) and (\ref{uniqueness condition}), the value function $V(X  \otimes  m,t)$ has a second-order G\^ateaux derivative with respect to $X$ given by
\begin{equation}
D_{X}^{2}V(X  \otimes  m,t)(\Psi)=D\mathbb{Z}^\Psi_{tX}(t)
\h{10pt}\text{and}\h{10pt}
\label{DX^2 V = DZ}
\end{equation}

\begin{equation}
\pig\|D_{X}^{2}V(X  \otimes  m,t)(\Psi)\pigr\|_{\mathcal{H}_m}\leq C_{9}\|\Psi\|_{\mathcal{H}_{m}},
\label{DX^2 V < psi}
\end{equation}
where $C_{9}$ is a positive constant depending only on $n$, $\lambda$, $\eta$, $c$, $c_l$, $c_h$, $c_T$, $c'$, $c_l'$, $c_h'$, $c_T'$ and $T$.
\label{prop DXX V}
\end{prop}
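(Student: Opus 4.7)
The strategy is to differentiate the identity $D_X V(X\otimes m,t)=\mathbb{Z}_{tX}(t)$ from Proposition \ref{prop D_X V = Z and lip of D_X V} with respect to $X$ in the direction $\Psi$, and to recognise that this differentiation is precisely accomplished by the Jacobian flow constructed in Lemma \ref{lem, Existence of J flow}. So the proof reduces to identifying the limit of the difference quotient of $X\longmapsto\mathbb{Z}_{tX}(t)$ with the adjoint component of (\ref{J flow of FBSDE}) evaluated at time $s=t$.

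Concretely, for any $\Psi\in L^2_{\mathcal{W}_t^{\indep}}(\mathcal{H}_m)$ and $\epsilon>0$, since $X+\epsilon\Psi$ again lies in $L^2_{\mathcal{W}_t^{\indep}}(\mathcal{H}_m)$, Proposition \ref{prop D_X V = Z and lip of D_X V} applied at both $X$ and $X+\epsilon\Psi$ gives
$$\dfrac{D_X V((X+\epsilon\Psi)\otimes m,t)-D_X V(X\otimes m,t)}{\epsilon}=\Delta^\epsilon_\Psi\mathbb{Z}_{tX}(t),$$
in the notation of (\ref{def diff process}). By Lemma \ref{lem, Existence of J flow}, $\Delta^\epsilon_\Psi\mathbb{Z}_{tX}(\cdot)$ converges in $L^\infty_{\mathcal{W}_{tX\Psi}}(t,T;\mathcal{H}_m)$, as $\epsilon\downarrow 0$, to $D\mathbb{Z}^\Psi_{tX}(\cdot)$, the solution of the backward component of (\ref{J flow of FBSDE}); evaluating at $s=t$ and taking the inner product with $\Psi$,
$$\lim_{\epsilon\to 0}\dfrac{\pigl\langle D_X V((X+\epsilon\Psi)\otimes m,t)-D_X V(X\otimes m,t),\Psi\pigr\rangle_{\mathcal{H}_m}}{\epsilon}=\pigl\langle D\mathbb{Z}^\Psi_{tX}(t),\Psi\pigr\rangle_{\mathcal{H}_m},$$
which exactly matches the defining relation (\ref{def second G derivative}). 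Because $\Psi\longmapsto D\mathbb{Z}^\Psi_{tX}(t)$ is linear (Lemma \ref{lem, Existence of Frechet derivatives}, inherited from linearity of the coupled system (\ref{J flow of FBSDE}) in its datum $\Psi$), the polarization identity (\ref{eq:2-1}) is compatible with the identification $D_X^2 V(X\otimes m,t)(\Psi)=D\mathbb{Z}^\Psi_{tX}(t)$, yielding (\ref{DX^2 V = DZ}).

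For the bound (\ref{DX^2 V < psi}), the uniform estimate (\ref{bdd Y, Z, u, r, epsilon}) gives $\big\|\Delta^\epsilon_\Psi\mathbb{Z}_{tX}(t)\big\|_{\mathcal{H}_m}\leq C_4'\|\Psi\|_{\mathcal{H}_m}$ with $C_4'$ independent of $\epsilon$, and passing to the limit using the $L^\infty$-convergence above yields $\big\|D\mathbb{Z}^\Psi_{tX}(t)\big\|_{\mathcal{H}_m}\leq C_4'\|\Psi\|_{\mathcal{H}_m}$. Thus (\ref{DX^2 V < psi}) holds with $C_9$ determined by $C_4'$ and, through it, by the constants $n,\lambda,\eta,c,c_l,c_h,c_T,c',c_l',c_h',c_T',T$ listed in the statement.

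The only non-routine point is that the limiting object $D\mathbb{Z}^\Psi_{tX}(t)$ must genuinely serve as the operator $D_X^2 V(X\otimes m,t)$ in the polarization-based sense (\ref{eq:2-1}); this is the step where the linearity in $\Psi$ of the Jacobian flow really is needed, and it is the one place where a reader might expect to have to do extra work. Once that linearity, already secured in Lemma \ref{lem, Existence of Frechet derivatives}, is invoked, the remainder of the argument is a straightforward passage to the limit in a difference quotient governed by the uniform bound (\ref{bdd Y, Z, u, r, epsilon}), so no further technical obstacle arises.
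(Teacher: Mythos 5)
Your proposal is correct and follows essentially the same route as the paper: differentiate the identity $D_XV(X\otimes m,t)=\mathbb{Z}_{tX}(t)$ from Proposition \ref{prop D_X V = Z and lip of D_X V}, identify the difference quotient with $\Delta^\epsilon_\Psi\mathbb{Z}_{tX}(t)$, pass to the limit via the strong convergence of Lemma \ref{lem, Existence of J flow} to match the definition (\ref{def second G derivative}), and obtain the bound (\ref{DX^2 V < psi}) from the $\epsilon$-uniform estimate (\ref{bdd Y, Z, u, r, epsilon}). Your explicit remark that linearity of $\Psi\mapsto D\mathbb{Z}^\Psi_{tX}(t)$ is what justifies the polarization-based identification (\ref{eq:2-1}) is a point the paper leaves implicit, but it is not a departure from its argument.
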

\begin{proof}
Since $\mathbb{Z}_{tX}(t)=D_{X}V(X \otimes  m,t)$ due to Proposition \ref{prop D_X V = Z and lip of D_X V}, by differentiating both sides and using the definition of G\^ateaux  second derivative in (\ref{def second G derivative}) and the strong convergence in Lemma \ref{lem, Existence of J flow}, we obtain 
\begin{align*}
\pig\langle D_{X}^{2}V(X \otimes  m,t)(\Psi),\Psi \pigr\rangle_{\mathcal{H}_m}
&=\lim_{\epsilon \to 0} \dfrac{1}{\epsilon}
\pig\langle D_X V((X+\epsilon \Psi) \otimes  m,t) - D_X V(X \otimes  m,t) ,\Psi \pigr\rangle_{\mathcal{H}_m}
=\lim_{\epsilon \to 0} 
\pig\langle  \Delta^\epsilon_{\Psi}\mathbb{Z}_{t X}(t),
\Psi\pigr\rangle_{\mathcal{H}_m}\\
&=\pig\langle  
D\mathbb{Z}_{t X}^{\Psi}(t),
\Psi\pigr\rangle_{\mathcal{H}_m},
\end{align*}
together with the bound in (\ref{bdd Y, Z, u, r, epsilon}),  as a limit, we also obtain the upper bound in (\ref{DX^2 V < psi}).
\end{proof}
We then state additional pointwise continuity properties for the second-order derivative, which is similar to (\ref{eq:3-125}), given by
\begin{prop}
Let $\{t_{k}\}_{k\in\mathbb{N}}$ be a sequence decreasing to $t\in [0,T)$. Suppose that  $\big\{X_{k}\big\}_{k\in\mathbb{N}}$ and $\big\{\Psi_{k}\big\}_{k\in\mathbb{N}}$ are sequences converging to $X$ and $\Psi$ in $\mathcal{H}_{m}$, respectively, such that each $X_k$, $\Psi_k\in L^2_{\mathcal{W}^{\indep}_{t_k}} (\mathcal{H}_m)$. Under the assumptions of Proposition \ref{prop DXX V}, we have
\begin{equation}
D_{X}^{2}V(X_{k}  \otimes  m,t_{k})(\Psi_{k})\longrightarrow D_{X}^{2}V(X  \otimes  m,t)(\Psi)
\h{10pt} \text{in}\;\mathcal{H}_{m}.
\label{ct of D^2_X V}
\end{equation}
\label{prop cts of DXX V}
\end{prop}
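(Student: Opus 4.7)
My plan is to exploit the characterisation $D_X^2 V(X\otimes m,t)(\Psi) = D\mathbb{Z}^{\Psi}_{tX}(t)$ from Proposition~\ref{prop DXX V}, so that the claim reduces to proving $D\mathbb{Z}^{\Psi_k}_{t_k X_k}(t_k) \to D\mathbb{Z}^{\Psi}_{tX}(t)$ in $\mathcal{H}_m$. I would split this via
$$ D\mathbb{Z}^{\Psi_k}_{t_k X_k}(t_k) - D\mathbb{Z}^{\Psi}_{tX}(t) = \pig( D\mathbb{Z}^{\Psi_k}_{t_k X_k}(t_k) - D\mathbb{Z}^{\Psi}_{tX}(t_k) \pigr) + \pig( D\mathbb{Z}^{\Psi}_{tX}(t_k) - D\mathbb{Z}^{\Psi}_{tX}(t) \pigr). $$
The second bracket tends to zero because $s \longmapsto D\mathbb{Z}^{\Psi}_{tX}(s)$ is continuous in $\mathcal{H}_m$: this is immediate from the BSDE in \eqref{J flow of FBSDE} whose driver is uniformly integrable (the coefficients $l_{xx}, l_{xv}, D^2_X F$ are bounded by Assumptions \textbf{A(ii)} and \textbf{B(ii)}, and $D\mathbb{Y}^\Psi_{tX}, D\mathbb{Z}^\Psi_{tX}$ are uniformly bounded by the analogue of Proposition~\ref{prop bdd of Y Z u r} applied to the linear system \eqref{J flow of FBSDE}).

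For the first bracket I would proceed in two sub-steps. Firstly, combining the Lipschitz nature of the flow in the initial condition (from the a priori bound \eqref{bdd Y, Z, u, r, epsilon}) with the time-regularity of Remark~\ref{rem7-1} and the forward estimate \eqref{est. |Ys-X|^2}, one obtains $\pig\|\mathbb{Y}_{t_k X_k}(s)-\mathbb{Y}_{tX}(s)\pigr\|_{\mathcal{H}_m} + \pig\|\mathbb{Z}_{t_k X_k}(s)-\mathbb{Z}_{tX}(s)\pigr\|_{\mathcal{H}_m} \to 0$ uniformly for $s \in [t_k,T]$, and similarly for $u_{t_k X_k}$ through the first-order condition \eqref{1st order condition} together with the uniform invertibility implied by Assumption \textbf{A(v)}. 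Secondly, the Jacobian flows $\pig(D\mathbb{Y}^{\Psi_k}_{t_k X_k},D\mathbb{Z}^{\Psi_k}_{t_k X_k},D\mathbbm{r}^{\Psi_k}_{t_k X_k,j}\pig)$ satisfy \eqref{J flow of FBSDE} with coefficients evaluated at $(\mathbb{Y}_{t_k X_k},\mathbb{Z}_{t_k X_k},u_{t_k X_k})$, and so the differences $\delta\mathbb{Y}_k := D\mathbb{Y}^{\Psi_k}_{t_k X_k}-D\mathbb{Y}^{\Psi}_{tX}$, $\delta\mathbb{Z}_k := D\mathbb{Z}^{\Psi_k}_{t_k X_k}-D\mathbb{Z}^{\Psi}_{tX}$ solve a linear FBSDE whose inhomogeneities are: (i) the initial data $\Psi_k-\Psi \to 0$ in $\mathcal{H}_m$; (ii) terminal data involving $h_{xx}$ and $D_X^2F_T$ evaluated at convergent sequences, handled through Assumptions \textbf{A(iv)}, \textbf{B(iv)}, \textbf{B(v)(a)}; (iii) driver terms from $l_{xx}, l_{xv}, D_X^2 F$ similarly handled through \textbf{A(iv)}, \textbf{B(iii)}, \textbf{B(v)(a)}, all acting on the uniformly bounded process $D\mathbb{Y}^{\Psi}_{tX}$.

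The conclusion then follows by applying the energy method of the proof of Proposition~\ref{prop bdd of Y Z u r} to the linear FBSDE satisfied by $(\delta\mathbb{Y}_k,\delta\mathbb{Z}_k)$: compute $d\pig\langle \delta\mathbb{Z}_k(s), \delta\mathbb{Y}_k(s) \pigr\rangle_{\mathcal{H}_m}$ on $[t_k,T]$, integrate, and invoke Assumptions \textbf{A(v)}, \textbf{A(vi)}, \textbf{B(v)(b)} together with the smallness condition \eqref{uniqueness condition} to absorb the $\delta u_k$ terms. This yields
$$ \sup_{s\in[t_k,T]} \pig\|\delta\mathbb{Y}_k(s)\pigr\|_{\mathcal{H}_m}^2 + \sup_{s\in[t_k,T]} \pig\|\delta\mathbb{Z}_k(s)\pigr\|_{\mathcal{H}_m}^2 \longrightarrow 0, $$
which, evaluated at $s=t_k$, gives the convergence of the first bracket. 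The main technical obstacle is item (iii): the operators $D_X^2 F(\cdot\otimes m)$ and $D_X^2 F_T(\cdot\otimes m)$ are only strongly continuous in $X$ pointwise in $Y$ (Assumptions \textbf{B(iii)}, \textbf{B(iv)}), but here we simultaneously vary the inner argument $X_k \to X$ and apply them to the varying sequence $Y_k = D\mathbb{Y}^{\Psi_k}_{t_k X_k}(s)$. This is exactly the joint continuity in Assumption \textbf{B(v)(a)} (formulated in $L^1(\Omega;L^1_m)$), so the transition from $L^1$-type convergence to the $\mathcal{H}_m$ convergence needed in the energy estimate requires extracting subsequences, invoking the uniform bound from Lemma~\ref{lem, Existence of J flow}, and a dominated convergence argument to upgrade the mode of convergence.
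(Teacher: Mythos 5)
Your proposal is correct in outline and reaches the same destination as the paper, but by a genuinely more direct route in the key step. Both arguments reduce the claim via \eqref{DX^2 V = DZ} to the convergence $D\mathbb{Z}^{\Psi_k}_{t_kX_k}(t_k)\to D\mathbb{Z}^{\Psi}_{tX}(t)$, both rely on the strong convergence of the base flow $(\mathbb{Y}_{t_kX_k},\mathbb{Z}_{t_kX_k},u_{t_kX_k})$, and both close the estimate with the monotonicity package \textbf{A(v)}, \textbf{A(vi)}, \textbf{B(v)(b)} together with the smallness condition \eqref{uniqueness condition}. Where you differ is in how the quadratic-form identity is brought to bear: the paper first extracts weakly convergent subsequences of the Jacobian flows (Banach--Alaoglu), identifies the weak limit with the unique solution of the limiting linear FBSDE, proves $\pig\langle D\mathbb{Z}_{t_kX_k}^{\Psi}(t_k),\Psi\pigr\rangle_{\mathcal{H}_m}\to\pig\langle D\mathbb{Z}^{\Psi}_{tX}(t),\Psi\pigr\rangle_{\mathcal{H}_m}$ via an equicontinuity/Arzel\`a--Ascoli argument, and only then shows that the quadratic form $\mathscr{J}'_k$ of the difference tends to zero (the cross terms in its decomposition require the weak convergence). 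You instead write the linear FBSDE satisfied by the differences $(\delta\mathbb{Y}_k,\delta\mathbb{Z}_k)$ with the coefficient perturbations collected as vanishing inhomogeneities and run the energy method of Proposition \ref{prop bdd of Y Z u r} directly on that system; the troublesome boundary pairing becomes $\pig\langle\delta\mathbb{Z}_k(t_k),\delta\mathbb{Y}_k(t_k)\pigr\rangle_{\mathcal{H}_m}$, which dies simply because $\delta\mathbb{Y}_k(t_k)=\Psi_k-\Psi-\int_t^{t_k}Du^{\Psi}_{tX}\,d\tau\to0$ while $\delta\mathbb{Z}_k(t_k)$ stays bounded by \eqref{bdd Y, Z, u, r, epsilon}. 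This buys you a shorter argument with no weak-compactness or equicontinuity step. The price, which you correctly identify, is that the inhomogeneities such as $\pig[D_X^2F(\mathbb{Y}_{t_kX_k}(\tau)\otimes m)-D_X^2F(\mathbb{Y}_{tX}(\tau)\otimes m)\pigr]\pig(D\mathbb{Y}^{\Psi}_{tX}(\tau)\pigr)$ must vanish in a mode strong enough for the energy estimate, whereas \textbf{B(v)(a)} only delivers $L^1(\Omega;L^1_m)$; your "dominated convergence to upgrade the mode" does work here, but only because the representation \eqref{eq:2-232} with the pointwise bounds \eqref{assumption, bdd of D^2F, D^2F_T, no lift} furnishes a square-integrable dominating function $\frac{c}{2}\pig|D\mathbb{Y}^{\Psi}_{tX}\pigr|+\frac{c}{2}\pig\|D\mathbb{Y}^{\Psi}_{tX}\pigr\|_{\mathcal{H}_m}$ (equivalently, one can use the $\epsilon$-truncation device of \eqref{eq:3-115}); without invoking that pointwise structure the upgrade would fail, so this should be made explicit rather than left as a remark. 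Two further small points: the two Jacobian flows live on different filtrations, so the inner products and the It\^o computation on $[t_k,T]$ should be set on the enlarged filtration $\widetilde{\mathcal{W}}^s_{tX\Psi}$ as in Remark \ref{rem4-1} and the paper's appendix; and in absorbing $\int\|\delta\mathbb{Y}_k\|^2_{\mathcal{H}_m}$ into $\int\|\delta u_k\|^2_{\mathcal{H}_m}$ one must accommodate the nonzero initial datum $\delta\mathbb{Y}_k(t_k)$ via a $(1+\kappa)$ Young splitting, which the strict inequality in \eqref{uniqueness condition} tolerates, exactly as in the proof of Proposition \ref{prop4-2}.
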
\vspace{-20pt}
\begin{rem}
As a consequence, using Cauchy-Schwarz inequality, we can deduce the $L^1$-convergence

$$D_{X}^{2}V(X_{k}  \otimes  m,t_{k})(\Psi_{k})
-D_{X}^{2}V(X  \otimes  m,t_{k})(\Psi_{k})\longrightarrow0
\h{10pt} 
\text{in }L^{1}(\Omega,\mathcal{A},\mathbb{P};L_{m}^{1}(\mathbb{R}^{n}));$$
indeed, the quantity
$$\scalemath{0.97}{
\begin{aligned}
&\h{-10pt}\int_{\mathbb{R}^n}
\mathbb{E}\left[
\pig|D_{X}^{2}V(X_{k}  \otimes  m,t_{k})(\Psi_{k})
-D_{X}^{2}V(X  \otimes  m,t_{k})(\Psi_{k})\pig|\right]dm(x)\\
\leq\:&
\int_{\mathbb{R}^n}
\mathbb{E}\left[
\pig|D_{X}^{2}V(X_{k}  \otimes  m,t_{k})(\Psi_{k})
-D_{X}^{2}V(X  \otimes  m,t)(\Psi)\pig|+
\pig|D_{X}^{2}V(X  \otimes  m,t)(\Psi)
-D_{X}^{2}V(X  \otimes  m,t_{k})(\Psi_{k})\pig|\right]dm(x)\\
\leq\:&
\pig\|D_{X}^{2}V(X_{k}  \otimes  m,t_{k})(\Psi_{k})
-D_{X}^{2}V(X  \otimes  m,t)(\Psi)\pigr\|_{\mathcal{H}_m}
+\pig\|D_{X}^{2}V(X  \otimes  m,t)(\Psi)
-D_{X}^{2}V(X  \otimes  m,t_{k})(\Psi_{k})\pigr\|_{\mathcal{H}_m}
\mycomment{\leq\:&
\left\{\mathbb{E}\left[\int_{\mathbb{R}^n}
\pig|D_{X}^{2}V(X_{k}  \otimes  m,t_{k})(\Psi_{k})
-D_{X}^{2}V(X  \otimes  m,t)(\Psi)\pig|^2dm(x)\right]\right\}^{1/2}
+\left\{\mathbb{E}\left[
\int_{\mathbb{R}^n}\pig|D_{X}^{2}V(X  \otimes  m,t)(\Psi)
-D_{X}^{2}V(X  \otimes  m,t_{k})(\Psi_{k})\pig|^2dm(x)\right]\right\}^{1/2}}
\end{aligned}}
$$
tends to zero as $k \to \infty$. This $L^1$-convergence ensures a valid  application of It\^o lemma (Theorem \ref{ito thm}) in Section \ref{sec. master equation}.
\end{rem}

The proof of Proposition \ref{prop cts of DXX V} can be found in  \hyperref[app, prop cts of DXX V]{Appendix}.

\subsection{Linear Functional Derivatives}
In the discussion before, we use the notations of the processes $\mathbb{Y}_{tX}(s)$, $\mathbb{Z}_{tX}(s)$, $u_{tX}(s)$, $\mathbbm{r}_{tX,j}(s)$ to emphasise the  dependence on the initial random variable $X$. In this section, we take $X=\mathcal{I}_x=x$, and then we denote the processes $\mathbb{Y}_{t\mathcal{I}}(s)$, $\mathbb{Z}_{t\mathcal{I}}(s)$, $u_{t\mathcal{I}}(s)$, $\mathbbm{r}_{t\mathcal{I},j}(s)$ by $\mathbb{Y}_{txm}(s)$, $\mathbb{Z}_{txm}(s)$, $u_{txm}(s)$, $\mathbbm{r}_{txm,j}(s)$ so as to emphasise the dependence on the initial distribution $m$ and the initial point $x$. With the new notations, the system (\ref{forward FBSDE})-(\ref{1st order condition}) can be rewritten as
\begin{empheq}[left=\h{-10pt}\empheqbiglbrace]{align}
\mathbb{Y}_{txm}(s)=&\:x+\int_{t}^{s}u_{txm}(\tau)d\tau+\eta(w(s)-w(t));
\label{forward FBSDE, X=I}\\
\mathbb{Z}_{txm}(s)=&\:h_{x}(\mathbb{Y}_{txm}(T))
+\diff_x \dfrac{dF_T}{d\nu}(\mathbb{Y}_{t\bigcdot m}(T) \otimes  m)\pig(\mathbb{Y}_{txm}(T)\pig)
\nonumber\\
&+\int^T_sl_x\pig(\mathbb{Y}_{txm}(\tau),u_{txm}(\tau)\pig)
+\diff_x \dfrac{dF}{d\nu}(\mathbb{Y}_{t\bigcdot m}(\tau) \otimes  m)\pig(\mathbb{Y}_{txm}(\tau)\pig)d\tau-\int^T_s\sum_{j=1}^{n}\mathbbm{r}_{txm,j}(\tau)dw_{j}(\tau);\h{-3pt}
\label{backward FBSDE, X=I}
\end{empheq}
\begin{flalign}
\text{subject to}&&
l_v\pig(\mathbb{Y}_{txm}(s),u_{txm}(s)\pig)+\mathbb{Z}_{txm}(s)=0. &&
\label{1st order condition, X=I}
\end{flalign}
\mycomment{Proposition \ref{prop bdd of Y Z u r} shows that the optimal control $u_{txm}$ belongs to the space
of processes $v_{tx}$: $x\longmapsto\dfrac{v_{tx}}{(1+|x|^{2})^{\frac{1}{2}}}\in L^2_m\pig(\mathbb{R}^n;L^{\infty}\big(t,T;L^2(\Omega,\mathcal{A},\mathbb{P};\mathbb{R}^{n})\big)\pig)$, which is a subspace of $L_{\mathcal{W}_{t}}^{2}(t,T;\mathcal{H}_{m})$
for any $m$.}
Proposition \ref{prop bdd of Y Z u r} shows that, since $1+|x|^2$ is the same as $(1+|x|)^2$ asymptotically for large $x$, for almost every $x \in \mathbb{R}^n$ and $s\in [t,T]$, the expected value $\mathbb{E}\left[\frac{(v_{tx}(s))^2}{1+|x|^{2}}\right]$ is finite, and thus $\frac{v_{tx}(s)}{\sqrt{1+|x|^{2}}} \in L^2(\Omega,\mathcal{A},\mathbb{P};\mathbb{R}^{n})$ for every $s \in [t,T]$. Therefore, for almost every fixed $x\in \mathbb{R}^n$, the process $\frac{v_{tx}}{\sqrt{1+|x|^{2}}} \in L^{\infty}\big(t,T;L^2(\Omega,\mathcal{A},\mathbb{P};\mathbb{R}^{n})\big)$. Thus, the optimal control $u_{txm}$ belongs to the space {\small$ L^2_m\pig(\mathbb{R}^n;L^{\infty}\big(t,T;L^2(\Omega,\mathcal{A},\mathbb{P};\mathbb{R}^{n})\big)\pig)$} as a function of $x$, for each $m \in \mathcal{P}_2(\mathbb{R}^n)$. In order to study the master equation in Section \ref{sec. master equation}, it is necessary to consider the linear functional derivative of the value function with respect to $m$. Clearly $\mathcal{I}_x \otimes m=m$, we
write $V(m,t)=V(\mathcal{I} \otimes  m,t)$ , which is given by  

\begin{equation}
\scalemath{0.93}{
\begin{aligned}
V(m,t)=\int_{t}^{T}\mathbb{E}
\left[\int_{\mathbb{R}^{n}}l\pig(\mathbb{Y}_{txm}(s),u_{txm}(s)\pig)dm(x)\,\right]
+F(\mathbb{Y}_{t\bigcdot m}(s) \otimes  m)ds
+\mathbb{E}\left[\int_{\mathbb{R}^{n}}h(\mathbb{Y}_{txm}(T))dm(x)\right]+F_{T}(\mathbb{Y}_{t\bigcdot m}(T) \otimes  m).
\end{aligned}}
\label{eq:4-91}
\end{equation}
Recalling from Proposition \ref{prop D_X V = Z and lip of D_X V}, the G\^ateaux derivative of $V$ equals
\begin{equation}
D_{X}V(m,t)
=D_X V(X  \otimes  m)\Big|_{X=\mathcal{I}}
=\mathbb{Z}_{t\mathcal{I}}(t)
=\mathbb{Z}_{txm}(t).
\label{eq:4-92}
\end{equation}
Next, we want to discuss the linear functional derivative $\dfrac{dV}{d\nu}(m,t).$
\begin{prop}
\label{prop4-3}
Under the assumptions of Proposition $\ref{prop DXX V}$, the value function $V(m,t)$ has the linear functional derivative $\dfrac{dV}{d\nu}(m,t)(x)$ satisfying
\vspace{-5pt}
\begin{flalign}
\text{(a)}&&\dfrac{dV}{d\nu}(m,t)(x)
=\:
\int_{t}^{T}\left\{\mathbb{E}\pig[l(\mathbb{Y}_{txm}(s),u_{txm}(s))\pig]
+ \mathbb{E}\left[\dfrac{dF}{d\nu}(\mathbb{Y}_{t\bigcdot m}(s) \otimes  m)(\mathbb{Y}_{txm}(s))\right]\right\}ds&&\nonumber\\
&&+\mathbb{E}\pig[h(\mathbb{Y}_{txm}(T))\pig]
+\mathbb{E}\left[\dfrac{dF_T}{d\nu}(\mathbb{Y}_{t\bigcdot m}(T) \otimes  m)(\mathbb{Y}_{txm}(T))\right],\h{30pt}&&
\label{eq:4-10}
\end{flalign}
\vspace{-30pt}
\begin{flalign*}
&\h{20pt}\text{with all the expectations $\mathbb{E}$'s being taken with respect to the Brownian motion only;}&&
\end{flalign*}
\vspace{-15pt}
\begin{flalign}
\text{(b)}&&
\diff_x \dfrac{dV}{d\nu}(m,t)(x)=\mathbb{Z}_{txm}(t);&&
\label{D_x p_nu V = Z}
\end{flalign}
\vspace{-15pt}
\begin{flalign}
\text{(c)}&&\dfrac{dV}{d\nu}(m,t)(x)=\int_{0}^{1}\mathbb{Z}_{t,\theta x,m}(t)\cdot xd\theta+C(t),&&
\label{eq:4-110}
\end{flalign}
where $C(t)$ depends only on $t$ but not $x$, and which can be chosen to be $0$ if the normalisation condition $\displaystyle\int_{\mathbb{R}^{n}}\dfrac{dV}{d\nu}(m,t)(x)dm(x)=0$ is taken.
\end{prop}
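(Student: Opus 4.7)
The plan is to prove part (a) by an envelope-type computation that exploits the first-order optimality of $u_{txm}$, to derive part (b) by equating the Hilbert-space Fr\'echet derivative $D_X V$ at $X = \mathcal{I}$ with the $x$-gradient of $\frac{dV}{d\nu}$ via the lift identity \eqref{eq:2-218}, and finally to obtain part (c) by integrating (b) along the radial segment from $0$ to $x$.

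For part (a), I would fix $m$, write $V(m,t) = J(u^*, m)$ with $u^* := u_{t\bigcdot m}$ the unique optimal control provided by Proposition \ref{prop. convex of J}, and differentiate $\theta \mapsto V\pig(m + \theta(m'-m), t\pig)$. Because $u^*$ minimises $J(\cdot,m)$, the first-order condition $D_v J(u^*, m) = 0$ from Lemma \ref{lem G derivative of J} makes the control-variation induced by a perturbation of $m$ drop out to leading order, yielding the envelope identity
$$
\frac{d}{d\theta} V\pig(m + \theta(m'-m), t\pig)\bigg|_{\theta = 0} = \frac{d}{d\theta} J\pig(u^*, m + \theta(m'-m)\pig)\bigg|_{\theta = 0};
$$
equivalently, one freezes $u^*$ and the trajectory $\mathbb{Y}_{t\bigcdot m}$ and differentiates $J$ only in its measure slot. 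The measure $m$ enters $J(u^*,\cdot)$ both as the integrating measure for the $l$- and $h$-costs and as the second argument of the pushforward $\mathbb{Y}_{t\bigcdot m}(s) \otimes m$ inside $F$ and $F_T$. The first source contributes $\mathbb{E}[l(\mathbb{Y}_{txm},u_{txm})]$ and $\mathbb{E}[h(\mathbb{Y}_{txm}(T))]$; for the second, the chain rule applied to $m \mapsto \mathbb{Y}_{t\bigcdot m}(s) \otimes m$ (with the random map $y \mapsto \mathbb{Y}_{tym}(s)$ held fixed) shows that inserting a point mass at $x$ into $m$ inserts the law of $\mathbb{Y}_{txm}(s)$ into the pushforward, producing the terms $\mathbb{E}\pig[\tfrac{dF}{d\nu}(\mathbb{Y}_{t\bigcdot m}(s) \otimes m)(\mathbb{Y}_{txm}(s))\pig]$ and $\mathbb{E}\pig[\tfrac{dF_T}{d\nu}(\mathbb{Y}_{t\bigcdot m}(T) \otimes m)(\mathbb{Y}_{txm}(T))\pig]$, up to an $x$-independent constant absorbed into the normalisation. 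Assembling the four groups of terms yields \eqref{eq:4-10}.

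Part (b) then follows by equating two expressions for the Hilbert-space derivative at $X = \mathcal{I}$. Proposition \ref{prop D_X V = Z and lip of D_X V} gives $D_X V(m,t) = \mathbb{Z}_{t\mathcal{I}}(t) = \mathbb{Z}_{txm}(t)$, while the lift formula \eqref{eq:2-218}, applicable thanks to the existence and spatial $C^1$-regularity of $\frac{dV}{d\nu}(m,t)$ supplied by (a), yields $D_X V(m,t)(x,\omega) = D_x \frac{dV}{d\nu}(m,t)(x)$; equating gives \eqref{D_x p_nu V = Z}. Part (c) is then obtained by the one-variable fundamental theorem of calculus applied to $\theta \mapsto \frac{dV}{d\nu}(m,t)(\theta x)$:
$$
\frac{dV}{d\nu}(m,t)(x) - \frac{dV}{d\nu}(m,t)(0) = \int_0^1 D_x \frac{dV}{d\nu}(m,t)(\theta x) \cdot x \, d\theta = \int_0^1 \mathbb{Z}_{t, \theta x, m}(t) \cdot x \, d\theta,
$$
so that $C(t) := \frac{dV}{d\nu}(m,t)(0)$ is manifestly $x$-independent and \eqref{eq:4-110} follows; the normalisation $\int_{\mathbb{R}^n} \frac{dV}{d\nu}(m,t)(x) dm(x) = 0$ fixes $C(t)$ uniquely and, in particular, lets one set $C(t) = 0$ by choosing that representative of the linear functional derivative.

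The main obstacle I anticipate is making the envelope step in (a) fully rigorous, because it requires a quantitative continuous (ideally Lipschitz) dependence of both the optimal control $u_{t\bigcdot m}$ and the trajectory $\mathbb{Y}_{t\bigcdot m}$ on the measure argument $m$, so that the implicit sensitivities of the FBSDE \eqref{forward FBSDE}--\eqref{1st order condition} under a perturbation of $m$ are controlled and indeed contribute nothing to first order. The available inputs are the strict convexity and coercivity of $J(\cdot,m)$ from Proposition \ref{prop. convex of J}, the a priori FBSDE bounds from Proposition \ref{prop bdd of Y Z u r}, and the Jacobian-flow differentiability of Lemmas \ref{lem, Existence of J flow}--\ref{lem, Existence of Frechet derivatives}, which together should yield the required measure-sensitivities by an analogous linearisation argument. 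Once those are in hand, the chain-rule computation for the pushforward and the verification of the defining identity \eqref{eq:2-211} for $\frac{dV}{d\nu}$ are routine.
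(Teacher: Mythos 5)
Your route to part (a) is genuinely different from the paper's, while your parts (b) and (c) essentially coincide with it. The paper works in the opposite order: it first obtains the \emph{existence} of $\frac{dV}{d\nu}(m,t)$ and identity \eqref{D_x p_nu V = Z} abstractly, from the existence and growth of $D_XV$ (Proposition \ref{prop D_X V = Z and lip of D_X V} and \eqref{bdd Y, Z, u, r}) combined with \eqref{eq:2-2000}, \eqref{eq:2-217} and Proposition 4.1 of \cite{BY19}; part (c) is then the same integration you perform. For part (a) it never differentiates $V$ in the measure argument at all: writing $\mathcal{S}(x,m,t)$ for the right-hand side of \eqref{eq:4-10}, it suffices --- because the linear functional derivative is defined only up to an $x$-independent constant and (b) is already in hand --- to check that $\diff_x\mathcal{S}(x,m,t)=\mathbb{Z}_{txm}(t)$. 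This is done by recognising $\mathcal{S}$ as the value function of the auxiliary \emph{standard} stochastic control problem \eqref{eq:4-13}--\eqref{eq:4-14} in which the mean-field flow $\mathbb{Y}_{t\bigcdot m}(s)\otimes m$ is frozen; its optimality system coincides with \eqref{forward FBSDE, X=I}--\eqref{1st order condition, X=I}, so by uniqueness the optimal pair is $(\mathbb{Y}_{txm},u_{txm})$ with costate $\mathbb{Z}_{txm}$, and the finite-dimensional Bellman equation \eqref{eq:4-15} delivers $\diff_x\mathcal{S}=\mathbb{Z}_{txm}(t)$. The whole point of this detour is that no sensitivity of the optimal control with respect to $m$ is ever required.

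That is exactly where your proposal has a genuine gap. The envelope identity you invoke for (a) needs the contribution of the implicit $m$-dependence of $(u_{t\bigcdot m},\mathbb{Y}_{t\bigcdot m})$ to vanish to first order, and any rigorous version of this (differentiating $m\mapsto u_{t\bigcdot m}$, or the sandwich $J(u^{0},m_\theta)-J(u^{0},m)\geq V(m_\theta,t)-V(m,t)\geq J(u^{\theta},m_\theta)-J(u^{\theta},m)$ followed by $\theta\to0$) requires at least strong $L^2$-continuity of $\theta\mapsto u^{\theta}:=u_{t\bigcdot m_\theta}$ and of the associated trajectories. None of the results you cite supplies this: Propositions \ref{prop. convex of J} and \ref{prop bdd of Y Z u r} and Lemmas \ref{lem, Existence of J flow}--\ref{lem, Existence of Frechet derivatives} give bounds and differentiability in the initial datum $X$, not in $m$, and the $m$-sensitivities of the FBSDE solution are only established later, in Proposition \ref{prop bdd of functiona d of Y Z u r}, under the \emph{additional} Assumption \ref{ass. ass 1} and condition \eqref{ass in prop 6.2}, which Proposition \ref{prop4-3} does not assume. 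The required continuity can plausibly be extracted from the strong convexity estimate \eqref{eq:Ap1} together with the Lipschitz dependence of $D_XF(\cdot\otimes\mu)$ on $\mu$ implied by Assumption \textbf{b(ii)}, but this is a nontrivial argument (complicated by the fact that $u^{\theta}$ lives in $\mathcal{H}_{m_\theta}$ rather than $\mathcal{H}_m$) that your proposal leaves entirely open. Either supply that stability estimate, or adopt the paper's device of verifying \eqref{eq:4-10} through its $x$-gradient, which sidesteps the issue.
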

\begin{rem}
Formulae (\ref{D_X V(s)=Z(s)}) and (\ref{D_x p_nu V = Z}) tell us that $
\mathbb{Z}_{txm}(s)=\diff_x \,\dfrac{dV}{d\nu}(\mathbb{Y}_{t\bigcdot m}(s)  \otimes   m,s)(\mathbb{Y}_{txm}(s))$ for $s \in (t,T]$.
\end{rem}
\begin{proof}
Since the G\^ateaux  derivative $D_{X}V(X \otimes  m,t)$ exists due to Proposition \ref{prop D_X V = Z and lip of D_X V} and satisfies the regularity in (\ref{bdd Y, Z, u, r}), the linear functional derivative also exists and is given by (\ref{eq:2-2000}), because of Proposition 4.1 in \cite{BY19}. Utilizing (\ref{eq:2-217}) and the fact that  $\mathbb{Z}_{tX}(t)=D_{X}V(X  \otimes   m,t)$, we know that (\ref{D_x p_nu V = Z}) holds, and a simple integration and the mean value theorem give (\ref{eq:4-110}). We now turn to prove (\ref{eq:4-10}). 

Denote the function on the right hand side
of (\ref{eq:4-10}) by $\mathcal{S}(x,m,t)$, it suffices to show that

\begin{equation}
\mathbb{Z}_{txm}(t)=\diff_x  \mathcal{S}(x,m,t)\label{eq:4-500}
\end{equation}
since the linear functional derivative is defined up to a constant function independent of $x$, but possibly depending on $t$ (also see (\ref{def. linear functional derivative})). To this end, we consider the control problem described as follows: for any control $\alpha_{tx} \in L_{\mathcal{W}_{t}}^{2}(t,T;\mathbb{R}^{n})$, consider the state process by  
\begin{equation}
\mathbb{A}_{tx}(s)=x+\int_{t}^{s}\alpha_{tx}(\tau)d\tau+\eta(w(s)-w(t)).
\label{eq:4-13}
\end{equation}
We aim to minimize the following objective function:

\begin{equation}
\begin{aligned}
\mathcal{S}^*_{tx}(\alpha_{tx}):=\:&\mathbb{E}\left[\int_{t}^{T}l(\mathbb{A}_{tx}(s),\alpha_{tx}(s))ds\right]
+\int_{t}^{T}\mathbb{E}\left[\dfrac{dF}{d\nu}(\mathbb{Y}_{t\bigcdot m}(s)  \otimes   m)(\mathbb{A}_{tx}(s))ds\right]
+\mathbb{E}\pig[h(\mathbb{A}_{tx}(T))\pig]\\
&+\mathbb{E}\left[\dfrac{dF_{T}}{d\nu}(\mathbb{Y}_{t\bigcdot m}(T)  \otimes   m)(\mathbb{A}_{tx}(T))\right],
\end{aligned}
\label{eq:4-14}
\end{equation}
where $\mathbb{Y}_{t x m}(s)$ is given by the system (\ref{forward FBSDE, X=I})-(\ref{1st order condition, X=I}). The necessary conditions of optimality, working similarly to Lemma \ref{lem G derivative of J}, is given by \begin{equation}
l_v(\mathbb{A}_{tx}(s),\alpha_{tx}(s))+\mathbb{K}_{tx}(s)=0,
\label{1st condition, S}
\end{equation} where
\begin{equation}
\left\{
\begin{aligned}
-d\mathbb{K}_{tx}(s)
&=\left[l_{x}\pig(\mathbb{A}_{tx}(s),\alpha_{tx}(s)\pig)+\diff_x 
\dfrac{dF}{d\nu}(\mathbb{Y}_{t\bigcdot m}(s)  \otimes   m)\pig(\mathbb{A}_{tx}(s)\pig)\right]ds
-\sum_{j=1}^{n}\kappa_{tx,j}(s)dw_{j}(s)\h{1pt},\h{10pt}
\text{ for $s\in [t,T]$;}\\
\mathbb{K}_{tx}(T) &= h_x\big(\mathbb{A}_{tx}(T)\big) 
+\diff_x 
\dfrac{dF_T}{d\nu}(\mathbb{Y}_{t\bigcdot m}(T)  \otimes   m)\pig(\mathbb{A}_{tx}(T)\pig), 
\end{aligned}\right.
\label{def of Z*, S}
\end{equation}
for some adapted processes $\kappa_{tx,j}(s)\in L_{\mathcal{W}_{t}}^{2}(t,T;\mathcal{H}_{m})$ with $j=1,2,\ldots,n$. It is clear that $\mathbb{A}_{tx}=\mathbb{Y}_{txm}$, $\mathbb{K}_{tx}=\mathbb{Z}_{txm}$, $\alpha_{tx}=u_{txm}$, $\kappa_{tx,j}=\mathbbm{r}_{txm,j}$ satisfy (\ref{eq:4-13}), (\ref{def of Z*, S}) and (\ref{1st condition, S}) respectively. Since (\ref{uniqueness condition}) holds by the assumption of this proposition, it implies the strict convexity of the objective function $\mathcal{S}^*_{tx}(\alpha_{tx})$, and thus the optimal controlled process is unique. Therefore, the optimal control is $u_{txm}(s)$, and the
optimal state is $\mathbb{Y}_{txm}(s)$. The Bellman equation for
(\ref{eq:4-13})-(\ref{eq:4-14}) reads

\begin{equation}
\left\{
\begin{aligned}
&\dfrac{\partial\Phi}{\partial s}+H(x,\diff_x \Phi)+\dfrac{dF}{d\nu}(\mathbb{Y}_{t\bigcdot m}(s)  \otimes   m)(x)+\dfrac{1}{2}\sum_{j=1}^{n}D^2_x\Phi\eta^{j}\cdot\eta^{j}=0,
\\
&\Phi(x,T)=h(x)+\dfrac{dF_{T}}{d\nu}(\mathbb{Y}_{t\bigcdot m}(T)  \otimes   m)(x),
\end{aligned}
\right.
\label{eq:4-15}
\end{equation}
where $\eta^{j}$ is the $j$-th column vector of the matrix $\eta$. By the uniqueness of the Bellman equation, we see that $\Phi(x,t)=\mathcal{S}(x,m,t)$ and $\mathbb{Z}_{txm}(t)=\diff_x \mathcal{S}(x,m,t)$
\mycomment{\begin{equation}
\mathbb{Z}_{txm}(s)=\diff_x \Phi(\mathbb{Y}_{txm}(s),s)
\h{10pt}\text{and}\h{10pt}
\mathbbm{r}_{txm,j}(s)=\diff^2_x \Phi(\mathbb{Y}_{txm}(s),s)\eta^{j}
.\label{eq:4-160}
\end{equation}
Clearly
\begin{equation}
\Phi(x,t)=L(x,m,t)
\h{10pt}\text{and}\h{10pt}
\mathbb{Z}_{txm}(t)=\diff_x L(x,m,t).
\label{eq:4-161}
\end{equation}}
which concludes (\ref{eq:4-500}).
\end{proof}

We next study the second-order derivative of the linear functional derivative of the value function $V$ and its connection with the second-order G\^ateaux  derivative of $V$. With $X=\mathcal{I}_x=x$, just like as above, we adopt the notation of Jacobian flow $D\mathbb{Y}^{\Psi}_{tX}(s)=D\mathbb{Y}^{\Psi}_{txm}(s)$, $D\mathbb{Z}^{\Psi}_{tX}(s)=D\mathbb{Z}^{\Psi}_{txm}(s)$, $Du^{\Psi}_{tX}(s)=Du^{\Psi}_{txm}(s)$ and $D\mathbbm{r}^{\Psi}_{tX}(s)=D\mathbbm{r}^{\Psi}_{txm}(s)$ such that the governing system (\ref{J flow of FBSDE}) can be rewritten as

\begin{equation}
\scalemath{0.92}{
\h{-10pt}\left\{
\begin{aligned}
D \mathbb{Y}_{txm}^\Psi  (s)
&=\Psi
+\displaystyle\int_{t}^{s}\Big[ \diff_y  u(\mathbb{Y}_{txm},\mathbb{Z}_{txm})(\tau)\Big]
\Big[D \mathbb{Y}_{txm}^\Psi  (\tau) \Big]
+
\Big[\diff_z  u (\mathbb{Y}_{txm},\mathbb{Z}_{txm}) (\tau)\Big]
\Big[D \mathbb{Z}_{txm}^\Psi  (\tau)\Big]  d\tau;\\
D \mathbb{Z}_{txm}^\Psi (s)
&=h_{xx}(\mathbb{Y}_{txm}(T))D \mathbb{Y}_{txm}^\Psi (T)
+D_{X}^2F_{T}(\mathbb{Y}_{txm}(T)  \otimes  m)(D \mathbb{Y}_{txm}^\Psi (T))\\
&\h{10pt}+\displaystyle\int^T_s\bigg\{l_{xx}(\mathbb{Y}_{txm}(\tau),u(\tau))D \mathbb{Y}_{txm}^\Psi (\tau)
+l_{xv}(\mathbb{Y}_{txm}(\tau),u(\tau))\Big[ \diff_y  u(\mathbb{Y}_{txm},\mathbb{Z}_{txm})(\tau)\Big]
D \mathbb{Y}_{txm}^\Psi (\tau)\\
&\h{45pt}+l_{xv}(\mathbb{Y}_{txm}(\tau),u(\tau))
\Big[ \diff_z  u (\mathbb{Y}_{txm},\mathbb{Z}_{txm})(\tau)\Big]
D \mathbb{Z}_{txm}^\Psi (\tau)
+D^2_{X}F(\mathbb{Y}_{t \bigcdot m}(\tau)  \otimes  m)\pig(
D\mathbb{Y}_{txm}^\Psi (\tau)\pig)\bigg\}d\tau\h{-15pt} \\
&\h{10pt}-\int^T_s\sum_{j=1}^{n}D\mathbbm{r}^\Psi_{txm,j}(\tau)dw_{j}(\tau).
\end{aligned}\right.}
\label{J flow of FBSDE, X=I}
\end{equation}
The existence of solution to the above system is clearly established in Lemma \ref{lem, Existence of J flow}. Result (\ref{DX^2 V = DZ}) can also be rewritten as
\begin{equation}
D\mathbb{Z}_{txm}^\Psi(t)=D_{X}^{2}V(m,t)(\Psi).
\label{eq:4-173}
\end{equation}
On the other hand, we consider the derivative of $\pig(\mathbb{Y}_{txm}(s), \mathbb{Z}_{txm}(s), u_{txm}(s), \mathbbm{r}_{txm,j}(s)\pig)$ with respect to $x$, for instance, the notation $\diff_x \mathbb{Y}_{txm}(s)$ denotes the matrix-valued random process with $(i,j)$ entry $\pig[D \mathbb{Y}^{e_j}_{txm}(s)\pigr]_i$, here the $l$-th coordinate of $e_j$ is $\delta_{jl}$. Without involving the test random vector $\Psi$, the system (\ref{J flow of FBSDE, X=I}) can be rewritten as 
\begin{equation}
\scalemath{0.91}{
\h{-10pt}\left\{
\begin{aligned}
\diff_x  \mathbb{Y}_{txm}  (s)
&=Id
+\displaystyle\int_{t}^{s}\Big[ \diff_y  u(\mathbb{Y}_{txm},\mathbb{Z}_{txm})(\tau)\Big] 
\Big[\diff_x  \mathbb{Y}_{txm}  (\tau) \Big]
+\Big[\diff_z  u (\mathbb{Y}_{txm},\mathbb{Z}_{txm}) (\tau)\Big] 
\Big[\diff_x  \mathbb{Z}_{txm}  (\tau)\Big]  d\tau;\\
\diff_x  \mathbb{Z}_{txm} (s)
&=h_{xx}(\mathbb{Y}_{txm}(T))\diff_x  \mathbb{Y}_{txm} (T)
+D_{X}^2F_{T}(\mathbb{Y}_{txm}(T)  \otimes  m)(\diff_x  \mathbb{Y}_{txm} (T))\\
&\h{10pt}+\displaystyle\int^T_s\bigg\{l_{xx}(\mathbb{Y}_{txm}(\tau),u(\tau))\diff_x  \mathbb{Y}_{txm} (\tau)
+l_{xv}(\mathbb{Y}_{txm}(\tau),u(\tau))\Big[ \diff_y  u(\mathbb{Y}_{txm},\mathbb{Z}_{txm})(\tau)\Big] 
\diff_x  \mathbb{Y}_{txm} (\tau)\h{-20pt}\\
&\h{45pt}+l_{xv}(\mathbb{Y}_{txm}(\tau),u(\tau))
\Big[ \diff_z  u (\mathbb{Y}_{txm},\mathbb{Z}_{txm})(\tau)\Big] 
\diff_x  \mathbb{Z}_{txm} (\tau)
+D^2_{X}F(\mathbb{Y}_{t \bigcdot m}(\tau)  \otimes  m)\pig(
\diff_x \mathbb{Y}_{txm} (\tau)\pig)\bigg\}d\tau\h{-30pt} \\
&\h{10pt}-\int^T_s\sum_{k=1}^{n}\diff_x \mathbbm{r}_{txm,k}(\tau)dw_{j}(\tau).
\end{aligned}\right.}
\label{J flow of FBSDE, X=I, matrix}
\end{equation}
Due to the uniqueness of the system (\ref{J flow of FBSDE}) and Lemma \ref{lem, Existence of Frechet derivatives}, we compare the systems (\ref{J flow of FBSDE, X=I}) and (\ref{J flow of FBSDE, X=I, matrix}) to obtain that
\begin{equation}
D \mathbb{Y}_{txm}^\Psi(s)=\diff_x  \mathbb{Y}_{txm}(s)\Psi\;,
D \mathbb{Z}_{txm}^\Psi(s)=\diff_x  \mathbb{Z}_{txm}(s)\Psi\;,
D u_{txm}^\Psi(s)=\diff_x  u_{txm}(s)\Psi\;,
D\mathbbm{r}_{txm,k}^\Psi(s)=\diff_x  \mathbbm{r}_{txm,k}(s)\Psi,
\label{eq:4-178}
\end{equation}
where each term on the right hand side is  the matrix product of the gradient matrix and the random vector $\Psi$. Recalling from (\ref{eq:2-232}), we have
\begin{equation}
\h{-17pt}\begin{aligned}
D_{X}^{2}F(\mathbb{Y}_{t \bigcdot m}  (s)  \otimes  m)(D \mathbb{Y}_{txm}^\Psi  (s))
=\:&\diff^2_x \dfrac{dF}{d\nu}(\mathbb{Y}_{t \bigcdot m}  \otimes  m)(\mathbb{Y}_{txm})D \mathbb{Y}_{txm}^\Psi  (s)\\
&+\widetilde{\mathbb{E}}\left[\int_{\mathbb{R}^{n}}\h{-5pt}\diff_x \diff_{\tilde{x}}\dfrac{d^{\h{.5pt}2}\h{-.7pt}  F}{d\nu^{2}}( \mathbb{Y}_{t \bigcdot m}  (s)  \otimes  m)(\mathbb{Y}_{t x m}  (s),\widetilde{\mathbb{Y}}_{t\tilde{x}m}  (s))
D \widetilde{\mathbb{Y}}_{t \tilde{x} m}^{\tilde{\Psi}}  (s)
dm(\tilde{x})\right].\h{-20pt}
\end{aligned}
\end{equation}
Here $(\widetilde{\mathbb{Y}}_{t \tilde{x} m} (s),D \widetilde{\mathbb{Y}}_{t \tilde{x} m}^{\tilde{\Psi}}  (s))$ is an independent copy of $(\mathbb{Y}_{t x m} (s),D \mathbb{Y}_{tx m}^{\Psi} (s))$. A similar expression holds for $F_T$. If $\Psi$ is independent of $\mathcal{W}^s_0$ with $\mathbb{E}(\Psi)=0$, then 

\noindent(a)\: we use (\ref{eq:4-178}) to obtain
\begin{equation}
\begin{aligned}
D_{X}^{2}F(\mathbb{Y}_{t \bigcdot m}  (s)  \otimes  m)(D \mathbb{Y}_{txm}^\Psi  (s))
=\:& \diff^2_x \dfrac{dF}{d\nu}(\mathbb{Y}_{t \bigcdot m}  \otimes  m)(\mathbb{Y}_{txm})D \mathbb{Y}_{txm}^\Psi  (s)\\
&\mbox{\fontsize{10.5}{10}\selectfont\(
+\displaystyle\int_{\mathbb{R}^{n}}
\widetilde{\mathbb{E}}\left[\diff_x \diff_{\tilde{x}}\dfrac{d^{\h{.5pt}2}\h{-.7pt}  F}{d\nu^{2}}( \mathbb{Y}_{t \bigcdot m}  (s)  \otimes  m)(\mathbb{Y}_{t x m}  (s),\widetilde{\mathbb{Y}}_{t\tilde{x}m}  (s))
\diff_x  \widetilde{\mathbb{Y}}_{t \tilde{x} m}  (s)
\right] \widetilde{\mathbb{E}}\pig( \widetilde{\Psi}\pig)
dm(\tilde{x})\)}\\
=\:& \diff^2_x \dfrac{dF}{d\nu}(\mathbb{Y}_{t \bigcdot m}  \otimes  m)(\mathbb{Y}_{txm})D \mathbb{Y}_{txm}^\Psi  (s);
\end{aligned}
\label{3099}
\end{equation}
(b)\: moreover, under the assumptions of Proposition \ref{prop DXX V}, similar to (\ref{3099}), Proposition \ref{prop DXX V} and (\ref{eq:4-178}) imply
\begin{equation}
\diff^2_x \dfrac{dV}{d\nu}(m,t)(x)\Psi
=D_{X}^{2}V(m,t)(\Psi)
=D\mathbb{Z}_{txm}^\Psi(t)
=\diff_x \mathbb{Z}_{txm}(t)\Psi.
\label{D^2_x dV=DZ Psi}
\end{equation}
\mycomment{
So we can state
\begin{prop}\label{prop4-4}
Under the assumptions of Proposition \ref{prop DXX V}, for $\Psi \in\mathcal{H}_{m}$ measurable to $\mathcal{W}^t_0$ with $\mathbb{E}(\Psi)=0$, we have 
\end{prop}

\begin{equation}
D_{X}^{2}V(m,t)(\Psi)=\diff^2_x \dfrac{dV}{d\nu}(m,t)(x)\Psi
\label{eq:4-180}
\end{equation}}

\mycomment{
We can also interpret the system (\ref{J flow of FBSDE, X=I}) as a control problem following the proof of Proposition \ref{prop DXX V}. Taking $\Psi \in \mathcal{H}_m$ independent of $\mathcal{W}_{t}$ and controls in $\mathscr{V}_{t\Psi} \in L_{\mathcal{W}_{t \Psi}}^{2}(t,T;\mathcal{H}_{m})$, we have the state equation 
\begin{equation}
\mathscr{X}_{t\Psi}(s)=\Psi+\int_{t}^{s}\mathscr{V}_{t\Psi}(\tau)d\tau\label{eq:8-1-1}
\end{equation}
and the objective function is 

\begin{equation}
\begin{aligned}
\mathscr{J}_{txm\Psi}(\mathscr{V}_{t\Psi})
=\:&\dfrac{1}{2}\int_{t}^{T}\left\langle \left[l_{xx}(\mathbb{Y}_{txm}(s),u_{txm}(s))+\diff^2_x \dfrac{dF}{d\nu}(\mathbb{Y}_{txm}(s)  \otimes   m)(\mathbb{Y}_{txm}(s))\right]\mathscr{X}_{t\Psi}(s),\mathscr{X}_{t\Psi}(s)\right\rangle_{\mathcal{H}_m}\\
&\h{20pt}+2\pig\langle l_{xv}(\mathbb{Y}_{txm}(s),u_{txm}(s))\mathscr{V}_{t\Psi}(s),\mathscr{X}_{t\Psi}(s)\pigr\rangle_{\mathcal{H}_m}
+\pig\langle l_{vv}(\mathbb{Y}_{txm}(s),u_{txm}(s))\mathscr{V}_{t\Psi}(s),\mathscr{V}_{t\Psi}(s)\pigr\rangle_{\mathcal{H}_m} ds\\
&+\dfrac{1}{2}\left\langle \left[h_{xx}(\mathbb{Y}_{txm}(T))+\diff^2_x \dfrac{dF_T}{d\nu}(\mathbb{Y}_{txm}(T)  \otimes   m)(\mathbb{Y}_{txm}(T))\right]\mathscr{X}_{t\Psi}(T),\mathscr{X}_{t\Psi}(T)\right\rangle_{\mathcal{H}_m}
\end{aligned}
\label{eq:8-2-1}
\end{equation}
The optimal control is $D u_{txm}^\Psi(s) = \diff_x  u_{txm}(s) \Psi$ and the optimal state is $D \mathbb{Y}_{txm}^\Psi(s) =\diff_x  \mathbb{Y}_{txm}(s)\Psi.$}

\section{Bellman and Master Equations}\label{sec. master equation}

\subsection{Optimality Principle and Bellman Equation}\label{Bellman Equation}
The dynamical programming principle is the key to establishing the Bellman equation, it is read as: for $\epsilon \in (0,T-t)$,
\begin{equation}
V(X  \otimes  m,t)=\int_{t}^{t+\epsilon}\int_{\mathbb{R}^{n}}\mathbb{E}\Big[l\pig(\mathbb{Y}_{tX}(s),u_{tX}(s)\pig)\Big]dm(x)
+F(\mathbb{Y}_{tX}(s)  \otimes  m)\h{1pt}ds
+V(\mathbb{Y}_{tX}(t+\epsilon)  \otimes  m,t+\epsilon).
\label{eq:3-16}
\end{equation}
The above follows by the usual arguments consisting of showing
that for $X^{\epsilon}=\mathbb{Y}_{tX}(t+\epsilon)$ and $s \in [t+\epsilon,T]$, the flow property of
\begin{equation}
\mathbb{Y}_{t+\epsilon, X^{\epsilon}}(s)=\mathbb{Y}_{tX}(s),\;
\mathbb{Z}_{t+\epsilon, X^{\epsilon}}(s)=\mathbb{Z}_{tX}(s),\;
u_{t+\epsilon, X^{\epsilon}}(s)=u_{tX}(s) \h{10pt} \text{and}\h{10pt}
\mathbbm{r}_{t+\epsilon, X^{\epsilon},j}(s)=\mathbbm{r}_{tX,j}(s),
\label{flow property}
\end{equation}
which is clearly true due to the uniqueness of solution to the FBSDE \eqref{forward FBSDE}-\eqref{1st order condition}. It implies that
\begin{equation}
\begin{aligned}
V(\mathbb{Y}_{tX}(t+\epsilon)  \otimes  m,t+\epsilon)
=\:&\int_{t+\epsilon}^{T}\int_{\mathbb{R}^{n}}
\mathbb{E}\Big[l\pig(\mathbb{Y}_{tX}(s),u_{tX}(s)\pig)\Big]dm(x)
+F(\mathbb{Y}_{tX}(s)  \otimes  m)\h{2pt}ds\\
&+\int_{\mathbb{R}^n} \mathbb{E}\pig[h(\mathbb{Y}_{tX}(T)\pig] d m(x)
+F_{T}\pig(\mathbb{Y}_{tX}(T)  \otimes  m\pig).
\end{aligned}
\end{equation}
We can now state the following.
\begin{thm}
Let $X\in L^2_{\mathcal{W}^{\indep}_t} (\mathcal{H}_m)$. Under the assumptions (\ref{assumption, bdd of l, lx, lv})-(\ref{assumption, convexity of h}), (\ref{assumption, bdd of F F_T})-(\ref{assumption, convexity of D^2F, D^2F_T}) and (\ref{uniqueness condition}), we have the following:
\begin{itemize}
\item[(a).] The value function $V(X  \otimes  m,t)$ defined in (\ref{def value function}) is a solution to the Bellman equation
\begin{equation}
\h{-5pt}\left\{
\begin{aligned}
&\mbox{\fontsize{8.7}{10}\selectfont\(
\dfrac{\partial \Phi}{\partial t}(X  \otimes  m,t)+\mathbb{E}\left[\displaystyle\int_{\mathbb{R}^{n}}H\pig(X,D_{X}\Phi(X  \otimes  m,t)\pig)\,dm(x)\right]
+F(X  \otimes  m)+\dfrac{1}{2}\left\langle D_{X}^{2}\Phi(X  \otimes  m,t)\left(\displaystyle\sum_{j=1}^{n}\eta^{j}\mathcal{N}_{t}^{j}\right),\displaystyle\sum_{j=1}^{n}\eta^{j}\mathcal{N}_{t}^{j}\right\rangle_{\mathcal{H}_m}\h{-10pt}=0;
\)}\\
&\Phi(X  \otimes  m,T)=\mathbb{E}\left[\displaystyle\int_{\mathbb{R}^{n}}h(X)dm(x)\right]+F_{T}(X  \otimes  m),
\end{aligned}\right.
\label{eq. bellman eq.}
\end{equation}
where $\eta^{j}$ is the $j$-th column vector of the matrix $\eta$, $H$ is the Hamiltonian defined in (\ref{def. Hamiltonian}); and $\mathcal{N}_{t}^{j}$'s are Gaussian random variables each of them having a mean $0$ and a unit variance, and they are independent of each other and also of $X$;
\item[(b).] Among all functions satisfying the regularity properties in (\ref{bdd V}), (\ref{lip cts of D_XV in X}), (\ref{cts of D_X V in t}), (\ref{eq:4-26}), (\ref{DX^2 V < psi}) and (\ref{ct of D^2_X V}), $V(X  \otimes  m,t)$ is the unique solution to the Bellman equation (\ref{eq. bellman eq.}).
\end{itemize}
\label{prop bellman} 
\end{thm}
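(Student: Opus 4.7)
The plan is to handle existence and uniqueness separately, both resting on the Mean Field It\^o's Formula (Theorem \ref{ito thm}) applied to $V$ or to a candidate solution $\Phi$ along controlled trajectories.

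\textbf{Existence (part (a)).} First I would check that $V(X\otimes m,t)$, viewed as a functional on $L^2_{\mathcal{W}^{\indep}_t}(\mathcal{H}_m)\times[0,T]$, satisfies all hypotheses (\ref{subspace of H_m indep. of W_t})--(\ref{eq:3-112}) required by Theorem \ref{ito thm}. Indeed: twice G\^ateaux differentiability in $X$ and the quadratic/linear growth bounds come from Propositions \ref{prop bdd of V}, \ref{prop D_X V = Z and lip of D_X V}, and \ref{prop DXX V}; Lipschitz continuity in time of $V$ and $D_X V$ (yielding (\ref{eq:3-120})--(\ref{eq:3-121})) comes from Proposition \ref{prop holder of V D_X V in time}; the continuity properties (\ref{eq:3-124})--(\ref{eq:3-125}) of $D_X^2 V$ come from Proposition \ref{prop cts of DXX V}; and since the state (\ref{def. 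X_tX, constant sigma}) has the bounded drift $u_{tX}(s)$ and constant diffusion $\eta$, condition (vi) is immediate. Applying It\^o's formula to $s\longmapsto V(\mathbb{Y}_{tX}(s)\otimes m,s)$ on $[t,t+\epsilon]$, subtracting the DPP identity (\ref{eq:3-16}), dividing by $\epsilon$, and letting $\epsilon\downarrow 0$ together with the a.e.\ continuity of integrands yields
\begin{equation*}
0=\dfrac{\partial V}{\partial t}(X\otimes m,t)+\int_{\mathbb{R}^n}\mathbb{E}\bigl[l(X,u_{tX}(t))\bigr]dm(x)+\pigl\langle D_X V,u_{tX}(t)\pigr\rangle_{\mathcal{H}_m}+F(X\otimes m)+\dfrac{1}{2}\left\langle D_X^2 V\!\left(\sum_j\eta^j\mathcal{N}_t^j\right),\sum_j\eta^j\mathcal{N}_t^j\right\rangle_{\mathcal{H}_m}.
\end{equation*}
Since $u_{tX}(t)=H_p(X,D_X V(X\otimes m,t))$ by (\ref{eq:4-45}) and $D_X V=\mathbb{Z}_{tX}(t)$ by (\ref{D_X V = Z}), the pointwise identity $l(x,H_p(x,p))+H_p(x,p)\cdot p=H(x,p)$ converts the $l$ and $\langle D_X V,u\rangle$ terms into the Hamiltonian term, producing (\ref{eq. bellman eq.}). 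The terminal condition holds because $\mathbb{Y}_{TX}(T)=X$ in (\ref{def value function}).

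\textbf{Uniqueness (part (b)).} Let $\Phi$ be any solution of (\ref{eq. bellman eq.}) with the regularity listed in (\ref{bdd V}), (\ref{lip cts of D_XV in X}), (\ref{cts of D_X V in t}), (\ref{eq:4-26}), (\ref{DX^2 V < psi}), (\ref{ct of D^2_X V}). For an arbitrary admissible control $v_{tX}\in L^2_{\mathcal{W}_{tX}}(t,T;\mathcal{H}_m)$ driving $\mathbb{X}_{tX}(s)$ in (\ref{def. X_tX, constant sigma}), apply Theorem \ref{ito thm} to $\Phi(\mathbb{X}_{tX}(s)\otimes m,s)$ on $[t,T]$; the same hypotheses checked above hold by assumption for $\Phi$. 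Integrating and using the Bellman PDE for $\Phi$, the drift and Hamiltonian terms combine via
\begin{equation*}
\pigl\langle D_X\Phi,v_{tX}(s)\pigr\rangle_{\mathcal{H}_m}+\mathbb{E}\!\int_{\mathbb{R}^n}\!l(\mathbb{X}_{tX}(s),v_{tX}(s))\,dm(x)\ge\mathbb{E}\!\int_{\mathbb{R}^n}\!\pigl[l(\mathbb{X}_{tX}(s),v_{tX}(s))+v_{tX}(s)\cdot D_X\Phi\pigr]dm(x)\ge\mathbb{E}\!\int_{\mathbb{R}^n}\!H(\mathbb{X}_{tX}(s),D_X\Phi)\,dm(x),
\end{equation*}
with equality exactly when $v_{tX}(s)=H_p(\mathbb{X}_{tX}(s),D_X\Phi(\mathbb{X}_{tX}(s)\otimes m,s))$. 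Rearranging the resulting inequality and invoking the terminal condition produces $\Phi(X\otimes m,t)\le J_{tX}(v_{tX})$; taking the infimum over $v_{tX}$ yields $\Phi\le V$. Running the argument with the optimal feedback from (\ref{eq:4-45}) gives equality, and strict convexity from Proposition \ref{prop. convex of J} ensures the minimizer is unique, so $\Phi\equiv V$.

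\textbf{Main obstacle.} The delicate step is the verification that the It\^o formula (\ref{eq:3-113}) may be invoked along the general controlled process $\mathbb{X}_{tX}$ in the uniqueness argument: the hypotheses (\ref{subspace of H_m indep. of W_t})--(\ref{eq:3-112}) must be checked for $\Phi$ as a functional on the appropriate subspace $L^2_{\mathcal{W}^{\indep}_t}(\mathcal{H}_m)$, and one must verify that $\mathbb{X}_{tX}(s)$ remains in this class (which it does since $X$ is $\mathcal{W}_0^t$-measurable and the driving noise increments are independent of $\mathcal{W}_t$). Once this measurability bookkeeping is in place, the remainder reduces to the Hamiltonian minimization identity and is routine.
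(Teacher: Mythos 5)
Your part (a) follows the paper's own argument essentially verbatim: differentiate the dynamic programming identity (\ref{eq:3-16}) along the optimal trajectory, justify the chain rule with Theorem \ref{ito thm} (whose hypotheses are exactly what Propositions \ref{prop bdd of V}--\ref{prop cts of DXX V} supply), and absorb $\mathbb{E}\int l\,dm+\langle D_XV,u_{tX}\rangle_{\mathcal{H}_m}$ into the Hamiltonian using (\ref{eq:4-45}) and (\ref{D_X V(s)=Z(s)}). For part (b) you take a genuinely different and more classical route: a full verification/comparison argument over \emph{all} admissible controls, which gives $\Phi\le J_{tX}(v)$ for every $v$ and hence $\Phi\le V$. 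The paper instead evaluates the candidate $V^*$ only along the already-constructed optimal trajectory $\mathbb{Y}_{tX}$ from the FBSDE, asserts that it reproduces (\ref{eq:9-1}), and integrates to conclude $V^*=J_{tX}(u_{tX})=V$. Your route makes the origin of the inequality transparent and does not need the FBSDE solution for the ``$\le$'' half, at the cost of requiring more work for the reverse inequality.

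That reverse inequality is where your proposal has a real loose end. To obtain $\Phi\ge V$ you need an \emph{admissible} control along which your Hamiltonian inequality is an equality for $\Phi$, namely $v(s)=H_p\big(\mathbb{X}(s),D_X\Phi(\mathbb{X}(s)\otimes m,s)\big)$ --- the feedback built from $\Phi$, not the feedback (\ref{eq:4-45}) built from $V$ that you invoke. Plugging the $V$-optimal control $u_{tX}$ into your inequality only reproduces $\Phi\le J_{tX}(u_{tX})=V$, because equality there would require $u_{tX}=H_p(\mathbb{Y}_{tX},D_X\Phi)$, which is precisely what is not yet known. To close the gap you must first solve the closed-loop equation $\mathbb{X}(s)=X+\int_t^sH_p\big(\mathbb{X}(\tau),D_X\Phi(\mathbb{X}(\tau)\otimes m,\tau)\big)d\tau+\eta(w(s)-w(t))$ by a fixed-point argument; this is exactly where the assumed Lipschitz bound (\ref{lip cts of D_XV in X}) on $D_X\Phi$ and the Lipschitz bounds on $H_p=u(\cdot,\cdot)$ from Remark \ref{rem def of Du = DY +DZ} enter, and it is the step your sketch omits. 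Once that trajectory exists and is admissible, $\Phi(X\otimes m,t)=J_{tX}(v^\Phi)\ge\inf_vJ_{tX}(v)=V(X\otimes m,t)$; the strict convexity of $J_{tX}$ from Proposition \ref{prop. convex of J} is not what closes the argument and can be dropped from this step.
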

Its proof is placed in \hyperref[app, prop bellman]{Appendix}. Particularly, when $X=\mathcal{I}_x$, thanks to
(\ref{D^2_x dV=DZ Psi}), we have

\begin{equation}
\begin{aligned}
\left\langle 
D_{X}^{2}V(m,t)\left(\sum_{j=1}^{n}\eta^{j}\mathcal{N}_{t}^{j}\right),\sum_{j=1}^{n}\eta^{j}\mathcal{N}_{t}^{j}\right\rangle_{\mathcal{H}_m}
&=\left\langle 
\diff^2_x\dfrac{dV}{d\nu}(m,t)(x)\sum_{j=1}^{n}\eta^{j}\mathcal{N}_{t}^{j},\sum_{j=1}^{n}\eta^{j}\mathcal{N}_{t}^{j}\right\rangle_{\mathcal{H}_m}\\
&=\int_{\mathbb{R}^{n}}\sum_{j=1}^{n}\diff^2_x \dfrac{dV}{d\nu}(m,t)(x)\eta^{j}\cdot\eta^{j}dm(x),
\end{aligned}
\end{equation}
by then the Bellman equation in this case reads
\begin{equation}
\left\{
\begin{aligned}
-&\dfrac{\partial V}{\partial t}(m,t)
-\dfrac{1}{2}\sum^n_{j=1}\int_{\mathbb{R}^{n}}\diff^2_x \dfrac{dV}{d\nu}(m,t)(x)\eta^{j}\cdot\eta^{j}dm(x)=\int_{\mathbb{R}^{n}}H\left(x,\diff_x \dfrac{dV}{d\nu}(m,t)(x)\right)\,dm(x)+F(m),\\
&V(m,T)=\int_{\mathbb{R}^{n}}h(x)dm(x)+F_{T}(m).
\end{aligned}
\label{eq:5-24}
\right.
\end{equation}

\subsection{Master Equation}
Up to the last section, the mean field type control problem (\ref{eq:3-501})-(\ref{eq:3-503}) has already been completely resolved. In this section, we introduce the master equation of the mean field type control problem, and show how we apply the assertions we have so far to solve the master equation. By using (\ref{D^2_x dV=DZ Psi}), the Bellman equation in (\ref{eq:5-24}) is written as follows, 

\begin{equation}
-\dfrac{\partial V}{\partial t}(m,t)-\dfrac{1}{2}\sum_{j=1}^{n}\int_{\mathbb{R}^{n}}
\diff_\xi  \mathbb{Z}_{t\xi m}(t)\eta^{j} \cdot \eta^{j}dm(\xi)=\int_{\mathbb{R}^{n}}H(\xi,\mathbb{Z}_{t\xi m}(t))dm(\xi)+F(m).
\label{eq:5-25}
\end{equation}
For if the linear functional derivatives of $\mathbb{Z}_{t \xi m}(s)$
and $\diff_\xi  \mathbb{Z}_{t\xi m}(s)$ with respect to $m$ exist (their existence will be established in Propositions \ref{prop bdd of functiona d of Y Z u r} and \ref{prop bdd of Dxi dnu Y Z u r} respectively), which are denoted by $ \dfrac{d \mathbb{Z}_{t \xi m}}{d\nu} (x,s)$
and $ \dfrac{d\diff_\xi  \mathbb{Z}_{t\xi m}}{d\nu}(x,s)$ respectively, further, if we  set
\begin{equation}
U(x,m,t):=\dfrac{d V}{d\nu}(m,t)(x),
\label{eq:7-1}
\end{equation} 
by differentiating (\ref{eq:5-25}) both sides with respect to $m$, we can write 
\begin{equation}
\begin{aligned}
&-\dfrac{\partial U}{\partial t}(x,m,t)
-\dfrac{1}{2}\sum_{j=1}^{n}\diff_x  \mathbb{Z}_{txm}(t)\eta^{j}\cdot \eta^{j}
-\dfrac{1}{2}\sum_{j=1}^{n}\int_{\mathbb{R}^{n}}\dfrac{d\diff_\xi \mathbb{Z}_{t\xi m}}{d\nu}(x,t)\eta^{j}\cdot\eta^{j}dm(\xi)\\
&=H(x,\mathbb{Z}_{txm}(t))+\int_{\mathbb{R}^{n}}H_{p}(\xi,\mathbb{Z}_{t \xi m}(t))
\dfrac{d \mathbb{Z}_{t \xi m}}{d\nu} (x,t)dm(\xi)+\dfrac{dF}{d\nu}(m)(x).
\end{aligned}
\label{eq:7-3}
\end{equation}
The relation of (\ref{D_x p_nu V = Z}) implies that
\begin{equation}
\diff_x  U(x,m,t)=\mathbb{Z}_{txm}(t)
\h{10pt}\text{and}\h{10pt}
\diff^2_x  U(x,m,t)=\diff_x  \mathbb{Z}_{txm}(t).
\label{eq:7-2}
\end{equation}
In particular, we have used Lemma \ref{lem, Existence of Frechet derivatives} to justify the Fr\'echet differentiability of the second term in (\ref{eq:7-2}), which is obtained by putting $X=\mathcal{I}_x=x$ in $D\mathbb{Z}_{tX}(t)$ mentioned in Lemma \ref{lem, Existence of Frechet derivatives} as shown in (\ref{eq:4-178}), and the equation of $D\mathbb{Z}_{tX}(t)$ reduces to equation of $\diff_x \mathbb{Z}_{txm}(s)$ in (\ref{J flow of FBSDE, X=I, matrix}). It further implies, by differentiating with respect to $m$ that,
\begin{equation}
\diff_\xi 
\dfrac{dU}{d\nu}(\xi,m,t)(x)=\dfrac{d \mathbb{Z}_{t \xi m}}{d\nu} (x,t)
\h{10pt}\text{and}\h{10pt}
\diff_\xi ^{2}\dfrac{dU}{d\nu}(\xi,m,t)(x)= \dfrac{d\diff_\xi  \mathbb{Z}_{t\xi m}}{d\nu}(x,t).
\label{eq:7-4}
\end{equation}
Therefore, by plugging (\ref{eq:7-4}) into Equation (\ref{eq:7-3}), we finally obtain the Cauchy problem of the master equation
\begin{equation}
\left\{
\begin{aligned}
&-\dfrac{\partial U}{\partial t}(x,m,t)
-\dfrac{1}{2}\sum_{j=1}^{n}\diff^2_x U(x,m,t)\eta^{j}\cdot\eta^{j}-\dfrac{1}{2}\sum_{j=1}^{n}\int_{\mathbb{R}^{n}}\diff_\xi ^{2}\dfrac{dU}{d\nu}(\xi,m,t)(x)\eta^{j}\cdot \eta^{j}dm(\xi)\\
&\h{50pt}=
H(x,\diff_x U(x,m,t))+\int_{\mathbb{R}^{n}}H_{p}(\xi,\diff_\xi  U(\xi,m,t))\diff_\xi \dfrac{dU}{d\nu}(\xi,m,t)(x)dm(\xi)+\dfrac{dF}{d\nu}(m)(x)\\
&U(x,m,T)=h(x)+\dfrac{dF_T}{d\nu}(m)(x),
\end{aligned}
\right.
\label{eq:7-5}
\end{equation}
also see the discussions in \cite{BFY15,BFY17}. We next aim to justify the existences of the derivatives $ \dfrac{d \mathbb{Z}_{t \xi m}}{d\nu} (x,s)$
and $ \dfrac{d\diff_\xi  \mathbb{Z}_{t\xi m}}{d\nu}(x,s)$. To this end, we consider the linear system for the linear functional derivatives of $\mathbb{Y}_{t \xi m}(s)$, $\mathbb{Z}_{t \xi m}(s)$, $u_{t \xi m}(s)$, $\mathbbm{r}_{t \xi m,j}(s)$, which are denoted by $\dfrac{d \mathbb{Y}_{t \xi m}}{d\nu}(x,s)$, $\dfrac{d \mathbb{Z}_{t \xi m}}{d\nu}(x,s)$, $\dfrac{d u_{t \xi m}}{d\nu}(x,s)$, $\dfrac{d\mathbbm{r}_{t \xi m,j}}{d\nu}(x,s)$, respectively. From the system (\ref{forward FBSDE, X=I})-(\ref{backward FBSDE, X=I}), using the rules of differentiation with respect to $m$, we obtain
\begin{empheq}[left=\h{-10pt}\empheqbiglbrace]{align}
\dfrac{d \mathbb{Y}_{t \xi m}}{d\nu}(x,s)
=&\: \int_{t}^{s}\dfrac{d u_{t \xi m}}{d\nu}(x,\tau)d\tau;
\label{dnu, forward FBSDE, X=I}\\
\dfrac{d \mathbb{Z}_{t \xi m}}{d\nu}(x,s)
=&\:h_{xx}(\mathbb{Y}_{t\xi m}(T))
\dfrac{d \mathbb{Y}_{t \xi m}}{d\nu}(x,T)
+\diff_x^2\dfrac{dF_T}{d\nu}(\mathbb{Y}_{t\bigcdot m}(T) \otimes  m)\pig(\mathbb{Y}_{t\xi m}(T)\pig)\dfrac{d \mathbb{Y}_{t \xi m}}{d\nu}(x,T)
\nonumber\\
&+\widetilde{\mathbb{E}}\left[ \int_{\mathbb{R}^n} \diff_x\diff_{\tilde{x}} \dfrac{d^{\h{.5pt}2}\h{-.7pt}  F_T}{d\nu^2}(\mathbb{Y}_{t\bigcdot m}(T) \otimes  m)\pig(\mathbb{Y}_{t\xi m}(T),\widetilde{\mathbb{Y}}_{t\tilde{\xi} m}(T)\pig)
\dfrac{d \widetilde{\mathbb{Y}}_{t \tilde{\xi} m}}{d\nu}(x,T)
dm(\widetilde{\xi})\right]\nonumber\\
&+\widetilde{\mathbb{E}}\left[\diff_x  \dfrac{d^{\h{.5pt}2}\h{-.7pt}  F_T}{d\nu^2}(\mathbb{Y}_{t\bigcdot m}(T) \otimes  m)\pig(\mathbb{Y}_{t\xi m}(T),\widetilde{\mathbb{Y}}_{tx m}(T)\pig)\right]\nonumber\\
&+\int^T_s \left[l_{xx}\pig(\mathbb{Y}_{t\xi m}(\tau),u_{t\xi m}(\tau)\pig)
+\diff^2_x \dfrac{dF}{d\nu}(\mathbb{Y}_{t\bigcdot m}(\tau) \otimes  m)\pig(\mathbb{Y}_{t\xi m}(\tau)\pig)\right]\dfrac{d \mathbb{Y}_{t \xi m}}{d\nu}(x,\tau) d\tau\nonumber\\
&+\int^T_s l_{xv}\pig(\mathbb{Y}_{t\xi m}(\tau),u_{t\xi m}(\tau)\pig)
\dfrac{d u_{t \xi m}}{d\nu}(x,\tau) d\tau\nonumber\\
&+\int^T_s\widetilde{\mathbb{E}}\left[ \int_{\mathbb{R}^n} \diff_x\diff_{\tilde{x}} \dfrac{d^{\h{.5pt}2}\h{-.7pt}  F}{d\nu^2}(\mathbb{Y}_{t\bigcdot m}(\tau) \otimes  m)\pig(\mathbb{Y}_{t\xi m}(\tau),\widetilde{\mathbb{Y}}_{t\tilde{\xi} m}(\tau)\pig)
\dfrac{d \widetilde{\mathbb{Y}}_{t \tilde{\xi} m}}{d\nu}(x,\tau)
dm(\widetilde{\xi})\right]d\tau\nonumber\\
&+\int^T_s\widetilde{\mathbb{E}}\left[\diff_x  \dfrac{d^{\h{.5pt}2}\h{-.7pt}  F}{d\nu^2}(\mathbb{Y}_{t\bigcdot m}(\tau) \otimes  m)\pig(\mathbb{Y}_{t\xi m}(\tau),\widetilde{\mathbb{Y}}_{tx m}(\tau)\pig)\right]d\tau-\int^T_s\sum_{j=1}^{n}\dfrac{d\mathbbm{r}_{t \xi m,j}}{d\nu}(x,\tau) dw_{j}(\tau).
\label{dnu, backward FBSDE, X=I}
\end{empheq}
In order to guarantee the validity of the system (\ref{dnu, forward FBSDE, X=I})-(\ref{dnu, backward FBSDE, X=I}), we assume further regularity conditions on $F$ and $F_T$.
\begin{ass}
For any $x$, $\widetilde{x} \in \mathbb{R}^n$, we assume
\begin{itemize}
\item[(i).] the regularities of the second-order linear functional derivative: 
\begin{equation}
\left|\diff_x\dfrac{d^{\h{.5pt}2}\h{-.7pt}  F}{d\nu^{2}}(m)(x,\widetilde{x})\right|
\leq c(1+|\widetilde{x}|)\h{1pt},\h{10pt}
\left|\diff_x\dfrac{d^{\h{.5pt}2}\h{-.7pt}  F_T}{d\nu^{2}}(m)(x,\widetilde{x})\right|
\leq c_{T}(1+|\widetilde{x}|);
\label{assumption bdd of Dd^2F, F_T}
\end{equation}

\begin{equation}
\left|\diff_x \diff_{\tilde{x}}\dfrac{d^{\h{.5pt}2}\h{-.7pt}  F}{d\nu^{2}}(m)(x,\widetilde{x})\right|
\leq c\h{1pt},\h{10pt}
\left|\diff_x \diff_{\tilde{x}}\dfrac{d^{\h{.5pt}2}\h{-.7pt}  F_T}{d\nu^{2}}(m)(x,\widetilde{x})\right|
\leq c_{T};
\label{assumption bdd of D^2d^2F, F_T}
\end{equation}
\item[(ii).] all the derivatives in (i) and (ii) are continuous in $(m,x)$ and $(m,x,\widetilde{x})$ respectively.
\end{itemize}
\label{ass. ass 1}
\end{ass}

We then have 
\begin{prop}
\label{prop bdd of functiona d of Y Z u r} Under the assumptions (\ref{assumption, bdd of l, lx, lv})-(\ref{assumption, convexity of h}), (\ref{assumption, bdd of F F_T})-(\ref{assumption, convexity of D^2F, D^2F_T}), (\ref{uniqueness condition}) and  Assumption \ref{ass. ass 1}, if there is a $\delta_1 \in (0,1)$ such that 
\begin{equation}
(1-\delta_1)\lambda - \big( c_h'+c_T'+c_T \big)T
-\big( c+c'+c_l' \big)\dfrac{T^2}{2}>0,
\label{ass in prop 6.2}
\end{equation} then the system (\ref{dnu, forward FBSDE, X=I})-(\ref{dnu, backward FBSDE, X=I}) has the unique solution $\left(\dfrac{d \mathbb{Y}_{t \xi m}}{d\nu}(x,s), \dfrac{d \mathbb{Z}_{t \xi m}}{d\nu}(x,s), \dfrac{d u_{t \xi m}}{d\nu}(x,s), \dfrac{d\mathbbm{r}_{t \xi m,j}}{d\nu}(x,s)\right)$, and they are the linear functional derivatives of $\mathbb{Y}_{t \xi m}(s),\mathbb{Z}_{t \xi m}(s),u_{t \xi m}(s),\mathbbm{r}_{t \xi m,j}(s)$. Moreover, for any $s \in [t,T]$, they satisfies the following $L^2$-boundedness

\begin{equation}
\begin{aligned}
&\mathbb{E}\left[\int_{\mathbb{R}^{n}}\left|\dfrac{d \mathbb{Y}_{t \xi m}}{d\nu}(x,s)\right|^{2}dm(\xi)\right]
\h{1pt},\h{10pt}\mathbb{E}\left[\int_{\mathbb{R}^{n}}\left|\dfrac{d \mathbb{Z}_{t \xi m}}{d\nu}(x,s)\right|^{2}dm(\xi)\right],\\
&\mathbb{E}\left[\int_{\mathbb{R}^{n}}\left|\dfrac{d u_{t \xi m}}{d\nu}(x,s)\right|^{2}dm(\xi)\right]
\h{1pt},\h{10pt}\sum_{j=1}^{n}\int_{t}^{T}\mathbb{E}\left[\int_{\mathbb{R}^n}\left|\dfrac{d \mathbbm{r}_{t \xi m,j}}{d\nu}(x,s)\right|^{2}dm(\xi)\right]ds\leq C_{10}(1+|x|^{2}),
\end{aligned}
\label{eq:7-6}
\end{equation}
where $C_{10}$ is a positive constant depending only on $\delta_1$, $n$, $\lambda$, $\eta$, $c$, $c_l$, $c_h$, $c_T$, $c'$, $c_l'$, $c_h'$, $c_T'$ and $T$.
\end{prop}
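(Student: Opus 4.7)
The plan is to read (\ref{dnu, forward FBSDE, X=I})--(\ref{dnu, backward FBSDE, X=I}), together with the differentiation of the first order condition (\ref{1st order condition, X=I})
$$
l_{vx}\pig(\mathbb{Y}_{t\xi m}(s),u_{t\xi m}(s)\pig)\dfrac{d\mathbb{Y}_{t\xi m}}{d\nu}(x,s)
+l_{vv}\pig(\mathbb{Y}_{t\xi m}(s),u_{t\xi m}(s)\pig)\dfrac{du_{t\xi m}}{d\nu}(x,s)
+\dfrac{d\mathbb{Z}_{t\xi m}}{d\nu}(x,s)=0,
$$
as a linear McKean-Vlasov FBSDE parameterised by $x$, whose coefficients (the Hessians of $l,h$ along the optimal trajectory and the second-order Fr\'echet derivatives of $F,F_T$) are bounded by Assumptions \textbf{A(ii)}, \textbf{A(iv)}, \textbf{B(ii)} and Assumption \ref{ass. ass 1}, and whose source terms are the two ``diagonal'' contributions $\widetilde{\mathbb{E}}\pig[\diff_x\frac{d^{\h{.5pt}2}\h{-.7pt} F}{d\nu^2}(\mathbb{Y}_{t\bigcdot m}(s)\otimes m)(\mathbb{Y}_{t\xi m}(s),\widetilde{\mathbb{Y}}_{txm}(s))\pigr]$ and its $F_T$-counterpart. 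By (\ref{assumption bdd of Dd^2F, F_T}) and the bound (\ref{bdd Y, Z, u, r}) on $\mathbb{Y}_{txm}$, these sources are bounded in $\mathcal{H}_m$ by $C(1+|x|)$ uniformly in $\xi$. Once this linear system is shown to be well-posed with the claimed estimate, the solution is then identified as the functional derivative by the standard difference-quotient argument.

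For the core a priori estimate, I mimic the proof of Proposition \ref{prop bdd of Y Z u r}: apply It\^o's product rule to
$$
\left\langle \dfrac{d\mathbb{Z}_{t\xi m}}{d\nu}(x,s),\,\dfrac{d\mathbb{Y}_{t\xi m}}{d\nu}(x,s)\right\rangle_{\mathcal{H}_m}
$$
over $s\in[t,T]$, insert the dynamics (\ref{dnu, forward FBSDE, X=I})--(\ref{dnu, backward FBSDE, X=I}) and substitute the first-order condition above to eliminate $\frac{d\mathbb{Z}_{t\xi m}}{d\nu}$ at the cost of $l_{vv}|\frac{du_{t\xi m}}{d\nu}|^2+2l_{vx}\frac{du_{t\xi m}}{d\nu}\cdot\frac{d\mathbb{Y}_{t\xi m}}{d\nu}$. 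Assumption \textbf{A(v)}'s (\ref{assumption, convexity of l}) gives the lower bound $\lambda\big|\frac{du_{t\xi m}}{d\nu}\big|^2-c'_l\big|\frac{d\mathbb{Y}_{t\xi m}}{d\nu}\big|^2$ for this quadratic form, while the contributions coming from $D_X^2F$ and $D_X^2F_T$, after averaging in $\xi$ against $m$, are bounded below by $-c'\|\tfrac{d\mathbb{Y}}{d\nu}\|_{\mathcal{H}_m}^2$ and $-c_T'\|\tfrac{d\mathbb{Y}}{d\nu}\|_{\mathcal{H}_m}^2$ respectively via Assumption \textbf{B(v)(b)}'s (\ref{assumption, convexity of D^2F, D^2F_T}). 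Crucially, the terms in (\ref{dnu, backward FBSDE, X=I}) arising from $\diff_x\diff_{\tilde x}\frac{d^{\h{.5pt}2}\h{-.7pt} F}{d\nu^2}(\cdots)(\mathbb{Y}_{t\xi m},\widetilde{\mathbb{Y}}_{t\widetilde\xi m})\frac{d\widetilde{\mathbb{Y}}_{t\widetilde\xi m}}{d\nu}(x,\tau)$ are handled after taking $\mathbb{E}\int\cdot\,dm(\xi)$: a symmetrisation against their ``copy'' partners, combined with the $\widetilde x$-derivative bound in (\ref{assumption bdd of D^2d^2F, F_T}), places a contribution of the form $c_T\|\frac{d\mathbb{Y}_{t\bigcdot m}}{d\nu}(x,T)\|_{\mathcal{H}_m}^2$ on the right. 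Applications of Young's inequality on the source and cross terms, mirroring (\ref{4055}), then produce
$$
\scalemath{0.95}{
\left\{(1-\delta_1)\lambda-(c_h'+c_T'+c_T)T-(c+c'+c_l')\tfrac{T^2}{2}\right\}\int_t^T\mathbb{E}\left[\int_{\mathbb{R}^n}\Big|\tfrac{du_{t\xi m}}{d\nu}(x,s)\Big|^2 dm(\xi)\right]ds\leq C\pig(1+|x|^2\pig),}
$$
which is strictly positive by hypothesis (\ref{ass in prop 6.2}); bounds on $\frac{d\mathbb{Y}}{d\nu}$, $\frac{d\mathbb{Z}}{d\nu}$ and $\frac{d\mathbbm r_j}{d\nu}$ follow by standard forward-Gr\"onwall and backward BSDE arguments.

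Existence is then obtained by a contraction in $L^2_{\mathcal{W}_t}\pig(t,T;L^2(\mathbb{R}^n,m;\mathcal{H}_m)\pig)$: given a candidate $\frac{du_{t\xi m}}{d\nu}$, the forward equation (\ref{dnu, forward FBSDE, X=I}) is solved trivially, the backward one (\ref{dnu, backward FBSDE, X=I}) is a standard linear mean field BSDE (well-posed by the bounded coefficients and martingale representation), and the first-order condition returns an updated control; the estimate above with $\delta_1$ fixed gives the strict contraction when (\ref{ass in prop 6.2}) holds. Finally, to identify the solution with the functional derivative, set $m^\epsilon:=m+\epsilon(m'-m)$ and define the difference quotients $\Delta^\epsilon\mathbb{Y}_{t\xi}(s):=\epsilon^{-1}[\mathbb{Y}_{t\xi m^\epsilon}(s)-\mathbb{Y}_{t\xi m}(s)]$ and likewise for $\mathbb{Z}, u, \mathbbm r$: by first-order Taylor expansion of the coefficients of (\ref{forward FBSDE, X=I})--(\ref{1st order condition, X=I}) in the measure argument, using the continuity in Assumption \ref{ass. ass 1}(ii), these quotients satisfy a perturbation of the linearised system with residual going to zero in $\mathcal{H}_m$ as $\epsilon\downarrow 0$; the $\epsilon$-uniform estimate (\ref{eq:7-6}) obtained by the same procedure and the uniqueness just established promote weak to strong convergence, giving the required identification and (after integrating against $dm'(x)-dm(x)$) the linear functional derivative representation (\ref{def. linear functional derivative}).

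The main obstacle is the mean-field coupling term $\widetilde{\mathbb{E}}\int\diff_x\diff_{\tilde x}\tfrac{d^{\h{.5pt}2}\h{-.7pt} F}{d\nu^2}\tfrac{d\widetilde{\mathbb{Y}}_{t\widetilde\xi m}}{d\nu}(x,\tau)\,dm(\widetilde\xi)$ in (\ref{dnu, backward FBSDE, X=I}): since it couples the derivative at a fresh base point $\widetilde\xi$ with that at $\xi$, the It\^o-product estimate cannot be carried out pointwise in $\xi$ but only \emph{after} averaging against $m$, which is precisely why the Lasry-Lions-type sign condition in Assumption \textbf{b(v)}$^*$ (equivalently \textbf{B(v)(b)}) is indispensable here, and why the sharper global smallness condition (\ref{ass in prop 6.2}) is needed in place of (\ref{uniqueness condition}) alone.
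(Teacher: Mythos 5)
Your proposal is correct and follows essentially the same route as the paper: the core a priori estimate is obtained exactly as in the paper by applying It\^o's product rule to $\bigl\langle \tfrac{d\mathbb{Y}_{t\xi m}}{d\nu}(x,\cdot),\tfrac{d\mathbb{Z}_{t\xi m}}{d\nu}(x,\cdot)\bigr\rangle_{\mathcal{H}_m}$, substituting the differentiated first-order condition, invoking \textbf{A(v)}, \textbf{A(vi)}, \textbf{B(v)(b)} and the bounds of Assumption \ref{ass. ass 1} on the source terms $\widetilde{\mathbb{E}}\bigl[\diff_x\tfrac{d^{\h{.5pt}2}\h{-.7pt}F}{d\nu^2}(\cdots)(\mathbb{Y}_{t\xi m},\widetilde{\mathbb{Y}}_{txm})\bigr]$ together with $\sup_\tau\mathbb{E}|\mathbb{Y}_{txm}(\tau)|^2\leq C(1+|x|^2)$, and closing via Young's inequality and the forward-equation bounds to reach the coefficient in \eqref{ass in prop 6.2}. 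The paper omits the existence/identification step with a reference to the finite-difference argument of Lemma \ref{lem, Existence of J flow}, and your contraction-plus-difference-quotient sketch is a reasonable filling of that gap.
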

\begin{proof}
As in the proof of Lemma \ref{lem, Existence of J flow}, by considering the finite differences of the processes, we can show that the unique solution to (\ref{dnu, forward FBSDE, X=I})-(\ref{dnu, backward FBSDE, X=I}) equals the linear functional derivatives of $\pig(\mathbb{Y}_{t \xi m}(s),\mathbb{Z}_{t \xi m}(s),u_{t \xi m}(s),\mathbbm{r}_{t \xi m,j}(s)\pig)$. We omit the proof here. To establish the estimates in (\ref{eq:7-6}), we differentiate the first order condition (\ref{1st order condition}) with respect to $m$ such that
\begin{equation}
 l_{vx}\pig(\mathbb{Y}_{t \xi m}(s),u_{t \xi m}(s)\pig)\dfrac{d \mathbb{Y}_{t \xi m}}{d\nu}(x,s)
+l_{vv}\pig(\mathbb{Y}_{t \xi m}(s),u_{t \xi m}(s)\pig)\dfrac{d u_{t \xi m}}{d\nu}(x,s)
+\dfrac{d \mathbb{Z}_{t \xi m}}{d\nu}(x,s)=0.
\label{1st order condition, dnu}
\end{equation}
We then apply It\^o lemma to the inner product $\left\langle \dfrac{d \mathbb{Y}_{t \xi m}}{d\nu}(x,s),\dfrac{d \mathbb{Z}_{t \xi m}}{d\nu}(x,s)\right\rangle_{\mathcal{H}_m}$ and then integrate from $t$ to $T$, together with (\ref{1st order condition, dnu}), to obtain
\begingroup
\allowdisplaybreaks
\begin{align*}
&\Bigg\langle \dfrac{d \mathbb{Y}_{t \xi m}}{d\nu}(x,T),
h_{xx}(\mathbb{Y}_{txm}(T))
\dfrac{d \mathbb{Y}_{t \xi m}}{d\nu}(x,T)
+\diff_x^2\dfrac{dF_T}{d\nu}(\mathbb{Y}_{t\bigcdot m}(T) \otimes  m)\pig(\mathbb{Y}_{txm}(T)\pig)\dfrac{d \mathbb{Y}_{t \xi m}}{d\nu}(x,T)
\nonumber\\
&\h{10pt}+\widetilde{\mathbb{E}}\left[ \int_{\mathbb{R}^n} \diff_x\diff_{\tilde{x}} \dfrac{d^{\h{.5pt}2}\h{-.7pt}  F_T}{d\nu^2}(\mathbb{Y}_{t\bigcdot m}(T) \otimes  m)\pig(\mathbb{Y}_{t\xi m}(T),\widetilde{\mathbb{Y}}_{t\tilde{\xi} m}(T)\pig)
\dfrac{d \widetilde{\mathbb{Y}}_{t \tilde{\xi} m}}{d\nu}(x,T)
dm(\widetilde{\xi})\right]\nonumber\\
&\h{190pt}+\widetilde{\mathbb{E}}\left[\diff_x  \dfrac{d^{\h{.5pt}2}\h{-.7pt}  F_T}{d\nu^2}(\mathbb{Y}_{t\bigcdot m}(T) \otimes  m)\pig(\mathbb{Y}_{t\xi m}(T),\widetilde{\mathbb{Y}}_{tx m}(T)\pig)\right]\Bigg\rangle_{\mathcal{H}_m}\nonumber\\
=\:&-\int^T_t  \Bigg\langle \dfrac{d \mathbb{Y}_{t \xi m}}{d\nu}(x,\tau), \left[l_{xx}\pig(\mathbb{Y}_{t\xi m}(\tau),u_{t\xi m}(\tau)\pig)
+\diff^2_x \dfrac{dF}{d\nu}(\mathbb{Y}_{t\bigcdot m}(\tau) \otimes  m)\pig(\mathbb{Y}_{t\xi m}(\tau)\pig)\right]\dfrac{d \mathbb{Y}_{t \xi m}}{d\nu}(x,\tau) \nonumber\\
&+ l_{xv}\pig(\mathbb{Y}_{t\xi m}(\tau),u_{t\xi m}(\tau)\pig)
\dfrac{d u_{t \xi m}}{d\nu}(x,\tau) 
+  \widetilde{\mathbb{E}}\left[ \int_{\mathbb{R}^n} \diff_x\diff_{\tilde{x}} \dfrac{d^{\h{.5pt}2}\h{-.7pt}  F}{d\nu^2}(\mathbb{Y}_{t\bigcdot m}(\tau) \otimes  m)\pig(\mathbb{Y}_{t\xi m}(\tau),\widetilde{\mathbb{Y}}_{t\tilde{\xi} m}(\tau)\pig)
\dfrac{d \widetilde{\mathbb{Y}}_{t \tilde{\xi} m}}{d\nu}(x,\tau)
dm(\widetilde{\xi})\right] \nonumber\\
&\h{250pt}+  \widetilde{\mathbb{E}}\left[\diff_x  \dfrac{d^{\h{.5pt}2}\h{-.7pt}  F}{d\nu^2}(\mathbb{Y}_{t\bigcdot m}(\tau) \otimes  m)\pig(\mathbb{Y}_{t\xi m}(\tau),\widetilde{\mathbb{Y}}_{tx m}(\tau)\pig)\right]  \Bigg\rangle_{\mathcal{H}_m} d\tau\\
&-\int^T_t \left\langle \dfrac{d u_{t \xi m}}{d\nu}(x,\tau),
l_{vx}\pig(\mathbb{Y}_{t \xi m}(\tau),u_{t \xi m}(\tau)\pig)\dfrac{d \mathbb{Y}_{t \xi m}}{d\nu}(x,\tau)
+l_{vv}\pig(\mathbb{Y}_{t \xi m}(\tau),u_{t \xi m}(\tau)\pig)\dfrac{d u_{t \xi m}}{d\nu}(x,\tau)
\right\rangle_{\mathcal{H}_m}d\tau.
\end{align*}
\endgroup
Note the integration on $\mathbb{R}^n$ involved in the $\mathcal{H}_m$-inner product corresponds to the variable $\xi$. Using the assumptions in (\ref{assumption bdd of Dd^2F, F_T})-(\ref{assumption bdd of D^2d^2F, F_T}), Assumptions \textbf{A(v)}'s (\ref{assumption, convexity of l}), \textbf{A(vi)}'s (\ref{assumption, convexity of h}), \textbf{b(i)}'s \eqref{assumption, bdd of DF, DF_T, no lift},
\textbf{b(ii)}'s \eqref{assumption, bdd of D^2F, D^2F_T, no lift},
\textbf{b(v)}'s \eqref{assumption, convexity of D^2F, D^2F_T, no lift},
\textbf{B(v)(b)}'s (\ref{assumption, convexity of D^2F, D^2F_T}), we obtain 
\begin{equation}
\scalemath{0.94}{
\begin{aligned}
\lambda\int^T_t\left\|\dfrac{d u_{t \xi m}}{d\nu}(x,\tau)\right\|^2_{\mathcal{H}_m} d\tau
\leq\:&\big( c+c'+c_l' \big)\int^T_t\left\|\dfrac{d \mathbb{Y}_{t \xi m}}{d\nu}(x,\tau)\right\|^2_{\mathcal{H}_m}d\tau
+\big( c_h'+c_T'+c_T \big) \left\|\dfrac{d \mathbb{Y}_{t \xi m}}{d\nu}(x,T)\right\|^2_{\mathcal{H}_m} \\
&+c\int^T_t  \left\| \dfrac{d \mathbb{Y}_{t \xi m}}{d\nu}(x,\tau)\right\|_{\mathcal{H}_m} \widetilde{\mathbb{E}}\left[1+\big|\widetilde{\mathbb{Y}}_{tx m}(\tau)\big|\right]  d\tau
+c_T\left\| \dfrac{d \mathbb{Y}_{t \xi m}}{d\nu}(x,T)\right\|_{\mathcal{H}_m} \widetilde{\mathbb{E}}\left[1+\big|\widetilde{\mathbb{Y}}_{tx m}(T)\big|\right].
\end{aligned}}
\label{3456}
\end{equation}
The equation in (\ref{dnu, forward FBSDE, X=I}) implies that
\begin{equation}
\left\| \dfrac{d \mathbb{Y}_{t \xi m}}{d\nu}(x,T)\right\|_{\mathcal{H}_m}^2
\leq T\int^T_t  \left\| \dfrac{d u_{t \xi m}}{d\nu}(x,\tau)\right\|_{\mathcal{H}_m}^2d\tau
\h{5pt}\text{and}\h{5pt}
\int^T_t  \left\| \dfrac{d \mathbb{Y}_{t \xi m}}{d\nu}(x,\tau)\right\|_{\mathcal{H}_m}^2d\tau
\leq \dfrac{T^2}{2}\int^T_t  \left\| \dfrac{d u_{t \xi m}}{d\nu}(x,\tau)\right\|_{\mathcal{H}_m}^2 d\tau.
\label{bdd |dnuY|}
\end{equation}
By repeating the proof of Proposition \ref{prop bdd of Y Z u r}, we also have the fact that $\displaystyle\sup_{\tau\in(t,T)}\mathbb{E}\pig(|\mathbb{Y}_{txm}(\tau)|^{2}\pig)\leq C_{4}(1+|x|^{2})$ which is put into the second line of (\ref{3456}), together with (\ref{bdd |dnuY|}), the inequality in (\ref{3456}) can be rewritten as
\begin{equation}
\begin{aligned}
\left[(1-\delta_1)\lambda - \big( c_h'+c_T'+c_T \big)T
-\big( c+c'+c_l' \big)\dfrac{T^2}{2}\right]
\int^T_t\left\|\dfrac{d u_{t \xi m}}{d\nu}(x,\tau)\right\|^2_{\mathcal{H}_m} d\tau
\leq\:&\dfrac{C_4(c^2+c_T^2)}{\lambda\delta_1}(1+|x|^2),
\end{aligned}
\label{3462}
\end{equation}
for any $\delta_1 \in (0,1)$. The required estimate for $\dfrac{d u_{t \xi m}}{d\nu}(x,\tau)$ thus holds, and similarly, the estimates for $\dfrac{d \mathbb{Y}_{t \xi m}}{d\nu}(x,s),$\\$ \dfrac{d \mathbb{Z}_{t \xi m}}{d\nu}(x,s), \dfrac{d\mathbbm{r}_{t \xi m,j}}{d\nu}(x,s)$ also hold by following the step as in the proof of Proposition \ref{prop bdd of Y Z u r} and using (\ref{3462}). This completes the proof.
\end{proof}
To further proceed to the existence of the gradient of the linear functional derivatives, we assume:
\begin{ass}
For any $x$, $\widetilde{x}$, $v \in \mathbb{R}^n$, 
\begin{itemize}
\item[(i).] the boundedness of the derivatives:
\begin{equation}
|l_{xxx}(x,v)|\h{1pt},\h{5pt}|l_{xxv}(x,v)|\h{1pt},\h{5pt}
|l_{xvv}(x,v)|\h{1pt},\h{5pt}|l_{vvv}(x,v)|\leq c\h{1pt},\h{5pt}|h_{xxx}(x)|\leq c_{T};
\label{assumption, new 1}
\end{equation}

\item[(ii).] the regularities of the first-order linear functional derivatives:
\begin{equation}
\left|\diff^{3}_x\dfrac{dF}{d\nu}(m)(x)\right|
\leq c\h{1pt},\h{5pt}
\left|\diff^{3}_x\dfrac{dF_T}{d\nu}(m)(x)\right|\leq c_{T};
\label{assumption, new 2}
\end{equation}
\item[(iii).] the regularities of the second-order linear functional derivatives:
\begin{equation}
\left|\diff_{x}^{2}\dfrac{d^{\h{.5pt}2}\h{-.7pt} F }{d\nu^{2}}(m)(x,\widetilde{x})\right|
\leq c(1+|\widetilde{x}|)\h{1pt},\h{5pt}
\left|\diff_{x}^{2}\dfrac{d^{\h{.5pt}2}\h{-.7pt} F_T }{d\nu^{2}}(m)(x,\widetilde{x})\right|
\leq c_T(1+|\widetilde{x}|) ;
\label{assumption, new 3}
\end{equation}
\begin{equation}
\left|\diff_{x}^{2}\diff_{\tilde{x}}
\dfrac{d^{\h{.5pt}2}\h{-.7pt} F}{d\nu^{2}}(m)(x,\widetilde{x})\right|
\leq c\h{1pt},\h{5pt}
\left|\diff_{x}^{2}\diff_{\tilde{x}}
\dfrac{d^{\h{.5pt}2}\h{-.7pt} F_T}{d\nu^{2}}(m)(x,\widetilde{x})\right|
\leq c_T.
\label{assumption, new 4}
\end{equation}
\item[(iv).] all the derivatives in (i), (ii) and (iii) are continuous in $(x,v)$, $(m,x)$ and $(m,x,\widetilde{x})$, respectively.
\end{itemize}
\label{ass. ass 2}
\end{ass}
From (\ref{dnu, forward FBSDE, X=I})-(\ref{dnu, backward FBSDE, X=I}), by taking
the gradient in $\xi$, we obtain the system 
{\footnotesize
\begin{empheq}[left=\h{-10pt}\empheqbiglbrace]{align}
\diff_\xi\dfrac{d \mathbb{Y}_{t \xi m}}{d\nu}(x,s)
=&\: \int_{t}^{s}\diff_\xi\dfrac{d u_{t \xi m}}{d\nu}(x,\tau)d\tau;
\label{Dxi dnu, forward FBSDE, X=I}\\
\diff_\xi \dfrac{d \mathbb{Z}_{t \xi m}}{d\nu}(x,s)
=&\:\left[h_{xx}(\mathbb{Y}_{t\xi m}(T))
+\diff_x^2\dfrac{dF_T}{d\nu}(\mathbb{Y}_{t\bigcdot m}(T) \otimes  m)\pig(\mathbb{Y}_{t \xi m}(T)\pig)\right]
\diff_\xi\dfrac{d \mathbb{Y}_{t \xi m}}{d\nu}(x,T)
\nonumber\\
&+\left[h_{xxx}(\mathbb{Y}_{t\xi m}(T))
+\diff_x^3\dfrac{dF_T}{d\nu}(\mathbb{Y}_{t\bigcdot m}(T) \otimes  m)\pig(\mathbb{Y}_{t \xi m}(T)\pig)\right]
\diff_\xi \mathbb{Y}_{t \xi m} (T)
\dfrac{d \mathbb{Y}_{t \xi m}}{d\nu}(x,T)
\nonumber\\
&+\widetilde{\mathbb{E}}\left[ \int_{\mathbb{R}^n} \diff_x^2\diff_{\tilde{x}} \dfrac{d^{\h{.5pt}2}\h{-.7pt}  F_T}{d\nu^2}(\mathbb{Y}_{t\bigcdot m}(T) \otimes  m)\pig(\mathbb{Y}_{t\xi m}(T),\widetilde{\mathbb{Y}}_{t\tilde{\xi} m}(T)\pig)
\diff_\xi \mathbb{Y}_{t \xi m} (T)
\dfrac{d \widetilde{\mathbb{Y}}_{t \tilde{\xi} m}}{d\nu}(x,T)
dm(\widetilde{\xi})\right]\nonumber\\
&+\widetilde{\mathbb{E}}\left[\diff_x^2 \dfrac{d^{\h{.5pt}2}\h{-.7pt}  F_T}{d\nu^2}(\mathbb{Y}_{t\bigcdot m}(T) \otimes  m)\pig(\mathbb{Y}_{t\xi m}(T),\widetilde{\mathbb{Y}}_{tx m}(T)\pig)
 \right]\diff_\xi \mathbb{Y}_{t \xi m}(T)\nonumber\\
&+\int^T_s \left[l_{xx}\pig(\mathbb{Y}_{t\xi m}(\tau),u_{t\xi m}(\tau)\pig)
+\diff^2_x \dfrac{dF}{d\nu}(\mathbb{Y}_{t\bigcdot m}(\tau) \otimes  m)\pig(\mathbb{Y}_{t\xi m}(\tau)\pig)\right]
\diff_\xi\dfrac{d \mathbb{Y}_{t \xi m}}{d\nu}(x,\tau) d\tau\nonumber\\
&+\int^T_s \left[l_{xxx}\pig(\mathbb{Y}_{t\xi m}(\tau),u_{t\xi m}(\tau)\pig)
+\diff^3_x \dfrac{dF}{d\nu}(\mathbb{Y}_{t\bigcdot m}(\tau) \otimes  m)\pig(\mathbb{Y}_{t\xi m}(\tau)\pig)\right]
\diff_\xi \mathbb{Y}_{t \xi m} (\tau)
\dfrac{d \mathbb{Y}_{t \xi m}}{d\nu}(x,\tau) d\tau\nonumber\\
&+\int^T_s \left[l_{xxv}\pig(\mathbb{Y}_{t\xi m}(\tau),u_{t\xi m}(\tau)\pig)\right]
\diff_\xi u_{t \xi m} (\tau)
\dfrac{d \mathbb{Y}_{t \xi m}}{d\nu}(x,\tau) d\tau\nonumber\\
&+\int^T_s l_{xv}\pig(\mathbb{Y}_{t\xi m}(\tau),u_{t\xi m}(\tau)\pig)
\diff_\xi\dfrac{d u_{t \xi m}}{d\nu}(x,\tau) 
+l_{xvx}\pig(\mathbb{Y}_{t\xi m}(\tau),u_{t\xi m}(\tau)\pig)
\diff_\xi \mathbb{Y}_{t \xi m} (\tau)
\dfrac{d u_{t \xi m}}{d\nu}(x,\tau)d\tau\nonumber\\
&+\int^T_s l_{xvv}\pig(\mathbb{Y}_{t\xi m}(\tau),u_{t\xi m}(\tau)\pig)
\diff_\xi u_{t\xi m}(\tau)
\dfrac{d u_{t \xi m}}{d\nu}(x,\tau) 
d\tau\nonumber\\
&+\int^T_s\widetilde{\mathbb{E}}\left[ \int_{\mathbb{R}^n} \diff_x^2\diff_{\tilde{x}} \dfrac{d^{\h{.5pt}2}\h{-.7pt}  F}{d\nu^2}(\mathbb{Y}_{t\bigcdot m}(\tau) \otimes  m)\pig(\mathbb{Y}_{t\xi m}(\tau),\widetilde{\mathbb{Y}}_{t\tilde{\xi} m}(\tau)\pig)
\diff_\xi \mathbb{Y}_{t\xi m}(\tau)
\dfrac{d \widetilde{\mathbb{Y}}_{t \tilde{\xi} m}}{d\nu}(x,\tau)
dm(\widetilde{\xi})\right]d\tau\nonumber\\
&+\int^T_s\widetilde{\mathbb{E}}\left[\diff_x^2 \dfrac{d^{\h{.5pt}2}\h{-.7pt}  F}{d\nu^2}(\mathbb{Y}_{t\bigcdot m}(\tau) \otimes  m)\pig(\mathbb{Y}_{t\xi m}(\tau),\widetilde{\mathbb{Y}}_{tx m}(\tau)\pig)
\diff_\xi \mathbb{Y}_{t\xi m}(\tau)\right]d\tau\nonumber\\
&-\int^T_s\sum_{j=1}^{n}\diff_\xi\dfrac{d\mathbbm{r}_{t \xi m,j}}{d\nu}(x,\tau) dw_{j}(\tau).
\label{Dxi dnu, backward FBSDE, X=I}
\end{empheq}}
We can then state:
\begin{prop}
\label{prop bdd of Dxi dnu Y Z u r} Under the assumptions (\ref{assumption, bdd of l, lx, lv})-(\ref{assumption, convexity of h}), (\ref{assumption, bdd of F F_T})-(\ref{assumption, convexity of D^2F, D^2F_T}), (\ref{uniqueness condition}), (\ref{ass in prop 6.2}), Assumptions \ref{ass. ass 1} and \ref{ass. ass 2}, the system (\ref{Dxi dnu, forward FBSDE, X=I})-(\ref{Dxi dnu, backward FBSDE, X=I}) has the unique solution, and they are the gradient of
$\dfrac{d \mathbb{Y}_{t \xi m}}{d\nu}(x,s), \dfrac{d \mathbb{Z}_{t \xi m}}{d\nu}(x,s),$ $\dfrac{d u_{t \xi m}}{d\nu}(x,s),$ $\dfrac{d\mathbbm{r}_{t \xi m,j}}{d\nu}(x,s)$ with respect to $\xi$ such that 

\begin{equation}
\begin{aligned}
&\mathbb{E}\left[\int_{\mathbb{R}^{n}}\left|\diff_\xi\dfrac{d \mathbb{Y}_{t \xi m}}{d\nu}(x,s)\right|^{2}dm(\xi)\right]
\h{1pt},\h{10pt}\mathbb{E}\left[\int_{\mathbb{R}^{n}}\left|\diff_\xi\dfrac{d \mathbb{Z}_{t \xi m}}{d\nu}(x,s)\right|^{2}dm(\xi)\right],\\
&\mathbb{E}\left[\int_{\mathbb{R}^{n}}\left|\diff_\xi\dfrac{d u_{t \xi m}}{d\nu}(x,s)\right|^{2}dm(\xi)\right]
\h{1pt},\h{10pt}\sum_{j=1}^{n}\int_{t}^{T}\mathbb{E}\left[\int_{\mathbb{R}^n}\left|\diff_\xi\dfrac{d \mathbbm{r}_{t \xi m,j}}{d\nu}(x,s)\right|^{2}dm(\xi)\right]\leq C_{11}(1+|x|^{2}),
\end{aligned}
\label{eq:6-17}
\end{equation}
where $C_{11}$ is a positive constant depending only on $\delta_1$, $n$, $\lambda$, $\eta$, $c$, $c_l$, $c_h$, $c_T$, $c'$, $c_l'$, $c_h'$, $c_T'$ and $T$.
\end{prop}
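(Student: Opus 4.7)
The plan is to mirror the strategy used in the proof of Proposition \ref{prop bdd of functiona d of Y Z u r}, interpreting \eqref{Dxi dnu, forward FBSDE, X=I}--\eqref{Dxi dnu, backward FBSDE, X=I} as a linear (inhomogeneous) FBSDE in the unknown quadruple $\Big(\diff_\xi\dfrac{d\mathbb{Y}_{t\xi m}}{d\nu}(x,s), \diff_\xi\dfrac{d\mathbb{Z}_{t\xi m}}{d\nu}(x,s), \diff_\xi\dfrac{du_{t\xi m}}{d\nu}(x,s), \diff_\xi\dfrac{d\mathbbm{r}_{t\xi m,j}}{d\nu}(x,s)\Big)$, whose coefficients and inhomogeneities are made of the lower-order objects that have already been controlled: the Jacobian flow $\pig(\diff_\xi\mathbb{Y}_{t\xi m},\diff_\xi\mathbb{Z}_{t\xi m},\diff_\xi u_{t\xi m}\pig)$ from Section \ref{sec. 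J Flow}, and the linear functional derivatives $\pig(\dfrac{d\mathbb{Y}_{t\xi m}}{d\nu},\dfrac{d\mathbb{Z}_{t\xi m}}{d\nu},\dfrac{du_{t\xi m}}{d\nu}\pig)$ from Proposition \ref{prop bdd of functiona d of Y Z u r}. Existence and uniqueness of the solution in the relevant $L^2$-space follows by a standard contraction/martingale-representation argument for such linear FBSDEs; the fact that this solution coincides with the actual gradient in $\xi$ of the quadruple obtained in Proposition \ref{prop bdd of functiona d of Y Z u r} is then justified by the usual finite-difference argument, exactly as in Lemma \ref{lem, Existence of J flow} and Proposition \ref{prop bdd of functiona d of Y Z u r}, the additional smoothness needed to differentiate the coefficients once more is precisely what Assumptions \ref{ass. ass 1} and \ref{ass. ass 2} supply.

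For the a priori estimate \eqref{eq:6-17}, the key step will be to differentiate the $\nu$-derivative first order condition \eqref{1st order condition, dnu} with respect to $\xi$, which produces
\[
l_{vx}\pig(\mathbb{Y}_{t\xi m},u_{t\xi m}\pig)\diff_\xi\dfrac{d\mathbb{Y}_{t\xi m}}{d\nu}(x,s)+l_{vv}\pig(\mathbb{Y}_{t\xi m},u_{t\xi m}\pig)\diff_\xi\dfrac{du_{t\xi m}}{d\nu}(x,s)+\diff_\xi\dfrac{d\mathbb{Z}_{t\xi m}}{d\nu}(x,s)=\mathscr{R}(x,\xi,s),
\]
where $\mathscr{R}$ collects the terms in which the third-order derivatives $l_{xxx}$, $l_{xxv}$, $l_{xvv}$, $l_{vvv}$ hit the pair $\pig(\diff_\xi\mathbb{Y}_{t\xi m},\diff_\xi u_{t\xi m}\pig)$ multiplied by $\pig(\dfrac{d\mathbb{Y}_{t\xi m}}{d\nu},\dfrac{du_{t\xi m}}{d\nu}\pig)$. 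Then, mimicking \eqref{eq:ApB2}--\eqref{4055}, I would apply the product rule to $\left\langle\diff_\xi\dfrac{d\mathbb{Y}_{t\xi m}}{d\nu}(x,s),\diff_\xi\dfrac{d\mathbb{Z}_{t\xi m}}{d\nu}(x,s)\right\rangle_{\mathcal{H}_m}$ and integrate from $t$ to $T$. The strict convexity assumption \textbf{A(v)}'s \eqref{assumption, convexity of l}, together with the lower bounds in \textbf{A(vi)}, \textbf{b(v)}'s \eqref{assumption, convexity of D^2F, D^2F_T, no lift}, and \textbf{B(v)(b)}'s \eqref{assumption, convexity of D^2F, D^2F_T}, will then furnish a coercive estimate of the form
\[
\left[(1-\delta_1)\lambda-(c_h'+c_T'+c_T)T-(c+c'+c_l')\dfrac{T^2}{2}\right]\int_t^T\left\|\diff_\xi\dfrac{du_{t\xi m}}{d\nu}(x,\tau)\right\|^2_{\mathcal{H}_m}d\tau\leq\mathscr{S}(x),
\]
where the left-hand coefficient is strictly positive by hypothesis \eqref{ass in prop 6.2}, and $\mathscr{S}(x)$ stands for the contribution of all source terms.

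The main obstacle will be to show that $\mathscr{S}(x)\leq C(1+|x|^{2})$. This is where the new assumptions in Assumption \ref{ass. ass 2} come into play: the third-order derivatives $h_{xxx}$, $l_{xxx},l_{xxv},l_{xvv},l_{vvv}$ and $\diff_x^3\dfrac{dF}{d\nu},\diff_x^3\dfrac{dF_T}{d\nu}$ are uniformly bounded, while $\diff_x^2\dfrac{d^{\h{.5pt}2}\h{-.7pt}F}{d\nu^{2}}$ and $\diff_x^2\dfrac{d^{\h{.5pt}2}\h{-.7pt}F_T}{d\nu^{2}}$ have at most linear growth in the second spatial variable, and $\diff_x^2\diff_{\tilde{x}}\dfrac{d^{\h{.5pt}2}\h{-.7pt}F}{d\nu^{2}},\diff_x^2\diff_{\tilde{x}}\dfrac{d^{\h{.5pt}2}\h{-.7pt}F_T}{d\nu^{2}}$ are bounded. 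Combining these bounds with $\displaystyle\sup_{\tau}\mathbb{E}|\mathbb{Y}_{txm}(\tau)|^{2}\leq C_4(1+|x|^{2})$ from Proposition \ref{prop bdd of Y Z u r}, the bounds on $\diff_\xi\mathbb{Y}_{t\xi m}$ and $\diff_\xi u_{t\xi m}$ inherited from Remark \ref{rem def of Du = DY +DZ} and Proposition \ref{prop bdd of Y Z u r}, and the bound \eqref{eq:7-6} on $\dfrac{d\mathbb{Y}_{t\xi m}}{d\nu},\dfrac{du_{t\xi m}}{d\nu}$ (which is $O(1+|x|^{2})$) from Proposition \ref{prop bdd of functiona d of Y Z u r}, the source term $\mathscr{S}(x)$ can be estimated via Cauchy--Schwarz and Young's inequalities by $C(1+|x|^{2})$, with a small portion of $\int_t^T\|\diff_\xi du_{t\xi m}/d\nu\|_{\mathcal{H}_m}^2\,d\tau$ absorbed on the left; the careful bookkeeping of all the cross terms arising from the length of \eqref{Dxi dnu, backward FBSDE, X=I} is the technically heavy part but proceeds along the same lines as \eqref{3456}--\eqref{3462}. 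The estimates for $\diff_\xi\dfrac{d\mathbb{Y}_{t\xi m}}{d\nu}$, $\diff_\xi\dfrac{d\mathbb{Z}_{t\xi m}}{d\nu}$ and $\sum_j\int_t^T\|\diff_\xi\dfrac{d\mathbbm{r}_{t\xi m,j}}{d\nu}\|^2_{\mathcal{H}_m}\,d\tau$ then follow from the control of $\diff_\xi\dfrac{du_{t\xi m}}{d\nu}$, exactly as at the end of the proof of Proposition \ref{prop bdd of Y Z u r}.
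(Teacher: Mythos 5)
Your proposal is correct and follows essentially the same route as the paper: the paper's own proof of this proposition consists of the single remark that it is ``Similar to the proof of Proposition \ref{prop bdd of functiona d of Y Z u r}'', and your argument --- differentiating the first order condition \eqref{1st order condition, dnu} in $\xi$, applying the product rule to $\left\langle\diff_\xi\dfrac{d\mathbb{Y}_{t\xi m}}{d\nu}(x,\cdot),\diff_\xi\dfrac{d\mathbb{Z}_{t\xi m}}{d\nu}(x,\cdot)\right\rangle_{\mathcal{H}_m}$, extracting coercivity from the convexity assumptions with the constant of \eqref{ass in prop 6.2}, and absorbing the source terms via Assumption \ref{ass. ass 2} and the previously established bounds \eqref{eq:7-6} and \eqref{bdd Y, Z, u, r, epsilon} --- is exactly the intended adaptation, spelled out in more detail than the paper provides.
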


\begin{proof}
Similar to the proof of Proposition \ref{prop bdd of functiona d of Y Z u r}.
\end{proof}
This justifies the differentiation with respect to $m$ of the Bellman
equation, and hence $U(x,m,t)=\dfrac{d V}{d\nu}(m,t)(x)$ is a solution to the Master equation in (\ref{eq:7-5}). If $U^*(x,m,t)$ is another solution to (\ref{eq:7-5}), then we define $\mathbb{Z}^*_{txm}(t) := \diff_xU^*(x,m,t)$,  $\mathbb{Y}^*_{txm}(s) := \int^s_t u\pig(\mathbb{Y}^*_{txm}(\tau), \mathbb{Z}^*_{\tau \mathbb{Y}^*_{txm}(\tau) m}(\tau) \pig)d\tau + \eta\pig(w(s)-w(t)\pig)$ and $\mathbb{Z}^*_{t x m}(\tau):=\mathbb{Z}^*_{\tau \mathbb{Y}^*_{txm}(\tau) m}(\tau)= \diff_x U^*(\mathbb{Y}^*_{txm}(\tau),m,\tau)$. We then differentiate (\ref{eq:7-5}) with respect to $x$ and evaluate the resulting equation at $x = \mathbb{Y}^*_{txm}(s)$. By using the mean-field It\^o lemma in Theorem \ref{ito thm}, we work backward and realise that $\mathbb{Z}^*_{txm}(s)$ satisfies the backward dynamics (\ref{def, backward SDE}). Since the FBSDE (\ref{forward FBSDE})-(\ref{backward FBSDE}) has the unique solution, therefore we know $\mathbb{Y}_{txm}(s)=\mathbb{Y}^*_{txm}(s)$, $\mathbb{Z}_{txm}(s)=\mathbb{Z}^*_{txm}(s)$ and hence $U=U^*$ due to the terminal condition in (\ref{eq:7-5}). The well-posedness of the master equation in (\ref{eq:7-5}) is concluded by the  proposition.

\begin{prop}
Under the assumptions (\ref{assumption, bdd of l, lx, lv})-(\ref{assumption, convexity of h}), (\ref{assumption, bdd of F F_T})-(\ref{assumption, convexity of D^2F, D^2F_T}) and (\ref{uniqueness condition}), the value function $V(m,t)$  satisfies the Bellman equation in (\ref{Bellman Equation}) classically. Furthermore, if Assumptions \ref{ass. ass 1}, \ref{ass. ass 2} and (\ref{ass in prop 6.2}) are fulfilled, the linear functional derivative $U(x,m,t)=\dfrac{d V}{d\nu}(m,t)(x)$ of $V(m,t)$ is the unique classical solution to the master equation in (\ref{eq:7-5}) in the pointwise sense, with all the derivatives $\dfrac{\p U}{\p t}(x,m,t)$, $\diff_x U(x,m,t)$, $\diff^2_x U(x,m,t)$, $\diff_\xi \dfrac{d U}{d \nu}(\xi,m,t)(x)$ and $\diff^2_\xi \dfrac{d U}{d \nu}(\xi,m,t)(x)$ being existed.
\end{prop}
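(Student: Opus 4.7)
The plan is to proceed in two stages. First I would establish that $V(m,t)$ solves the Bellman equation~\eqref{eq:5-24} classically, by invoking Theorem~\ref{prop bellman}(a) specialised to $X=\mathcal{I}$ and then translating Fr\'echet derivatives into gradients of the linear functional derivative. Concretely, \eqref{D_x p_nu V = Z} gives $\diff_x U(x,m,t)=\mathbb{Z}_{txm}(t)$, while \eqref{D^2_x dV=DZ Psi} combined with \eqref{eq:4-178} yields $\diff_x^{2}U(x,m,t)=\diff_x\mathbb{Z}_{txm}(t)$, so the Fr\'echet bounds from Propositions~\ref{prop4-2}, \ref{prop DXX V} and Lemma~\ref{lem, Existence of Frechet derivatives} become pointwise bounds on $\diff_x U$ and $\diff_x^{2}U$. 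The continuity of $\p_t V(m,t)$ is then read off \eqref{eq:4-26}, and the classical identity \eqref{eq:5-24} holds everywhere.

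Second, I would differentiate \eqref{eq:5-24} in the sense of a linear functional derivative with respect to $m$. The key input is that the derivatives inside the equation can be differentiated term-by-term: Proposition~\ref{prop bdd of functiona d of Y Z u r} produces $\dfrac{d\mathbb{Z}_{t\xi m}}{d\nu}(x,t)$ with the quadratic $L^{2}$-bound \eqref{eq:7-6}, and Proposition~\ref{prop bdd of Dxi dnu Y Z u r} produces $\dfrac{d\diff_\xi\mathbb{Z}_{t\xi m}}{d\nu}(x,t)$ with the analogous bound \eqref{eq:6-17}. Plugging the identifications \eqref{eq:7-4} into the $m$-differentiated form~\eqref{eq:7-3} of~\eqref{eq:5-25} yields the master equation~\eqref{eq:7-5} pointwise, and the temporal derivative $\p_t U$ is then defined by the equation itself.

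For uniqueness, I would follow the characteristic strategy sketched in the paragraph preceding the proposition. Given a classical solution $U^{*}$, set $\mathbb{Z}^{*}_{txm}(t):=\diff_x U^{*}(x,m,t)$ and define $\mathbb{Y}^{*}_{txm}(s)$ as the solution of the forward SDE driven by the feedback $u\!\left(\mathbb{Y}^{*}_{txm}(\tau),\,\diff_x U^{*}(\mathbb{Y}^{*}_{txm}(\tau),m,\tau)\right)$. Differentiating~\eqref{eq:7-5} in $x$ and applying the mean-field It\^o formula~\eqref{ito lemma in gradient form} to $s\mapsto \diff_x U^{*}(\mathbb{Y}^{*}_{txm}(s),m,s)$ transforms the master equation into the backward dynamics in~\eqref{def, backward SDE}. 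Hence the pair $(\mathbb{Y}^{*},\mathbb{Z}^{*})$ together with the feedback $u$ solves the FBSDE~\eqref{forward FBSDE}--\eqref{1st order condition}; the uniqueness provided by Proposition~\ref{prop. convex of J} forces $\mathbb{Z}^{*}=\mathbb{Z}$, so $\diff_x U^{*}=\diff_x U$ everywhere, and the common terminal condition in~\eqref{eq:7-5} fixes the $x$-independent constant of integration, giving $U^{*}=U$.

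The hard part will be justifying the application of the mean-field It\^o formula to $\diff_x U^{*}$ in the uniqueness step: one must verify that $\diff_x U^{*}(\,\cdot\,,m,t)$ fulfils the hypotheses~\eqref{subspace of H_m indep. of W_t}--\eqref{eq:3-112} of Theorem~\ref{ito thm}, in particular the pointwise continuity of its second Fr\'echet derivative in $m$ along flows. This is precisely why both $\diff_\xi\dfrac{dU}{d\nu}$ and $\diff_\xi^{2}\dfrac{dU}{d\nu}$ must be produced and estimated uniformly, and why the continuity clauses of Assumptions~\ref{ass. ass 1} and~\ref{ass. ass 2} are indispensable; once those are in hand, the rest of the argument is a clean chain of differentiations on top of results already proved in the paper.
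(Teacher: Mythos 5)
Your proposal follows essentially the same route as the paper's own (rather terse) argument: the Bellman equation is obtained from Theorem \ref{prop bellman} at $X=\mathcal{I}$ and translated via \eqref{D_x p_nu V = Z} and \eqref{D^2_x dV=DZ Psi}, the master equation by differentiating \eqref{eq:5-25} in $m$ with the term-by-term justification supplied by Propositions \ref{prop bdd of functiona d of Y Z u r} and \ref{prop bdd of Dxi dnu Y Z u r}, and uniqueness by running the characteristics of a second solution $U^{*}$ back to the FBSDE \eqref{forward FBSDE}--\eqref{1st order condition} and invoking its unique solvability. You also correctly flag, as the one delicate point, the verification of the mean-field It\^o formula hypotheses for $\diff_x U^{*}$ along the flow, which is exactly the step the paper itself leaves implicit.
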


\vspace{10pt}

\textbf{Acknowledgement.} The authors would like to express their sincere gratitude for the inspiring suggestions from the attendants in the talk by Phillip Yam in ICMS workshop ``Mean-field games, energy systems, and other applications'' at the University of Edinburgh in April 2019. The primitive ideas of this article had germinated since the final stage of Ph.D. study of Michael Man Ho Chau at Imperial College London and University of Hong Kong 2016 under the supervision of Phillip Yam, and some particular results had been incorporated in his Ph.D. dissertation. The authors would also like to extend their heartfelt gratitude to Michael Chau for his remarkable work. {\color{black} Alain Bensoussan is supported by the
National Science Foundation under grants NSF-DMS-1905449 and NSF-DMS-2204795, and grant
from the SAR Hong Kong RGC GRF 14301321.} Ho Man Tai extends his warmest thanks to Professor Yam and the Department of Statistics at The Chinese University of Hong Kong for the financial support. 
Phillip Yam acknowledges the financial supports from HKGRF-14301321 with the project title ``General Theory for Infinite Dimensional Stochastic Control: Mean Field and Some Classical Problems''. He also thanks Columbia University for the kind invitation to be a visiting faculty member in the Department of Statistics during
his sabbatical leave. He also recalled the unforgettable moments and the happiness shared with his beloved father and used this work in memory of his father's brave battle against liver cancer. 

\section{Appendix}
\subsection{Proof of Statements in Section 2}
\subsubsection{Proof of Theorem \ref{ito thm}}\label{app, ito thm}
We note that the constant $C$ in the following proof may be different from line to line, but we still denote by the same symbol $C$ without ambiguity.
We begin with some preliminaries. If the sequence $\{Y_{k}\}_{k \in \mathbb{N}}$ in assumption (\ref{eq:3-125}) further satisfies
\begin{equation}
\sup_k \mathbb{E}\left[\int_{\mathbb{R}^{n}}|Y_{k}|^{4}dm(x)\right] < \infty,
\label{eq:3-114}
\end{equation}
then 
\begin{equation}
\pigl\langle D_{X}^{2}F(X_{k}  \otimes  m,s_{k})(Y_{k})-D_{X}^{2}F(X  \otimes  m,s_{k})(Y_{k}),Y_{k}\pigr\rangle_{\mathcal{H}_m}\longrightarrow0 \h{10pt} \text{as $k \to \infty$;}
\label{eq:3-115}
\end{equation}
indeed, denote $\mathscr{I}_{\h{.5pt}k}$ for the expression (\ref{eq:3-115}), we write
$\mathscr{I}_{k}=:\mathscr{I}_{\h{.5pt}\epsilon k}^{1}+\mathscr{I}_{\h{.5pt}\epsilon k}^{2}$, for any $\epsilon>0$, where 

\[
\mathscr{I}_{\h{.5pt}\epsilon k}^{1} := \left\langle D_{X}^{2}F(X_{k}  \otimes  m,s_{k})(Y_{k})-D_{X}^{2}F(X  \otimes  m,s_{k})(Y_{k}),\dfrac{Y_{k}}{1+\epsilon|Y_{k}|}\right\rangle_{\mathcal{H}_m},
\]

\[
\mathscr{I}_{\h{.5pt}\epsilon k}^{2}:=\epsilon \left\langle D_{X}^{2}F(X_{k}  \otimes  m,s_{k})(Y_{k})-D_{X}^{2}F(X  \otimes  m,s_{k})(Y_{k}),\dfrac{Y_{k}|Y_{k}|}{1+\epsilon|Y_{k}|}\right\rangle_{\mathcal{H}_m}.
\]
Thanks to Assumptions (\ref{eq:3-123}) and (\ref{eq:3-114}), we have $\big|\mathscr{I}_{\h{.5pt}\epsilon k}^{2}\big|\leq C\epsilon$  for some $C>0$ independent of $\epsilon$; and thanks to (\ref{eq:3-125}), $\mathscr{I}_{\h{.5pt}\epsilon k}^{1}\longrightarrow0$, as $k\rightarrow \infty,$ for each $\epsilon>0$. Combining the two convergences, we get $\mathscr{I}_{k}\longrightarrow 0$ as $k\rightarrow\infty.$ Next, using
(\ref{eq:2-230}), we write 

\[
\dfrac{1}{\epsilon}\Big[F\pig(\mathbb{X}_{tX}(s+\epsilon)  \otimes  m,s+\epsilon\pig)
-F\pig(\mathbb{X}_{tX}(s)  \otimes  m,s+\epsilon\pig)\Big]
=:\textup{I}_{\h{.5pt}\epsilon}+\textup{II}_{\h{.5pt}\epsilon}+\textup{III}_{\h{.5pt}\epsilon}+\textup{IV}_{\epsilon},
\]
where 

$$
\textup{I}_{\h{.5pt}\epsilon}:=\dfrac{1}{\epsilon}
\left\langle D_{X}F\pig(\mathbb{X}_{tX}(s)  \otimes  m,s+\epsilon\pig), \int_{s}^{s+\epsilon}a_{tX}(\tau)d\tau + \sum_{j=1}^n\int_{s}^{s+\epsilon}\eta_{tX}^{j}(\tau)dw_{j}(\tau) \right\rangle_{\mathcal{H}_m};
$$
 
\begin{align*}
\textup{II}_{\h{.5pt}\epsilon}
:=\dfrac{1}{\epsilon}
\int_{0}^{1}\int_{0}^{1}
\theta\Bigg\langle 
D_{X}^{2}F\Big(\pig\{\mathbb{X}_{tX}(s)+\theta\lambda\pig[\mathbb{X}_{tX}(s+\epsilon)
&-\mathbb{X}_{tX}(s)\pig]\pig\}  \otimes  m,s+\epsilon\Big)
\left(\int_{s}^{s+\epsilon}a_{tX}(\tau)d\tau\right),\\
&\int_{s}^{s+\epsilon}a_{tX}(\tau)d\tau+2\sum_{j=1}^n\int_{s}^{s+\epsilon}\eta_{tX}^{j}(\tau)dw_{j}(\tau) \Bigg\rangle_{\mathcal{H}_m} 
d \theta d \lambda\h{1pt}, \h{5pt} 
\end{align*}
(the presence of factor $2$ is due to the self-adjointness of $D^2_X F(X   \otimes  m,s)$);
\begin{align*}
\textup{III}_{\h{.5pt}\epsilon}
:=\dfrac{1}{\epsilon}\int_{0}^{1}\int_{0}^{1}
\theta\Bigg\langle D_{X}^{2}
& F\Big(\pig\{\mathbb{X}_{tX}(s)
+\theta\lambda\pig[\mathbb{X}_{tX}(s+\epsilon)
-\mathbb{X}_{tX}(s)\pig]\pig\}  \otimes  m,s+\epsilon\Big)
\left(\sum_{j=1}^n\int_{s}^{s+\epsilon}\eta_{tX}^{j}(\tau)dw_{j}(\tau)\right)\\
&-D_{X}^{2}F(\mathbb{X}_{tX}(s)  \otimes  m,s+\epsilon)
\left(\sum_{j=1}^n\int_{s}^{s+\epsilon}\eta_{tX}^{j}(\tau)dw_{j}(\tau)\right)
,\sum_{j=1}^n\int_{s}^{s+\epsilon}\eta_{tX}^{j}(\tau)dw_{j}(\tau)\Bigg\rangle_{\mathcal{H}_m}
d\theta d\lambda;
\end{align*}
and
\[
\textup{IV}_{\epsilon}:=\dfrac{1}{2\epsilon}\left\langle D_{X}^{2}F(\mathbb{X}_{tX}(s)  \otimes  m,s+\epsilon)
\left(\sum^n_{j=1}\int_{s}^{s+\epsilon}\eta_{tX}^{j}(\tau)dw_{j}(\tau)
\right),\sum_{j=1}^n\int_{s}^{s+\epsilon}\eta_{tX}^{j}(\tau)dw_{j}(\tau)
\right\rangle_{\mathcal{H}_m}.
\]
Firstly, we expand the first term $\textup{I}_{\h{.5pt}\epsilon}$ by writing
\begin{align*}
\textup{I}_{\h{.5pt}\epsilon}=\:&\pigl\langle D_{X}F(\mathbb{X}_{tX}(s)  \otimes  m,s+\epsilon),a_{tX}(s)\pigr\rangle_{\mathcal{H}_m}
+\left\langle D_{X}F(\mathbb{X}_{tX}(s)  \otimes  m,s+\epsilon),\dfrac{1}{\epsilon}\int_{s}^{s+\epsilon}a_{tX}(\tau)d\tau -a_{tX}(s)\right\rangle_{\mathcal{H}_m}  \\
&+\left\langle D_{X}F\pig(\mathbb{X}_{tX}(s)  \otimes  m,s+\epsilon\pig),
\dfrac{1}{\epsilon}\sum_{j=1}^n\int_{s}^{s+\epsilon}\eta_{tX}^{j}(\tau)dw_{j}(\tau)\right\rangle_{\mathcal{H}_m}.
\end{align*}
From Assumption (c) of (\ref{eq:3-112}), together with (\ref{eq:3-123}), the second term in the first line tends to zero. While for the term in the second line, due to the future Brownian increment of the stochastic integral over $[s,s+\epsilon]$, it vanishes as well after an application of tower property. Finally, for the remaining term, in light of Assumption (\ref{eq:3-121}), we get
\begin{equation}
\textup{I}_{\h{.5pt}\epsilon}\longrightarrow
\pigl\langle D_{X}F(\mathbb{X}_{tX}(s)  \otimes  m,s),a_{tX}(s)\pigr\rangle_{\mathcal{H}_m}
\h{1pt},\h{10pt}
\text{as}\;\epsilon\rightarrow0.
\label{eq:3-116}
\end{equation}
For the term $\textup{II}_{\h{.5pt}\epsilon}$, we apply the second assumption in (\ref{eq:3-123}), assumptions (b) and (c) of (\ref{eq:3-112}), we have
\begin{equation}
\textup{II}_{\h{.5pt}\epsilon}\longrightarrow0
\h{1pt},\h{10pt}
\text{as}\;\epsilon\rightarrow0.
\label{eq:3-117}
\end{equation}
For the term $\textup{III}_{\h{.5pt}\epsilon}$, the use of B\"urkholder-Davis-Gundy and Cauchy-Schwarz inequalities gives
$$
\dfrac{1}{\epsilon^{2}}
\mathbb{E}\left[\h{3pt}\left|\displaystyle\sum_{j=1}^{n}
\displaystyle\int_{s}^{s+\epsilon}\eta_{t\xi }^{j}(\tau)dw_{j}(\tau)\right|^{4}\h{3pt}\right]
\leq \dfrac{4}{\epsilon^{2}}
\mathbb{E}\left[\left( \displaystyle\int_{s}^{s+\epsilon}\displaystyle\sum_{j=1}^{n}\big|\eta_{t\xi }(\tau)\big|^{2}d\tau\right)^2\right]
\leq \dfrac{4}{\epsilon}
\mathbb{E}\left[ \displaystyle\int_{s}^{s+\epsilon}
\left(\displaystyle\sum_{j=1}^{n}
\big|\eta_{t\xi }(\tau)\big|^{2}\right)^2 d\tau\right],
$$
for any $\xi\in \mathbb{R}^{n}$. Therefore,

\begin{align*}
\dfrac{1}{\epsilon^2}
\mathbb{E}_{\mathcal{W}_t}\left[\int_{\mathbb{R}^{n}}\left|\sum_{j=1}^{n}\int_{s}^{s+\epsilon}\eta_{tX}^{j}(\tau)dw_{j}(\tau)\right|^{4}dm(x)\right]
&\leq \dfrac{C}{\epsilon}
\mathbb{E}_{\mathcal{W}_t}\left[\int_{\mathbb{R}^{n}}\int_{s}^{s+\epsilon}\left(\sum_{j=1}^{n}|\eta_{tX}(\tau)|^{2}\right)^{2}d\tau dm(x)\right]\\
&\leq C\sup_{s\in [t,T]} \mathbb{E}_{\mathcal{W}_t}\left[\int_{\mathbb{R}^{n}}\left(\sum_{j=1}^{n}|\eta_{tX}(s)|^{2}\right)^{2} dm(x)\right]\\
&\leq C
\end{align*}
by the first assumption (a) of (\ref{eq:3-112}). Based on this bound, from Assumption
(\ref{eq:3-125}), one can use the claim (\ref{eq:3-115}) to conclude the convergence of the integrand of $\textup{III}_{\h{.5pt}\epsilon}$ to zero for each $(\lambda,\theta)$. Next, under the second assumption of (\ref{eq:3-123}), a standard application of the bounded convergence theorem asserts that

\begin{equation}
\textup{III}_{\h{.5pt}\epsilon}\longrightarrow0,\h{10pt} \text{as}\;\epsilon\rightarrow0.
\label{eq:3-118}
\end{equation}
Moreover, from the second assumption (\ref{eq:3-112}), we have

\[\dfrac{1}{\sqrt{\epsilon}}
\sum_{j=1}^{n}\int_{s}^{s+\epsilon}\pig[\eta_{tX}^{j}(\tau)-\eta_{tX}^{j}(s)\pig]dw_{j}(\tau)
\longrightarrow 0, \h{10pt} \text{as $\epsilon \to 0$, in $\mathcal{H}_{m}$}
\]
and thus by using the second assumption of (\ref{eq:3-123})
\[
\textup{IV}_{\epsilon}
-\dfrac{1}{2}\left\langle 
D_{X}^{2}F(\mathbb{X}_{tX}(s)  \otimes  m,s+\epsilon)
\left(\sum_{j=1}^n\eta_{tX}^{j}(s)\dfrac{w_{j}(s+\epsilon)-w_{j}(s)}{\sqrt{\epsilon}}\right),\sum_{j=1}^n\eta_{tX}^{j}(s)\dfrac{w_{j}(s+\epsilon)-w_{j}(s)}{\sqrt{\epsilon}}\right\rangle_{\mathcal{H}_m}\longrightarrow0.
\]
On the other hand, by formula (\ref{eq:2-231}), it holds that
\begingroup
\allowdisplaybreaks
\begin{align*}
&\left\langle D_{X}^{2}F(\mathbb{X}_{tX}(s)  \otimes  m,s+\epsilon)
\left(\sum_{j=1}^n\eta_{tX}^{j}(s)\dfrac{w_{j}(s+\epsilon)-w_{j}(s)}{\sqrt{\epsilon}}\right)
,\sum_{j=1}^n\eta_{tX}^{j}(s)\dfrac{w_{j}(s+\epsilon)-w_{j}(s)}{\sqrt{\epsilon}}\right\rangle_{\mathcal{H}_m}\\
&=\left\langle D_{X}^{2}F(\mathbb{X}_{tX}(s)  \otimes  m,s+\epsilon)
\left(\sum_{j=1}^{n}\eta_{tX}^{j}(s)\mathcal{N}_{s}^{j}\right),\sum_{j=1}^{n}\eta_{tX}^{j}(s)\mathcal{N}_{s}^{j}\right\rangle_{\mathcal{H}_m},
\end{align*}
\endgroup
for any standard normal distributed $\mathcal{N}_{s}^{j}$ being independent of $\mathcal{W}_{tX}^{s}$ since the law of $\dfrac{w_{j}(s+\epsilon)-w_{j}(s)}{\sqrt{\epsilon}}$ is normally distributed with a mean $0$ and variance $1$, and hence is independent of the choice of $\epsilon$.

We have proven, from assumption (\ref{eq:3-124}), we conclude
\begin{align*}
\mbox{\fontsize{9}{10}\selectfont\(\left\langle D_{X}^{2}F(\mathbb{X}_{tX}(s)  \otimes  m,s+\epsilon)\left(\displaystyle\sum_{j=1}^{n}\eta_{tX}^{j}(s)\mathcal{N}_{s}^{j}\right),\displaystyle\sum_{j=1}^{n}\eta_{tX}^{j}(s)\mathcal{N}_{s}^{j}\right\rangle_{\mathcal{H}_m}\h{-5pt}\rightarrow\left\langle D_{X}^{2}F(\mathbb{X}_{tX}(s)  \otimes  m,s)\left(\displaystyle\sum_{j=1}^{n}\eta_{tX}^{j}(s)\mathcal{N}_{s}^{j}\right),\displaystyle\sum_{j=1}^{n}\eta_{tX}^{j}(s)\mathcal{N}_{s}^{j}\right\rangle_{\mathcal{H}_m}\)},
\end{align*}
as $\epsilon \to 0$. Collecting all these results, we have proven that 
\begin{equation}
\begin{aligned}
&\dfrac{1}{\epsilon}\Big[F(\mathbb{X}_{tX}(s+\epsilon)  \otimes  m,s+\epsilon)-F(\mathbb{X}_{tX}(s)  \otimes  m,s+\epsilon)\Big]
\longrightarrow\\
&\pigl\langle D_{X}F(\mathbb{X}_{tX}(s)  \otimes  m,s),a_{tX}(s)\pigr\rangle_{\mathcal{H}_m}
+\dfrac{1}{2}\left\langle D_{X}^{2}F(\mathbb{X}_{tX}(s)  \otimes  m,s)\left(\sum_{j=1}^{n}\eta_{tX}^{j}(s)\mathcal{N}_{s}^{j}\right),\sum_{j=1}^{n}\eta_{tX}^{j}(s)\mathcal{N}_{s}^{j}\right\rangle_{\mathcal{H}_m}.
\end{aligned}
    \label{eq:3-119}
\end{equation}
Finally, from assumption (\ref{eq:3-120}), in accordance with Rademacher's theorem,
\[
\dfrac{1}{\epsilon}\pig[F(\mathbb{X}_{tX}(s)  \otimes  m,s+\epsilon)-F(\mathbb{X}_{tX}(s)  \otimes  m,s)\pig]
\longrightarrow\dfrac{\partial}{\partial s}F(\mathbb{X}_{tX}(s)  \otimes  m,s),\h{10pt} \text{ a.e. }s \in [t,T].
\]
Therefore, the result in (\ref{eq:3-113}) is obtained. If the formula (\ref{eq:2-232})
applies, by combining the discussion between (\ref{eq:2-233}) and (\ref{eq:2-234}),
\begin{align*}
D_{X}^{2}F(\mathbb{X}_{tX}(s)  \otimes  m,s)\left(\sum_{j=1}^{n}\eta_{tX}^{j}(s)\mathcal{N}_{s}^{j}\right)
=\:&\diff^2_x \dfrac{dF}{d\nu}(\mathbb{X}_{tX}(s)  \otimes  m,s)(\mathbb{X}_{tX}(s))
\left(\sum_{j=1}^{n}\eta_{tX}^{j}(s)\mathcal{N}_{s}^{j}\right)\\
&+\mbox{\fontsize{9}{10}\selectfont\(
\widetilde{\mathbb{E}}\left[\displaystyle\int_{\mathbb{R}^{n}}\diff_x \diff_{\tilde{x}}\dfrac{d^{\h{.5pt}2}\h{-.7pt}  F}{d\nu^{2}}(\mathbb{X}_{tX}(s)  \otimes  m,s)
\Big(\mathbb{X}_{tX}(s),\widetilde{\mathbb{X}}_{t\widetilde{X}}(s)\Big)
\left(\displaystyle\sum_{j=1}^{n}\eta_{t\widetilde{X}}^{j}(s)\widetilde{\mathcal{N}}_{s}^{j}\right)dm(\tilde{x})\right]\)}\\
=\:&\diff^2_x \dfrac{dF}{d\nu}(\mathbb{X}_{tX}(s)  \otimes  m,s)
\pig(\mathbb{X}_{tX}(s)\pig)\left(\sum_{j=1}^{n}\eta_{tX}^{j}(s)\mathcal{N}_{s}^{j}\right),
\end{align*}
where $\widetilde{\mathcal{N}}_{s}^{j}$ is independent of $\mathcal{W}_{t\widetilde{X}}^{s}$. This concludes the proof. \hfill$\blacksquare$

\subsubsection{Complements to Theorem \ref{ito thm}}
If we choose $X_t=\mathcal{I}_x$ in the formula (\ref{ito lemma in gradient form}), then the
process $\mathbb{X}_{t\mathcal{I}}(s)$ is simply a classical It\^o process indexed by
a parameter $x$, which is the initial condition at the time $t$, denoted
by $\mathbb{X}_{tx}(s)$. It reads

\begin{equation}
\mathbb{X}_{tx}(s)=x+\int_{t}^{s}a_{tx}(\tau)d\tau+\sum_{j=1}^{n}\int_{t}^{s}\eta_{tx}^{j}(\tau)dw_{j}(\tau)\:, \h{10pt} \text{ for $s\geq t$.}
\label{eq:3-131}
\end{equation}
Both the processes $a_{tx}(s)$ and $\eta_{tx}^{j}(s)$ are adapted to the filtration $\mathcal{W}_{t}^{s}$, and such that 

\[
\mathbb{E}\left[\int_{t}^{T}\int_{\mathbb{R}^{n}}|a_{tx}(s)|^{2}ds\:dm(x)\right]
\h{10pt} \text{and} \h{10pt}
\mathbb{E}\left[\int_{t}^{T}\int_{\mathbb{R}^{n}}|\eta_{tx}^{j}(s)|^{2}ds\:dm(x)\right]<\infty.
\]
We can write (\ref{ito lemma in gradient form}) as

\begin{equation}
\begin{aligned}
&\dfrac{d}{ds}F(\mathbb{X}_{tx}(s)  \otimes  m,s)=\dfrac{\partial}{\partial s}F(\mathbb{X}_{tx}(s)  \otimes  m,s)\\
&\pushright{+\mathbb{E}\left\{\int_{\mathbb{R}^{n}}
\bigg[\diff_x \dfrac{dF}{d\nu}(\mathbb{X}_{tx}(s)  \otimes  m,s)(\mathbb{X}_{tx}(s))\cdot a_{tx}(s)
+\dfrac{1}{2}\sum_{j=1}^{n}\diff^2_x \dfrac{dF}{d\nu}(\mathbb{X}_{tx}(s)  \otimes  m,s)(\mathbb{X}_{tx}(s))\eta_{tx}^{j}(s)\cdot \eta_{tx}^{j}(s)\bigg]dm(x)\right\}.}
\end{aligned}
\label{eq:3-132}
\end{equation}
For instance, taking $F(m,s)=\displaystyle\int_{\mathbb{R}^{n}}\Psi(x,s)dm(x),$ then $F(\mathbb{X}_{tx}(s)  \otimes  m,s)=\mathbb{E}\left[\displaystyle\int_{\mathbb{R}^{n}}\Psi(\mathbb{X}_{tx}(s),s)dm(x)\right]$
and $\dfrac{dF}{d\nu}(m,s)(x)=\Psi(x,s).$ Therefore, (\ref{eq:3-132})
reduces to 

\begin{equation}
\begin{aligned}
\dfrac{d}{ds}\mathbb{E}
\left[\int_{\mathbb{R}^{n}}\Psi(\mathbb{X}_{tx}(s),s)dm(x)\right]
=\mathbb{E}\Bigg\{\int_{\mathbb{R}^{n}}\Bigg[\dfrac{\partial}{\partial s}\Psi(\mathbb{X}_{tx}(s),s)+\diff_x \Psi(\mathbb{X}_{tx}(s),s)\cdot a_{tx}(s)&\\
+\dfrac{1}{2}\sum_{j=1}^{n}\diff^2_x \Psi(\mathbb{X}_{tx}(s),s)\eta_{tx}^{j}(s) & \cdot \eta_{tx}^{j}(s)\Bigg]dm(x)\Bigg\},
\end{aligned}
\label{eq:3-133}
\end{equation}
which is the same as the classical It\^o's formula when applies to an arbitrary test function $\Psi$. 

\subsection{Proof of Statements in Section 3}
\subsubsection{Proof of Lemma \ref{lem G derivative of J}}\label{app, lem G derivative of J}

For the perturbed control $v_{tX}(s)+\theta \tilde{v}_{tX}(s)$ with $\theta \in \mathbb{R}$ and $\tilde{v}_{tX}(s) \in L_{\mathcal{W}_{tX}}^{2}(t,T;\mathcal{H}_{m})$, the corresponding state is $\mathbb{X}_{tX}(s)+\theta\int_{t}^{s}\tilde{v}_{tX}(\tau)d\tau$. Using (\ref{assumption, bdd of l, lx, lv}), (\ref{assumption, bdd of h, hx, hxx}), (\ref{assumption, bdd of DF, DF_T}) and dominated convergence theorem, we first check that 

\begin{equation}
\begin{aligned}
\dfrac{d}{d\theta}J_{tX}\pig(v_{tX}+\theta\tilde{v}_{tX}\pig)\bigg|_{\theta=0}
=\:&\int_{t}^{T}\pigl\langle l_{v}\big(\mathbb{X}_{tX}(s),v_{tX}(s)\big),\tilde{v}_{tX}(s)\pigr\rangle_{\mathcal{H}_m}ds\\
&+\int_{t}^{T}\left\langle l_{x}\pig(\mathbb{X}_{tX}(s),v_{tX}(s)\pig)
+D_{X}F\pig(\mathbb{X}_{tX}(s)  \otimes  m\pig)
,\int_{t}^{s}\widetilde{v}_{tX}(\tau)d\tau\right\rangle_{\mathcal{H}_m}ds\\
&+\left\langle h_{x}\pig(\mathbb{X}_{tX}(T)\pig)
+D_{X}F_T\pig(\mathbb{X}_{tX}(T)  \otimes  m\pig)
,\int_{t}^{T}\widetilde{v}_{tX}(\tau)d\tau\right\rangle_{\mathcal{H}_m}. 
\end{aligned}
\label{eq:8-100}
\end{equation}
To deal with the second and third terms in (\ref{eq:8-100}), we define $\Gamma_{tX}$ by the following:

\begin{equation}
\Gamma_{tX}
:=\int_{t}^{T}
l_{x}\pig(\mathbb{X}_{tX}(s),v_{tX}(s)\pig)
+D_{X}F\pig(\mathbb{X}_{tX}(s)  \otimes  m\pig)ds
+h_{x}\pig(\mathbb{X}_{tX}(T)\pig)
+D_{X}F_T\pig(\mathbb{X}_{tX}(T)  \otimes  m\pig),
\label{eq:8-101}
\end{equation}
which is clearly a random variable taking values in $\mathbb{R}^{n}$, and is $\mathcal{W}_{tX}^{T}$-measurable, so it can be written as a $\mathcal{W}_{t}^{T}$-measurable random field $\Gamma_{t\xi}$, for almost every $\xi \in \mathbb{R}^n$, and then we substitute $\xi$ by $X$. Now,  

\[
\mathbb{E}\big(\Gamma_{tX}\big|\mathcal{W}_{tX}^{s}\big)
=\mathbb{E}\big(\Gamma_{t\xi}\big|\mathcal{W}_{t}^{s}\big)\Big|_{\xi=X}.
\]
By the standard martingale representation theorem adapted to the Wiener filtration, we can write 

\[
\mathbb{E}(\Gamma_{t\xi}|\mathcal{W}_{t}^{s})
=\mathbb{E}\big(\Gamma_{t\xi}\big)
+\sum_{j=1}^{n}\int_{t}^{s}\mathbbm{r}_{t\xi,j}(\tau)dw_{j}(\tau).
\]
Therefore, we can write
\[
\mathbb{E}\big(\Gamma_{tX}\big|\mathcal{W}_{tX}^{s}\big)
=\mathbb{E}\big(\Gamma_{tX}\big|X\big)
+\sum_{j=1}^{n}\int_{t}^{s}\mathbbm{r}_{tX,j}(\tau)dw_{j}(\tau),
\]
with $\mathbbm{r}_{tX,j}(s)\in L_{\mathcal{W}_{tX}}^{2}(t,T;\mathcal{H}_{m}).$ Next, we define the It\^o process:

\begin{equation}
\mathbb{Z}_{tX}(s):=\mathbb{E}(\Gamma_{tX}|X)
-\int_{t}^{s}
l_{x}\pig(\mathbb{X}_{tX}(\tau),v_{tX}(\tau)\pig)
+D_{X}F\pig(\mathbb{X}_{tX}(\tau)  \otimes  m\pig)d\tau
+\sum_{j=1}^{n}\int_{t}^{s}\mathbbm{r}_{tX,j}(\tau)dw_{j}(\tau).
\label{eq:8-102}
\end{equation}
We note that $\mathbb{Z}_{tX}(T) = h_x\big(\mathbb{X}_{tX}(T)\big) 
+D_X F_T\big(\mathbb{X}_{tX}(T)  \otimes  m\big) $ since $\Gamma_{tX}$ is independent of $\mathcal{W}^T_{tX}$. This fact together with integration by parts yield
\begin{align*}
\int_{t}^{T}\pigl\langle \mathbb{Z}_{tX}(s),\widetilde{v}_{tX}(s)\pigr\rangle_{\mathcal{H}_m}ds
=&\int_{t}^{T}\left\langle l_{x}\pig(\mathbb{X}_{tX}(s),v_{tX}(s)\pig)
+D_{X}F\pig(\mathbb{X}_{tX}(s)  \otimes  m\pig)
,\int_{t}^{s}\widetilde{v}_{tX}(\tau)d\tau
\right\rangle_{\mathcal{H}_m}ds\\
&+\left\langle h_{x}(\mathbb{X}_{tX}(T))+D_{X}F_T(\mathbb{X}_{tX}(T)  \otimes  m),\int_{t}^{T}\widetilde{v}_{tX}(\tau)d\tau\right\rangle_{\mathcal{H}_m}.
\end{align*}
Hence 
\[
\dfrac{d}{d\theta}J_{tX}\pig(v_{tX}+\theta\tilde{v}_{tX}\pig)\bigg|_{\theta=0}
=\int_{t}^{T}\pigl\langle l_{v}\big(\mathbb{X}_{tX}(s),v_{tX}(s)\big)+\mathbb{Z}_{tX}(s),\tilde{v}_{tX}(s)\pigr\rangle_{\mathcal{H}_m}ds,
\]
which gives the result (\ref{eq:3-5}) when we take the first variation with respect to $v_{tX}$. \hfill$\blacksquare$

\subsubsection{Proof of Proposition \ref{prop. convex of J}}\label{app, prop. convex of J}

\noindent {\bf Part 1A. Convexity:}\\
Consider two controls $v_{tX}^{1}(s), v_{tX}^{2}(s) \in L_{\mathcal{W}_{tX}}^{2}(t,T;\mathcal{H}_{m})$. We are going
to verify the strong convexity, in the sense that 

\begin{equation}
\begin{aligned}
\int_{t}^{T}\Big\langle D_{v}J_{tX}\pig(v_{tX}^{1}\pig)(s)
-D_{v}J_{tX}\pig(v_{tX}^{2}&\pig)(s),v_{tX}^{1}(s)-v_{tX}^{2}(s)\Big\rangle_{\mathcal{H}_m}ds
\geq c_0\int_{t}^{T}\pigl\|v_{tX}^{1}(s)-v_{tX}^{2}(s)\pigr\|^{2}_{\mathcal{H}_m}ds.
\end{aligned}
\label{eq:Ap1}
\end{equation}
for some constant $c_0>0$. Then, the claim in the proposition will follow immediately. To simplify the notations, we write $v^{1}(s)=v_{tX}^{1}(s),\:v^{2}(s)=v_{tX}^{2}(s)$, $
\mathbb{X}^{1}(s)=\mathbb{X}_{tX} \pig(s;v_{tX}^{1}\pig)$ and $\mathbb{X}^{2}(s)=\mathbb{X}_{tX}\pig(s;v_{tX}^{2}\pig)$. Let $\mathbb{Z}^{1}(s)$ and $\mathbb{Z}^{2}(s)$ be the corresponding solutions of (\ref{def, backward SDE}) with respect to $\mathbb{X}^1(s)$ and $\mathbb{X}^2(s)$ respectively.

From the formula (\ref{eq:3-5}), we have 
\begin{equation}
\begin{aligned}
\int_{t}^{T}\Big\langle D_{v}J_{tX}(v^1)(s)-D_{v}J_{tX}(v^{2})(s)&,v^{1}(s)-v^{2}(s)\Big\rangle_{\mathcal{H}_m}ds\\
=\:&\int_{t}^{T}\Big\langle l_{v}\pig(\mathbb{X}^{1}(s),v^{1}(s)\pig)-l_{v}\pig(\mathbb{X}^{2}(s),v^{2}(s)\pig),v^{1}(s)-v^{2}(s)\Big\rangle_{\mathcal{H}_m}ds\\
&+\int_{t}^{T}\Big\langle \mathbb{Z}^{1}(s)-\mathbb{Z}^{2}(s),v^{1}(s)-v^{2}(s)\Big\rangle_{\mathcal{H}_m}ds.
\end{aligned}
\label{eq. of int DJ1-DJ2}
\end{equation}
Next, since $v^{1}(s)-v^{2}(s)=\dfrac{d}{ds}\pig[\mathbb{X}^{1}(s)-\mathbb{X}^{2}(s)\pig]$
and $\mathbb{X}^{1}(t)-\mathbb{X}^{2}(t)=0,$ the equality in (\ref{eq. of int DJ1-DJ2}) is equivalent to
\begin{equation}
\begin{aligned}
&\h{-20pt}\int_{t}^{T}\Big\langle D_{v}J_{tX}(v^1)(s)-D_{v}J_{tX}(v^{2})(s),v^{1}(s)-v^{2}(s)\Big\rangle_{\mathcal{H}_m}ds\\
=\:&\int_{t}^{T}\Big\langle l_{v}\pig(\mathbb{X}^{1}(s),v^{1}(s)\pig)-l_{v}\pig(\mathbb{X}^{2}(s),v^{2}(s)\pig),v^{1}(s)-v^{2}(s)\Big\rangle_{\mathcal{H}_m}ds\\
&+\int_{t}^{T}\Big\langle l_{x}(\mathbb{X}^{1}(s),v^{1}(s))-l_{x}(\mathbb{X}^{2}(s),v^{2}(s))+D_{X}F(\mathbb{X}^{1}(s)  \otimes  m)-D_{X}F(\mathbb{X}^{2}(s)  \otimes  m),\mathbb{X}^{1}(s)-\mathbb{X}^{2}(s)\Big\rangle_{\mathcal{H}_m}ds\\
&+\Big\langle  h_x(\mathbb{X}^1(T)) 
+ D_X  F_T(\mathbb{X}^1(T)  \otimes  m) 
-h_x(\mathbb{X}^2(T)) 
- D_X F_T(\mathbb{X}^2(T)  \otimes  m),
\mathbb{X}^1(T)-\mathbb{X}^2(T)
\Big\rangle_{\mathcal{H}_m}.
\end{aligned}
\label{Z1-Z2 geq v +X}
\end{equation}
The mean value theorem, Assumptions \textbf{A(ii)}, \textbf{A(v)}, \textbf{A(vi)} and \textbf{B(v)(b)} tell us that
\begin{equation}
\begin{aligned}
&\h{-10pt}\int_{t}^{T}\Big\langle D_{v}J_{tX}(v^{1})(s)-D_{v}J_{tX}(v^{2})(s),v^{1}(s)-v^{2}(s)\Big\rangle_{\mathcal{H}_m}ds\\
\geq\,&\int_{t}^{T}\inf_{x,v\in\mathbb{R}^n}\bigg[\Big\langle l_{vv}(x,v)\pig[v^{1}(s)-v^{2}(s)\pig]+l_{vx}(x,v)\pig[\mathbb{X}^{1}(s)-\mathbb{X}^{2}(s)\pig],v^{1}(s)-v^{2}(s)\Big\rangle_{\mathcal{H}_m}\\
&\h{60pt}+\Big\langle l_{xv}(x,v)\pig[v^{1}(s)-v^{2}(s)\pig]+l_{xx}(x,v)\pig[\mathbb{X}^{1}(s)-\mathbb{X}^{2}(s)\pig],\mathbb{X}^{1}(s)-\mathbb{X}^{2}(s)\Big\rangle_{\mathcal{H}_m}\bigg]ds\\
&- c' \int_{t}^{T}\pigl\|\mathbb{X}^{1}(s)-\mathbb{X}^{2}(s)\pigr\|^{2}_{\mathcal{H}_m}ds
- (c'_T+c'_h)  \pigl\|\mathbb{X}^{1}(T)-\mathbb{X}^{2}(T)\pigr\|^{2}_{\mathcal{H}_m}\\
\geq\,&\lambda
\int_{t}^{T}\pigl\|v^{1}(s)-v^{2}(s)\pigr\|^{2}_{\mathcal{H}_m}ds
-( c' +c'_l)\int_{t}^{T}\pigl\|\mathbb{X}^{1}(s)-\mathbb{X}^{2}(s)\pigr\|^{2}_{\mathcal{H}_m}ds
- (c'_T+c'_h)  \pigl\|\mathbb{X}^{1}(T)-\mathbb{X}^{2}(T)\pigr\|^{2}_{\mathcal{H}_m}
\end{aligned}
\label{convex, est. DJ1-DJ2}
\end{equation}
The fact that $\mathbb{X}^{1}(s)-\mathbb{X}^{2}(s)=\int_{t}^{s}v^{1}(\tau)-v^{2}(\tau)d\tau$ for any $s \in(t,T]$, together with a simple application of Cauchy-Schwarz inequality imply
\begin{equation}
    \pigl\|\mathbb{X}^{1}(T)-\mathbb{X}^{2}(T)\pigr\|^{2}_{\mathcal{H}_m}
\leq (T-t) \int_{t}^{T}\pigl\|v^{1}(s)-v^{2}(s)\pigr\|^{2}_{\mathcal{H}_m}ds\h{10pt} \text{ and }
\label{convex, est. |X_1(T)-X_2(T)|^2}
\end{equation}
\begin{equation}
\int_{t}^{T}\pigl\|\mathbb{X}^{1}(s)-\mathbb{X}^{2}(s)\pigr\|^{2}_{\mathcal{H}_m}ds\leq\dfrac{(T-t)^{2}}{2}\int_{t}^{T}\pigl\|v^{1}(s)-v^{2}(s)\pigr\|^{2}_{\mathcal{H}_m}ds.
\label{convex, est. int |X_1(s)-X_2(s)|^2}
\end{equation}
Bringing (\ref{convex, est. |X_1(T)-X_2(T)|^2}) and (\ref{convex, est. int |X_1(s)-X_2(s)|^2}) into (\ref{convex, est. DJ1-DJ2}), the strict convexity is proven for
\begin{equation}
c_0:=\lambda
- (c'_T+c'_h)_+T
- \left(c'_{l}+c'\right)_+\dfrac{T^2}{2}>0.
\label{c_0 with k_1 and k_2}
\end{equation}
\noindent {\bf Part 1B. Coercivity:}\\
For the coercivity, from the formula (\ref{eq:Ap1}), we have
\begin{equation}
\int_{t}^{T}\Big\langle D_{v}J_{tX}\pig(v_{tX}\pig)(s)
-D_{v}J_{tX}(0)(s),v_{tX}(s)\Big\rangle_{\mathcal{H}_m}ds
\geq c_0\int_{t}^{T}\bigl\|v_{tX}(s)\bigr\|^{2}_{\mathcal{H}_m}ds,
\label{eq:Ap2}
\end{equation}
In addition, one can write
\begin{align*}
J_{tX}\big(v_{tX}\big)-J_{tX}(0)
=\int^1_0 \dfrac{d}{d\theta}J_{tX}\pig( \theta v_{tX} \pig) d \theta 
=\int_{0}^{1} \int^T_t \Big\langle D_{v}J_{tX}(\theta v_{tX})(s),v_{tX}(s)\Big\rangle_{\mathcal{H}_m}dsd\theta.
\end{align*}
Combined with (\ref{eq:Ap2}), we obtain

\[
J_{tX}(v_{tX})-J_{tX}(0)
\geq\int_{t}^{T}\Big\langle D_{v}J_{tX}(0)(s),v_{tX}(s)\Big\rangle_{\mathcal{H}_m}ds
+\dfrac{c_{0}}{2}\int_{t}^{T}\big\|v_{tX}(s)\big\|^{2}_{\mathcal{H}_m}ds,
\]
where we note that $\int^1_0\theta d \theta =\frac{1}{2}$. This completes the proof of the coercivity.

{\color{black}
\noindent {\bf Part 2. Existence and Uniqueness of the Optimal Control:}\\
Since $l$, $h$, $F$ and $F_T$ are continuous according to Assumptions \textbf{A(i)}'s \eqref{assumption, bdd of l, lx, lv}, \textbf{A(iii)}'s \eqref{assumption, bdd of h, hx, hxx} and \textbf{B(i)}'s \eqref{assumption, bdd of DF, DF_T}, together with the fact that $\mathbb{X}_{tX}$ is continuous in $v_{tX}$  under the strong $L_{\mathcal{W}_{tX}}^{2}(t,T;\mathcal{H}_{m})$-norm by \eqref{def. X_tX, constant sigma}, the functional $J_{tX}\big(v_{tX}\big)$ is clearly continuous in $v_{tX}$. Combining the convexity and coercivity of the functional, the existence and uniqueness of the optimal control is then guaranteed by Theorem 7.2.12. in \cite{DM07}. We here include our own proof for the sake of  completeness.

\noindent {\bf Part 2A. Weakly Sequentially Lower Semi-continuity of $J_{tX}\big(v_{tX}\big)$:}\\
Let $\{u_n\}^\infty_{n=1} \subset L_{\mathcal{W}_{tX}}^{2}(t,T;\mathcal{H}_{m})$ such that $u_n \longrightarrow u_\infty$ weakly. Let $\gamma_1 := \displaystyle\liminf_{n\to \infty} J_{tX}\big(u_n\big)$, there is a subsequence $\{u_{n_k}\}^\infty_{k=1}$ such that $J_{tX}\big(u_{n_k}\big) \longrightarrow \gamma_1$. Take $\epsilon>0$ be small, for any large enough $k$, we have $u_{n_k} \in L_{\mathcal{W}_{tX}}^{2}(t,T;\mathcal{H}_{m}) \cap \pig\{J_{tX}\big(u\big) \leq \gamma_1+\epsilon\pig\}$. Since $J_{tX}\big(u\big)$ is continuous, then the set $L_{\mathcal{W}_{tX}}^{2}(t,T;\mathcal{H}_{m}) \cap \pig\{J_{tX}\big(u\big) \leq \gamma_1+\epsilon\pig\}$ is closed. Moreover, $J_{tX}\big(u\big)$ is convex in $u$, therefore the set $L_{\mathcal{W}_{tX}}^{2}(t,T;\mathcal{H}_{m}) \cap \pig\{J_{tX}\big(u\big) \leq \gamma_1+\epsilon\pig\}$ is also convex and hence is weakly closed (see Exercise 2.1.40. in \cite{DM07} which is a simple consequence of Hahn-Banach theorem by arguing that the convex sets $L_{\mathcal{W}_{tX}}^{2}(t,T;\mathcal{H}_{m}) \cap \pig\{J_{tX}\big(u\big) \leq \gamma_1+\epsilon\pig\}$ and $\{u_\infty\}$ cannot be separated by linear functionals as $u_\infty$ is the weak limit of $u_{n_k}$). As $\epsilon>0$ is arbitrary, we see that $u_\infty \in L_{\mathcal{W}_{tX}}^{2}(t,T;\mathcal{H}_{m}) \cap \pig\{J_{tX}\big(u\big) \leq \gamma_1\pig\}$ and thus $J_{tX}\big(u_\infty\big)\leq \gamma_1= \displaystyle\liminf_{n\to \infty} J_{tX}\big(u_n\big)$.

\noindent {\bf Part 2B. Conclusion:}\\
Let $\gamma_2 = \displaystyle\inf_{v \in L_{\mathcal{W}_{tX}}^{2}(t,T;\mathcal{H}_{m})}J_{tX}\big(v\big) $ and $\{v_n\}^\infty_{n=1} \subset L_{\mathcal{W}_{tX}}^{2}(t,T;\mathcal{H}_{m})$ be a minimizing sequence such that $J_{tX}\big(v_n\big) \longrightarrow \gamma_2$. By the coercivity of $J_{tX}\big(v\big)$, there is a $\Gamma_1>0$ such that if $v \in L_{\mathcal{W}_{tX}}^{2}(t,T;\mathcal{H}_{m})$ and $\|u\|_{L_{\mathcal{W}_{tX}}^{2}(t,T;\mathcal{H}_{m})} \geq \Gamma_1$, then $J_{tX}\big(v\big)>2\gamma_2$. Since $J_{tX}\big(v_n\big) \longrightarrow \gamma_2$, thus for large enough $n$, we can assume that $\|v_n\|_{L_{\mathcal{W}_{tX}}^{2}(t,T;\mathcal{H}_{m})} \leq \Gamma_1$. So there is a subsequence $\{v_{n_k}\}^\infty_{n=1}$ such that $v_{n_k} \longrightarrow v_\infty$ weakly and thus the weakly sequentially lower semi-continuity of $J_{tX}\big(v_{tX}\big)$ implies 
$$\displaystyle\inf_{v \in L_{\mathcal{W}_{tX}}^{2}(t,T;\mathcal{H}_{m})}J_{tX}\big(v\big)
\leq J_{tX}\big(v_\infty\big)\leq \displaystyle\liminf_{k\to \infty} J_{tX}\big(v_{n_k}\big) = \gamma_2
=\displaystyle\inf_{v \in L_{\mathcal{W}_{tX}}^{2}(t,T;\mathcal{H}_{m})}J_{tX}\big(v\big).$$
It concludes that $v_\infty$ is the optimal control. The uniqueness follows easily by the strict convexity as stated in Part 1A of this proof.

}

\hfill $\blacksquare$

\subsubsection{Proof of Lemma \ref{lem, Existence of J flow}}\label{app, Existence of J flow}

\mycomment{
First, we aim to find a local solution to the system (\ref{forward eq DY_tX psi, no expand in u, A})-(\ref{backward eq Z_tX psi, no expand in u, A}) over $[t,T]$. More precisely, for the sequence $\big\{\tau_{ik}\big\}_{k=1}^{n^i} \subset [t,T]$ such that $t=\tau_{i1}<\tau_{i2}<\ldots<\tau_{i,{n^i-1}}<\tau_{i,{n^i}}=T$, we look for a solution $\pig( \mathscr{D}^\Psi\mathbb{Y}_{\tau_{ik}X}(s),\mathscr{D}^\Psi\mathbb{Z}_{\tau_{ik}X}(s), \mathscr{D}^\Psi\mathbbm{r}_{\tau_{ik}X,j}(s)\pig)$ to the system, for $s \in [\tau_{ik},T]$ with $k=1,2,\ldots,n^i$, 
\vspace{-5pt}
{\small
\begin{empheq}[left=\h{-10pt}\empheqbiglbrace]{align}
\mathscr{D}^\Psi\mathbb{Y}_{\tau_{ik} X}(s)
=&\:\Psi
+\displaystyle\int_{\tau_{ik}}^{s}
\Big[ \diff_y  u(\mathbb{Y}_{tX},\mathbb{Z}_{tX})(\tau)\Big] 
\mathscr{D}^\Psi\mathbb{Y}_{\tau_{ik}X}(\tau)d\tau
+\int_{\tau_{ik}}^{s} 
\Big[\diff_z  u (\mathbb{Y}_{tX},\mathbb{Z}_{tX}) (\tau)\Big] 
\mathscr{D}^\Psi\mathbb{Z}_{\tau_{ik}X}(\tau)  d\tau
\label{forward eq DY_tX psi, no expand in u, A};\\
\mathscr{D}^\Psi\mathbb{Z}_{\tau_{ik}X}(s)
=&\:h_{xx}(\mathbb{Y}_{tX}(T))\mathscr{D}^\Psi\mathbb{Y}_{\tau_{ik}X}(T)+D_{X}^{2}F_{T}(\mathbb{Y}_{tX}(T)  \otimes  m)(\mathscr{D}^\Psi\mathbb{Y}_{\tau_{ik}X}(T))\nonumber\\
&+\int_s^T
\h{-3pt}\pig[l_{xx}\big(\mathbb{Y}_{tX}(\tau),u_{tX}(\tau)\big)\mathscr{D}^\Psi\mathbb{Y}_{\tau_{ik}X}(\tau)
+l_{xv}\big(\mathbb{Y}_{tX}(\tau),u_{tX}(\tau)\big)\Big[ \diff_y  u(\mathbb{Y}_{tX},\mathbb{Z}_{tX})(\tau)\Big] 
\mathscr{D}^\Psi\mathbb{Y}_{\tau_{ik}X}(\tau)\h{-70pt}\nonumber\\
&\h{25pt}+l_{xv}\big(\mathbb{Y}_{tX}(\tau),u_{tX}(\tau)\big)\Big[ \diff_z  u(\mathbb{Y}_{tX},\mathbb{Z}_{tX})(\tau)\Big] 
\mathscr{D}^\Psi\mathbb{Z}_{\tau_{ik}X}(\tau)
+D_{X}^{2}F(\mathbb{Y}_{tX}(\tau)  \otimes  m)(\mathscr{D}^\Psi\mathbb{Y}_{\tau_{ik}X}(\tau))\pig]d\tau
\h{-70pt}\nonumber\\
&-\int_s^T\sum_{j=1}^{n}\mathscr{D}^\Psi\mathbbm{r}_{\tau_{ik}X,j}(\tau)dw_{j}(\tau).
\label{backward eq Z_tX psi, no expand in u, A}
\end{empheq}}
Note that the system (\ref{forward eq DY_tX psi, no expand in u, A})-(\ref{backward eq Z_tX psi, no expand in u, A}) is linear in coefficients and the terminal condition, the coefficients are also fixed if the solution to the FBSDE (\ref{forward FBSDE})-(\ref{1st order condition}) is given, therefore, we can adopt the method in \cite{HP95} to establish the global existence. Or, the construction of the global solution can follow the same procedure as in Section \ref{sec. local existence}. To this end, the critical step is to construct the Lipschitz terminal map $P^i_k$ in $L^2_{\mathcal{W}_{\tau_{ik}}^{\indep} }(\mathcal{H}_m) $ for the local system (\ref{forward eq DY_tX psi, no expand in u, A})-(\ref{backward eq Z_tX psi, no expand in u, A}), similar to Steps 1 to 3 in Section \ref{sec. local existence} for those generic $P_i$'s. More precisely, first, it is clear that (\ref{forward eq DY_tX psi, no expand in u, A})-(\ref{backward eq Z_tX psi, no expand in u, A}) has a solution on $[\tau_{i,n^i-1},T]$ by following the same method as in Lemmas \ref{local forward estimate}-\ref{lem, local backward estimate} and contraction mapping arguments as in Section \ref{sec. local existence},  provided that $\big|T-\tau_{i,n^i-1}\big|$ is small enough. Note that (\ref{forward eq DY_tX psi, no expand in u, A})-(\ref{backward eq Z_tX psi, no expand in u, A}) is a linear system with fixed coefficients, the proof of local existence on $[\tau_{i,n^i-1},T]$ is much simpler than that in Section \ref{sec. local existence}. Let $\Psi_1$, $\Psi_2 \in L^2_{\mathcal{W}_{\tau_{i,n^i-1}}^{\indep} }\h{-5pt}(\mathcal{H}_m) $, thanks to (\ref{uniqueness condition}), Proposition \ref{prop bdd of Y Z u r} and the linearity of system (\ref{forward eq DY_tX psi, no expand in u, A})-(\ref{backward eq Z_tX psi, no expand in u, A}), we have the estimate,
\begin{equation}
\sup_{s \in \big[\tau_{i,n^i-1},T\big]}\big\|\mathscr{D}^{\Psi_1}\mathbb{Z}_{\tau_{i,n^i-1}X}(s)-
\mathscr{D}^{\Psi_2}\mathbb{Z}_{\tau_{i,n^i-1}X}(s)\pigr\|_{\mathcal{H}_m}\:
\leq C'_4\|\Psi_1-\Psi_2\|_{\mathcal{H}_m},
\label{bdd of DU DY DZ, approx of J flow}
\end{equation}
where the positive constant $C'_4$ is independent of $\Psi_1$, $\Psi_2$ and $\tau_{i,n^i-1}$. Therefore, $P^i_{n^i-2}$ is Lipschitz under the norm over $\mathcal{H}_m$. Again, due to the linearity of the system, we easily follow Step 1 to Step 3 mentioned in Section \ref{sec. local existence}, without invoking the estimates in Lemmas \ref{lem cruical est.}-\ref{lem, bdd of DZ}, to establish the Lipschitz continuity of $P^i_{k}$, for all $k=1,2,\ldots,n^i-1$, with the Lipschitz constant being independent of the partition $\{\tau_{ik}\}^{n^i}_{k=1}$. Hence, the existence of the global solution to (\ref{forward eq DY_tX psi, no expand in u, A})-(\ref{backward eq Z_tX psi, no expand in u, A}) on $[t,T]$ is established with the same reasoning for Theorem \ref{thm global existence},  as long as $\displaystyle\max_k|\tau_{ik}-\tau_{i,k+1}|$ is sufficiently small. We remark that using (\ref{uniqueness condition}) and Proposition \ref{prop bdd of Y Z u r} again, we see that 
\begin{equation}
\big\|\mathscr{D}^\Psi\mathbb{Y}_{tX}(s)\big\|_{\mathcal{H}_m},\:
\big\|\mathscr{D}^\Psi\mathbb{Z}_{tX}(s)\big\|_{\mathcal{H}_m},\:
\left[\displaystyle\sum^n_{j=1}\int^T_{t}\big\|\mathscr{D}^\Psi\mathbbm{r}_{tX,j}(\tau)\big\|_{\mathcal{H}_m}^2
\right]^{1/2} d\tau
\leq C'_4\|\Psi\|_{\mathcal{H}_m}.
\label{bdd of DU DY DZ, approx of J flow}
\end{equation}

On the other hand, the quadruple $\pig( \Delta^\epsilon_\Psi \mathbb{Y}_{tX} (s),
\Delta^\epsilon_\Psi \mathbb{Z}_{tX} (s),
\Delta^\epsilon_\Psi u_{tX}(s),
\Delta^\epsilon_\Psi\mathbbm{r}_{tX}(s)\pig)$ solves  the system 

\begin{empheq}[left=\h{-10pt}\empheqbiglbrace]{align}
\Delta^\epsilon_\Psi \mathbb{Y}_{tX} (s)
=\:&\Psi+\int_{t}^{s}\Delta^\epsilon_\Psi u_{tX}(\tau)d\tau;
\label{finite diff. forward}\\
\Delta^\epsilon_\Psi \mathbb{Z}_{tX} (s)
=\:&
\displaystyle\int_{0}^{1}h_{xx}\pig(\mathbb{Y}_{t X}(T)+\theta\epsilon \Delta^\epsilon_\Psi \mathbb{Y}_{tX} (T)\pig)\Delta^\epsilon_\Psi \mathbb{Y}_{tX} (T)\nonumber\\ &\h{100pt}+D_{X}^{2}F_{T}\pig((\mathbb{Y}_{t X}(T)+\theta\epsilon \Delta^\epsilon_\Psi \mathbb{Y}_{tX} (T)) \otimes  m\pig)(\Delta^\epsilon_\Psi \mathbb{Y}_{tX} (T))d\theta\nonumber\\
&+\int^T_s\int_{0}^{1}l_{xx}\pig(\mathbb{Y}_{t X}(\tau)+\theta\epsilon \Delta^\epsilon_\Psi \mathbb{Y}_{tX} (\tau),u_{t X}(\tau) \pig)\Delta^\epsilon_\Psi \mathbb{Y}_{tX} (\tau)\nonumber\\
&\pushright{+l_{xv}\pig(\mathbb{Y}_{t,X+\epsilon\Psi}(\tau),u_{t X}(\tau)+\theta\epsilon \Delta^\epsilon_\Psi u_{tX}(\tau)\pig)\Delta^\epsilon_\Psi u_{tX}(\tau)d\theta d\tau}\h{-1pt}\nonumber\\
&+\int^T_s\int_{0}^{1}D_{X}^{2}F\pig((\mathbb{Y}_{t X}(\tau)+\theta\epsilon \Delta^\epsilon_\Psi \mathbb{Y}_{tX} (\tau)) \otimes  m\pig)(\Delta^\epsilon_\Psi \mathbb{Y}_{tX} (\tau))d\theta d\tau\nonumber\\
&-\int^T_s\sum_{j=1}^n \Delta^\epsilon_\Psi r_{t X,j}(\tau)dw_{j}(\tau)d\tau,
\label{finite diff. backward}
\end{empheq}
meanwhile,
{\footnotesize
\begin{equation}
\int_{0}^{1}l_{vx}\pig(\mathbb{Y}_{t X}(s)+\theta\epsilon \Delta^\epsilon_\Psi \mathbb{Y}_{tX} (s),u_{t X}(s)\pig)
\Delta^\epsilon_\Psi \mathbb{Y}_{tX} (s)
+l_{vv}\pig(\mathbb{Y}_{t,X+\epsilon\Psi}(s) ,u_{t X}(s)+\theta\epsilon \Delta^\epsilon_\Psi u_{tX}(s)\pig)
\Delta^\epsilon_\Psi u_{tX}(s)d\theta+\Delta^\epsilon_\Psi \mathbb{Z}_{tX} (s)=0.
\label{1st order finite diff.}
\end{equation}}
Now, by subtracting (\ref{forward eq DY_tX psi, no expand in u, A})-(\ref{backward eq Z_tX psi, no expand in u, A}) from (\ref{finite diff. forward})-(\ref{finite diff. backward}) correspondingly, we consider the system for 
$$\pig( \Delta^\epsilon_\Psi \mathbb{Y}_{tX} (s)
- D \mathbb{Y}^\Psi_{tX} (s), 
\Delta^\epsilon_\Psi \mathbb{Z}_{tX} (s)
-D \mathbb{Z}^\Psi_{tX} (s),
\Delta^\epsilon_\Psi\mathbbm{r}_{t X,j}(s)
-\mathscr{D}^\Psi \mathbbm{r}_{t X,j}(s)
\pig).$$
By using the same arguments as in the proof of Proposition \ref{prop bdd of Y Z u r}, together with Assumptions \textbf{A(vii)}'s (\ref{assumption, lip of 3rd derivative of l and h}) and \textbf{B(vi)}'s (\ref{assumption, lip of 3rd derivative of F and FT}), we can easily deduce that, for instance, $$\pig\|\mathscr{D}^\Psi\mathbb{Y}_{t X}(s)
-\Delta_\Psi^\epsilon\mathbb{Y}_{t X}(s)
\pigr\|_{\mathcal{H}_m}
\leq \epsilon C''_4,$$
where $C_{4}''$ is a positive constant independent of $\epsilon$ and $s$. Therefore, we conclude, as $\epsilon \to 0^+$, \scalebox{0.85}{$\pig( \Delta^\epsilon_\Psi \mathbb{Y}_{tX} (s),
\Delta^\epsilon_\Psi \mathbb{Z}_{tX} (s)\pig)$}\\
$\longrightarrow 
\pig( \mathscr{D}^\Psi\mathbb{Y}_{tX}(s),\mathscr{D}^\Psi\mathbb{Z}_{tX}(s)\pig)$ in $L_{\mathcal{W}_{t X\Psi}}^{\infty}(t,T;\mathcal{H}_{m})$ and $\Delta^\epsilon_\Psi\mathbbm{r}_{tX,j}(s) \longrightarrow \mathscr{D}^\Psi\mathbbm{r}_{t X,j}(s)$ in $L_{\mathcal{W}_{t X\Psi}}^{2}(t,T;\mathcal{H}_{m})$.

======================================}

\noindent {\bf Step 1. Weak Convergence:}\\
From (\ref{forward FBSDE})-(\ref{1st order condition}), the quadruple $\pig( \Delta^\epsilon_\Psi \mathbb{Y}_{tX} (s),
\Delta^\epsilon_\Psi \mathbb{Z}_{tX} (s),
\Delta^\epsilon_\Psi u_{tX}(s),
\Delta^\epsilon_\Psi\mathbbm{r}_{tX}(s)\pig)$ solves  the system \vspace{-15pt}

\begin{empheq}[left=\h{-10pt}\empheqbiglbrace]{align}
\Delta^\epsilon_\Psi \mathbb{Y}_{tX} (s)
=\:&\Psi+\int_{t}^{s}\Delta^\epsilon_\Psi u_{tX}(\tau)d\tau;
\label{finite diff. forward}\\
\Delta^\epsilon_\Psi \mathbb{Z}_{tX} (s)
=\:&
\displaystyle\int_{0}^{1}h_{xx}\pig(\mathbb{Y}_{t X}(T)+\theta\epsilon \Delta^\epsilon_\Psi \mathbb{Y}_{tX} (T)\pig)\Delta^\epsilon_\Psi \mathbb{Y}_{tX} (T)\nonumber\\ &\h{100pt}+D_{X}^{2}F_{T}\pig((\mathbb{Y}_{t X}(T)+\theta\epsilon \Delta^\epsilon_\Psi \mathbb{Y}_{tX} (T)) \otimes  m\pig)(\Delta^\epsilon_\Psi \mathbb{Y}_{tX} (T))d\theta\nonumber\\
&+\int^T_s\int_{0}^{1}l_{xx}\pig(\mathbb{Y}_{t X}(\tau)+\theta\epsilon \Delta^\epsilon_\Psi \mathbb{Y}_{tX} (\tau),u_{t X}(\tau) \pig)\Delta^\epsilon_\Psi \mathbb{Y}_{tX} (\tau)\nonumber\\
&\pushright{+l_{xv}\pig(\mathbb{Y}_{t,X+\epsilon\Psi}(\tau),u_{t X}(\tau)+\theta\epsilon \Delta^\epsilon_\Psi u_{tX}(\tau)\pig)\Delta^\epsilon_\Psi u_{tX}(\tau)d\theta d\tau}\h{-1pt}\nonumber\\
&+\int^T_s\int_{0}^{1}D_{X}^{2}F\pig((\mathbb{Y}_{t X}(\tau)+\theta\epsilon \Delta^\epsilon_\Psi \mathbb{Y}_{tX} (\tau)) \otimes  m\pig)(\Delta^\epsilon_\Psi \mathbb{Y}_{tX} (\tau))d\theta d\tau\nonumber\\
&-\int^T_s\sum_{j=1}^n \Delta^\epsilon_\Psi \mathbbm{r}_{t X,j}(\tau)dw_{j}(\tau),
\label{finite diff. backward}
\end{empheq}
meanwhile,\vspace{-15pt}

\begin{equation}
\scalemath{0.91}{
\int_{0}^{1}l_{vx}\pig(\mathbb{Y}_{t X}(s)+\theta\epsilon \Delta^\epsilon_\Psi \mathbb{Y}_{tX} (s),u_{t X}(s)\pig)
\Delta^\epsilon_\Psi \mathbb{Y}_{tX} (s)
+l_{vv}\pig(\mathbb{Y}_{t,X+\epsilon\Psi}(s) ,u_{t X}(s)+\theta\epsilon \Delta^\epsilon_\Psi u_{tX}(s)\pig)
\Delta^\epsilon_\Psi u_{tX}(s)d\theta+\Delta^\epsilon_\Psi \mathbb{Z}_{tX} (s)=0.}
\label{1st order finite diff.}
\end{equation}
Since $\Delta^\epsilon_\Psi \mathbb{Y}_{tX} (s)$, $\Delta^\epsilon_\Psi \mathbb{Z}_{tX} (s)$ and $\Delta^\epsilon_\Psi u_{tX}(s)$ are uniformly bounded in $L^\infty_{\mathcal{W}_{t X\Psi}}(t,T;\mathcal{H}_m)$ for all choices of $\epsilon$ by (\ref{bdd Y, Z, u, r, epsilon}), then by Banach-Alaoglu theorem, these finite difference processes converge to the weak limits $\mathscr{D} \mathbb{Y}^\Psi_{tX} (s)$, $\mathscr{D} \mathbb{Z}^\Psi_{tX} (s)$ and $\mathscr{D}u^\Psi_{tX} (s)$ in $L^2_{\mathcal{W}_{t X\Psi}}(t,T;\mathcal{H}_m)$, as $\epsilon \to 0$ along a subsequence. Equation (\ref{finite diff. forward}) becomes, as $\epsilon \to 0$ along the subsequence, 
\begin{equation}
\Delta^\epsilon_\Psi \mathbb{Y}_{tX} (s)\longrightarrow
 \mathscr{D}\mathbb{Y}^\Psi_{tX} (s)
=\:\Psi+\int_{t}^{s}\mathscr{D}u^\Psi_{tX}(\tau)d\tau,\h{10pt}\text{weakly in $L^2_{\mathcal{W}_{t X\Psi}}(t,T;\mathcal{H}_m)$,}
\label{forward, app}
\end{equation}
where $\mathscr{D}u^\Psi_{tX}(\tau)$ can be expressed as \begin{equation}
\mathscr{D}u^\Psi_{tX}(\tau)
=\Big[\diff_y  u (\mathbb{Y}_{tX},\mathbb{Z}_{tX}) (\tau)\Big] 
\Big[\mathscr{D} \mathbb{Y}_{tX}^\Psi  (\tau)\Big]
+\Big[\diff_z  u (\mathbb{Y}_{tX},\mathbb{Z}_{tX}) (\tau)\Big] 
\Big[\mathscr{D} \mathbb{Z}_{tX}^\Psi  (\tau)\Big]
\label{Du = DY+DZ}
\end{equation} due to the first order condition (\ref{1st order condition}) and Remark \ref{rem def of Du = DY +DZ}. We also have 
\begin{equation}
\Delta^\epsilon_\Psi \mathbb{Y}_{tX} (T)\longrightarrow
 \mathscr{D}\mathbb{Y}^\Psi_{tX} (T)
=\:\Psi+\int_{t}^{T}\mathscr{D}u^\Psi_{tX}(\tau)d\tau,\h{10pt}\text{weakly in $L^2_{\mathcal{W}_{t X\Psi}}(\mathcal{H}_m)$ up to a subsequence.}
\label{forward, app, at T}
\end{equation}
Moreover, $\Delta^\epsilon_\Psi \mathbbm{r}_{tX,j}(s)$ is uniformly bounded for all choices of $\epsilon$ in $L^2_{\mathcal{W}_{t X\Psi}}(t,T;\mathcal{H}_m)$ by (\ref{bdd Y, Z, u, r, epsilon}), then it converges to the weak limits $\mathscr{D} \mathbbm{r}^\Psi_{tX,j} (s)$, up to a subsequence. Define $\mathbb{Y}_{\theta\epsilon}(s) := \mathbb{Y}_{{tX}}(s) 
+ \epsilon \theta \Delta^\epsilon_\Psi \mathbb{Y}_{{tX}}(s) $ and $u_{\theta\epsilon}(s) := u_{{tX}}(s) 
+ \epsilon \theta \Delta^\epsilon_\Psi u_{{tX}}(s) $, we rewrite the right hand side of (\ref{finite diff. backward}) by augmenting the pointwisely bounded test random variable $\varphi \in L^\infty_{\mathcal{W}_{t X\Psi}}(t,T;\mathcal{H}_m)$ under the inner product, after telescoping, 
\begin{equation}
\h{-9pt}\begin{aligned}
&\h{-10pt}\bigg\langle
\int^1_0 \Big[h_{xx}(\mathbb{Y}_{\theta\epsilon}(T) )
+D_{X}^{2}F_{T}\pig(\mathbb{Y}_{\theta\epsilon}(T) \otimes  m\pig)\Big](\Delta^\epsilon_\Psi \mathbb{Y}_{tX} (T))d\theta
,\varphi
\bigg\rangle_{\mathcal{H}_m}\\
&+\bigg\langle\int_s^{T}
 \int^1_0 l_{xv}(\mathbb{Y}_{t,X+\epsilon\Psi}(\tau),u_{\theta\epsilon}(\tau))
\Delta^\epsilon_\Psi u_{tX}(\tau)\\
&\pushright{+\Big[l_{xx}(\mathbb{Y}_{\theta\epsilon}(\tau),u_{tX}(\tau))
+D_{X}^{2}F(\mathbb{Y}_{\theta\epsilon}(\tau) \otimes  m)\Big](\Delta^\epsilon_\Psi\mathbb{Y}_{tX}(\tau)) d\theta d\tau,
\varphi
\bigg\rangle_{\mathcal{H}_m} \h{-6pt} }\\
=&\:\bigg\langle
\int^1_0 \Big[h_{xx}(\mathbb{Y}_{\theta\epsilon}(T) )
+D_{X}^{2}F_{T}\pig(\mathbb{Y}_{\theta\epsilon}(T) \otimes  m\pig)
-h_{xx}(\mathbb{Y}_{tX}(T) )
-D_{X}^{2}F_{T}\pig(\mathbb{Y}_{tX}(T) \otimes  m\pig)\Big](\Delta^\epsilon_\Psi \mathbb{Y}_{tX} (T))d\theta
,\varphi
\bigg\rangle_{\mathcal{H}_m}\h{-30pt}\\
&+\bigg\langle
\int^1_0 \Big[h_{xx}(\mathbb{Y}_{tX}(T) )
+D_{X}^{2}F_{T}\pig(\mathbb{Y}_{tX}(T) \otimes  m\pig)\Big](\Delta^\epsilon_\Psi \mathbb{Y}_{tX} (T))d\theta
,\varphi
\bigg\rangle_{\mathcal{H}_m}\\
&+\bigg\langle\int_s^{T} 
 \int^1_0 \Big[l_{xv}(\mathbb{Y}_{t,X+\epsilon\Psi}(\tau),u_{\theta\epsilon}(\tau))
-l_{xv}(\mathbb{Y}_{t,X}(\tau),u_{t,X}(\tau))\Big]
\Delta^\epsilon_\Psi u_{tX}(\tau) d\theta d\tau,
\varphi
\bigg\rangle_{\mathcal{H}_m} \h{-6pt} \\
&+\bigg\langle\int_s^{T} 
 \int^1_0 l_{xv}(\mathbb{Y}_{t,X }(\tau),u_{t,X }(\tau))
\Delta^\epsilon_\Psi u_{tX}(\tau) d\theta d\tau,
\varphi
\bigg\rangle_{\mathcal{H}_m} \h{-6pt} \\
&+\bigg\langle\int_s^{T} 
\int^1_0 \Big[l_{xx}(\mathbb{Y}_{\theta\epsilon}(\tau),u_{tX}(\tau))
-l_{xx}(\mathbb{Y}_{tX}(\tau),u_{tX}(\tau))
\\
&\h{150pt}+D_{X}^{2}F(\mathbb{Y}_{\theta\epsilon}(\tau) \otimes  m)
-D_{X}^{2}F(\mathbb{Y}_{tX}(\tau) \otimes  m)\Big](\Delta^\epsilon_\Psi\mathbb{Y}_{tX}(\tau)) d\theta d\tau,
\varphi
\bigg\rangle_{\mathcal{H}_m} \h{-6pt} \h{-30pt}\\
&+\bigg\langle \int_s^{T}\h{-3pt}
 \int^1_0 \Big[l_{xx}(\mathbb{Y}_{tX}(\tau),u_{tX}(\tau))
+D_{X}^{2}F(\mathbb{Y}_{tX}(\tau) \otimes  m)\Big]
(\Delta^\epsilon_\Psi\mathbb{Y}_{tX}(\tau)) d\theta d\tau,
\varphi
\bigg\rangle_{\mathcal{H}_m}.
\end{aligned}
\label{<DZ_e(t),Psi,j flow>2}
\end{equation}
We claim that the following terms
\begin{align}
&\scalemath{0.93}{\bigg\langle
\int^1_0 \Big[h_{xx}(\mathbb{Y}_{\theta\epsilon}(T) )
+D_{X}^{2}F_{T}\pig(\mathbb{Y}_{\theta\epsilon}(T) \otimes  m\pig)
-h_{xx}(\mathbb{Y}_{tX}(T) )
-D_{X}^{2}F_{T}\pig(\mathbb{Y}_{tX}(T) \otimes  m\pig)\Big](\Delta^\epsilon_\Psi \mathbb{Y}_{tX} (T))d\theta
,\varphi
\bigg\rangle_{\mathcal{H}_m}},
\h{-30pt}\label{(1) h_xx}\\
&\bigg\langle\int_s^{T} 
\int^1_0 \Big[l_{xv}(\mathbb{Y}_{t,X+\epsilon\Psi}(\tau),u_{\theta\epsilon}(\tau))
-l_{xv}(\mathbb{Y}_{t,X}(\tau),u_{t,X}(\tau))\Big]
\Delta^\epsilon_\Psi u_{tX}(\tau) d\theta d\tau,
\varphi
\bigg\rangle_{\mathcal{H}_m} ,\label{(2) l_xv}\\
&
\scalemath{0.83}{
\bigg\langle \displaystyle\int_s^{T}\h{-3pt}
 \int^1_0 \Big[l_{xx}(\mathbb{Y}_{\theta\epsilon}(\tau),u_{tX}(\tau))
-l_{xx}(\mathbb{Y}_{tX}(\tau),u_{tX}(\tau))
+D_{X}^{2}F(\mathbb{Y}_{\theta\epsilon}(\tau) \otimes  m)
-D_{X}^{2}F(\mathbb{Y}_{tX}(\tau) \otimes  m)\Big](\Delta^\epsilon_\Psi\mathbb{Y}_{tX}(\tau)) d\theta d\tau,
\varphi
\bigg\rangle_{\mathcal{H}_m},}\label{(3) l_xx}
\end{align}
converge to zero, as $\epsilon \to 0$ up to a subsequence. For instance, for (\ref{(1) h_xx}),
\mycomment{{
\small \begin{align*}
&\left|\bigg\langle
\int^1_0 \Big[h_{xx}(\mathbb{Y}_{\theta\epsilon}(T) )
+D_{X}^{2}F_{T}\pig(\mathbb{Y}_{\theta\epsilon}(T) \otimes  m\pig)
-h_{xx}(\mathbb{Y}_{tX}(T) )
-D_{X}^{2}F_{T}\pig(\mathbb{Y}_{tX}(T) \otimes  m\pig)\Big](\Delta^\epsilon_\Psi \mathbb{Y}_{tX} (T))d\theta
,\varphi
\bigg\rangle_{\mathcal{H}_m}\right|\\
&\leq\|\varphi\|_{L^\infty}\cdot
\int^1_0 \left\|h_{xx}(\mathbb{Y}_{\theta\epsilon}(T) )
-h_{xx}(\mathbb{Y}_{tX}(T) )
+D_{X}^{2}F_{T}\pig(\mathbb{Y}_{\theta\epsilon}(T) \otimes  m\pig)
-D_{X}^{2}F_{T}\pig(\mathbb{Y}_{tX}(T) \otimes  m\pig)\right\|_{\mathcal{H}_m}
\left\|\Delta^\epsilon_\Psi \mathbb{Y}_{tX} (T))\right\|_{\mathcal{H}_m}d\theta\\
&\leq\|\varphi\|_{L^\infty}
\cdot\left\|\Delta^\epsilon_\Psi \mathbb{Y}_{tX} (T))\right\|_{\mathcal{H}_m}\cdot
\h{-3pt}
\int^1_0 \pig\|h_{xx}(\mathbb{Y}_{\theta\epsilon}(T) )
-h_{xx}(\mathbb{Y}_{tX}(T))
\pigr\|_{\mathcal{H}_m}
+\left\|D_{X}^{2}F_{T}\pig(\mathbb{Y}_{\theta\epsilon}(T) \otimes  m\pig)
-D_{X}^{2}F_{T}\pig(\mathbb{Y}_{tX}(T) \otimes  m\pig)\right\|_{\mathcal{H}_m}
\h{-5pt}d\theta.
\end{align*}}}
\begin{align*}
&\left|\bigg\langle
\int^1_0 \Big[h_{xx}(\mathbb{Y}_{\theta\epsilon}(T) )
-h_{xx}(\mathbb{Y}_{tX}(T) )\Big](\Delta^\epsilon_\Psi \mathbb{Y}_{tX} (T))d\theta
,\varphi
\bigg\rangle_{\mathcal{H}_m}\right|\\
&\leq\|\varphi\|_{L^\infty}
\cdot\left\|\Delta^\epsilon_\Psi \mathbb{Y}_{tX} (T))\right\|_{\mathcal{H}_m}\cdot
\h{-3pt}
\int^1_0 \pig\|h_{xx}(\mathbb{Y}_{\theta\epsilon}(T) )
-h_{xx}(\mathbb{Y}_{tX}(T))
\pigr\|_{\mathcal{H}_m}d\theta.
\end{align*}
The definition of $\mathbb{Y}_{\theta\epsilon}(\tau)$ and the bounds in (\ref{bdd Y, Z, u, r, epsilon}) tell us that
\begin{equation}
\int^1_0\pig\|\mathbb{Y}_{\theta\epsilon}(T)
-\mathbb{Y}_{{tX}}(T) \pigr\|_{\mathcal{H}_m} d\theta
=   \int^1_0 \epsilon \theta
\pig\|\Delta^\epsilon_\Psi \mathbb{Y}_{{tX}}(T)\pigr\|_{\mathcal{H}_m} d\theta
\leq  \dfrac{\epsilon  C'_4}{2} \|\Psi\|_{\mathcal{H}_m} \longrightarrow 0 \h{10pt}
\text{as $\epsilon\to 0$.}
\label{strong conv. of Y theta u theta}
\end{equation}
The strong convergence in  (\ref{strong conv. of Y theta u theta}) and Borel-Cantelli lemma show that there is a subsequence of $\epsilon$ such that 
\begin{align}
\mathbb{Y}_{\theta\epsilon}(T) - \mathbb{Y}_{tX}(T)\longrightarrow 0, \h{10pt} \text{$m \otimes \mathbb{P}$-a.s., a.e. $\theta \in [0,1]$,  as $\epsilon \to 0$, and}
\label{pointwise conv. of Y}\\
\h{-20pt}\pig\|\mathbb{Y}_{\theta\epsilon}(T)
-\mathbb{Y}_{{tX}}(T) \pigr\|_{\mathcal{H}_m} \longrightarrow 0,
\h{10pt}
\text{for a.e. $\theta \in [0,1]$ as $\epsilon \to 0$.}
\label{strong conv. of Y}
\end{align} 
Along this subsequence, the convergence in  (\ref{pointwise conv. of Y}) and the continuity of $h_{xx}$ in Assumptions \textbf{A(iv)}'s (\ref{assumption, cts of lxx, lvx, lvv, hxx}) tell us that
\begin{equation}
h_{xx}(\mathbb{Y}_{\theta\epsilon}(T) )
-h_{xx}(\mathbb{Y}_{tX}(T) )\longrightarrow 0 \h{10pt}
\text{$m \otimes \mathbb{P}$-a.s., a.e. $\theta \in [0,1]$,  as $\epsilon \to 0$.}
\label{hxxY conv.point}
\end{equation} 
Since $h_{xx}$ is pointwisely bounded due to \textbf{A(iii)}'s (\ref{assumption, bdd of h, hx, hxx}), then $|h_{xx}(\mathbb{Y}_{\theta\epsilon}(T) )
-h_{xx}(\mathbb{Y}_{tX}(T) ) | \leq 2\displaystyle\sup_{x\in \mathbb{R}^n}|h_{xx}(x)| \leq 2c_h$, $m \otimes \mathbb{P}$-a.s., a.e. $\theta \in [0,1]$. Using the dominated convergence theorem and (\ref{hxxY conv.point}), we see that as $\epsilon \to 0$, it holds that 
\begin{equation}
\int^1_0
\left\|h_{xx}(\mathbb{Y}_{\theta\epsilon}(T) )
-h_{xx}(\mathbb{Y}_{tX}(T) )
\right\|_{\mathcal{H}_m}
d\theta \longrightarrow 0.
\label{conv. hxx theta 1}
\end{equation}
By the $\mathcal{H}_m$-boundedness of $\Delta^\epsilon_\Psi\mathbb{Y}_{tX}(\tau)$ in (\ref{bdd Y, Z, u, r, epsilon}) and the strong convergence in (\ref{strong conv. of Y}) for a.e. $\theta \in [0,1]$, Assumption \textbf{B(v)(a)}'s (\ref{assumption, properties of D^2F, D^2F_T}) implies
\begin{align*}
\mathbb{E}\left[\int_{\mathbb{R}^n}\left|D_{X}^{2}F_{T}\pig(\mathbb{Y}_{\theta\epsilon}(T) \otimes  m\pig)\big(\Delta^\epsilon_\Psi \mathbb{Y}_{tX} (T)\big)
-D_{X}^{2}F_{T}\pig(\mathbb{Y}_{tX}(T) \otimes  m\pig)\big(\Delta^\epsilon_\Psi \mathbb{Y}_{tX} (T)\big)\right|dm(x)\right] \longrightarrow 0,
\end{align*}
for a.e. $\theta \in [0,1]$ as $\epsilon \to 0$. By Assumption \textbf{B(ii)}'s (\ref{assumption, bdd of D^2F, D^2F_T}), Cauchy-Schwarz inequality and (\ref{bdd Y, Z, u, r, epsilon}), we also see that
\begin{align}
&\mathbb{E}\left[\int_{\mathbb{R}^n}\left|D_{X}^{2}F_{T}\pig(\mathbb{Y}_{\theta\epsilon}(T) \otimes  m\pig)\big(\Delta^\epsilon_\Psi \mathbb{Y}_{tX} (T)\big)
-D_{X}^{2}F_{T}\pig(\mathbb{Y}_{tX}(T) \otimes  m\pig)\big(\Delta^\epsilon_\Psi \mathbb{Y}_{tX} (T)\big)\right|dm(x)\right]\nonumber\\
&\leq \left\|D_{X}^{2}F_{T}\pig(\mathbb{Y}_{\theta\epsilon}(T) \otimes  m\pig)\big(\Delta^\epsilon_\Psi \mathbb{Y}_{tX} (T)\big)
-D_{X}^{2}F_{T}\pig(\mathbb{Y}_{tX}(T) \otimes  m\pig)\big(\Delta^\epsilon_\Psi \mathbb{Y}_{tX} (T)\big)\right\|_{\mathcal{H}_m}\nonumber\\
&\leq \left\|D_{X}^{2}F_{T}\pig(\mathbb{Y}_{\theta\epsilon}(T) \otimes  m\pig)\big(\Delta^\epsilon_\Psi \mathbb{Y}_{tX} (T)\big)\right\|_{\mathcal{H}_m}
+\left\|D_{X}^{2}F_{T}\pig(\mathbb{Y}_{tX}(T) \otimes  m\pig)\big(\Delta^\epsilon_\Psi \mathbb{Y}_{tX} (T)\big)\right\|_{\mathcal{H}_m}\nonumber\\
&\leq 2c_T\left\|\Delta^\epsilon_\Psi \mathbb{Y}_{tX} (T)\big)\right\|_{\mathcal{H}_m}\nonumber\\
&\leq 2c_T C_4'\|\Psi\|_{\mathcal{H}_m}
\label{bdd of diff of |D^2_X F_T|}
\end{align}
is uniformly bounded for a.e. $\theta\in [0,1]$. We apply the dominated convergence theorem to obtain that as $\epsilon \to 0$
\begin{equation}
\begin{aligned}
&\left|\bigg\langle
\int^1_0 \Big[
D_{X}^{2}F_{T}\pig(\mathbb{Y}_{\theta\epsilon}(T) \otimes  m\pig)
-D_{X}^{2}F_{T}\pig(\mathbb{Y}_{tX}(T) \otimes  m\pig)\Big](\Delta^\epsilon_\Psi \mathbb{Y}_{tX} (T))d\theta
,\varphi
\bigg\rangle_{\mathcal{H}_m}\right|\label{DF conv.}\\
&\leq\|\varphi\|_{L^\infty}\cdot
\int^1_0 \mathbb{E}\left[\int_{\mathbb{R}^n}\left|D_{X}^{2}F_{T}\pig(\mathbb{Y}_{\theta\epsilon}(T) \otimes  m\pig)\big(\Delta^\epsilon_\Psi \mathbb{Y}_{tX} (T)\big)
-D_{X}^{2}F_{T}\pig(\mathbb{Y}_{tX}(T) \otimes  m\pig)\big(\Delta^\epsilon_\Psi \mathbb{Y}_{tX} (T)\big)\right|dm(x)\right]d\theta\\
&\longrightarrow 0.
\end{aligned}
\end{equation}
Combining (\ref{conv. hxx theta 1}) and (\ref{DF conv.}), we see that (\ref{(1) h_xx}) converges to zero as $\epsilon \to 0$.
Therefore, in a similar manner, (\ref{(2) l_xv})-(\ref{(3) l_xx}) also converge to zero as $\epsilon \to 0$. Hence, by the weak convergences of $\Delta^\epsilon_\Psi\mathbb{Y}_{tX}(\tau)$, $\Delta^\epsilon_\Psi u_{tX}(\tau)$, the weak convergence in (\ref{forward, app, at T}) and the convergences of (\ref{(1) h_xx})-(\ref{(3) l_xx}), (\ref{<DZ_e(t),Psi,j flow>2}) converges, as $\epsilon \to 0$ along a subsequence, 
\begin{equation}
\begingroup
\allowdisplaybreaks
\begin{aligned}
&\int^T_{t}\bigg\langle
\int^1_0 \Big[h_{xx}(\mathbb{Y}_{\theta\epsilon}(T) )
+D_{X}^{2}F_{T}\pig(\mathbb{Y}_{\theta\epsilon}(T) \otimes  m\pig)\Big](\Delta^\epsilon_\Psi \mathbb{Y}_{tX} (T))d\theta
,\varphi
\bigg\rangle_{\mathcal{H}_m}ds\\
&\h{10pt}+\int_{t}^{T}\int_{s}^{T}\h{-3pt}
\bigg\langle \int^1_0 l_{xv}(\mathbb{Y}_{t,X+\epsilon\Psi}(\tau),u_{\theta\epsilon}(\tau))
\Delta^\epsilon_\Psi u_{tX}(\tau)\\
&\h{100pt}+\Big[l_{xx}(\mathbb{Y}_{\theta\epsilon}(\tau),u_{tX}(\tau))
+D_{X}^{2}F(\mathbb{Y}_{\theta\epsilon}(\tau) \otimes  m)\Big](\Delta^\epsilon_\Psi\mathbb{Y}_{tX}(\tau)) d\theta,
\varphi
\bigg\rangle_{\mathcal{H}_m} \h{-6pt} d\tau ds\\
&\h{-5pt}\longrightarrow 
\int^T_{t}\bigg\langle
 \Big[h_{xx}(\mathbb{Y}_{tX}(T) )
+D_{X}^{2}F_{T}\pig(\mathbb{Y}_{tX}(T) \otimes  m\pig)\Big](\mathscr{D} \mathbb{Y}^\Psi_{tX} (T))
,\varphi
\bigg\rangle_{\mathcal{H}_m}ds\\
&\h{20pt}+\int^T_{t}\int_{s}^{T}\h{-3pt}
\bigg\langle l_{xv}(\mathbb{Y}_{t,X }(\tau),u_{t,X }(\tau))
\mathscr{D} u^\Psi_{tX}(\tau) \\
&\h{100pt} +\Big[l_{xx}(\mathbb{Y}_{tX}(\tau),u_{tX}(\tau))
+D_{X}^{2}F(\mathbb{Y}_{tX}(\tau) \otimes  m)\Big]
(\mathscr{D} \mathbb{Y}^\Psi_{tX}(\tau)) ,
\varphi
\bigg\rangle_{\mathcal{H}_m} \h{-6pt} d\tau ds,
\end{aligned}
\endgroup
\label{<DZ_e(t),Psi,j flow>3}
\end{equation}
since the remaining terms in (\ref{<DZ_e(t),Psi,j flow>2}) only involve linear operators acting on $\Delta^\epsilon_\Psi \mathbb{Y}_{tX}$ and $\Delta^\epsilon_\Psi u_{tX}$. The backward equation of (\ref{finite diff. backward}) and the convergence of (\ref{<DZ_e(t),Psi,j flow>3}) show that
\begin{align}
\mathscr{D} \mathbb{Z}^\Psi_{tX} (s)
=\:&
h_{xx}\pig(\mathbb{Y}_{t X}(T)\pig)\mathscr{D} \mathbb{Y}^\Psi_{tX}(T)
+D_{X}^{2}F_{T}\pig((\mathbb{Y}_{t X}(T)) \otimes  m\pig)(\mathscr{D} \mathbb{Y}^\Psi_{tX} (T))\nonumber\\
&+\int^T_s l_{xx}\pig(\mathbb{Y}_{t X}(\tau), u_{t X}(\tau)\pig)
\mathscr{D} \mathbb{Y}^\Psi_{tX} (\tau)
+l_{xv}\pig(\mathbb{Y}_{t,X}(\tau),u_{t X}(\tau)\pig)\mathscr{D} u^\Psi_{tX}(\tau) d\tau\h{-1pt}\nonumber\\
&+\int^T_s D_{X}^{2}F\pig((\mathbb{Y}_{t X}(\tau) \otimes  m\pig)(\mathscr{D} \mathbb{Y}^\Psi_{tX} (\tau)) d\tau
-\int^T_s\sum_{j=1}^n \mathscr{D} \mathbbm{r}^\Psi_{t X,j}(\tau)dw_{j}(\tau).
\label{backward, app}
\end{align}
Similarly, we also note that  as $\epsilon \to 0$ along a subsequence, 
\begin{equation}
\begin{aligned}
\Delta^\epsilon_\Psi \mathbb{Z}_{tX} (t)
=\:&
\displaystyle\int_{0}^{1}h_{xx}\pig(\mathbb{Y}_{t X}(T)+\theta\epsilon \Delta^\epsilon_\Psi \mathbb{Y}_{tX} (T)\pig)\Delta^\epsilon_\Psi \mathbb{Y}_{tX} (T)\nonumber\\ &\h{100pt}+D_{X}^{2}F_{T}\pig((\mathbb{Y}_{t X}(T)+\theta\epsilon \Delta^\epsilon_\Psi \mathbb{Y}_{tX} (T)) \otimes  m\pig)(\Delta^\epsilon_\Psi \mathbb{Y}_{tX} (T))d\theta\nonumber\\
&+\int^T_{t}\int_{0}^{1}l_{xx}\pig(\mathbb{Y}_{t X}(\tau)+\theta\epsilon \Delta^\epsilon_\Psi \mathbb{Y}_{tX} (\tau),u_{t X}(\tau) \pig)\Delta^\epsilon_\Psi \mathbb{Y}_{tX} (\tau)\nonumber\\
&\pushright{+l_{xv}\pig(\mathbb{Y}_{t,X+\epsilon\Psi}(\tau),u_{t X}(\tau)+\theta\epsilon \Delta^\epsilon_\Psi u_{tX}(\tau)\pig)\Delta^\epsilon_\Psi u_{tX}(\tau)d\theta d\tau}\h{-1pt}\nonumber\\
&+\int^T_{t}\int_{0}^{1}D_{X}^{2}F\pig((\mathbb{Y}_{t X}(\tau)+\theta\epsilon \Delta^\epsilon_\Psi \mathbb{Y}_{tX} (\tau)) \otimes  m\pig)(\Delta^\epsilon_\Psi \mathbb{Y}_{tX} (\tau))d\theta d\tau
-\int^T_{t}\sum_{j=1}^n \Delta^\epsilon_\Psi \mathbbm{r}_{t X,j}(\tau)dw_{j}(\tau)\\
\longrightarrow\:& h_{xx}\pig(\mathbb{Y}_{t X}(T)\pig)\mathscr{D} \mathbb{Y}^\Psi_{tX}(T)
+D_{X}^{2}F_{T}\pig((\mathbb{Y}_{t X}(T)) \otimes  m\pig)(\mathscr{D} \mathbb{Y}^\Psi_{tX} (T))\nonumber\\
&+\int^T_{t} l_{xx}\pig(\mathbb{Y}_{t X}(\tau), u_{t X}(\tau)\pig)
\mathscr{D} \mathbb{Y}^\Psi_{tX} (\tau)
+l_{xv}\pig(\mathbb{Y}_{t,X}(\tau),u_{t X}(\tau)\pig)\mathscr{D} u^\Psi_{tX}(\tau) d\tau\h{-1pt}\nonumber\\
&+\int^T_{t} D_{X}^{2}F\pig((\mathbb{Y}_{t X}(\tau) \otimes  m\pig)(\mathscr{D} \mathbb{Y}^\Psi_{tX} (\tau)) d\tau
-\int^T_s\sum_{j=1}^n \mathscr{D} \mathbbm{r}^\Psi_{t X,j}(\tau)dw_{j}(\tau)\\
=\:&\mathscr{D} \mathbb{Z}^\Psi_{tX} (t),\h{20pt}\text{weakly in $L^2_{\mathcal{W}_{t X\Psi}}(\mathcal{H}_m)$.}
\end{aligned}
\label{Delta Z tau_i weak conv}
\end{equation}
Subject to the first order condition (\ref{1st order condition}), we see that (\ref{J flow of FBSDE}) is a linear system and thus its solution can be shown to be unique, where this claim will be proven below. Taking this for granted, we see that the solution of (\ref{forward, app}) and (\ref{backward, app}) is unique and equal to the weak limit of $\pig( \Delta^\epsilon_\Psi \mathbb{Y}_{tX} (s),
\Delta^\epsilon_\Psi \mathbb{Z}_{tX} (s),
\Delta^\epsilon_\Psi\mathbbm{r}_{tX}(s)\pig)$, with $u_{tX}(s)$ satisfying the first order condition (\ref{1st order condition}). As a consequence, it implies that the weak limit of $\pig( \Delta^\epsilon_\Psi \mathbb{Y}_{tX} (s),
\Delta^\epsilon_\Psi \mathbb{Z}_{tX} (s),
\Delta^\epsilon_\Psi\mathbbm{r}_{tX}(s)\pig)$ along any subsequence as $\epsilon \to 0$ is the same, if it exists. Therefore, we can now identify  $\pig( \mathscr{D} \mathbb{Y}^\Psi_{tX} (s),
\mathscr{D} \mathbb{Z}^\Psi_{tX} (s),
\mathscr{D} u^\Psi_{tX}(s),
\mathscr{D}\mathbbm{r}^\Psi_{tX}(s)\pig)$ with $\pig( D \mathbb{Y}^\Psi_{tX} (s),
D \mathbb{Z}^\Psi_{tX} (s),
D u^\Psi_{tX}(s),
D\mathbbm{r}^\Psi_{tX}(s)\pig)$ which is the Jacobian flow indeed.

\noindent {\bf Uniqueness of (\ref{J flow of FBSDE})\footnote{It is well-known that the global uniqueness of general linear FBSDE is not true even for the case with constant coefficients, for instance, it can be verified by the system $dX_t=Y_t$ and $dY_t=-X_t$ with $X_0=Y_{\pi/2}=0$, we see that $(X_t,Y_t)=(a\sin t,a\cos t)$ can be a solution for any $a\in \mathbb{R}$. The uniqueness claim of (\ref{J flow of FBSDE}) heavily on certain convexity structures of the FBSDE, namely (\ref{uniqueness condition}).}:}\\
If we have two sets of solution with the same initial condition $\Psi$ to the system (\ref{J flow of FBSDE}), then it is enough to show that the differences between them are zero; also note that these differences would satisfy the following first order condition (\ref{1st order finite diff. uni}). Since the system (\ref{J flow of FBSDE}) is linear, it is sufficient  to show that any solution $\pig( D \mathbb{Y}^* (s),
D\mathbb{Z}^* (s),
D u^*(s),
D\mathbbm{r}^*(s)\pig)$ to the system with a zero initial condition, that is,
\begin{equation}
\scalemath{0.93}{
\h{-10pt}\left\{
\begin{aligned}
D \mathbb{Y}^*  (s)
&= 
\displaystyle\int_{t}^{s}
D u^*(\tau)d\tau;\\
D \mathbb{Z}^* (s)
&=h_{xx}(\mathbb{Y}_{tX}(T))D \mathbb{Y}^*(T)
+D_{X}^2F_{T}(\mathbb{Y}_{tX}(T)  \otimes  m)(D \mathbb{Y}^* (T))\\
&\h{10pt}+\displaystyle\int^T_s\bigg\{l_{xx}(\mathbb{Y}_{tX}(\tau),u(\tau))D \mathbb{Y}^*(\tau)
+l_{xv}(\mathbb{Y}_{tX}(\tau),u(\tau))D u^*(\tau)
+D^2_{X}F(\mathbb{Y}_{tX}(\tau)  \otimes  m)\pig(
D\mathbb{Y}^* (\tau)\pig)\bigg\}d\tau \h{-10pt}\\
&\h{10pt}-\int^T_s\sum_{j=1}^{n}D\mathbbm{r}^*_{j}(\tau)dw_{j}(\tau),
\end{aligned}\right.}
\label{J flow of FBSDE, app, zero ini}
\end{equation}
must be vanished. From the first order condition in (\ref{1st order condition}), we obtain that
\begin{equation}
l_{vx}\pig(\mathbb{Y}_{t X}(s),u_{t X}(s)\pig)
D\mathbb{Y}^* (s)
+l_{vv}\pig(\mathbb{Y}_{tX}(s),u_{t X}(s)\pig)
D u^* (s)+D \mathbb{Z}^*  (s)=0.
\label{1st order finite diff. uni}
\end{equation}
We consider the inner product 
$\pig\langle D\mathbb{Y}^*(s), D\mathbb{Z}^*(s) \pigr\rangle_{\mathcal{H}_m}$, together with (\ref{1st order finite diff. uni}), we obtain the following equation
$$
\scalemath{0.94}{
\h{-20pt}\begin{aligned}
&\h{-10pt}\bigg\langle
h_{xx}(\mathbb{Y}_{tX}(T) )
D \mathbb{Y}^* (T)
+D_{X}^{2}F_{T}\pig(\mathbb{Y}_{tX}(T) \otimes  m\pig)(D \mathbb{Y}^* (T))
,D \mathbb{Y}^*(T)
\bigg\rangle_{\mathcal{H}_m}\\
=\:&
-\int_{t}^{T}
\Big\langle 
l_{vx}\pig(\mathbb{Y}_{t X}(\tau),u_{t X}(\tau)\pig)
D\mathbb{Y}^* (\tau)
+l_{vv}\pig(\mathbb{Y}_{tX}(\tau),u_{t X}(\tau)\pig)D u^*(\tau),
D u^*(\tau)\Big\rangle_{\mathcal{H}_m} d\tau\\
&-\int_{t}^{T}\h{-3pt}
\bigg\langle   l_{xv}(\mathbb{Y}_{tX}(\tau),u_{tX}(\tau))
D u^*(\tau)
+l_{xx}(\mathbb{Y}_{tX}(\tau),u_{tX}(\tau))D\mathbb{Y}^*(\tau)
+D_{X}^{2}F(\mathbb{Y}_{tX}(\tau) \otimes  m)(D\mathbb{Y}^*(\tau)),
D \mathbb{Y}^*(\tau)
\bigg\rangle_{\mathcal{H}_m} \h{-6pt} d\tau.\h{-40pt}
\end{aligned}}
$$
Assumptions \textbf{A(v)}'s (\ref{assumption, convexity of l}), \textbf{A(vi)}'s (\ref{assumption, convexity of h}), \textbf{B(v)(b)}'s (\ref{assumption, convexity of D^2F, D^2F_T}) imply
\begin{equation}
\begin{aligned}
\int_{t}^{T}
\lambda\big\|D  u^* (s) \big\|_{\mathcal{H}_m}^{2}
-\left(c'_{l}+c'\right) 
\big\|D \mathbb{Y}^* (s)\big\|_{\mathcal{H}_m}^{2}ds
-(c'_{h}+c'_{T})\big\|D \mathbb{Y}^* (T)\big\|_{\mathcal{H}_m}^{2}\leq 0.
\end{aligned}
\label{est Jk > 5}
\end{equation}
The equation of $D \mathbb{Y}^* (s)$ in (\ref{J flow of FBSDE, app, zero ini}) with a simple application of Cauchy-Schwarz inequality gives
\begin{equation}
\big\|D \mathbb{Y}^* (s)\big\|_{\mathcal{H}_m}^{2} \leq s\int^T_{t}
\big\|D  u^* (\tau) \big\|_{\mathcal{H}_m}^{2}d\tau \h{10pt} \text{ and }\h{10pt} 
\int^T_{t}\big\|D \mathbb{Y}^* (\tau)\big\|_{\mathcal{H}_m}^{2} 
d\tau
\leq \dfrac{T^2}{2}\int^T_{t}
\big\|D u^*(\tau) \big\|_{\mathcal{H}_m}^{2}d\tau.
\label{4657}
\end{equation}
Putting (\ref{4657}) into (\ref{est Jk > 5}), we have
$$\left[\lambda
-(c'_{h}+c'_{T})_+T
-\left(c'_{l}+c'\right)_+\dfrac{T^2}{2}
\right]\int_{t}^{T}
\big\|D  u^* (s) \big\|_{\mathcal{H}_m}^{2}ds
\leq 0.$$
The condition in (\ref{uniqueness condition}) implies $\int^T_{t}
\big\|D u^*(s) \big\|_{\mathcal{H}_m}^{2}ds=0$ which further deduces $Du^*(s)=0$, $m \otimes \mathbb{P}$-a.s. for a.e. $s \in [t,T]$. Therefore, since the processes $D\mathbb{Y}^*(s)$ and $D\mathbb{Z}^*(s)$ are continuous in time $s$, we easily see that $D\mathbb{Y}^*(s)=D\mathbb{Z}^*(s)=0$, $m \otimes \mathbb{P}$-a.s. for all $s \in [t,T]$, and $D\mathbbm{r}^*(s)=0$, $m \otimes \mathbb{P}$-a.s. for a.e. $s \in [t,T]$, from (\ref{J flow of FBSDE, app, zero ini}) and (\ref{1st order finite diff. uni}).

\noindent {\bf Step 2. Strong Convergence:}\\
Now we next prove that the finite difference process $\pig( \Delta^\epsilon_\Psi \mathbb{Y}_{tX} (s),
\Delta^\epsilon_\Psi \mathbb{Z}_{tX} (s),
\Delta^\epsilon_\Psi u_{tX}(s),
\Delta^\epsilon_\Psi\mathbbm{r}_{tX}(s)\pig)$ converges to $\pig( D \mathbb{Y}^\Psi_{tX} (s),
D \mathbb{Z}^\Psi_{tX} (s),
D u^\Psi_{tX}(s),
D\mathbbm{r}^\Psi_{tX}(s)\pig)$ strongly in $L^2_{\mathcal{W}_{t X\Psi}}(t,T;\mathcal{H}_m)$ as $\epsilon \to 0$. For if not the case, there is a sequence $\{\epsilon_k\}_{k\in\mathbb{N}}$ such that $\epsilon_k \longrightarrow 0$ and, without loss of generality, we assume 
\begin{equation}
\lim_{k \to \infty} \sup_{s\in [t,T]} \pig\| \Delta^{\epsilon_k}_\Psi \mathbb{Y}_{tX} (s)- D \mathbb{Y}^\Psi_{tX} (s) \pigr\|_{\mathcal{H}_m}>0.
\label{not conv DY}
\end{equation}
According to Step 1 in this proof, we can extract a subsequence $\{\epsilon_{k_j}\}_{j\in \mathbb{N}}$ from $\{\epsilon_{k}\}_{k\in \mathbb{N}}$ such that $\pig( \Delta^{\epsilon_{k_j}}_\Psi \mathbb{Y}_{tX} (s)$,\\
$\Delta^{\epsilon_{k_j}}_\Psi \mathbb{Z}_{tX} (s),
\Delta^{\epsilon_{k_j}}_\Psi u_{tX}(s),
\Delta^{\epsilon_{k_j}}_\Psi\mathbbm{r}_{tX}(s)\pig)$ converges weakly to $\pig( D \mathbb{Y}^\Psi_{tX} (s),
D \mathbb{Z}^\Psi_{tX} (s),
D u^\Psi_{tX}(s),
D\mathbbm{r}^\Psi_{tX}(s)\pig)$ as $j \to 0$, which is the unique solution of (\ref{J flow of FBSDE}) with $D u^\Psi_{tX}(s)$ satisfying (\ref{Du = DY+DZ}). For simplicity, we write $\epsilon_j$ in place of $\epsilon_{k_j}$ to avoid cumbersome notations without affecting the main arguments in the rest of our proof.

By noting that $\Delta^{\epsilon_j}_\Psi\mathbb{Y}_{tX}(s)$ is a finite variation process, we apply the traditional It\^o lemma to\\ $\pig\langle \Delta^{\epsilon_j}_\Psi\mathbb{Y}_{tX}(s), \Delta^{\epsilon_j}_\Psi\mathbb{Z}_{tX}(s) \pigr\rangle_{\mathcal{H}_m}$ to give the equation
$$
\begin{aligned}
\pig\langle \Delta^{\epsilon_j}_\Psi \mathbb{Z}_{tX}(t),\Psi  \pigr\rangle_{\mathcal{H}_m}
=\:&
\bigg\langle
\int^1_0 h_{xx}(\mathbb{Y}_{\theta{\epsilon_j}}(T) )\Delta^{\epsilon_j}_\Psi \mathbb{Y}_{tX} (T)
+D_{X}^{2}F_{T}\pig(\mathbb{Y}_{\theta{\epsilon_j}}(T) \otimes  m\pig)(\Delta^{\epsilon_j}_\Psi \mathbb{Y}_{tX} (T))d\theta
,\Delta^{\epsilon_j}_\Psi \mathbb{Y}_{tX}(T)
\bigg\rangle_{\mathcal{H}_m}\\
&-\int_{t}^{T}
\Big\langle 
\Delta^{\epsilon_j}_\Psi \mathbb{Z}_{tX}(\tau),
\Delta^{\epsilon_j}_\Psi u_{tX}(\tau)\Big\rangle_{\mathcal{H}_m} d\tau\\
&+\int_{t}^{T}\h{-3pt}
\bigg\langle \int^1_0 l_{xv}(\mathbb{Y}_{t,X+{\epsilon_j}\Psi}(\tau),u_{\theta{\epsilon_j}}(\tau))
\Delta^{\epsilon_j}_\Psi u_{tX}(\tau)
+l_{xx}(\mathbb{Y}_{\theta{\epsilon_j}}(\tau),u_{tX}(\tau))\Delta^{\epsilon_j}_\Psi\mathbb{Y}_{tX}(\tau)\\
&\pushright{+D_{X}^{2}F(\mathbb{Y}_{\theta{\epsilon_j}}(\tau) \otimes  m)(\Delta^{\epsilon_j}_\Psi\mathbb{Y}_{tX}(\tau)) d\theta,
\Delta^{\epsilon_j}_\Psi \mathbb{Y}_{tX}(\tau)
\bigg\rangle_{\mathcal{H}_m} \h{-6pt} d\tau,\h{-13pt}}
\end{aligned}
$$
together with the first order condition in (\ref{1st order finite diff.}),
we further have
\begin{equation}
\scalemath{0.94}{
\h{-10pt}\begin{aligned}
&\h{-13pt}\pig\langle \Delta^{\epsilon_j}_\Psi \mathbb{Z}_{tX}(t),\Psi  \pigr\rangle_{\mathcal{H}_m}\\
=\:&
\bigg\langle
\int^1_0 h_{xx}(\mathbb{Y}_{\theta{\epsilon_j}}(T) )
\Delta^{\epsilon_j}_\Psi \mathbb{Y}_{tX} (T)
+D_{X}^{2}F_{T}\pig(\mathbb{Y}_{\theta{\epsilon_j}}(T) \otimes  m\pig)(\Delta^{\epsilon_j}_\Psi \mathbb{Y}_{tX} (T))d\theta
,\Delta^{\epsilon_j}_\Psi \mathbb{Y}_{tX}(T)
\bigg\rangle_{\mathcal{H}_m}\\
&+\int_{t}^{T}
\bigg\langle 
\int_{0}^{1}l_{vx}\pig(\mathbb{Y}_{\theta{\epsilon_j}} (\tau),u_{t X}(\tau)\pig)
\Delta^{\epsilon_j}_\Psi \mathbb{Y}_{tX} (\tau)
+l_{vv}\pig(\mathbb{Y}_{t,X+{\epsilon_j}\Psi}(\tau) ,u_{\theta{\epsilon_j}}(\tau)\pig)
\Delta^{\epsilon_j}_\Psi u_{tX}(\tau)d\theta,
\Delta^{\epsilon_j}_\Psi u_{tX}(\tau)\bigg\rangle_{\mathcal{H}_m} d\tau\\
&+\int_{t}^{T}\h{-3pt}
\bigg\langle \int^1_0 l_{xv}(\mathbb{Y}_{t,X+{\epsilon_j}\Psi}(\tau),u_{\theta{\epsilon_j}}(\tau))
\Delta^{\epsilon_j}_\Psi u_{tX}(\tau)
+l_{xx}(\mathbb{Y}_{\theta{\epsilon_j}}(\tau),u_{tX}(\tau))\Delta^{\epsilon_j}_\Psi\mathbb{Y}_{tX}(\tau)\\
&\pushright{+D_{X}^{2}F(\mathbb{Y}_{\theta{\epsilon_j}}(\tau) \otimes  m)(\Delta^{\epsilon_j}_\Psi\mathbb{Y}_{tX}(\tau)) d\theta,
\Delta^{\epsilon_j}_\Psi \mathbb{Y}_{tX}(\tau)
\bigg\rangle_{\mathcal{H}_m} \h{-6pt} d\tau}.
\end{aligned}}
\label{<DZ_e(t),Psi,j flow>}
\end{equation}
By defining the linear operators $Q_{1{\epsilon_j}}:=h_{xx}(\mathbb{Y}_{\theta{\epsilon_j}}(T) )
+D_{X}^{2}F_{T}\pig(\mathbb{Y}_{\theta{\epsilon_j}}(T) \otimes  m\pig)$, 
$Q_{2{\epsilon_j}}:=l_{vx}\pig(\mathbb{Y}_{\theta{\epsilon_j}} (\tau),u_{t X}(\tau)\pig)$, 
$Q_{3{\epsilon_j}}:=l_{vv}(\mathbb{Y}_{t,X+{\epsilon_j}\Psi}(\tau),u_{\theta{\epsilon_j}}(\tau))$,
$Q_{4{\epsilon_j}}:=l_{xv}(\mathbb{Y}_{t,X+{\epsilon_j}\Psi}(\tau),u_{\theta{\epsilon_j}}(\tau))$,\\
$Q_{5{\epsilon_j}}:=l_{xx}(\mathbb{Y}_{\theta{\epsilon_j}}(\tau),u_{tX}(\tau))+D_{X}^{2}F(\mathbb{Y}_{\theta{\epsilon_j}}(\tau) \otimes  m)$, we can also write, by using the backward equation of Jacobian flow of (\ref{backward, app}) and It\^o's lemma directly, 
\begin{equation}
\begin{aligned}
\pig\langle D\mathbb{Z}_{tX}^\Psi(t), \Psi \pigr\rangle_{\mathcal{H}_m}
=\:&
\Big\langle
Q_{1}^*(D\mathbb{Y}^\Psi_{tX} (T)),
D\mathbb{Y}^\Psi_{tX}(T)
\Big\rangle_{\mathcal{H}_m} 
+\int_{t}^{T}
\Big\langle
Q_{2}^*(D\mathbb{Y}^\Psi_{tX}(\tau)),
Du^\Psi_{tX}(\tau)\Big\rangle_{\mathcal{H}_m} d \tau \\
&+\int_{t}^{T}
\Big\langle
Q_{3}^*(Du^\Psi_{tX}(\tau)),
Du^\Psi_{tX}(\tau)\Big\rangle_{\mathcal{H}_m} d \tau 
+\int_{t}^{T}
\Big\langle Q_{4}^*
(Du^\Psi_{tX} (\tau)),
D\mathbb{Y}^\Psi_{tX} (\tau)
\Big\rangle_{\mathcal{H}_m} d\tau\\
&+\int_{t}^{T}\Big\langle Q_{5}^*(D\mathbb{Y}^\Psi_{tX}(\tau)),
D\mathbb{Y}^\Psi_{tX} (\tau)
\Big\rangle_{\mathcal{H}_m} d\tau,\h{-40pt}
\end{aligned}
\label{<DZ_infty(t),Psi,j flow>}
\end{equation}
where, for example, $Q_{1}^*= h_{xx}(\mathbb{Y}_{t X}(T))
+D_{X}^2F_{T}(\mathbb{Y}_{tX}(T)  \otimes  m)$, by noting the strong convergence of (\ref{strong conv. of Y theta u theta}), and the continuities of $h_{xx}$ and $D^2_X F_T$ under Assumptions \textbf{A(iv)}'s (\ref{assumption, cts of lxx, lvx, lvv, hxx}) and \textbf{B(iv)}'s (\ref{assumption, cts of D^2F_T}) respectively. 

\noindent {\bf Step 2A. Estimate of $\pig\langle \Delta^{\epsilon_j}_\Psi  \mathbb{Z}_{tX}(t),\Psi  \pigr\rangle_{\mathcal{H}_m} 
- \pig\langle D\mathbb{Z}^\Psi_{tX}(t),\Psi \pigr\rangle_{\mathcal{H}_m}$:}\\
Referring to (\ref{<DZ_e(t),Psi,j flow>}) and (\ref{<DZ_infty(t),Psi,j flow>}), we want to study the limit of $\pig\langle \Delta^{\epsilon_j}_\Psi  \mathbb{Z}_{tX}(t),\Psi  \pigr\rangle_{\mathcal{H}_m} 
- \pig\langle D\mathbb{Z}^\Psi_{tX}(t),\Psi \pigr\rangle_{\mathcal{H}_m}$ by first checking the term
\begin{equation}
\h{-5pt}
\begin{aligned}
&\h{-10pt}\int^1_0\Big\langle
Q_{1{\epsilon_j}}(\Delta^{\epsilon_j}_\Psi  \mathbb{Y}_{tX} (T)),
\Delta^{\epsilon_j}_\Psi  \mathbb{Y}_{tX} (T)
 \Big\rangle_{\mathcal{H}_m} 
 -\Big\langle
Q_{1}^*(D\mathbb{Y}^\Psi_{t X} (T)),
D\mathbb{Y}^\Psi_{tX}(T)
 \Big\rangle_{\mathcal{H}_m} d\theta \\
=\:&\int^1_0\Big\langle
Q_{1{\epsilon_j}}\pig(\Delta^{\epsilon_j}_\Psi  \mathbb{Y}_{tX} (T)-D\mathbb{Y}^\Psi_{tX} (T)\pig)
 ,\Delta^{\epsilon_j}_\Psi  \mathbb{Y}_{tX} (T)-D\mathbb{Y}^\Psi_{tX} (T)
 \Big\rangle_{\mathcal{H}_m} \\
&+\Big\langle
Q_{1{\epsilon_j}}(D\mathbb{Y}^\Psi_{tX} (T)),\Delta^{\epsilon_j}_\Psi  \mathbb{Y}_{tX}(T)
-D\mathbb{Y}^\Psi_{tX}(T)
\Big\rangle_{\mathcal{H}_m}
\h{-5pt}+\Big\langle
Q_{1{\epsilon_j}}(\Delta^{\epsilon_j}_\Psi  \mathbb{Y}_{tX} (T))
-Q_{1}^*(D\mathbb{Y}^\Psi_{tX} (T)),D\mathbb{Y}^\Psi_{tX}(T)
\Big\rangle_{\mathcal{H}_m}d\theta.
\h{-50pt}
\end{aligned}
\label{Q1e-Q1infty}
\end{equation}
The first term in the third line of (\ref{Q1e-Q1infty}) reads
\begin{align}
&\int^1_0\Big\langle
Q_{1{\epsilon_j}}(D\mathbb{Y}^\Psi_{tX} (T)),\Delta^{\epsilon_j}_\Psi  \mathbb{Y}_{tX}(T)
-D\mathbb{Y}^\Psi_{tX}(T)
\Big\rangle_{\mathcal{H}_m}d\theta\label{4813}\\
&=\int^1_0 \h{-5pt} \Big\langle
(Q_{1{\epsilon_j}}-Q_{1}^*)(D\mathbb{Y}^\Psi_{tX} (T)),\Delta^{\epsilon_j}_\Psi  \mathbb{Y}_{tX}(T)
-D\mathbb{Y}^\Psi_{tX}(T)
\Big\rangle_{\mathcal{H}_m}
\h{-10pt}+\Big\langle
Q_{1}^*(D\mathbb{Y}^\Psi_{tX} (T)),\Delta^{\epsilon_j}_\Psi  \mathbb{Y}_{tX}(T)
-D\mathbb{Y}^\Psi_{tX}(T)
\Big\rangle_{\mathcal{H}_m}\h{-5pt}d\theta.
\nonumber
\end{align}
The second term in the second line of (\ref{4813}) converges to zero since $\Delta^{\epsilon_j}_\Psi  \mathbb{Y}_{tX}(T)$ weakly converges to $D\mathbb{Y}^\Psi_{tX}(T)$ by (\ref{forward, app, at T}). The first term in the second line of (\ref{4813}) can be estimated by
\begin{align*}
&\int^1_0\left|\Big\langle
(Q_{1{\epsilon_j}}-Q_{1}^*)(D\mathbb{Y}^\Psi_{tX} (T)),\Delta^{\epsilon_j}_\Psi  \mathbb{Y}_{tX}(T)
-D\mathbb{Y}^\Psi_{tX}(T)
\Big\rangle_{\mathcal{H}_m}\right|d\theta\nonumber\\
&\leq \int^1_0\left|\Big\langle
\pig[h_{xx}(\mathbb{Y}_{\theta{\epsilon_j}}(T))
-h_{xx}(\mathbb{Y}_{t X}(T)) \pig]
(D\mathbb{Y}^\Psi_{tX} (T)),\Delta^{\epsilon_j}_\Psi  \mathbb{Y}_{tX}(T)
-D\mathbb{Y}^\Psi_{tX}(T)
\Big\rangle_{\mathcal{H}_m}\right|\nonumber\\
&\h{20pt}+\left|\Big\langle
\pig[D_{X}^{2}F_{T}\pig(\mathbb{Y}_{\theta{\epsilon_j}}(T) \otimes  m\pig)
-D_{X}^{2}F_{T}\pig(\mathbb{Y}_{t X}(T) \otimes  m\pig)\pig](D\mathbb{Y}^\Psi_{tX} (T)),\Delta^{\epsilon_j}_\Psi  \mathbb{Y}_{tX}(T)
-D\mathbb{Y}^\Psi_{tX}(T)
\Big\rangle_{\mathcal{H}_m}\right|d\theta\nonumber\\
&\leq \int^1_0\left\|
\pig[h_{xx}(\mathbb{Y}_{\theta{\epsilon_j}}(T))
-h_{xx}(\mathbb{Y}_{t X}(T)) \pig]
(D\mathbb{Y}^\Psi_{tX} (T))\right\|_{\mathcal{H}_m}
\left\|\Delta^{\epsilon_j}_\Psi  \mathbb{Y}_{tX}(T)
-D\mathbb{Y}^\Psi_{tX}(T)
\right\|_{\mathcal{H}_m}\\
&\h{20pt}+\left\|
\pig[D_{X}^{2}F_{T}\pig(\mathbb{Y}_{\theta{\epsilon_j}}(T) \otimes  m\pig)
-D_{X}^{2}F_{T}\pig(\mathbb{Y}_{t X}(T) \otimes  m\pig)\pig](D\mathbb{Y}^\Psi_{tX} (T))\right\|_{\mathcal{H}_m}
\left\|\Delta^{\epsilon_j}_\Psi  \mathbb{Y}_{tX}(T)
-D\mathbb{Y}^\Psi_{tX}(T)
\right\|_{\mathcal{H}_m}d\theta.
\end{align*}
By (\ref{bdd Y, Z, u, r, epsilon}), in light of the nature of linear operator norm, Fatou's lemma and the weak convergence of $\Delta^{\epsilon_j}_\Psi  \mathbb{Y}_{tX}(T)$ by (\ref{forward, app, at T}), we see that 
\begin{equation}
\left\|D\mathbb{Y}^\Psi_{tX}(T)\right\|_{\mathcal{H}_m}\leq C'_4\|\Psi\|_{\mathcal{H}_m}.
\label{|DY| bdd}
\end{equation}
Thus \begin{align}
&\int^1_0\left|\Big\langle
(Q_{1{\epsilon_j}}-Q_{1}^*)(D\mathbb{Y}^\Psi_{tX} (T)),\Delta^{\epsilon_j}_\Psi  \mathbb{Y}_{tX}(T)
-D\mathbb{Y}^\Psi_{tX}(T)
\Big\rangle_{\mathcal{H}_m}\right|d\theta\nonumber\\
&\leq 2C'_4 \|\Psi\|_{\mathcal{H}_m}
\int^1_0\left\|
\pig[h_{xx}(\mathbb{Y}_{\theta{\epsilon_j}}(T))
-h_{xx}(\mathbb{Y}_{t X}(T)) \pig]
(D\mathbb{Y}^\Psi_{tX} (T))\right\|_{\mathcal{H}_m}
d\theta\label{4843}\\
&\h{10pt}+2C'_4 \|\Psi\|_{\mathcal{H}_m}
\int^1_0\left\|
\pig[D_{X}^{2}F_{T}\pig(\mathbb{Y}_{\theta{\epsilon_j}}(T) \otimes  m\pig)
-D_{X}^{2}F_{T}\pig(\mathbb{Y}_{t X}(T) \otimes  m\pig)\pig](D\mathbb{Y}^\Psi_{tX} (T))\right\|_{\mathcal{H}_m}
d\theta.
\label{4849}
\end{align}
Since $|h_{xx}|$ is also bounded due to Assumptions \textbf{A(iii)}'s (\ref{assumption, bdd of h, hx, hxx}), then by (\ref{hxxY conv.point}), (\ref{|DY| bdd}) and dominated convergence theorem, we see that (\ref{4843}) converges to zero as $\epsilon_j \to 0$ up to a subsequence. Due to the strong convergence of $\mathbb{Y}_{\theta\epsilon}(T)$ in (\ref{strong conv. of Y}), the boundedness in (\ref{|DY| bdd}) and (\ref{bdd of diff of |D^2_X F_T|}), an application of Assumptions \textbf{B(v)(a)}'s (\ref{assumption, properties of D^2F, D^2F_T}) shows that 
$$\int^1_0\mathbb{E}\left[\int_{\mathbb{R}^n}\Big|\pig[D_{X}^{2}F_{T}\pig(\mathbb{Y}_{\theta\epsilon}(T) \otimes  m\pig)
-D_{X}^{2}F_{T}\pig(\mathbb{Y}_{t X}(T) \otimes  m\pig)\pig](D\mathbb{Y}^\Psi_{tX} (T))\Big|dm(x)\right]d\theta \longrightarrow 0,$$ which further implies that there is a subsequence of $\epsilon_j$ such that $$\pig[D_{X}^{2}F_{T}\pig(\mathbb{Y}_{\theta\epsilon}(T) \otimes  m\pig)
-D_{X}^{2}F_{T}\pig(\mathbb{Y}_{t X}(T) \otimes  m\pig)\pig](D\mathbb{Y}^\Psi_{tX} (T)) \longrightarrow 0,$$  $m \otimes \mathbb{P}$-a.s. for a.e. $\theta \in [0,1]$ as $\epsilon_j \to 0$. Therefore, by the boundedness of $D^2_XF_T$ in Assumption \textbf{B(ii)}'s (\ref{assumption, bdd of D^2F, D^2F_T}), (\ref{|DY| bdd}) and the dominated convergence theorem, we see that (\ref{4849}) also converges to $0$ up to a subsequence. It concludes that (\ref{4813}) converges to $0$ as $\epsilon_j \to 0$ up to a subsequence. The second term in the third line of (\ref{Q1e-Q1infty}) can be handled similarly to obtain its convergence to 0 up to a subsequence of $\epsilon_j$. Therefore, it yields that
\begin{align*}
\int^1_0\Big\langle
Q_{1\epsilon_j}(\Delta^{\epsilon_j}_\Psi  \mathbb{Y}_{tX} (T)),
\Delta^{\epsilon_j}_\Psi  \mathbb{Y}_{tX} (T)
 \Big\rangle_{\mathcal{H}_m} 
&-\Big\langle
Q_{1}^*(D\mathbb{Y}^\Psi_{t X} (T)),
D\mathbb{Y}^\Psi_{tX}(T)
 \Big\rangle_{\mathcal{H}_m} \\
 &-\Big\langle
Q_{1\epsilon_j}\pig(\Delta^{\epsilon_j}_\Psi  \mathbb{Y}_{tX} (T)-D\mathbb{Y}^\Psi_{tX} (T)\pig)
 ,\Delta^{\epsilon_j}_\Psi  \mathbb{Y}_{tX} (T)-D\mathbb{Y}^\Psi_{tX} (T)
 \Big\rangle_{\mathcal{H}_m} d\theta
\end{align*}
tends to $0$ as $\epsilon_j \to 0$ up to a subsequence. By estimating the remaining terms in $\pig\langle \Delta^{\epsilon_j}_\Psi  \mathbb{Z}_{tX}(t),\Psi  \pigr\rangle_{\mathcal{H}_m} 
- \pig\langle D\mathbb{Z}^\Psi_{tX}(t),\Psi \pigr\rangle_{\mathcal{H}_m}$ involving $Q_{2\epsilon_j},\ldots,Q_{5\epsilon_j}$ similarly, we can deduce that as $\epsilon_j \to 0$  up to a subsequence,
\begin{equation}
 \pig\langle \Delta^{\epsilon_j}_\Psi  \mathbb{Z}_{tX}(t),\Psi  \pigr\rangle_{\mathcal{H}_m} 
- \pig\langle D\mathbb{Z}^\Psi_{tX}(t),\Psi \pigr\rangle_{\mathcal{H}_m}
-\mathscr{J}^\dagger_{\epsilon_j}\longrightarrow 0,
\label{Delta Z - DZ - J to 0}
\end{equation}
where
\begingroup
\allowdisplaybreaks
\begin{align*}
\mathscr{J}^\dagger_{\epsilon_j}:=\int^1_0&\Big\langle Q_{1\epsilon_j}\pig(\Delta^{\epsilon_j}_\Psi  \mathbb{Y}_{tX} (T)-D\mathbb{Y}^\Psi_{tX} (T)\pig),
\Delta^{\epsilon_j}_\Psi  \mathbb{Y}_{tX} (T)-D\mathbb{Y}^\Psi_{tX} (T)
\Big\rangle_{\mathcal{H}_m}\\
&+\int_{t}^{T}
\Big\langle Q_{2\epsilon_j}
\pig(\Delta^{\epsilon_j}_\Psi  \mathbb{Y}_{tX} (s)-D\mathbb{Y}^\Psi_{tX} (s)\pig),
\Delta^{\epsilon_j}_\Psi  u_{tX} (s)-Du^\Psi_{tX} (s)
\Big\rangle_{\mathcal{H}_m}\\
&\h{20pt}+\Big\langle Q_{3\epsilon_j}
\pig(\Delta^{\epsilon_j}_\Psi  u_{tX} (s)-Du^\Psi_{tX} (s)\pig),
\Delta^{\epsilon_j}_\Psi  u_{tX} (s)-Du^\Psi_{tX} (s)\Big\rangle_{\mathcal{H}_m}\\
&\h{20pt}+\Big\langle
Q_{4\epsilon_j}
\pig(\Delta^{\epsilon_j}_\Psi  u_{tX} (s)-Du^\Psi_{tX} (s)\pig),
\Delta^{\epsilon_j}_\Psi  \mathbb{Y}_{tX} (s)-D\mathbb{Y}^\Psi_{tX} (s)\Big\rangle_{\mathcal{H}_m}\\
&\h{20pt}+\Big\langle Q_{5\epsilon_j}
\pig(\Delta^{\epsilon_j}_\Psi  \mathbb{Y}_{tX} (s)-D\mathbb{Y}^\Psi_{tX} (s)\pig),
\Delta^{\epsilon_j}_\Psi  \mathbb{Y}_{tX} (s)-D\mathbb{Y}^\Psi_{tX} (s)\Big\rangle_{\mathcal{H}_m}dsd\theta\\
=\int^1_0&\Big\langle \pig[h_{xx}(\mathbb{Y}_{\theta\epsilon_j}(T))
+D_{X}^{2}F_{T}(\mathbb{Y}_{\theta\epsilon_j}(T) \otimes  m)\pig]\pig(\Delta^{\epsilon_j}_\Psi  \mathbb{Y}_{tX} (T)-D\mathbb{Y}^\Psi_{tX} (T)\pig),
\Delta^{\epsilon_j}_\Psi  \mathbb{Y}_{tX} (T)-D\mathbb{Y}^\Psi_{tX} (T)
\Big\rangle_{\mathcal{H}_m}\\
&+\int_{t}^{T}
\Big\langle l_{vx}(\mathbb{Y}_{\theta\epsilon_j}(s),u_{tX}(s))
\pig(\Delta^{\epsilon_j}_\Psi  \mathbb{Y}_{tX} (s)-D\mathbb{Y}^\Psi_{tX} (s)\pig),
\Delta^{\epsilon_j}_\Psi  u_{tX} (s)-Du^\Psi_{tX} (s)
\Big\rangle_{\mathcal{H}_m}\\
&\h{20pt}+\Big\langle l_{vv}(\mathbb{Y}_{t,X+\epsilon_j \Psi}(s),u_{\theta\epsilon_j}(s))
\pig(\Delta^{\epsilon_j}_\Psi  u_{tX} (s)-Du^\Psi_{tX} (s)\pig),
\Delta^{\epsilon_j}_\Psi  u_{tX} (s)-Du^\Psi_{tX} (s)\Big\rangle_{\mathcal{H}_m}\\
&\h{20pt}+\Big\langle
l_{xv}(\mathbb{Y}_{t,X+\epsilon_j \Psi}(s),u_{\theta\epsilon_j}(s))
\pig(\Delta^{\epsilon_j}_\Psi  u_{tX} (s)-Du^\Psi_{tX} (s)\pig),
\Delta^{\epsilon_j}_\Psi  \mathbb{Y}_{tX} (s)-D\mathbb{Y}^\Psi_{tX} (s)\Big\rangle_{\mathcal{H}_m}\\
&\h{20pt}+\Big\langle l_{xx}\pig(\mathbb{Y}_{\theta\epsilon_j}(s),u_{tX}(s)\pig)
\pig(\Delta^{\epsilon_j}_\Psi  \mathbb{Y}_{tX} (s)-D\mathbb{Y}^\Psi_{tX} (s)\pig),
\Delta^{\epsilon_j}_\Psi  \mathbb{Y}_{tX} (s)-D\mathbb{Y}^\Psi_{tX} (s)\Big\rangle_{\mathcal{H}_m}\\
&\h{20pt}+\Big\langle D_{X}^{2}F(\mathbb{Y}_{\theta\epsilon_j}(s) \otimes  m)\pig(\Delta^{\epsilon_j}_\Psi  \mathbb{Y}_{tX} (s)-D\mathbb{Y}^\Psi_{tX} (s)\pig),
\Delta^{\epsilon_j}_\Psi  \mathbb{Y}_{tX} (s)-D\mathbb{Y}^\Psi_{tX} (s)\Big\rangle_{\mathcal{H}_m}dsd\theta\\
=\int^1_0&\Big\langle \pig[h_{xx}(\mathbb{Y}_{\theta\epsilon_j}(T))
+D_{X}^{2}F_{T}(\mathbb{Y}_{\theta\epsilon_j}(T) \otimes  m)\pig]\pig(\Delta^{\epsilon_j}_\Psi  \mathbb{Y}_{tX} (T)-D\mathbb{Y}^\Psi_{tX} (T)\pig),
\Delta^{\epsilon_j}_\Psi  \mathbb{Y}_{tX} (T)-D\mathbb{Y}^\Psi_{tX} (T)
\Big\rangle_{\mathcal{H}_m}\\
&+\int_{t}^{T}
\Big\langle l_{vx}(\mathbb{Y}_{\theta\epsilon_j}(s),u_{tX}(s))
\pig(\Delta^{\epsilon_j}_\Psi  \mathbb{Y}_{tX} (s)-D\mathbb{Y}^\Psi_{tX} (s)\pig),
\Delta^{\epsilon_j}_\Psi  u_{tX} (s)-Du^\Psi_{tX} (s)
\Big\rangle_{\mathcal{H}_m}\\
&\h{20pt}+\Big\langle l_{vv}(\mathbb{Y}_{\theta\epsilon_j}(s),u_{tX}(s))
\pig(\Delta^{\epsilon_j}_\Psi  u_{tX} (s)-Du^\Psi_{tX} (s)\pig),
\Delta^{\epsilon_j}_\Psi  u_{tX} (s)-Du^\Psi_{tX} (s)\Big\rangle_{\mathcal{H}_m}\\
&\h{20pt}+\Big\langle
l_{xv}(\mathbb{Y}_{\theta\epsilon_j}(s),u_{tX}(s))
\pig(\Delta^{\epsilon_j}_\Psi  u_{tX} (s)-Du^\Psi_{tX} (s)\pig),
\Delta^{\epsilon_j}_\Psi  \mathbb{Y}_{tX} (s)-D\mathbb{Y}^\Psi_{tX} (s)\Big\rangle_{\mathcal{H}_m}\\
&\h{20pt}+\Big\langle l_{xx}(\mathbb{Y}_{\theta\epsilon_j}(s),u_{tX}(s))
\pig(\Delta^{\epsilon_j}_\Psi  \mathbb{Y}_{tX} (s)-D\mathbb{Y}^\Psi_{tX} (s)\pig),
\Delta^{\epsilon_j}_\Psi  \mathbb{Y}_{tX} (s)-D\mathbb{Y}^\Psi_{tX} (s)\Big\rangle_{\mathcal{H}_m}\\
&\h{20pt}+\Big\langle D_{X}^{2}F(\mathbb{Y}_{\theta\epsilon_j}(s) \otimes  m)\pig(\Delta^{\epsilon_j}_\Psi  \mathbb{Y}_{tX} (s)-D\mathbb{Y}^\Psi_{tX} (s)\pig),
\Delta^{\epsilon_j}_\Psi  \mathbb{Y}_{tX} (s)-D\mathbb{Y}^\Psi_{tX} (s)\Big\rangle_{\mathcal{H}_m}dsd\theta+\mathscr{R}_{\epsilon_j},
\end{align*}
\endgroup

{\small
\begin{align*}
&\h{-10pt}\mathscr{R}_{\epsilon_j}=\\
\int^1_0&\int_{t}^{T}
\Big\langle \pig[l_{vv}(\mathbb{Y}_{t,X+\epsilon_j \Psi}(s),u_{\theta\epsilon_j}(s))
-l_{vv}(\mathbb{Y}_{\theta\epsilon_j}(s),u_{tX}(s))\pig]
\pig(\Delta^{\epsilon_j}_\Psi  u_{tX} (s)-Du^\Psi_{tX} (s)\pig),
\Delta^{\epsilon_j}_\Psi  u_{tX} (s)-Du^\Psi_{tX} (s)\Big\rangle_{\mathcal{H}_m}\\
&\h{20pt}+\Big\langle
\pig[l_{xv}(\mathbb{Y}_{t,X+\epsilon_j \Psi}(s),u_{\theta\epsilon_j}(s))
-l_{xv}(\mathbb{Y}_{\theta\epsilon_j}(s),u_{tX}(s))\pig]
\pig(\Delta^{\epsilon_j}_\Psi  u_{tX} (s)-Du^\Psi_{tX} (s)\pig),
\Delta^{\epsilon_j}_\Psi  \mathbb{Y}_{tX} (s)-D\mathbb{Y}^\Psi_{tX} (s)\Big\rangle_{\mathcal{H}_m}dsd\theta.
\end{align*}}
\noindent {\bf Step 2B. Estimate of $\mathscr{J}^\dagger_{\epsilon_j}$ and Conclusion:}\\
Together with the convergences in (\ref{Delta Z - DZ - J to 0}) and (\ref{Delta Z tau_i weak conv}), we have $\mathscr{J}^\dagger_{\epsilon_j} \longrightarrow 0$ as $\epsilon_j \to 0$ up to a subsequence. The convexity conditions of $h_{xx}$, $D_X^2F_T$, $D_X^2F$ and the second-order derivatives of $l$ in Assumptions \textbf{A(v)}'s (\ref{assumption, convexity of l}), \textbf{A(vi)}'s (\ref{assumption, convexity of h}) and \textbf{B(v)(b)}'s (\ref{assumption, convexity of D^2F, D^2F_T}) imply that
\begin{equation}
\begin{aligned}
\mathscr{J}^\dagger_{\epsilon_j}\geq\:&\int_{t}^{T}
\lambda\big\|\Delta^{\epsilon_j}_\Psi  u_{tX} (s)-Du^\Psi_{tX} (s)\big\|_{\mathcal{H}_m}^{2}
-\left(c'_{l}+c' \right) 
\big\|\Delta^{\epsilon_j}_\Psi  \mathbb{Y}_{tX} (s)-D\mathbb{Y}^\Psi_{tX} (s)\big\|_{\mathcal{H}_m}^{2}ds\\
&-(c'_{h}+c'_{T})\big\|\Delta^{\epsilon_j}_\Psi  \mathbb{Y}_{tX} (T)-D\mathbb{Y}^\Psi_{tX} (T)\big\|_{\mathcal{H}_m}^{2} + \mathscr{R}_{\epsilon_j}.
\end{aligned}
\label{est Jk > 1}
\end{equation}
We first estimate the term $\mathscr{R}_{\epsilon_j}$,
{\small
\begingroup
\allowdisplaybreaks
\begin{align*}
&\h{-10pt}|\mathscr{R}_{\epsilon_j}|\leq\\
\int^1_0&\int_{t}^{T}
\Big\| \pig[l_{vv}(\mathbb{Y}_{t,X+\epsilon_j \Psi}(s),u_{\theta\epsilon_j}(s))
-l_{vv}(\mathbb{Y}_{\theta\epsilon_j}(s),u_{tX}(s))\pig]
\pig(\Delta^{\epsilon_j}_\Psi  u_{tX} (s)-Du^\Psi_{tX} (s)\pig)
\Big\|_{\mathcal{H}_m}
\Big\|\Delta^{\epsilon_j}_\Psi  u_{tX} (s)-Du^\Psi_{tX} (s)\Big\|_{\mathcal{H}_m}\\
&\h{15pt}+\Big\|
\pig[l_{xv}(\mathbb{Y}_{t,X+\epsilon_j \Psi}(s),u_{\theta\epsilon_j}(s))
-l_{xv}(\mathbb{Y}_{\theta\epsilon_j}(s),u_{tX}(s))\pig]
\pig(\Delta^{\epsilon_j}_\Psi  u_{tX} (s)-Du^\Psi_{tX} (s)\pig)\Big\|_{\mathcal{H}_m}
\Big\|\Delta^{\epsilon_j}_\Psi  \mathbb{Y}_{tX} (s)-D\mathbb{Y}^\Psi_{tX} (s)\Big\|_{\mathcal{H}_m}dsd\theta\\
&\leq 2c_l\int_{t}^{T}
\Big\|
\pig(\Delta^{\epsilon_j}_\Psi  u_{tX} (s)-Du^\Psi_{tX} (s)\pig)
\Big\|_{\mathcal{H}_m}^2
+\Big\|
\pig(\Delta^{\epsilon_j}_\Psi  u_{tX} (s)-Du^\Psi_{tX} (s)\pig)\Big\|_{\mathcal{H}_m}
\Big\|\Delta^{\epsilon_j}_\Psi  \mathbb{Y}_{tX} (s)-D\mathbb{Y}^\Psi_{tX} (s)\Big\|_{\mathcal{H}_m}ds,
\end{align*}
\endgroup}
where we have used Assumption \textbf{A(ii)}'s (\ref{assumption, bdd of lxx, lvx, lvv}). It shows that $\mathscr{R}_{\epsilon_j}$ is integrable by the weak convergences of $\Delta^{\epsilon_j}_\Psi  u_{tX} (s)$ and $\Delta^{\epsilon_j}_\Psi  \mathbb{Y}_{tX} (s)$, as well as \eqref{bdd Y, Z, u, r, epsilon}. Moreover, the definition of $\mathbb{Y}_{\theta\epsilon_j}(s)$ and \eqref{bdd Y, Z, u, r, epsilon} show that
\begin{align*}
\int^1_0 \int^T_t \pig\|
\mathbb{Y}_{t,X+\epsilon_j \Psi}(s)-\mathbb{Y}_{\theta\epsilon_j}(s)
\pigr\|_{\mathcal{H}_m}^2dsd\theta 
= \epsilon^2\int^1_0 (1-\theta)^2\int^T_t \pig\|
\Delta^{\epsilon_j}_\Psi  \mathbb{Y}_{tX} (s)
\pigr\|_{\mathcal{H}_m}^2dsd\theta \longrightarrow 0 \h{10pt} \text{ as $j \to \infty$.}
\end{align*}
Borel-Cantelli lemma shows that there is a subsequence of $\epsilon_j$, still denoted by $\epsilon_j$, such that 
\begin{align}
\mathbb{Y}_{t,X+\epsilon_j \Psi}(s)-\mathbb{Y}_{\theta\epsilon_j}(s)\longrightarrow 0, \h{10pt} \text{$m \otimes \mathbb{P}$-a.s., a.e. $\theta \in [0,1]$, a.e. $s \in [t,T]$,  as $\epsilon_j \to 0$.}
\end{align} 
The continuity of the second order derivatives of $l$ also imply
\begin{align*}
l_{vv}(\mathbb{Y}_{t,X+\epsilon_j \Psi}(s),u_{\theta\epsilon_j}(s))
-l_{vv}(\mathbb{Y}_{\theta\epsilon_j}(s),u_{tX}(s))\longrightarrow 0, \h{10pt} \text{$m \otimes \mathbb{P}$-a.s., a.e. $\theta \in [0,1]$, a.e. $s \in [t,T]$,  as $\epsilon_j \to 0$}
\end{align*} 
 and
\begin{align*}
l_{xv}(\mathbb{Y}_{t,X+\epsilon_j \Psi}(s),u_{\theta\epsilon_j}(s))
-l_{xv}(\mathbb{Y}_{\theta\epsilon_j}(s),u_{tX}(s))\longrightarrow 0, \h{10pt} \text{$m \otimes \mathbb{P}$-a.s., a.e. $\theta \in [0,1]$, a.e. $s \in [t,T]$,  as $\epsilon_j \to 0$.}
\end{align*} 
Therefore, Lebesgue dominated convergence yields that $\mathscr{R}_{\epsilon_j} \longrightarrow 0$ as $j \to \infty$. With an application of Cauchy-Schwarz inequality, the equation of $\Delta^{\epsilon_j}_\Psi  \mathbb{Y}_{tX} (s)-D\mathbb{Y}^\Psi_{tX} (s)$ imply
\begin{equation}
\begin{aligned}
&\big\|\Delta^{\epsilon_j}_\Psi  \mathbb{Y}_{tX} (s)-D\mathbb{Y}^\Psi_{tX} (s)\big\|_{\mathcal{H}_m}^{2} \leq (s-t)\int^s_{t}
\big\|\Delta^{\epsilon_j}_\Psi  u_{tX} (\tau)-Du^\Psi_{tX} (\tau)\big\|_{\mathcal{H}_m}^{2}d\tau \h{10pt} \text{ and }\\
&\int^T_{t}\big\|\Delta^{\epsilon_j}_\Psi  \mathbb{Y}_{tX} (\tau)-D\mathbb{Y}^\Psi_{tX} (\tau)\big\|_{\mathcal{H}_m}^{2} 
d\tau
\leq \dfrac{T^2}{2}\int^T_{t}
\big\|\Delta^{\epsilon_j}_\Psi  u_{tX} (\tau)-Du^\Psi_{tX} (\tau)\big\|_{\mathcal{H}_m}^{2}d\tau.
\end{aligned}
\label{4514}
\end{equation}
Putting (\ref{4514}) into (\ref{est Jk > 1}), together with the assumption in (\ref{uniqueness condition}) and the fact that $\mathscr{R}_{\epsilon_j} \to 0$, we see that as $\epsilon_j \to 0$ up to a subsequence,
\begin{equation}
\int_{t}^{T}
\big\|\Delta^{\epsilon_j}_\Psi  u_{tX} (s)-Du^\Psi_{tX} (s)\big\|_{\mathcal{H}_m}^{2}ds \longrightarrow 0.
\label{4520}
\end{equation}
We also bring (\ref{4520}) into (\ref{4514}) to yield that as $\epsilon_j \to 0$ up to a subsequence,
\begin{equation}
\big\|\Delta^{\epsilon_j}_\Psi  \mathbb{Y}_{tX} (s)-D\mathbb{Y}^\Psi_{tX} (s)\big\|_{\mathcal{H}_m}^{2} \longrightarrow 0 \h{10pt} \text{uniformly for all $s\in [t,T]$}.
\label{4526}
\end{equation}
It contradicts (\ref{not conv DY}), therefore the strong convergence of $\Delta^{\epsilon_j}_\Psi  \mathbb{Y}_{tX} (s)$ should follow. 

The strong convergences of 
$\Delta^{\epsilon_j}_\Psi  \mathbb{Z}_{tX} (s)$ and $\Delta^{\epsilon_j}_\Psi  \mathbbm{r}_{tX} (s)$ are concluded by subtracting the equation of $\Delta^{\epsilon_j}_\Psi  \mathbb{Z}_{tX} (s)$ in (\ref{finite diff. backward}) from the equation of $D  \mathbb{Z}^\Psi_{tX} (s)$ in (\ref{J flow of FBSDE}) and then using It\^o's lemma, together with the convergences in (\ref{4520}) and (\ref{4526}). Finally, the strong convergence of $\Delta^{\epsilon_j}_\Psi  u_{tX} (s)$ is deduced by the first order condition in (\ref{1st order condition}) and the strong convergences of $\Delta^{\epsilon_j}_\Psi  \mathbb{Y}_{tX} (s)$, $\Delta^{\epsilon_j}_\Psi  \mathbb{Z}_{tX} (s)$ and $\Delta^{\epsilon_j}_\Psi \mathbbm{r}_{tX} (s)$ just obtained.

\hfill$\blacksquare$

\subsubsection{Proof of Lemma \ref{lem, Existence of Frechet derivatives}}\label{app, Existence of Frechet derivatives}

\noindent\textbf{Part 1. Linearity in $\Psi$:}\\
Given $X$, $\Psi_1$, $\Psi_2 \in L^2_{\mathcal{W}^{\indep}_{t}} (\mathcal{H}_m) \subset \mathcal{H}_{m}$, by summing up the equations in (\ref{J flow of FBSDE}) with $\Psi=\Psi_1$ and $\Psi=\Psi_2$, we have

\begin{equation}
\scalemath{0.99}{
\left\{
\begin{aligned}
D \mathbb{Y}_{tX}^{\Psi_1}  (s)
+D \mathbb{Y}_{tX}^{\Psi_2}  (s)
=\:& \Psi_1+\Psi_2
+\displaystyle\int_{t}^{s}\Big[ \diff_y  u(\mathbb{Y}_{tX},\mathbb{Z}_{tX})(\tau)\Big] 
\Big[D \mathbb{Y}_{tX}^{\Psi_1}  (\tau) 
+D \mathbb{Y}_{tX}^{\Psi_2}  (\tau)\Big]d\tau\\
&\h{37pt}+\int_{t}^{s} 
\Big[\diff_z  u (\mathbb{Y}_{tX},\mathbb{Z}_{tX}) (\tau)\Big] 
\Big[D \mathbb{Z}_{tX}^{\Psi_1}  (\tau)
+D \mathbb{Z}_{tX}^{\Psi_2}  (\tau)\Big]  d\tau;\\
D \mathbb{Z}_{tX}^{\Psi_1} (s)
+D \mathbb{Z}_{tX}^{\Psi_2} (s)
=\:&h_{xx}(\mathbb{Y}_{tX}(T)) \pig[D\mathbb{Y}_{tX}^{\Psi_1} (T)
+D\mathbb{Y}_{tX}^{\Psi_2} (T)\pig]
+D_{X}^2F_{T}(\mathbb{Y}_{tX}(T)  \otimes  m)\pig(D\mathbb{Y}_{tX}^{\Psi_1} (T)
+D\mathbb{Y}_{tX}^{\Psi_2} (T)\pig)\h{-40pt}\\
&+\displaystyle\int^T_s\bigg\{l_{xx}(\mathbb{Y}_{tX}(\tau),u(\tau))
\pig[D \mathbb{Y}_{tX}^{\Psi_1} (\tau)
+D \mathbb{Y}_{tX}^{\Psi_2} (\tau)\pig]\\
&\h{40pt}+l_{xv}(\mathbb{Y}_{tX}(\tau),u(\tau))\Big[ \diff_y  u(\mathbb{Y}_{tX},\mathbb{Z}_{tX})(\tau)\Big] 
\pig[D \mathbb{Y}_{tX}^{\Psi_1} (\tau)
+D \mathbb{Y}_{tX}^{\Psi_2} (\tau)\pig]\\
&\h{40pt}+l_{xv}(\mathbb{Y}_{tX}(\tau),u(\tau))
\Big[ \diff_z  u (\mathbb{Y}_{tX},\mathbb{Z}_{tX})(\tau)\Big] 
\pig[D \mathbb{Z}_{tX}^{\Psi_1} (\tau)
+D \mathbb{Z}_{tX}^{\Psi_2} (\tau)\pig]\\
&\h{40pt}+D^2_{X}F(\mathbb{Y}_{tX}(\tau)  \otimes  m)\pig(
D\mathbb{Y}_{tX}^{\Psi_1} (\tau)
+D\mathbb{Y}_{tX}^{\Psi_2} (\tau)\pig)\bigg\}d\tau \h{-10pt}\\
&-\int^T_s\sum_{j=1}^{n}D\mathbbm{r}^{\Psi_1}_{tX,j}(\tau)
+D\mathbbm{r}^{\Psi_2}_{tX,j}(\tau)dw_{j}(\tau).
\end{aligned}\right.}
\label{J flow 1+2}
\end{equation}
By comparing the equations in (\ref{J flow of FBSDE}) under $\Psi=\Psi_1+\Psi_2$ with the equations in (\ref{J flow 1+2}), the uniqueness of solution stated in Lemma \ref{lem, Existence of J flow} shows that 
$$(D \mathbb{Y}_{tX}^{\Psi_1+\Psi_2}(s),
D \mathbb{Z}_{tX}^{\Psi_1+\Psi_2}(s),
D u_{tX}^{\Psi_1+\Psi_2}(s))
=(D \mathbb{Y}_{tX}^{\Psi_1}(s)
+D \mathbb{Y}_{tX}^{\Psi_2}(s),
D \mathbb{Z}_{tX}^{\Psi_1}(s)
+D \mathbb{Z}_{tX}^{\Psi_2}(s),
D u_{tX}^{\Psi_1}(s)
+D u_{tX}^{\Psi_2}(s))$$
for any $s \in [t,T]$ and $D \mathbbm{r}_{tX,j}^{\Psi_1+\Psi_2}(s)=D \mathbbm{r}_{tX,j}^{\Psi_1}(s)+D \mathbbm{r}_{tX,j}^{\Psi_2}(s)$ for a.e. $s\in [t,T]$. The homogeneity property is given directly by the definition of G\^ateaux derivative, for instance,
\begin{align*}
0&=\lim_{\epsilon \to 0} 
\left\|
\dfrac{\mathbb{Y}_{t,X+\epsilon c\Psi}(s)-\mathbb{Y}_{tX}(s)}{\epsilon}
-D \mathbb{Y}_{tX}^{c\Psi}(s) \right\|_{\mathcal{H}_m}
=c\lim_{\epsilon' \to 0} 
\left\|
\dfrac{\mathbb{Y}_{t,X+\epsilon '\Psi}(s)-\mathbb{Y}_{tX}(s)}{\epsilon'}
-\dfrac{1}{c} D \mathbb{Y}_{tX}^{c\Psi}(s) \right\|_{\mathcal{H}_m}\\
&=c\lim_{\epsilon' \to 0} 
\left\|
\dfrac{\mathbb{Y}_{t,X+\epsilon '\Psi}(s)-\mathbb{Y}_{tX}(s)}{\epsilon'}
-D \mathbb{Y}_{tX}^{\Psi}(s)
+D \mathbb{Y}_{tX}^{\Psi}(s)
-\dfrac{1}{c} D \mathbb{Y}_{tX}^{c\Psi}(s) \right\|_{\mathcal{H}_m},
\end{align*}
for any non-zero constant $c \in \mathbb{R}$. Thus $D \mathbb{Y}_{tX}^{c\Psi}(s)=cD \mathbb{Y}_{tX}^{\Psi}(s)$.

\noindent\textbf{Part 2. Partial Continuity in $X$:}\\
For $X$, $\Psi \in L^2_{\mathcal{W}^{\indep}_{t}} (\mathcal{H}_m) \subset \mathcal{H}_{m}$, we consider a sequence $\{X_k\}_{k \in \mathbb{N}} \subset L^2_{\mathcal{W}^{\indep}_{t}} (\mathcal{H}_m)$ such that $X_k \longrightarrow X$ in $\mathcal{H}_m$. Applying It\^o's lemma to the inner product $\pig\langle D \mathbb{Z}_{tX_k}^{\Psi}(s)
-D \mathbb{Z}_{tX}^{\Psi}(s),
D \mathbb{Y}_{tX_k}^{\Psi}(s)
-D \mathbb{Y}_{tX}^{\Psi}(s)
\pigr\rangle_{\mathcal{H}_m}$, together with the first order condition in (\ref{1st order condition}), we have
\begingroup
\allowdisplaybreaks
\begin{equation}
\begin{aligned}
&\h{-11pt}\pig\langle h_{xx}(\mathbb{Y}_{tX_k}(T))D \mathbb{Y}_{tX_k}^{\Psi} (T)
-h_{xx}(\mathbb{Y}_{tX}(T))D \mathbb{Y}_{tX}^{\Psi} (T),
D \mathbb{Y}_{tX_k}^{\Psi}(T)
-D \mathbb{Y}_{tX}^{\Psi}(T)
\pigr\rangle_{\mathcal{H}_m}\\
&+\pig\langle D_{X}^2F_{T}(\mathbb{Y}_{tX_k}(T)  \otimes  m)\pig(D\mathbb{Y}_{tX_k}^{\Psi} (T)\pig)
-D_{X}^2F_{T}(\mathbb{Y}_{tX}(T)  \otimes  m)\pig(D\mathbb{Y}_{tX}^{\Psi} (T)\pig),
D \mathbb{Y}_{tX_k}^{\Psi}(T)
-D \mathbb{Y}_{tX}^{\Psi}(T)
\pigr\rangle_{\mathcal{H}_m}\\
=\:&-2\int_{t}^{T}
\Big\langle 
l_{vx}\pig(\mathbb{Y}_{t X_k}(\tau),u_{t X_k}(\tau)\pig)
D \mathbb{Y}^\Psi_{tX_k} (\tau)
-l_{vx}\pig(\mathbb{Y}_{t X}(\tau),u_{t X}(\tau)\pig)
D \mathbb{Y}^\Psi_{tX} (\tau),
D u_{tX_k}^{\Psi}(\tau)
-D u_{tX}^{\Psi}(\tau)\Big\rangle_{\mathcal{H}_m} d\tau\\
&-\int_{t}^{T}
\Big\langle 
l_{vv}\pig(\mathbb{Y}_{t X_k}(\tau),u_{t X_k}(\tau)\pig)
D u^\Psi_{tX_k} (\tau)
-l_{vv}\pig(\mathbb{Y}_{t X}(\tau),u_{t X}(\tau)\pig)
D u^\Psi_{tX} (\tau),
D u_{tX_k}^{\Psi}(\tau)
-D u_{tX}^{\Psi}(\tau)\Big\rangle_{\mathcal{H}_m} d\tau\\
&-\int_{t}^{T}\h{-3pt}
\bigg\langle l_{xx}(\mathbb{Y}_{t,X_k}(\tau),u_{t,X_k}(\tau))
D \mathbb{Y}^\Psi_{tX_k}(\tau)
-l_{xx}(\mathbb{Y}_{t,X}(\tau),u_{t,X}(\tau))
D \mathbb{Y}^\Psi_{tX}(\tau),
D \mathbb{Y}_{tX_k}^{\Psi}(\tau)
-D \mathbb{Y}_{tX}^{\Psi}(\tau)\Big\rangle_{\mathcal{H}_m} d\tau\\
&-\int_{t}^{T}\h{-3pt}
\bigg\langle D_{X}^{2}F(\mathbb{Y}_{tX_k}(\tau) \otimes  m)(D\mathbb{Y}_{tX_k}^\Psi(\tau)) 
-D_{X}^{2}F(\mathbb{Y}_{tX}(\tau) \otimes  m)(D\mathbb{Y}_{tX}^\Psi(\tau)) ,
D \mathbb{Y}_{tX_k}^{\Psi}(\tau)
-D \mathbb{Y}_{tX}^{\Psi}(\tau)
\bigg\rangle_{\mathcal{H}_m} \h{-6pt} d\tau.
\end{aligned}
\label{5185}
\end{equation}
\endgroup
The first line in (\ref{5185}) can be estimated by using Young's inequality and Assumption \textbf{A(vi)}'s (\ref{assumption, convexity of h}) such that we have
\begin{align*}
&\h{-10pt}\pig\langle h_{xx}(\mathbb{Y}_{tX_k}(T))D \mathbb{Y}_{tX_k}^{\Psi} (T)
-h_{xx}(\mathbb{Y}_{tX}(T))D \mathbb{Y}_{tX}^{\Psi} (T),
D \mathbb{Y}_{tX_k}^{\Psi}(T)
-D \mathbb{Y}_{tX}^{\Psi}(T)
\pigr\rangle_{\mathcal{H}_m}\\
=\:&\pig\langle h_{xx}(\mathbb{Y}_{tX_k}(T))
\pig[D \mathbb{Y}_{tX_k}^{\Psi} (T)
-D \mathbb{Y}_{tX}^{\Psi} (T)
\pig]
+\pig[h_{xx}(\mathbb{Y}_{tX_k}(T))
-h_{xx}(\mathbb{Y}_{tX}(T))\pig]
D \mathbb{Y}_{tX}^{\Psi} (T),\\
&\h{350pt}D \mathbb{Y}_{tX_k}^{\Psi}(T)
-D \mathbb{Y}_{tX}^{\Psi}(T)
\pigr\rangle_{\mathcal{H}_m}\\
\geq\:& -c_h'\pig\|D \mathbb{Y}_{tX_k}^{\Psi} (T)
-D \mathbb{Y}_{tX}^{\Psi} (T)\pigr\|_{\mathcal{H}_m}^2
-\kappa_8\pig\|\pig[h_{xx}(\mathbb{Y}_{tX_k}(T))
-h_{xx}(\mathbb{Y}_{tX}(T))\pig]
D \mathbb{Y}_{tX}^{\Psi} (T)\pigr\|_{\mathcal{H}_m}^2\\
&-\dfrac{1}{4\kappa_8}
\pig\|D \mathbb{Y}_{tX_k}^{\Psi}(T)
-D \mathbb{Y}_{tX}^{\Psi}(T)\pigr\|_{\mathcal{H}_m}^2,
\end{align*}
for some $\kappa_8>0$ to be determined later. All the other terms in (\ref{5185}) can be decomposed and estimated in a similar manner as above, then together with Assumptions \textbf{A(v)}'s (\ref{assumption, convexity of l}), \textbf{B(v)(b)}'s (\ref{assumption, convexity of D^2F, D^2F_T}), we have
\begin{equation}
\scalemath{0.93}{
\begin{aligned}
&\h{-10pt}\int^T_{t}
\lambda\pig\|D u_{tX_k}^{\Psi}(\tau)
-D u_{tX}^{\Psi}(\tau)\pigr\|_{\mathcal{H}_m}^2
-(c'_l+c')
\pig\|D \mathbb{Y}_{tX_k}^{\Psi}(\tau)
-D \mathbb{Y}_{tX}^{\Psi}(\tau)\pigr\|_{\mathcal{H}_m}^2 d\tau
-(c_h'+c_T')\pig\|D \mathbb{Y}_{tX_k}^{\Psi} (T)
-D \mathbb{Y}_{tX}^{\Psi} (T)\pigr\|_{\mathcal{H}_m}^2\\
\leq\:&\dfrac{1}{4\kappa_8}
\pig\|D \mathbb{Y}_{tX_k}^{\Psi}(T)
-D \mathbb{Y}_{tX}^{\Psi}(T)\pigr\|_{\mathcal{H}_m}^2
+\kappa_8\pig\|\pig[h_{xx}(\mathbb{Y}_{tX_k}(T))
-h_{xx}(\mathbb{Y}_{tX}(T))\pig]
D \mathbb{Y}_{tX}^{\Psi} (T)\pigr\|_{\mathcal{H}_m}^2\\
&+\dfrac{1}{4\kappa_9}
\pig\|D \mathbb{Y}_{tX_k}^{\Psi}(T)
-D \mathbb{Y}_{tX}^{\Psi}(T)\pigr\|_{\mathcal{H}_m}^2
+\kappa_9\pig\|\pig[D_{X}^{2}F(\mathbb{Y}_{tX_k}(T) \otimes  m)
-D_{X}^{2}F(\mathbb{Y}_{tX}(T) \otimes  m)\pig]
(D \mathbb{Y}_{tX}^{\Psi} (T))\pigr\|_{\mathcal{H}_m}^2\\
&+
\int^T_{t}\dfrac{1}{2\kappa_{10}}
\pig\|
D u_{tX_k}^{\Psi}(\tau)
-D u_{tX}^{\Psi}(\tau)\pigr\|_{\mathcal{H}_m}^2
+2\kappa_{10}\pig\|\pig[l_{vx}\pig(\mathbb{Y}_{t X_k}(\tau),u_{t X_k}(\tau)\pig)
-l_{vx}\pig(\mathbb{Y}_{t X}(\tau),u_{t X}(\tau)\pig)\pig]
D \mathbb{Y}_{tX}^{\Psi} (\tau)\pigr\|_{\mathcal{H}_m}^2d\tau\\
&+
\int^T_{t}\dfrac{1}{4\kappa_{11}}
\pig\|
D u_{tX_k}^{\Psi}(\tau)
-D u_{tX}^{\Psi}(\tau)\pigr\|_{\mathcal{H}_m}^2
+\kappa_{11}\pig\|\pig[l_{vv}\pig(\mathbb{Y}_{t X_k}(\tau),u_{t X_k}(\tau)\pig)
-l_{vv}\pig(\mathbb{Y}_{t X}(\tau),u_{t X}(\tau)\pig)\pig]
D u_{tX}^{\Psi} (\tau)\pigr\|_{\mathcal{H}_m}^2d\tau\\
&+
\int^T_{t}\dfrac{1}{4\kappa_{12}}
\pig\|
D \mathbb{Y}_{tX_k}^{\Psi}(\tau)
-D \mathbb{Y}_{tX}^{\Psi}(\tau)\pigr\|_{\mathcal{H}_m}^2
+\kappa_{12}\pig\|\pig[l_{xx}\pig(\mathbb{Y}_{t X_k}(\tau),u_{t X_k}(\tau)\pig)
-l_{xx}\pig(\mathbb{Y}_{t X}(\tau),u_{t X}(\tau)\pig)\pig]
D \mathbb{Y}_{tX}^{\Psi} (\tau)\pigr\|_{\mathcal{H}_m}^2d\tau\\
&+
\int^T_{t}\dfrac{1}{4\kappa_{13}}
\pig\|
D \mathbb{Y}_{tX_k}^{\Psi}(\tau)
-D \mathbb{Y}_{tX}^{\Psi}(\tau)\pigr\|_{\mathcal{H}_m}^2
+\kappa_{13}\pig\|\pig[D_{X}^{2}F(\mathbb{Y}_{tX_k}(\tau) \otimes  m)
-D_{X}^{2}F(\mathbb{Y}_{tX}(\tau) \otimes  m)\pig]
D \mathbb{Y}_{tX}^{\Psi} (\tau)\pigr\|_{\mathcal{H}_m}^2d\tau,
\end{aligned}}
\label{5282}
\end{equation}
for some positive constants $\kappa_9,\ldots,\kappa_{13}$ to be determined. With an application of Cauchy-Schwarz inequality, the equation of $D \mathbb{Y}_{tX_k}^{\Psi}(\tau)
-D \mathbb{Y}_{tX}^{\Psi}(\tau)$ implies that
\begin{equation}
\pig\|
D \mathbb{Y}_{tX_k}^{\Psi}(s)
-D \mathbb{Y}_{tX}^{\Psi}(s)\pigr\|_{\mathcal{H}_m}^2
\leq s \int^s_{t}\pig\|
D u_{tX_k}^{\Psi}(\tau)
-D u_{tX}^{\Psi}(\tau)\pigr\|_{\mathcal{H}_m}^2 d\tau,
\label{DY-DY X-X_k}
\end{equation}
\begin{flalign}
\text{and}&&\int^T_{t}\pig\|
D \mathbb{Y}_{tX_k}^{\Psi}(\tau)
-D \mathbb{Y}_{tX}^{\Psi}(\tau)\pigr\|_{\mathcal{H}_m}^2d\tau
\leq \dfrac{T^2}{2} \int^T_{t}\pig\|
D u_{tX_k}^{\Psi}(\tau)
-D u_{tX}^{\Psi}(\tau)\pigr\|_{\mathcal{H}_m}^2 d\tau.&&
\label{int DY-DY X-X_k}
\end{flalign}
Substituting (\ref{DY-DY X-X_k}) and (\ref{int DY-DY X-X_k}) into (\ref{5282}), we have
\begin{equation}
\scalemath{0.93}{
\begin{aligned}
&\h{-10pt}\int^T_{t}
\left[\lambda-\dfrac{1}{2\kappa_{10}}-\dfrac{1}{4\kappa_{11}}
- \left(\dfrac{1}{4\kappa_8}
+\dfrac{1}{4\kappa_9}\right)T
- \left(\dfrac{1}{4\kappa_{12}}
+\dfrac{1}{4\kappa_{13}}\right)\dfrac{T^2}{2}
\right]
\pig\|D u_{tX_k}^{\Psi}(\tau)
-D u_{tX}^{\Psi}(\tau)\pigr\|_{\mathcal{H}_m}^2\\
&\h{10pt}-(c'_l+c')_+\dfrac{T^2}{2}
\pig\|D u_{tX_k}^{\Psi}(\tau)
-D u_{tX}^{\Psi}(\tau)\pigr\|_{\mathcal{H}_m}^2-\left(c_h'+c_T'\right)_+T\pig\|D u_{tX_k}^{\Psi}(\tau)
-D u_{tX}^{\Psi}(\tau)\pigr\|_{\mathcal{H}_m}^2d\tau\\
\leq\:&\kappa_8\pig\|\pig[h_{xx}(\mathbb{Y}_{tX_k}(T))
-h_{xx}(\mathbb{Y}_{tX}(T))\pig]
D \mathbb{Y}_{tX}^{\Psi} (T)\pigr\|_{\mathcal{H}_m}^2
+\kappa_9\pig\|\pig[D_{X}^{2}F(\mathbb{Y}_{tX_k}(T) \otimes  m)
-D_{X}^{2}F(\mathbb{Y}_{tX}(T) \otimes  m)\pig]
(D \mathbb{Y}_{tX}^{\Psi} (T))\pigr\|_{\mathcal{H}_m}^2\\
&+
\int^T_{t}
2\kappa_{10}\pig\|\pig[l_{vx}\pig(\mathbb{Y}_{t X_k}(\tau),u_{t X_k}(\tau)\pig)
-l_{vx}\pig(\mathbb{Y}_{t X}(\tau),u_{t X}(\tau)\pig)\pig]
D \mathbb{Y}_{tX}^{\Psi} (T)\pigr\|_{\mathcal{H}_m}^2d\tau\\
&+
\int^T_{t}
\kappa_{11}\pig\|\pig[l_{vv}\pig(\mathbb{Y}_{t X_k}(\tau),u_{t X_k}(\tau)\pig)
-l_{vv}\pig(\mathbb{Y}_{t X}(\tau),u_{t X}(\tau)\pig)\pig]
D u_{tX}^{\Psi} (T)\pigr\|_{\mathcal{H}_m}^2d\tau\\
&+
\int^T_{t}\kappa_{12}\pig\|\pig[l_{xx}\pig(\mathbb{Y}_{t X_k}(\tau),u_{t X_k}(\tau)\pig)
-l_{xx}\pig(\mathbb{Y}_{t X}(\tau),u_{t X}(\tau)\pig)\pig]
D \mathbb{Y}_{tX}^{\Psi} (T)\pigr\|_{\mathcal{H}_m}^2d\tau\\
&+
\int^T_{t}
\kappa_{13}\pig\|\pig[D_{X}^{2}F(\mathbb{Y}_{tX_k}(\tau) \otimes  m)
-D_{X}^{2}F(\mathbb{Y}_{tX}(\tau) \otimes  m)\pig]
(D \mathbb{Y}_{tX}^{\Psi} (\tau))\pigr\|_{\mathcal{H}_m}^2d\tau.
\end{aligned}}
\label{5429}
\end{equation}
We next prove that the sequence of processes $\pig( D \mathbb{Y}_{tX_k}^\Psi (s),
D \mathbb{Z}_{tX_k}^\Psi (s),
D u_{tX_k}^\Psi (s),
D \mathbbm{r}_{tX_k}^\Psi (s)\pig)$ converges strongly in norm to $\pig( D \mathbb{Y}^\Psi_{tX} (s),
D \mathbb{Z}^\Psi_{tX} (s),
D u^\Psi_{tX}(s),
D\mathbbm{r}^\Psi_{tX}(s)\pig)$ as $k \to \infty$. For if not the case, there is a subsequence, without relabelling for simplicity,  such that, for instance, 
\begin{equation}
\lim_{k \to \infty} \sup_{s\in [t,T]} \pig\| D \mathbb{Y}^\Psi_{tX_k} (s)
- D \mathbb{Y}^\Psi_{tX} (s) \pigr\|_{\mathcal{H}_m}>0.
\label{not conv DY, X_k}
\end{equation}
By setting $\epsilon=1$ and taking $\Psi=X_k-X$ in the bound in (\ref{bdd Y, Z, u, r, epsilon}), we have
\begin{align} \pig\| \mathbb{Y}_{tX_k}(T)
-\mathbb{Y}_{tX}(T)\pigr\|_{\mathcal{H}_m}
\leq C_4'\|X_k-X\|_{\mathcal{H}_m}\h{15pt} \text{and},
\label{bdd DY, X_k-X}
\end{align}
\begin{flalign} 
&&
\int^T_{t}\pig\| \mathbb{Y}_{tX_k}(\tau)
-\mathbb{Y}_{tX}(\tau)\pigr\|_{\mathcal{H}_m} d\tau,\:
 \int^T_{t}\pig\|u_{tX_k}(\tau)
-u_{tX}(\tau)\pigr\|_{\mathcal{H}_m}d\tau
\leq C_4'\h{1pt}T\|X_k-X\|_{\mathcal{H}_m},&&
\label{bdd DY, Du, X_k-X}
\end{flalign}
where $C'_4$ mentioned in (\ref{bdd Y, Z, u, r, epsilon}) is independent of the sequence $\{X_k\}_{k\in \mathbb{N}}$ and $X$. The strong convergences in (\ref{bdd DY, X_k-X}) and (\ref{bdd DY, Du, X_k-X}) imply that there is a subsequence such that $\mathbb{Y}_{tX_k}(T)$ converges to $\mathbb{Y}_{tX}(T)$, $m \otimes \mathbb{P}$-a.s., 
$\mathbb{Y}_{tX_k}(\tau)$ converges to $\mathbb{Y}_{tX}(\tau)$, $m \otimes \mathbb{P}$-a.s. for a.e. $\tau \in [t,T]$, and $u_{tX_k}(\tau)$ converges to $u_{tX}(\tau)$, $m \otimes \mathbb{P}$-a.s. for a.e. $\tau \in [t,T]$. Similar to Step 2A in the proof of Lemma \ref{lem, Existence of J flow}, by the continuities and boundedness of $D^2_X F$, $D^2_X F_T$, $h_{xx}$, the second-order derivatives of $l$ in Assumptions \textbf{B(iii)}'s (\ref{assumption, cts of D^2F}),
\textbf{B(iv)}'s (\ref{assumption, cts of D^2F_T}),
\textbf{A(iv)}'s (\ref{assumption, cts of lxx, lvx, lvv, hxx}), \textbf{B(ii)}'s (\ref{assumption, bdd of D^2F, D^2F_T}), \textbf{A(ii)}'s (\ref{assumption, bdd of lxx, lvx, lvv}), 
\textbf{A(iii)}'s (\ref{assumption, bdd of h, hx, hxx}), by applying the dominated convergence theorem to (\ref{5429}) deduces the subsequential convergence of the right hand side of (\ref{5429}) to zero and
\begin{equation*}
\begin{aligned}
&\h{-10pt}\int^T_{t}
\left[\lambda-\dfrac{1}{2\kappa_{10}}-\dfrac{1}{4\kappa_{11}}
- \left(\dfrac{1}{4\kappa_8}
+\dfrac{1}{4\kappa_9}\right)T
- \left(\dfrac{1}{4\kappa_{12}}
+\dfrac{1}{4\kappa_{13}}\right)\dfrac{T^2}{2}
\right]
\pig\|D u_{tX_k}^{\Psi}(\tau)
-D u_{tX}^{\Psi}(\tau)\pigr\|_{\mathcal{H}_m}^2\\
&\h{10pt}-(c'_l+c')_+\dfrac{T^2}{2}
\pig\|D u_{tX_k}^{\Psi}(\tau)
-D u_{tX}^{\Psi}(\tau)\pigr\|_{\mathcal{H}_m}^2-\left(c_h'+c_T'\right)_+T\pig\|D u_{tX_k}^{\Psi}(\tau)
-D u_{tX}^{\Psi}(\tau)\pigr\|_{\mathcal{H}_m}^2d\tau \\
&\h{-10pt}\longrightarrow 0,\h{20pt}
\text{as $k \to \infty$ up to a subsequence.}
\end{aligned}
\end{equation*}
Choosing small enough $\kappa_i$'s, the condition in (\ref{uniqueness condition}) yields that 
\begin{equation}
\int^T_{t}
\pig\|D u_{tX_k}^{\Psi}(\tau)
-D u_{tX}^{\Psi}(\tau)\pigr\|_{\mathcal{H}_m}^2 d\tau\longrightarrow 0,
\h{20pt}
\text{as $k \to \infty$ up to a subsequence.}
\label{5397}
\end{equation}
Together with (\ref{DY-DY X-X_k}), we have 
$\sup_{s\in[t,T]}\pig\|
D \mathbb{Y}_{tX_k}^{\Psi}(s)
-D \mathbb{Y}_{tX}^{\Psi}(s)\pigr\|_{\mathcal{H}_m}$ converges to zero as $k \to \infty$ up to a subsequence. This contradicts (\ref{not conv DY, X_k}), therefore the strong convergence of $D  \mathbb{Y}_{tX_k}^\Psi (s)$ and the continuity of $D  \mathbb{Y}_{tX}^\Psi (s)$ with respect to $X$ in norm should follow. 

The strong convergences of 
$D  \mathbb{Z}_{tX_k}^\Psi (s)$ and $D \mathbbm{r}_{tX_k}^\Psi (s)$ are concluded by subtracting their equations and then using It\^o's lemma, together with the convergences in (\ref{5397}), that of $D  \mathbb{Y}_{tX_k}^\Psi (s)$, continuities and boundedness of $D^2_X F$, $D^2_X F_T$, $h_{xx}$, the second-order derivatives of $l$ in Assumptions \textbf{B(iii)}'s (\ref{assumption, cts of D^2F}),
\textbf{B(iv)}'s (\ref{assumption, cts of D^2F_T}),
\textbf{A(iv)}'s (\ref{assumption, cts of lxx, lvx, lvv, hxx}), \textbf{B(ii)}'s (\ref{assumption, bdd of D^2F, D^2F_T}), \textbf{A(ii)}'s (\ref{assumption, bdd of lxx, lvx, lvv}), 
\textbf{A(iii)}'s (\ref{assumption, bdd of h, hx, hxx}). Finally, the strong convergence of $D u_{tX_k}^\Psi (s)$ is deduced by differentiating the first order condition in (\ref{1st order condition}), continuities of the second-order derivatives of $l$ in Assumption \textbf{A(iv)}'s (\ref{assumption, cts of lxx, lvx, lvv, hxx}), and the strong convergences of $D \mathbb{Y}_{tX}^\Psi (s)$, $D \mathbb{Z}_{tX_k}^\Psi (s)$ just obtained.

\noindent\textbf{Part 3. Existence of G\^ateaux  derivative:}\\
To conclude, since $(D \mathbb{Y}_{tX}^\Psi(s),
D \mathbb{Z}_{tX}^\Psi(s),D u_{tX}^\Psi(s),D \mathbbm{r}_{tX,j}^\Psi(s))$ is linear in $\Psi$ and continuous in $X$ for a given $\Psi$, therefore, by Proposition 3.2.15 in \cite{DM07} together with the separability of the Hilbert space $\mathcal{H}_m$, we obtain the existence of the Fr\'echet derivatives.

\hfill $\blacksquare$

\subsection{Proof of Statements in Section 4}
\subsubsection{Proof of Proposition \ref{prop4-2} }\label{app, prop DX V = Z and lip of DX V}
This proof follows the same arguments as in Theorem 2.1 in \cite{BY19}; also see Theorem 5.3 in \cite{BGY19}. Let the initial random variables $X^{1},X^{2}\in L^2_{\mathcal{W}_t^{\indep}}(\mathcal{H}_m)$, for $\nu=1,2$, and consider the corresponding solution to the FBSDE (\ref{forward FBSDE})-(\ref{1st order condition}). For simplicity, we write $\mathbb{Y}^\nu(s)=\mathbb{Y}_{tX^\nu}(s)$, $\mathbb{Z}^\nu(s)=\mathbb{Z}_{tX^\nu}(s)$, $u^{\nu}(s)=u_{tX^\nu}(s)$ and $\mathbbm{r}^{\nu}(s)=\mathbbm{r}_{tX^{\nu}}(s)$, for $\nu=1,2$. We therefore consider the corresponding systems
\begin{empheq}[left=\h{-15pt}\empheqbiglbrace]{align} 
\mathbb{Y}^\nu(s)=&\:X^{\nu}+\int_{t}^{s}u^\nu(\tau)d\tau+\eta(w(s)-w(t));
\label{forward, X^nu}\\
\mathbb{Z}^\nu(s)=&\:
\mbox{\fontsize{9}{10}\selectfont\(h_{x}(\mathbb{Y}^\nu(T))+D_{X}F_{T}(\mathbb{Y}^\nu(T) \otimes  m)
+\displaystyle\int^T_s\pig[l_{x}(\mathbb{Y}^\nu(\tau),u^\nu(\tau))+D_{X}F(\mathbb{Y}^\nu(\tau) \otimes  m)\pig]d\tau
-\int^T_s\displaystyle\sum_{j=1}^{n}\mathbbm{r}_j^{\nu}(\tau)dw_{j}(\tau)\)},\h{-10pt}
\label{backward, X^nu}
\end{empheq}
\begin{flalign}
\text{subject to}&&
l_{v}(\mathbb{Y}^\nu(s),u^\nu(s))+\mathbb{Z}^\nu(s)=0.&&
\label{1st order condition, X^nu}
\end{flalign}

\noindent {\bf Part 1. Differentiability of $V(X \otimes m,t)$ in $X$:}\\
According to Remark \ref{rem4-1}, we can use a common space of controls $L_{\mathcal{W}_{tX^{1}X^{2}}}^{2}(t,T;\mathcal{H}_{m})$ for two problems (\ref{forward, X^nu})-(\ref{1st order condition, X^nu}) with $\nu=1,2$. Consider the dynamics (\ref{forward FBSDE})-(\ref{backward FBSDE}) using the control $u^{2}(s)$ with the initial condition $X^{1}$, the corresponding trajectory is $\mathbb{Y}^2(s)+X^{1}-X^{2}$, and then by definition

\begin{equation}
\begin{aligned}
&\h{-10pt}V(X^{1} \otimes  m,t)-V(X^{2} \otimes  m,t)\\
\leq&\:\int_{t}^{T}\mbox{\fontsize{10}{10}\selectfont\(
\bigg\{\mathbb{E}\left[\displaystyle\int_{\mathbb{R}^{n}}
l\big(\mathbb{Y}^2(s)+X^{1}-X^{2},u^{2}(s)\big)
-l\big(\mathbb{Y}^2(s),u^{2}(s)\big) dm(x)\right]
+F((\mathbb{Y}^2(s)+X^{1}-X^{2}) \otimes  m)-F(\mathbb{Y}^2(s) \otimes  m))\bigg\}ds\)}\\
&+\mathbb{E}\left[\int_{\mathbb{R}^{n}}h(\mathbb{Y}^2(T)+X^{1}-X^{2})-h(\mathbb{Y}^2(T))dm(x)\right]
+F_{T}((\mathbb{Y}^2(T)+X^{1}-X^{2}) \otimes  m)-F(\mathbb{Y}^2(T) \otimes  m)\\
=&\:\int_{0}^{1}\int_{t}^{T}
\Big\langle
l_{x}(\mathbb{Y}^2(s)+\theta(X^{1}-X^{2}),u^{2}(s))+D_{X}F((\mathbb{Y}^2(s)+\theta(X^{1}-X^{2})) \otimes  m),X^{1}-X^{2}\Big\rangle_{\mathcal{H}_m} dsd\theta\\
&+\int_{0}^{1}\Big\langle
h_{x}(\mathbb{Y}^2(T)+\theta(X^{1}-X^{2}))+D_{X}F_{T}((\mathbb{Y}^2(T)+\theta(X^{1}-X^{2})) \otimes  m),X^{1}-X^{2}
\Big\rangle_{\mathcal{H}_m}d\theta.
\label{eq:ApB14}
\end{aligned}
\end{equation}
From Assumptions \textbf{A(ii)}'s (\ref{assumption, bdd of lxx, lvx, lvv}), \textbf{A(iii)}'s (\ref{assumption, bdd of h, hx, hxx}) and \textbf{B(ii)}'s (\ref{assumption, bdd of D^2F, D^2F_T}), we further obtain
\begin{equation}
\begin{aligned}
&V(X^{1} \otimes  m,t)-V(X^{2} \otimes  m,t)\\
&\leq\left\langle\int_{t}^{T}l_{x}(\mathbb{Y}^2(s),u^{2}(s))+D_{X}F(\mathbb{Y}^2(s) \otimes  m)ds+h_{x}(\mathbb{Y}^2(T))+D_{X}F_T(\mathbb{Y}^2(T) \otimes  m),X^{1}-X^{2}\right\rangle_{\mathcal{H}_m}\\
&\h{10pt}+ \pig[c_{h}+c_{T}+(c_{l}+c)T\pig]\|X^{1}-X^{2}\|^{2}_{\mathcal{H}_m}.
\end{aligned}
\label{4289}
\end{equation}
Substituting (\ref{backward, X^nu}) into (\ref{4289}), we see that
\begin{equation}
V(X^{1} \otimes  m,t)-V(X^{2} \otimes  m,t)
\leq\pig\langle \mathbb{Z}^2(t),X^{1}-X^{2}\pigr\rangle_{\mathcal{H}_m}
+ \pig[c_{h}+c_{T}+(c_{l}+c)T\pig]\|X^{1}-X^{2}\|^{2}_{\mathcal{H}_m}.
\label{eq:ApB15}
\end{equation}
By interchanging the role of $X^{1}$ and $X^{2}$, we obtain the reverse inequality

\begin{equation}
 \mbox{\fontsize{9.7}{10}\selectfont\(
V(X^{1} \otimes  m,t)-V(X^{2} \otimes  m,t)\geq\pig\langle \mathbb{Z}^2(t),X^{1}-X^{2}\pigr\rangle_{\mathcal{H}_m}
- \pig[c_{h}+c_{T}+(c_{l}+c)T\pig]\pig\|X^{1}-X^{2}\pigr\|^{2}_{\mathcal{H}_m}
+\pig\langle
\mathbb{Z}^1(t)-\mathbb{Z}^2(t),X^{1}-X^{2}
\pigr\rangle_{\mathcal{H}_m}
\)}.
\label{eq:ApB16}
\end{equation}
Next, using the first order conditions (\ref{1st order condition, X^nu}) and the fact that $\mathbb{Y}^1(s)-\mathbb{Y}^2(s)$ is of finite variation, we have
\begin{equation}
\begin{aligned}
&\h{-15pt}\dfrac{d}{ds}\pig\langle\mathbb{Z}^1(s)-\mathbb{Z}^2(s),\mathbb{Y}^1(s)-\mathbb{Y}^2(s)\pigr\rangle_{\mathcal{H}_m}\\
=\:&-\pig\langle l_{v}(\mathbb{Y}^1(s),u^{1}(s))-l_{v}(\mathbb{Y}^2(s),u^{2}(s)),u^{1}(s)-u^{2}(s)\pigr\rangle_{\mathcal{H}_m}\\
&-\pig\langle l_{x}(\mathbb{Y}^1(s),u^{1}(s))-l_{x}(\mathbb{Y}^2(s),u^{2}(s))+D_{X}F(\mathbb{Y}^1(s) \otimes  m)-D_{X}F(\mathbb{Y}^2(s) \otimes  m),
\mathbb{Y}^1(s)-\mathbb{Y}^2(s)\pigr\rangle_{\mathcal{H}_m}.
\end{aligned}
\end{equation}
Integrating from $t$ to $T$, we obtain 
\begin{equation}
\begin{aligned}
&\h{-10pt}\pig\langle
\mathbb{Z}^1(t)-\mathbb{Z}^2(t),X^{1}-X^{2}
\pigr\rangle_{\mathcal{H}_m}\\
=\:&\pig\langle h_{x}(\mathbb{Y}^1(T))-h_{x}(\mathbb{Y}^2(T))+D_{X}F_{T}(\mathbb{Y}^1(T) \otimes  m)-D_{X}F_T(\mathbb{Y}^2(T) \otimes  m),\mathbb{Y}^1(T)-\mathbb{Y}^2(T)\pigr\rangle_{\mathcal{H}_m}\\
&+\int_{t}^{T}\pig\langle
l_{v}(\mathbb{Y}^1(s),u^{1}(s))-l_{v}(\mathbb{Y}^2(s),u^{2}(s)),u^{1}(s)-u^{2}(s)\pigr\rangle_{\mathcal{H}_m}ds\\
&+\int_{t}^{T}\pig\langle
l_{x}(\mathbb{Y}^1(s),u^{1}(s))-l_{x}(\mathbb{Y}^2(s),u^{2}(s))+D_{X}F(\mathbb{Y}^1(s) \otimes  m)-D_{X}F(\mathbb{Y}^2(s) \otimes  m),\mathbb{Y}^1(s)-\mathbb{Y}^2(s)\pigr\rangle_{\mathcal{H}_m}ds\\
\geq&\:
 -\mbox{\fontsize{9.5}{10}\selectfont\((c'_{h}+c'_{T})\pig\|\mathbb{Y}^1(T)-\mathbb{Y}^2(T)\pigr\|^{2}_{\mathcal{H}_m}
 +\left(\lambda-2c_l\kappa_{14}  \right)\displaystyle\int_{t}^{T}\pig\|u^{1}(s)-u^{2}(s)\pigr\|^{2}_{\mathcal{H}_m}ds
-\left(c_l'+\dfrac{c_{l}}{2\kappa_{14}}+c'\right)\int_{t}^{T}\pig\|\mathbb{Y}^1(s)-\mathbb{Y}^2(s)\pigr\|^{2}_{\mathcal{H}_m}ds\)},
\end{aligned}
\label{eq:ApB17}
\end{equation}
for some positive constant $\kappa_{14}$. From the fact that $\mathbb{Y}^1(s)-\mathbb{Y}^2(s)=X^{1}-X^{2}+\displaystyle\int_{t}^{s}u^{1}(\tau)-u^{2}(\tau)d\tau$, we get

\begin{equation}
\sup_{s\in [t,T]}
\pig\|\mathbb{Y}^1(s)-\mathbb{Y}^2(s)\pigr\|^{2}_{\mathcal{H}_m}
\leq(1+\kappa_{15})T\int_{t}^{T}\pig\|u^{1}(s)-u^{2}(s)\pigr\|^{2}_{\mathcal{H}_m}ds
+\left(1+\dfrac{1}{\kappa_{15}}\right)\pig\|X^{1}-X^{2}\pigr\|^{2}_{\mathcal{H}_m}\h{10pt} \text{and}
\label{est |YT1-YT2|}
\end{equation}
\begin{equation}
\int_{t}^{T}\pig\|\mathbb{Y}^1(s)-\mathbb{Y}^2(s)\pigr\|^{2}_{\mathcal{H}_m}ds
\leq(1+\kappa_{15})\dfrac{T^{2}}{2}\int_{t}^{T}\pig\|u^{1}(s)-u^{2}(s)\pigr\|^{2}_{\mathcal{H}_m}ds+T\left(1+\dfrac{1}{\kappa_{15}}\right)\pig\|X^{1}-X^{2}\pigr\|^{2}_{\mathcal{H}_m},
\label{est int|Y1-Y2|}
\end{equation}
for some $\kappa_{15} > 0$, therefore we obtain 
\begin{equation}
\begin{aligned}
&\h{-10pt}\pig\langle
\mathbb{Z}^1(t)-\mathbb{Z}^2(t),X^{1}-X^{2}
\pigr\rangle_{\mathcal{H}_m}\\
\geq&\:\left[\lambda-2c_l\kappa_{14}  
-\left(c_l'+\dfrac{c_{l}}{2\kappa_{14}}+c'\right)_+(1+\kappa_{15})\dfrac{T^{2}}{2}
-(c'_{h}+c'_{T})_+(1+\kappa_{15})T\right]
\int_{t}^{T}\pig\|u^{1}(s)-u^{2}(s)\pigr\|^{2}_{\mathcal{H}_m}ds\\
&-\left[\left(c_l'+\dfrac{c_{l}}{2\kappa_{14}}+c'\right)_+T\left(1+\dfrac{1}{\kappa_{15}}\right)
+(c'_{h}+c'_{T})_+\left(1+\dfrac{1}{\kappa_{15}}\right)\right]
\pig\|X^{1}-X^{2}\pigr\|^{2}_{\mathcal{H}_m}.
\label{eq:ApB18}
\end{aligned}
\end{equation}
Employing the assumption (\ref{uniqueness condition}) and choosing suitable $\kappa_{14}$ and $\kappa_{15}$ so that the coefficient of the first term on the right hand side of (\ref{eq:ApB18}) is positive. Combining the inequalities in (\ref{eq:ApB16}) and (\ref{eq:ApB15}), together with (\ref{eq:ApB18}), it
follows that 
\begin{equation}
\Big|V(X^{1} \otimes  m,t)-V(X^{2} \otimes  m,t)
-\pig\langle\mathbb{Z}^2(t),X^{1}-X^{2}\pigr\rangle_{\mathcal{H}_m}
\Big|\leq A_4\pig\|X^{1}-X^{2}\pigr\|^{2}_{\mathcal{H}_m},
\label{eq:ApB19}
\end{equation}
where $A_4$ is positive and clearly independent of $X^1$ and $X^2$. This proves immediately the result (\ref{D_X V = Z}). 

\noindent {\bf Part 2. Lipschitz continuity of $D_XV(X \otimes m,t)$ in $X$:}\\
Note from (\ref{eq:ApB18}) that we have also proven
the following estimate 

\begin{equation}
\int_{t}^{T}\big\|u^{1}(s)-u^{2}(s)\big\|^{2}_{\mathcal{H}_m}ds
\leq A_5\pig\langle \mathbb{Z}^1(t)-\mathbb{Z}^2(t),X^{1}-X^{2}
\pigr\rangle_{\mathcal{H}_m}
+A_6\big\|X^{1}-X^{2}\big\|^{2}_{\mathcal{H}_m},
\label{eq:ApB200}
\end{equation}
for constants $A_5$ and $A_6$ depending only on $\lambda$, $c_l$, $c'$, $c_h'$, $c_T'$, $c_l'$ and $T$. Next, Young's inequality also gives
\begin{equation}
\begin{aligned}
&\h{-10pt}\pig\langle\mathbb{Z}^1(t)-\mathbb{Z}^2(t),X^{1}-X^{2}\pigr\rangle_{\mathcal{H}_m}\\
=\:&\left\langle\int_{t}^{T}l_{x}(\mathbb{Y}^1(s),u^{1}(s))-l_{x}(\mathbb{Y}^2(s),u^{2}(s))+D_{X}F(\mathbb{Y}^1(s) \otimes  m)-D_{X}F(\mathbb{Y}^2(s) \otimes  m)ds,X^1-X^2 \right\rangle_{\mathcal{H}_m}\\
&+\pig\langle h_{x}(\mathbb{Y}^1(T))-h_{x}(\mathbb{Y}^2(T))+D_{X}F_{T}(\mathbb{Y}^1(T) \otimes  m)-D_{X}F(\mathbb{Y}^2(T) \otimes  m),X^{1}-X^{2}\pigr\rangle_{\mathcal{H}_m}\\
\leq\:& \kappa_{16}c_l^2\int_{t}^{T}\pig\|u^1(s)-u^2(s)\pigr\|^2_{\mathcal{H}_m}ds
+\dfrac{1}{4}\left(\dfrac{1}{\kappa_{16}}
+\dfrac{1}{\kappa_{17}}
+\dfrac{1}{\kappa_{18}}
+\dfrac{1}{\kappa_{19}}
+\dfrac{1}{\kappa_{20}}
\right)\pig\|X^{1}-X^{2}\pigr\|^2_{\mathcal{H}_m}\\
&+(\kappa_{17} c^2_{l}+\kappa_{18} c^2)
\int_{t}^{T}\pig\|\mathbb{Y}^1(s)-\mathbb{Y}^2(s)\pigr\|^2_{\mathcal{H}_m}ds
+ (\kappa_{19}c_{h}^2+\kappa_{20}c_{T}^2)\pig\|\mathbb{Y}^1(T)-\mathbb{Y}^2(T)\pigr\|^2_{\mathcal{H}_m}.
\end{aligned}
\label{eq:ApB201}
\end{equation}
Substituting the inequalities in (\ref{est |YT1-YT2|}), (\ref{est int|Y1-Y2|}), (\ref{eq:ApB201}) into (\ref{eq:ApB200}), we see 
\begin{equation*}
\begin{aligned}
&\h{-15pt}\mbox{\fontsize{10.5}{10}\selectfont\(
\left( 1-\kappa_{16} c_l^2 A_5 -  (\kappa_{17} c^2_{l}+\kappa_{18} c^2)A_5(1+\kappa'_{15})\dfrac{T^2}{2} 
-(\kappa_{19}c_{h}^2+\kappa_{20}c_{T}^2)A_5(1+\kappa'_{15})T
\right)\displaystyle\int_{t}^{T}\big\|u^{1}(s)-u^{2}(s)\big\|^{2}_{\mathcal{H}_m}ds\)}\\
&\h{10pt}\leq 
\mbox{\fontsize{10.5}{10}\selectfont\(\left[A_6+\dfrac{A_5}{4}\displaystyle\sum^{20}_{i=16}\dfrac{1}{\kappa_{i}}
+A_5(\kappa_{17} c^2_{l}+\kappa_{18} c^2)\left(1+\dfrac{1}{\kappa_{15}'}\right)T
+A_5(\kappa_{19}c_{h}^2+\kappa_{20}c_{T}^2)
\left(1+\dfrac{1}{\kappa_{15}'}\right)
\right]\big\|X^{1}-X^{2}\big\|^{2}_{\mathcal{H}_m}.\)}
\end{aligned}
\end{equation*}
Choosing $\kappa_{i}$'s small enough, we have
\begin{equation}
\int_{t}^{T}\pig\|u^{1}(s)-u^{2}(s)\pigr\|^{2}_{\mathcal{H}_m}ds\leq A_7 \pig\|X^{1}-X^{2}\pigr\|^{2}_{\mathcal{H}_m},
\label{est int |u1-u2|}
\end{equation}
for some $A_7>0$ depending only on $\lambda$, $c$, $c_l$, $c_h$, $c_T$, $c'$, $c_h'$, $c_T'$, $c_l'$ and $T$.
Plugging back (\ref{est int |u1-u2|}) into (\ref{est |YT1-YT2|}) and (\ref{est int|Y1-Y2|}), it implies 
\begin{equation}
\sup_{s\in [t,T]}
\pig\|\mathbb{Y}^1(s)-\mathbb{Y}^2(s)\pigr\|^{2}_{\mathcal{H}_m}
\leq A_9\pig\|X^{1}-X^{2}\pigr\|^{2}_{\mathcal{H}_m}
\h{5pt} \text{and} \h{5pt}
\int^T_t\pig\|\mathbb{Y}^1(s)-\mathbb{Y}^2(s)\pigr\|^{2}_{\mathcal{H}_m}ds
\leq A_{10}\pig\|X^{1}-X^{2}\pigr\|^{2}_{\mathcal{H}_m}.
\label{sup |Y^1-Y^2|, int |Y^1-Y^2|}
\end{equation}
Hence, by using (\ref{D_X V = Z}), we have 
$$ \pigl\|D_{X}V(X^{1}  \otimes  m,t)-D_{X}V(X^{2}  \otimes  m,t)\pigr\|_{\mathcal{H}_m}
= \big\|\mathbb{Z}^1(t)-\mathbb{Z}^2(t)\big\|_{\mathcal{H}_m},$$
together with (\ref{bdd Z}), (\ref{sup |Y^1-Y^2|, int |Y^1-Y^2|}), the backward equations of (\ref{backward, X^nu}), Assumption \textbf{A(ii)}'s (\ref{assumption, bdd of lxx, lvx, lvv}),
\textbf{A(iii)}'s (\ref{assumption, bdd of h, hx, hxx}),
\textbf{B(ii)}'s (\ref{assumption, bdd of D^2F, D^2F_T}), the result follows.
\hfill $\blacksquare$ 

\subsubsection{Proof of Proposition \ref{prop holder of V D_X V in time}}\label{app, prop holder of V DX V in time}

We begin with (\ref{eq:4-26}). For the ease of notation, we take $t_1=t+\epsilon$ and $t_2=t$ with $t \in [0,T)$ and $\epsilon \in [0,T-t]$. From the optimality principle and (\ref{eq:3-16}), we have

\begin{equation}
\begin{aligned}
V(X \otimes  m,t)-V(X \otimes  m,t+\epsilon)
=\:&\int_{t}^{t+\epsilon}
\mathbb{E}\left[\int_{\mathbb{R}^{n}}l(\mathbb{Y}_{tX}(s),u_{tX}(s))dm(x)\right]
+F(\mathbb{Y}_{tX}(s) \otimes  m)\;ds\\
&+V(\mathbb{Y}_{tX}(t+\epsilon) \otimes  m,t+\epsilon)-V(X \otimes  m,t+\epsilon).
\end{aligned}
\label{eq:ApB22}
\end{equation}
The first term on the right hand side of (\ref{eq:ApB22}) can be estimated by using Assumption \textbf{A(ii)}'s (\ref{assumption, bdd of l, lx, lv}), (\ref{assumption, bdd of F F_T}) and (\ref{bdd Y, Z, u, r}) to obtain 
\begin{equation}
\begin{aligned}
\int_{t}^{t+\epsilon}
\left|\mathbb{E}\left[\int_{\mathbb{R}^{n}}l(\mathbb{Y}_{tX}(s),u_{tX}(s)dm(x)\right]\right|
+\pig| F(\mathbb{Y}_{tX}(s) \otimes  m) \pig|\;ds
&\leq \epsilon (c_l+c) \sup_{s\in[t,T]}\pig(1+ \big\|\mathbb{Y}_{tX}(s)\big\|_{\mathcal{H}_m}^2 
+\big\|u_{tX}(s)\big\|_{\mathcal{H}_m}^2\pig)\\
&\leq \epsilon  A_{11}  \pig(1+ \big\|X\big\|_{\mathcal{H}_m}^2 \pig),
\end{aligned}
\label{1st term in V(t)-V(t+e)}
\end{equation}
for some $A_{11} > 0$ depending on $n$, $\lambda$, $\eta$, $c$, $c_l$, $c_h$, $c_T$, $c'$, $c_h'$, $c_T'$, $c_l'$ and $T$. The Lipschitz continuity of $D_X V$ in (\ref{lip cts of D_XV in X}) and the relation in (\ref{d_theta F = D_X F}) imply that

\begin{equation}
\begin{aligned}
&\pig|
V(\mathbb{Y}_{tX}(t+\epsilon) \otimes  m,t+\epsilon)-V(X \otimes  m,t+\epsilon)-\pig\langle D_{X}V(X \otimes  m,t+\epsilon),\mathbb{Y}_{tX}(t+\epsilon)-X\pigr\rangle_{\mathcal{H}_m}
\pig|\\
&= \left| \int^1_0\pigl\langle D_{X}V\big((X+\theta (\mathbb{Y}_{tX}(t+\epsilon)-X))  \otimes  m,t+\epsilon\big)
-D_{X}V(X \otimes  m,t+\epsilon)
,\mathbb{Y}_{tX}(t+\epsilon)-X\pigr\rangle_{\mathcal{H}_m} d \theta\right|\\
&\leq C_6\pig\|\mathbb{Y}_{tX}(t+\epsilon)-X\pigr\|^{2}_{\mathcal{H}_m}.
\end{aligned}
\label{eq:ApB23}
\end{equation}
Since $D_{X}V(X \otimes  m,t+\epsilon)$ is $\sigma(X)$-measurable and $X \in L^2_{\mathcal{W}_t^{\indep} }(\mathcal{H}_m)$, we can remove the stochastic integral by taking the expectation so as to obtain 
$$
\pig\langle D_{X}V(X \otimes  m,t+\epsilon),\mathbb{Y}_{tX}(t+\epsilon)-X\pigr\rangle_{\mathcal{H}_m}
=\left\langle D_{X}V(X \otimes  m,t+\epsilon),\int_{t}^{t+\epsilon}u_{tX}(s)ds\right\rangle_{\mathcal{H}_m}.
$$
Due to (\ref{D_X V = Z}) and (\ref{bdd Y, Z, u, r}), we further have
\begin{equation}
\begin{aligned}
\Big|\pig\langle D_{X}V(X \otimes  m,t+\epsilon),\mathbb{Y}_{tX}(t+\epsilon)-X\pigr\rangle_{\mathcal{H}_m}\Big|
\leq \epsilon A_{12}\pig(1+ \big\|X\big\|_{\mathcal{H}_m}^2 \pig).
\end{aligned}
\label{<D_X V, Y-X>}
\end{equation}
Substituting (\ref{1st term in V(t)-V(t+e)}), (\ref{eq:ApB23}), (\ref{<D_X V, Y-X>}) into (\ref{eq:ApB22}), we obtain 
$$
\pig| V(X \otimes  m,t)-V(X \otimes  m,t+\epsilon)\pig|
\leq \epsilon  A_{11}  \pig(1+ \big\|X\big\|_{\mathcal{H}_m}^2 \pig)
+C_6\pig\|\mathbb{Y}_{tX}(t+\epsilon)-X\pigr\|^{2}_{\mathcal{H}_m}
+ \epsilon A_{12} \pig(1+ \big\|X\big\|_{\mathcal{H}_m}^2 \pig),
$$
which leads to the result (\ref{eq:4-26}) by using (\ref{est. |Ys-X|^2}) to deal with the middle term on the right hand side.

We now turn to the proof of (\ref{cts of D_X V in t}) by applying (\ref{D_X V(s)=Z(s)}), which is equivalent to establish the inequality 
$\pig\|\mathbb{Z}_{t+\epsilon,X}(t+\epsilon)-\mathbb{Z}_{tX}(t)\pigr\|_{\mathcal{H}_m}
\leq C_8\pig(\epsilon^{\frac{1}{2}}+\epsilon\|X\|_{\mathcal{H}_m}\pig)$. From the BSDE in (\ref{backward FBSDE}), we can write 
\begin{align*}
 \mbox{\fontsize{9.9}{10}\selectfont\(
 \mathbb{Z}_{t+\epsilon,X}(t+\epsilon)-\mathbb{Z}_{tX}(t)
=\mathbb{Z}_{t+\epsilon,X}(t+\epsilon)
-\mathbb{E}\pig(\mathbb{Z}_{tX}(t+\epsilon) \pig| X\pig) 
-\displaystyle\int_{t}^{t+\epsilon}
\mathbb{E}\left[l_{x}\pig(\mathbb{Y}_{tX}(\tau),u_{tX}(\tau)\pig)
+D_{X}F\pig(\mathbb{Y}_{tX}(\tau)  \otimes  m\pig) \middle| X\right]d\tau.\)}
\end{align*}
Applying (\ref{bdd Y, Z, u, r}), Assumption \textbf{A(i)}'s (\ref{assumption, bdd of l, lx, lv}) and \textbf{B(i)}'s (\ref{assumption, bdd of DF, DF_T}), it yields

\[
\pig\|\mathbb{Z}_{t+\epsilon,X}(t+\epsilon)-\mathbb{Z}_{tX}(t)\pigr\|_{\mathcal{H}_m}
\leq \pig\|\mathbb{Z}_{t+\epsilon,X}(t+\epsilon)-\mathbb{Z}_{tX}(t+\epsilon)\pigr\|_{\mathcal{H}_m}
+A_{13} \epsilon\pig(1+\|X\|_{\mathcal{H}_m}\pig).
\]
Note that $\mathbb{Z}_{tX}(t+\epsilon)=\mathbb{Z}_{t+\epsilon,\mathbb{Y}_{tX}(t+\epsilon)}(t+\epsilon)$, using Proposition \ref{prop4-2}, we further rewrite
\[
\pig\|\mathbb{Z}_{t+\epsilon,X}(t+\epsilon)-\mathbb{Z}_{tX}(t)\pigr\|_{\mathcal{H}_m}
\leq C_{6}\pig\|\mathbb{Y}_{tX}(t+\epsilon)-X\pigr\|_{\mathcal{H}_m}
+A_{13}\epsilon\pig(1+\|X\|_{\mathcal{H}_m}\pig)
\]
 and thus the result (\ref{cts of D_X V in t}) is obtained immediately due to (\ref{est. |Ys-X|^2}). \hfill $\blacksquare$

\subsubsection{Proof of Proposition \ref{prop cts of DXX V}}\label{app, prop cts of DXX V}
\mycomment{
\noindent {\bf Part 1. Convergence:}\\
Recall that the quadruple $\pig( \mathbb{Y}_{tX\Psi}^{\epsilon}(s),
\mathbb{Z}_{tX\Psi}^{\epsilon}(s),
u_{tX\Psi}^{\epsilon}(s),
\mathbbm{r}_{tX\Psi,j}^{\epsilon}(s)\pig)$
solve
\mycomment{
\begin{empheq}[left=\h{-15pt}\empheqbiglbrace]{align}
\mathbb{Y}_{tX\Psi}^{\epsilon}(s)
=\:&\mathcal{X}+\int_{t}^{s}u_{tX\Psi}^{\epsilon}(\tau)d\tau;
\label{finite diff. forward}\\
\mathbb{Z}_{tX\Psi}^{\epsilon}(s)
=\:&\mbox{\fontsize{10}{10}\selectfont\(
\displaystyle\int_{0}^{1}h_{xx}\pig(\mathbb{Y}_{tX}(T)+\theta\epsilon \mathbb{Y}_{tX\Psi}^{\epsilon}(T)\pig)\mathbb{Y}_{tX\Psi}^{\epsilon}(T) +D_{X}^{2}F_{T}\pig((\mathbb{Y}_{tX}(T)+\theta\epsilon \mathbb{Y}_{tX\Psi}^{\epsilon}(T)) \otimes  m\pig)(\mathbb{Y}_{tX\Psi}^{\epsilon}(T))d\theta\)}\nonumber\\
&+\int^T_s\int_{0}^{1}l_{xx}\pig(\mathbb{Y}_{tX}(\tau)+\theta\epsilon \mathbb{Y}_{tX\Psi}^{\epsilon}(\tau),u_{tX}(\tau) \pig)\mathbb{Y}_{tX\Psi}^{\epsilon}(\tau)\nonumber\\
&\pushright{+l_{xv}\pig(\mathbb{Y}_{t,X+\epsilon\mathcal{X}}(\tau),u_{tX}(\tau)+\theta\epsilon u_{tX\Psi}^{\epsilon}(\tau)\pig)u_{tX\Psi}^{\epsilon}(\tau)d\theta d\tau}\h{-1pt}\nonumber\\
&+\int^T_s\int_{0}^{1}D_{X}^{2}F\pig((\mathbb{Y}_{tX}(\tau)+\theta\epsilon \mathbb{Y}_{tX\Psi}^{\epsilon}(\tau)) \otimes  m\pig)(\mathbb{Y}_{tX\Psi}^{\epsilon}(\tau))d\theta-\sum_{j=1}^n r_{tX\mathcal{X},j}^{\epsilon}(\tau)dw_{j}(\tau)d\tau;
\label{finite diff. backward}
\end{empheq}}
subject to
\begin{equation}
\int_{0}^{1}l_{vx}\pig(\mathbb{Y}_{tX}(s)+\theta\epsilon \mathbb{Y}_{tX\Psi}^{\epsilon}(s),u_{tX}(s)\pig)\mathbb{Y}_{tX\Psi}^{\epsilon}(s)+l_{vv}\pig(\mathbb{Y}_{t,X+\epsilon\mathcal{X}}(s) ,u_{tX}(s)+\theta\epsilon u_{tX\Psi}^{\epsilon}(s)\pig)
u_{tX\Psi}^{\epsilon}(s)d\theta+\mathbb{Z}_{tX\Psi}^{\epsilon}(s)=0.
\label{1st order finite diff.}
\end{equation}

We then consider the subsequences of $\mathbb{Y}_{tX\Psi}^{\epsilon}(s)$, $\mathbb{Z}_{tX\Psi}^{\epsilon}(s)$, $u_{tX\Psi}^{\epsilon}(s)$, 
$\mathbbm{r}_{tX\mathcal{X},j}^{\epsilon}(s)$
which converge weakly to $\mathcal{U}_{tX\mathcal{X}}(s),\mathcal{Y}_{tX\mathcal{X}}(s),\mathcal{Z}_{tX\mathcal{X}}(s),\mathcal{R}_{tX\mathcal{X},j}(s)$, in $L_{\mathcal{W}_{tX\mathcal{X}}}^{2}(t,T;\mathcal{H}_{m})$
It follows immediately that 

\begin{equation}
\mathcal{Y}_{tX\mathcal{X}}(s)=\mathcal{X}+\int_{t}^{s}\mathcal{U}_{tX\mathcal{X}}(\tau)d\tau\label{eq:8-71}
\end{equation}
 Define 

\begin{align*}
L_{tX\mathcal{X}}^{\epsilon}(s)=\int_{0}^{1}l_{vx}\pig(\mathbb{Y}_{tX}(s)+\theta\epsilon \Delta^\epsilon_\Psi \mathbb{Y}_{tX} (s),
u_{tX}(s)&+\theta\epsilon \Delta^\epsilon_\Psi u_{tX}(s)\pig)\Delta^\epsilon_\Psi \mathbb{Y}_{tX} (s)\\
&+l_{vv}\pig(\mathbb{Y}_{tX}(s)+\theta\epsilon \Delta^\epsilon_\Psi \mathbb{Y}_{tX} (s),u_{tX}(s)+\theta\epsilon \Delta^\epsilon_\Psi u_{tX}(s)\pig)\Delta^\epsilon_\Psi u_{tX}(s)d\theta
\end{align*}
We want to show that as $\epsilon \to 0$,

\begin{equation}
L_{tX\mathcal{X}}^{\epsilon}(s)\longrightarrow L_{tX\mathcal{X}}(s)=l_{vx}\pig(\mathbb{Y}_{tX}(s),u_{tX}(s)\pig)\mathcal{Y}_{tX\mathcal{X}}(s)+l_{vv}\pig(\mathbb{Y}_{tX}(s),u_{tX}(s)\pig)\mathcal{U}_{tX\mathcal{X}}(s)\label{eq:8-72}
\end{equation}
in $L_{\mathcal{W}_{tX\mathcal{X}}}^{2}(t,T;\mathcal{H}_{m})$ weakly.
Let us take an arbitrary $\Gamma_{tX\mathcal{X}}(s)$ in $L_{\mathcal{W}_{tX\mathcal{X}}}^{2}(t,T;\mathcal{H}_{m})$ and consider
\begin{align*}
\mbox{\fontsize{10.2}{10}\selectfont\(\displaystyle\int_{t}^{T}\pig\langle L_{tX\mathcal{X}}^{\epsilon}(s),\Gamma_{tX\mathcal{X}}(s)\pigr\rangle_{\mathcal{H}_m} ds\)}
=\:&\mbox{\fontsize{10.2}{10}\selectfont\(
\displaystyle\int_{t}^{T}\left\langle
\Delta^\epsilon_\Psi \mathbb{Y}_{tX} (s),
\displaystyle\int_{0}^{1}l_{xv}\pig(\mathbb{Y}_{tX}(s)+\theta\epsilon \Delta^\epsilon_\Psi \mathbb{Y}_{tX} (s),u_{tX}(s)+\theta\epsilon \Delta^\epsilon_\Psi u_{tX}(s)\pig)\Gamma_{tX\mathcal{X}}(s)d\theta\right\rangle_{\mathcal{H}_m} ds\)}\\
&+\mbox{\fontsize{10}{10}\selectfont\(\displaystyle\int_{t}^{T}\left\langle \Delta^\epsilon_\Psi u_{tX}(s),
\int_{0}^{1}l_{vv}\pig(\mathbb{Y}_{tX}(s)+\theta\epsilon \Delta^\epsilon_\Psi \mathbb{Y}_{tX} (s),u_{tX}(s)+\theta\epsilon \Delta^\epsilon_\Psi u_{tX}(s)\pig)\Gamma_{tX\mathcal{X}}(s)d\theta\right\rangle_{\mathcal{H}_m}ds\)}
\end{align*}
Note that $\mathbb{Y}_{tX}(s)+\theta\epsilon \mathbb{Y}_{tX\Psi}^{\epsilon}(s)\longrightarrow \mathbb{Y}_{tX}(s)$ and $u_{tX}(s)+\theta\epsilon u_{tX\Psi}^{\epsilon}(s)\longrightarrow u_{tX}(s)$
in $L_{\mathcal{W}_{tX\mathcal{X}}}^{2}(t,T;\mathcal{H}_{m})$ as $\epsilon \to 0$. Moreover, second-order derivatives of $l$ are continuous and bounded by Assumptions (\ref{assumption, bdd of l, lx, lv}),
(\ref{assumption, cts of lxx, lvx, lvv, hxx}), it follows by dominated Lebesgue convergence theorem that 

\[
\mbox{\fontsize{10.5}{10}\selectfont\(
\displaystyle\int_{0}^{1}l_{xv}\pig(\mathbb{Y}_{tX}(s)+\theta\epsilon \Delta^\epsilon_\Psi \mathbb{Y}_{tX} (s),u_{tX}(s)+\theta\epsilon \Delta^\epsilon_\Psi u_{tX}(s)\pig)\Gamma_{tX\mathcal{X}}(s)d\theta
\longrightarrow l_{xv}\pig(\mathbb{Y}_{tX}(s),u_{tX}(s)\pig)\Gamma_{tX\mathcal{X}}(s)\)}
\h{5pt} \text{in}\;L_{\mathcal{W}_{tX\mathcal{X}}}^{2}(t,T;\mathcal{H}_{m})
\]
and similar argument for $\displaystyle\int_{0}^{1}l_{vv}\pig(\mathbb{Y}_{tX}(s)+\theta\epsilon \Delta^\epsilon_\Psi \mathbb{Y}_{tX} (s),u_{tX}(s)+\theta\epsilon \Delta^\epsilon_\Psi u_{tX}(s)\pig)\Gamma_{tX\mathcal{X}}(s)d\theta$. Therefore 

\begin{align*}
\int_{t}^{T}\pig\langle L_{tX\mathcal{X}}^{\epsilon}(s),\Gamma_{tX\mathcal{X}}(s)
\pigr\rangle_{\mathcal{H}_m} ds
\longrightarrow\:&
\int_{t}^{T}\pig\langle
\mathcal{Y}_{tX\mathcal{X}}(s),l_{xv}\pig(\mathbb{Y}_{tX}(s),u_{tX}(s)\pig)\Gamma_{tX\mathcal{X}}(s)\pigr\rangle_{\mathcal{H}_m}  ds\\
&+\int_{t}^{T}\pig\langle\mathcal{U}_{tX\mathcal{X}}(s),l_{vv}\pig(\mathbb{Y}_{tX}(s),u_{tX}(s)\pig)\Gamma_{tX\mathcal{X}}(s)
\pigr\rangle_{\mathcal{H}_m} ds.
\end{align*}
in $L_{\mathcal{W}_{tX\mathcal{X}}}^{2}(t,T;\mathcal{H}_{m})$ weakly. Transposing the matrices $l_{xv}$ and $l_{vv}$, we obtain 

\[
\int_{t}^{T}\pig\langle
L_{tX\mathcal{X}}^{\epsilon}(s),\Gamma_{tX\mathcal{X}}(s)\pigr\rangle_{\mathcal{H}_m} ds
\longrightarrow \int_{t}^{T}\pig\langle L_{tX\mathcal{X}}(s),\Gamma_{tX\mathcal{X}}(s)\pigr\rangle_{\mathcal{H}_m} ds
\]
in $L_{\mathcal{W}_{tX\mathcal{X}}}^{2}(t,T;\mathcal{H}_{m})$ weakly, and since $\Gamma_{tX\mathcal{X}}(s)$ is arbitrary, the result (\ref{eq:8-72})
follows. So (\ref{1st order finite diff.}) yields 

\begin{equation}
l_{vx}\pig(\mathbb{Y}_{tX}(s),u_{tX}(s)\pig)\mathcal{Y}_{tX\mathcal{X}}(s)+l_{vv}\pig(\mathbb{Y}_{tX}(s),u_{tX}(s)\pig)\mathcal{U}_{tX\mathcal{X}}(s)+\mathcal{Z}_{tX\mathcal{X}}(s)=0\label{eq:8-73}
\end{equation}
A similar reasoning implies 

\begin{equation}
\begin{aligned}
&\mbox{\fontsize{10}{10}\selectfont\(\displaystyle\int_{0}^{1}l_{xx}\pig(\mathbb{Y}_{tX}(s)+\theta\epsilon \Delta^\epsilon_\Psi \mathbb{Y}_{tX} (s),u_{tX}(s)+\theta\epsilon \Delta^\epsilon_\Psi u_{tX}(s)\pig)
\Delta^\epsilon_\Psi \mathbb{Y}_{tX} (s)+l_{xv}\pig(\mathbb{Y}_{tX}(s)+\theta\epsilon \Delta^\epsilon_\Psi \mathbb{Y}_{tX} (s),u_{tX}(s)+\theta\epsilon \Delta^\epsilon_\Psi u_{tX}(s)\pig)\Delta^\epsilon_\Psi u_{tX}(s)d\theta\)}\\
&\longrightarrow 
l_{xx}\pig(\mathbb{Y}_{tX}(s),u_{tX}(s)\pig)\mathcal{Y}_{tX\mathcal{X}}(s)+l_{vx}\pig(\mathbb{Y}_{tX}(s),u_{tX}(s)\pig)\mathcal{U}_{tX\mathcal{X}}(s)
\end{aligned}
\label{eq:8-74}
\end{equation}
in $L_{\mathcal{W}_{tX\mathcal{X}}}^{2}(t,T;\mathcal{H}_{m})$ weakly. Now, thanks to Assumption (\ref{assumption, cts of D^2F}), we can also state that 

\begin{equation}
\int_{0}^{1}D_{X}^{2}F\pig((\mathbb{Y}_{tX}(s)+\theta\epsilon \Delta^\epsilon_\Psi \mathbb{Y}_{tX} (s)) \otimes  m\pig)(\Delta^\epsilon_\Psi \mathbb{Y}_{tX} (s))d\theta
\longrightarrow D_{X}^{2}F\pig(\mathbb{Y}_{tX}(s) \otimes  m\pig)(\mathcal{Y}_{tX\mathcal{X}}(s))
\label{eq:8-75}
\end{equation}
in $L_{\mathcal{W}_{tX\mathcal{X}}}^{2}(t,T;\mathcal{H}_{m})$ weakly. Also 

\begin{equation}
\begin{aligned}
&\int_{0}^{1}h_{xx}\pig(\mathbb{Y}_{tX}(T)+\theta\epsilon \Delta^\epsilon_\Psi \mathbb{Y}_{tX} (T)\pig)\Delta^\epsilon_\Psi \mathbb{Y}_{tX} (T)d\theta+\int_{0}^{1}D_{X}^{2}F_{T}((\mathbb{Y}_{tX}(T)+\theta\epsilon \Delta^\epsilon_\Psi \mathbb{Y}_{tX} (T)) \otimes  m)(\Delta^\epsilon_\Psi \mathbb{Y}_{tX} (T))d\theta\\
&\longrightarrow
h_{xx}\pig(\mathbb{Y}_{tX}(T)\pig)\mathcal{Y}_{tX\mathcal{X}}(T)
+D_{X}^{2}F_{T}\pig(\mathbb{Y}_{tX}(T) \otimes  m\pig)(\mathcal{Y}_{tX\mathcal{X}}(T))
\end{aligned}
\label{eq:8-76}
\end{equation}
weakly in the subspace of \ensuremath{\mathcal{H}_{m}} consisting of variables \ensuremath{\mathcal{W}_{tX\mathcal{X}}^{T}}-measurable. From relation (\ref{finite diff. backward}), by conditioning and letting $\epsilon\rightarrow0$
we obtain 

\begin{equation}
\begin{aligned}
\mathcal{Z}_{tX\mathcal{X}}(s)
=\mathbb{E}\bigg[\:&\int_{s}^{T}l_{xx}(\mathbb{Y}_{tX}(\tau),u_{tX}(\tau))\mathcal{Y}_{tX\mathcal{X}}(\tau)+l_{vx}(\mathbb{Y}_{tX}(\tau),u_{tX}(\tau))\mathcal{U}_{tX\mathcal{X}}(\tau)
+D_{X}^{2}F(\mathbb{Y}_{tX}(\tau) \otimes  m)(\mathcal{Y}_{tX\mathcal{X}}(\tau))d\tau\\
&+h_{xx}(\mathbb{Y}_{tX}(T))\mathcal{Y}_{tX\mathcal{X}}(T)+D_{X}^{2}F_{T}(\mathbb{Y}_{tX}(T) \otimes  m)(\mathcal{Y}_{tX\mathcal{X}}(T))\bigg|\ensuremath{\mathcal{W}_{tX\mathcal{X}}^{s}}\bigg].
\end{aligned}
\label{eq:8-77}
\end{equation}
This implies that $\mathcal{Z}_{tX\mathcal{X}}(s)$ is an It\^o process satisfying (\ref{backward eq Z_tX psi, no expand in u}). To check that $\mathcal{R}_{tX\mathcal{X},j}(s)$ is the weak limit
of $\mathbbm{r}_{tX\mathcal{X},j}^{\epsilon}(s)$ in $L_{\mathcal{W}_{tX\mathcal{X}}}^{2}(t,T;\mathcal{H}_{m}),$ we consider an It\^o process in $L_{\mathcal{W}_{tX\mathcal{X}}}^{2}(t,T;\mathcal{H}_{m})$
of the form 

\begin{equation}
d\Gamma_{tX\mathcal{X}}(s)=a_{tX\mathcal{X}}(s)ds+\sum_{j=1}^n b_{tX\mathcal{X},j}(s)dw_{j}(s)
\h{1pt},\h{10pt}
\Gamma_{tX\mathcal{X}}(0)=0
\label{eq:8-79}
\end{equation}
Computing $\dfrac{d}{ds}\pig\langle \Delta^\epsilon_\Psi \mathbb{Z}_{tX} (s),\Gamma_{tX\mathcal{X}}(s)
\pigr\rangle_{\mathcal{H}_m}$,
$\dfrac{d}{ds}\pig\langle
\mathbb{Z}_{tX\mathcal{X}}(s),\Gamma_{tX\mathcal{X}}(s)\pigr\rangle_{\mathcal{H}_m}$,
and integrating, we check easily (we omit details) that 

\[
\sum_{j=1}^{n}\int_{t}^{T}\pig\langle \mathbbm{r}_{tX\mathcal{X},j}^{\epsilon}(s),b_{tX\mathcal{X},j}(s)\pigr\rangle_{\mathcal{H}_m}ds
\longrightarrow\sum_{j=1}^{n}\int_{t}^{T}
\pig\langle\mathcal{R}_{tX\mathcal{X},j}(s),b_{tX\mathcal{X},j}(s)
\pigr\rangle_{\mathcal{H}_m}ds.
\]

Since $b_{tX\mathcal{X},j}(s)$ are arbitrary in $L_{\mathcal{W}_{tX\mathcal{X}}}^{2}(t,T;\mathcal{H}_{m})$, the weak convergence of $\mathbbm{r}_{tX\mathcal{X},j}^{\epsilon}(s)$ to $\mathcal{R}_{tX\mathcal{X},j}(s)$ in $L_{\mathcal{W}_{tX\mathcal{X}}}^{2}(t,T;\mathcal{H}_{m})$ is obtained. 

\noindent {\bf Part 2. Existence of second-order G\^ateaux derivative:}\\
Since we have also 

\begin{align*}
\pig\langle\mathcal{X},\Delta^\epsilon_\Psi \mathbb{Z}_{tX} (t)
\pigr\rangle_{\mathcal{H}_m}
=\Bigg\langle \mathcal{X},
\int_{0}^{1}\:& l_{xx}\pig(\mathbb{Y}_{tX}(s)+\theta\epsilon \Delta^\epsilon_\Psi \mathbb{Y}_{tX} (s),u_{tX}(s)+\theta\epsilon \Delta^\epsilon_\Psi u_{tX}(s)\pig)\Delta^\epsilon_\Psi \mathbb{Y}_{tX} (s)\\
&+l_{vx}\pig(\mathbb{Y}_{tX}(s)+\theta\epsilon \Delta^\epsilon_\Psi \mathbb{Y}_{tX} (s),u_{tX}(s)+\theta\epsilon \Delta^\epsilon_\Psi u_{tX}(s)\pig)\Delta^\epsilon_\Psi u_{tX}(s)\\
&+D_{X}^{2}F\pig((\mathbb{Y}_{tX}(s)+\theta\epsilon \Delta^\epsilon_\Psi \mathbb{Y}_{tX} (s)) \otimes  m\pig)(\Delta^\epsilon_\Psi \mathbb{Y}_{tX} (s))d\theta\:\Bigg\rangle_{\mathcal{H}_m}.?
\end{align*}

Due to the continuities of $l_{xx}$, $l_{vx}$ and $D^2_X F$ in assumptions (\ref{assumption, cts of lxx, lvx, lvv, hxx}), (\ref{assumption, cts of D^2F}), we can state that 

\begin{equation}
\pig\langle \mathcal{X},\Delta^\epsilon_\Psi \mathbb{Z}_{tX} (t)\pigr\rangle_{\mathcal{H}_m}
\longrightarrow\pig\langle\mathcal{X},
D\mathbb{Z}_{tX}^\Psi(t)\pigr\rangle_{\mathcal{H}_m}.(?missed terminal in the above)
\label{eq:8-80}
\end{equation}}

The proof of this proposition is a modification of the proof for Theorem 5.2 in \cite{BGY20} and Lemma \ref{lem, Existence of J flow}. By using (\ref{DX^2 V = DZ}), we need to prove that as $k \to \infty$,
\begin{equation}
D\mathbb{Z}_{t_k X_{k}}^{\Psi_{k}}(t_{k})
\longrightarrow
D\mathbb{Z}_{tX}^{\Psi}(t)
\h{10pt}
\text{ in $\mathcal{H}_m$.}
\label{eq:8-86}
\end{equation}
Fixing $s \in (t,T]$, we can assume that $t_{k}<s$ for large enough $k$. From (\ref{bdd Y, Z, u, r, epsilon}) and Lemma \ref{lem, Existence of Frechet derivatives}, the linearity of the system (\ref{J flow of FBSDE}) deduces that
\begin{equation}
\pig\|D\mathbb{Z}_{t_k X_{k}}^{\Psi_{k}}(t_{k})
-D\mathbb{Z}_{t_k X_{k}}^{\Psi}(t_{k})
\pigr\|_{\mathcal{H}_m}
\leq C'_4\|\Psi_k -\Psi\|_{\mathcal{H}_m},
\label{bdd of DU DY DZ, approx of J flow}
\end{equation}
where $C'_4$ is independent of $k$. Note that $D\mathbb{Z}_{t_k X_{k}}^{\Psi_{k}}(t_{k})
-D\mathbb{Z}_{tX}^{\Psi}(t)
=\pig[D\mathbb{Z}_{t_k X_{k}}^{\Psi_{k}}(t_{k})
-D\mathbb{Z}_{t_kX_k}^{\Psi}(t_k)\pig]
+\pig[D\mathbb{Z}_{t_k X_{k}}^{\Psi}(t_{k})
-D\mathbb{Z}_{tX}^{\Psi}(t)\pig]
$, in order to prove the claim (\ref{eq:8-86}), it is sufficient to prove that the second term converges to $0$, that is, in the $\mathcal{H}_m$-sense,
\begin{equation}
D\mathbb{Z}_{t_k X_{k}}^{\Psi}(t_{k})
\longrightarrow
D\mathbb{Z}_{tX}^{\Psi}(t)
\h{10pt}
\text{ in $\mathcal{H}_m$.}
\label{eq:8-88}
\end{equation}
From (\ref{bdd Y, Z, u, r, epsilon}), (\ref{est. |Ys-X|^2}), and the flow property (\ref{flow property}), we obtain that, for $s \in [t_k,T]$,
\begin{align}
\big\|\mathbb{Y}_{t_{k}X_{k}}(s)-\mathbb{Y}_{tX}(s)\big\|_{\mathcal{H}_m}
&\leq\big\|\mathbb{Y}_{t_{k}X_{k}}(s)
-\mathbb{Y}_{t_{k}X}(s)\big\|_{\mathcal{H}_m}
+\big\|\mathbb{Y}_{t_{k}X}(s)
-\mathbb{Y}_{t_k\mathbb{Y}_{tX}(t_k)}(s)\big\|_{\mathcal{H}_m}\nonumber\\
&\leq A_{14}\pig[\big\|X_{k}-X\big\|_{\mathcal{H}_m} + (t_k-t)^{1/2} \pig]
\label{eq:8-110}
\end{align}
and similar estimates hold for $\big\|\mathbb{Z}_{t_{k}X_{k}}(s)-\mathbb{Z}_{tX}(s)\big\|_{\mathcal{H}_m}$ and  $\big\|u_{t_{k}X_{k}}(s)-u_{tX}(s)\big\|_{\mathcal{H}_m}$ by arguing in the same way as in (\ref{eq:8-110}), with $A_{14}$ independent of $k$. In order to facilitate the comparison against $X_k$'s and $\Psi_k$'s, we introduce the $\sigma$-algebras, for $s\in(t,T]$,
$$\mathcal{\widetilde{W}}_{tX\Psi}^{s}
:=\bigvee_{\{j\in \mathbb{N}|t_{j}<s\}}
\left(\mathcal{W}_{t_{j}X_{j}\Psi_j}^{s}\bigvee\mathcal{W}_{t}^{t_{j}}\right).$$ Note that $X$ is $\mathcal{W}_{tX\Psi}^{s}$-measurable, so $X$ is also $\widetilde{\mathcal{W}}_{tX\Psi}^{s}$-measurable as $\mathcal{W}_{tX\Psi}^{s}\subset\mathcal{\widetilde{W}}_{tX\Psi}^{s}$. 
\mycomment{
To extend the definition of the processes $
D \mathbb{Y}_{t_kX_{k}}^\Psi(\tau)$, $D \mathbb{Z}_{t_kX_{k}}^\Psi(\tau)$, 
$D u_{t_kX_{k}}^\Psi(\tau)$, for $\tau\in[t,t_{k})$, we define 

\begin{equation}
D \mathbb{Y}_{t_kX_{k}}^\Psi(\tau)=\Psi,\h{10pt} 
D u_{t_kX_{k}}^\Psi(\tau)=0,\h{10pt} 
D \mathbb{Z}_{t_kX_{k}}^\Psi(\tau)
=\mathbb{E}\pig[D \mathbb{Z}_{t_kX_{k}}^\Psi(t_{k})\pig|\widetilde{\mathcal{W}}_{tX\Psi}^{\tau}\pig].
\label{eq:8-15}
\end{equation}
These extensions are compatible with the original definitions in (\ref{J flow of FBSDE}) since, for $\tau \in [t,t_k)$, 
$$ D \mathbb{Y}_{t_kX_{k}}^\Psi(\tau) =\Psi = \Psi + \int_t^\tau D u_{t_kX_{k}}^\Psi(r)dr 
\h{10pt} \h{20pt}\text{and}$$
{\small
\begin{align*}
&D \mathbb{Z}_{t_kX_{k}}^\Psi(\tau)\\
&=\mathbb{E}\pig[D \mathbb{Z}_{t_kX_{k}}^\Psi(t_{k})\pig|\widetilde{\mathcal{W}}_{tX\Psi}^{\tau}\pig]\\
&=\mathbb{E}\left[D \mathbb{Z}_{t_kX_{k}}^\Psi(t_{k})
-\sum^n_{j=1}\int^{t_k}_\tau D \mathbbm{r}_{t_kX_k,j}^\Psi(r)dw_j(r)
\Big|\widetilde{\mathcal{W}}_{tX\Psi}^{\tau}\right]\\
&=h_{xx}(\mathbb{Y}_{t_kX_k}(T))D \mathbb{Y}_{t_kX_k}^\Psi (T)
+D_{X}^2F_{T}(\mathbb{Y}_{t_kX_k}(T)  \otimes  m)(D \mathbb{Y}_{t_kX_k}^\Psi (T))\\
&\h{10pt}+\displaystyle\int^T_s\bigg\{l_{xx}(\mathbb{Y}_{t_kX_k}(\tau),u_{t_kX_k}(\tau))D \mathbb{Y}_{t_kX_k}^\Psi (\tau)
+l_{xv}(\mathbb{Y}_{t_kX_k}(\tau),u_{t_kX_k}(\tau))
D u_{t_kX_k}^\Psi (\tau)
+D^2_{X}F(\mathbb{Y}_{t_kX_k}(\tau)  \otimes  m)\pig(
D\mathbb{Y}_{t_kX_k}^\Psi (\tau)\pig)\bigg\}d\tau \h{-10pt}\\
&\h{10pt}-\int^T_s\sum_{j=1}^{n}D\mathbbm{r}^\Psi_{t_kX_k,j}(\tau)dw_{j}(\tau)
\end{align*}}}
For simplicity, we denote the respective processes $
D \mathbb{Y}_{t_kX_{k}}^\Psi(s)$, $D \mathbb{Z}_{t_kX_{k}}^\Psi(s)$, 
$D u_{t_kX_{k}}^\Psi(s)$, $D \mathbbm{r}_{t_kX_{k}}^\Psi(s)$ by $D\mathbb{Y}_{k}(s)$, $D\mathbb{Z}_{k}(s)$, $Du_{k}(s)$, $D\mathbbm{r}_{k}(s)$. Fix a $\tau^* \in (t,T]$, no matter how close to $t$, we can find $N^* \in \mathbb{N}$ large enough such that $t_k<\tau^*$ for any $k>N^*$. The processes $D\mathbb{Y}_{k}(s)$, $Du_{k}(s)$ and
$D\mathbb{Z}_{k}(s)$ remain bounded in $L_{\widetilde{\mathcal{W}}_{tX\Psi}}^{\infty}(t_k,T;\mathcal{H}_{m})$, and $D\mathbbm{r}_{k,j}(s)$ remains bounded in $L_{\widetilde{\mathcal{W}}_{tX\Psi}}^{2}(t_k,T;\mathcal{H}_{m})$, due to (\ref{bdd Y, Z, u, r, epsilon}) (the bounds hold no matter what the filtration is chosen). 

\noindent {\bf Step 1. Weak Convergence:}\\
To prove the convergence in (\ref{eq:8-88}), we suppose by contradiction that there is a subsequence of $k$, namely, $k_l$, such that $D\mathbb{Z}_{k_l}$ diverges from $D\mathbb{Z}_{tX}^\Psi$ in $\mathcal{H}_m$ as $l \to \infty$; if there were a subsequence of the subsequence $k_l$, namely, $k'_j$, such that $D\mathbb{Z}_{k'_j}$ converges to $D\mathbb{Z}_{tX}$ in $\mathcal{H}_m$ as $j\to \infty$, then the assumption is violated. Therefore, without loss of generality, in light of Banach-Alaoglu theorem, we can pick subsequences of the processes, without relabeling, such that they converge weakly to $\mathscr{D}\mathbb{Y}_{\infty}(s)$, $\mathscr{D}u_{\infty}(s)$,
$\mathscr{D}\mathbb{Z}_{\infty}(s)$,  $\mathscr{D}\mathbbm{r}_{\infty,j}(s)$ in $L_{\widetilde{\mathcal{W}}_{tX\Psi}}^{2}(\tau^*,T;\mathcal{H}_{m})$, respectively, and then it is sufficient to prove the strong convergence in $\mathcal{H}_m$ and show that  $\mathscr{D}\mathbb{Y}_{\infty}(s) = D\mathbb{Y}^\Psi_{tX}(s)$, $\mathscr{D}\mathbb{Z}_{\infty}(s) = D\mathbb{Z}^\Psi_{tX}(s)$,
$\mathscr{D}u_{\infty}(s) = Du^\Psi_{tX}(s)$, $\mathscr{D}\mathbbm{r}_{\infty,j}(s) = D\mathbbm{r}^\Psi_{tX,j}(s)$.

Then, using the continuity of $u(Y,Z)$ and the strong convergences of $\mathbb{Y}_{t_k X_k}$, $\mathbb{Z}_{t_k X_k}$ in (\ref{eq:8-110}), we have
$$
\mathscr{D}\mathbb{Y}_{\infty}(s)\:=\Psi+\int_{t}^{s}
\diff_y  u(\mathbb{Y}_{tX}(\tau),\mathbb{Z}_{tX}(\tau)) \mathscr{D}\mathbb{Y}_{\infty}(\tau) 
+\diff_z  u(\mathbb{Y}_{tX}(\tau),\mathbb{Z}_{tX}(\tau)) \mathscr{D}\mathbb{Z}_{\infty}(\tau) 
d\tau;$$
indeed, for instance,
\begin{align*}
&\h{-10pt}\int^s_{\tau^*}
\diff_y  u(\mathbb{Y}_{t_kX_k}(\tau),\mathbb{Z}_{t_kX_k}(\tau)) D\mathbb{Y}_{k}(\tau)d\tau\\
=\:&\int^s_{\tau^*} 
\Big[\diff_y  u(\mathbb{Y}_{t_kX_k}(\tau),\mathbb{Z}_{t_kX_k}(\tau))
-\diff_y  u(\mathbb{Y}_{tX}(\tau),\mathbb{Z}_{tX}(\tau))\Big]D\mathbb{Y}_{k}(\tau)
+\diff_y  u(\mathbb{Y}_{tX}(\tau),\mathbb{Z}_{tX}(\tau)) D\mathbb{Y}_{k}(\tau)d\tau,
\end{align*}
where the first term strongly converges to zero in $L_{\widetilde{\mathcal{W}}_{tX\Psi}}^{2}(\tau^*,T;\mathcal{H}_{m})$ and the second term  converges weakly to $\int^s_{\tau^*} \diff_y  u(\mathbb{Y}_{tX}(\tau),\mathbb{Z}_{tX}(\tau))\mathscr{D}\mathbb{Y}_{\infty}(\tau)d\tau$. 
Next, we aim to show that
\begin{equation}
\h{-10pt}\begin{aligned}
\mathscr{D}\mathbb{Z}_{\infty}&(s)\\ =\mathbb{E}\Bigg[&h_{xx}(\mathbb{Y}_{tX}(T))\mathscr{D}\mathbb{Y}_{\infty} (T)
+D_{X}^2F_{T}(\mathbb{Y}_{tX}(T)  \otimes  m)(\mathscr{D}\mathbb{Y}_{\infty} (T))\\
&\h{-1pt}+\displaystyle\int^T_s 
l_{xv}(\mathbb{Y}_{tX}(\tau),u_{tX}(\tau))
\mathscr{D}u_{\infty} (\tau)
+\pig[l_{xx}(\mathbb{Y}_{tX}(\tau),u_{tX}(\tau))
+D^2_{X}F(\mathbb{Y}_{tX}(\tau)  \otimes  m)\pig]
\pig(
\mathscr{D}\mathbb{Y}_{\infty}  (\tau)\pig) d\tau 
\Bigg|\widetilde{\mathcal{W}}_{tX\Psi}^{s}\Bigg].\h{-30pt}
\end{aligned}
\label{DZ_infty}
\end{equation}
\mycomment{
{\small
\begin{equation}
\h{-10pt}\begin{aligned}
\mathscr{D}\mathbb{Z}_{\infty}(s)=\:& h_{xx}(\mathbb{Y}_{tX}(T))\mathscr{D}\mathbb{Y}_{\infty} (T)
+D_{X}^2F_{T}(\mathbb{Y}_{tX}(T)  \otimes  m)(\mathscr{D}\mathbb{Y}_{\infty} (T))\\
&\h{-7pt}+\displaystyle\int^T_s
l_{xv}(\mathbb{Y}_{tX}(\tau),u_{tX}(\tau))
\mathscr{D}u_{\infty} (\tau)
+\pig[l_{xx}(\mathbb{Y}_{tX}(\tau),u_{tX}(\tau))
+D^2_{X}F(\mathbb{Y}_{tX}(\tau)  \otimes  m)\pig]\pig(
\mathscr{D}\mathbb{Y}_{\infty}  (\tau)\pig) d\tau \\
&\h{-7pt}-\sum^n_{j=1}\int^T_s \mathscr{D}\mathbbm{r}_{\infty,j}(\tau)dw_j(\tau).
\end{aligned}
\label{DZ_infty}
\end{equation}}}
Using the tower property, we obtain, for any bounded random vector $\varphi$ adapted to $\widetilde{\mathcal{W}}_{tX\Psi}^{s}$, 
{\footnotesize
\begin{align}
&\h{-10pt}\mathbb{E}\Big[ D\mathbb{Z}_{k}(s) \varphi\Big]\nonumber\\
=\:&\mathbb{E}\Bigg\{
\mathbb{E}\Bigg[\Big[h_{xx}(\mathbb{Y}_{t_kX_k}(T))
+D_{X}^2F_{T}(\mathbb{Y}_{t_kX_k}(T)  \otimes  m)\Big](D\mathbb{Y}_{k} (T))
\Bigg|\widetilde{\mathcal{W}}_{tX\Psi}^{s}\Bigg]\varphi\Bigg\}\nonumber\\
&+\mathbb{E}\Bigg\{\mathbb{E}\bigg[\int_s^T
l_{xv}(\mathbb{Y}_{t_{k}X_{k}}(\tau),u_{t_{k}X_{k}}(\tau))
Du_{k} (\tau)
+\Big[l_{xx}(\mathbb{Y}_{t_{k}X_{k}}(\tau),u_{t_{k}X_{k}}(\tau))
+D_{X}^{2}F(\mathbb{Y}_{t_{k}X_{k}}(\tau) \otimes  m)\Big](D\mathbb{Y}_k(\tau)) d\tau\Bigg|\widetilde{\mathcal{W}}_{tX\Psi}^{s}\bigg]\varphi\Bigg\}\nonumber\\
=\:&
\mathbb{E}\Bigg\{\Big[h_{xx}(\mathbb{Y}_{t_kX_k}(T))
+D_{X}^2F_{T}(\mathbb{Y}_{t_kX_k}(T)  \otimes  m)\Big](D\mathbb{Y}_{k} (T))\varphi\Bigg\}\nonumber\\
&+\mathbb{E} \bigg\{\int_s^T
l_{xv}(\mathbb{Y}_{t_{k}X_{k}}(\tau),u_{t_{k}X_{k}}(\tau))
Du_{k} (\tau)\varphi
+\Big[l_{xx}(\mathbb{Y}_{t_{k}X_{k}}(\tau),u_{t_{k}X_{k}}(\tau))
+D_{X}^{2}F(\mathbb{Y}_{t_{k}X_{k}}(\tau) \otimes  m)\Big](D\mathbb{Y}_k(\tau))\varphi d\tau \bigg\}\nonumber\\
=\:&
\mathbb{E}\Bigg\{\Big[h_{xx}(\mathbb{Y}_{t_kX_k}(T))
+D_{X}^2F_{T}(\mathbb{Y}_{t_kX_k}(T)  \otimes  m)
-h_{xx}(\mathbb{Y}_{tX}(T))
-D_{X}^2F_{T}(\mathbb{Y}_{tX}(T)  \otimes  m)\Big](D\mathbb{Y}_{k} (T))\varphi\Bigg\}\label{E(DZk)phi, line1}\\
&+\mathbb{E}\Bigg\{\Big[h_{xx}(\mathbb{Y}_{tX}(T))
+D_{X}^2F_{T}(\mathbb{Y}_{tX}(T)  \otimes  m)\Big](D\mathbb{Y}_{k} (T))\varphi\Bigg\}\label{E(DZk)phi, line2}\\
&+\mathbb{E} \left\{\int_s^T\Big[
l_{xv}(\mathbb{Y}_{t_{k}X_{k}}(\tau),u_{t_{k}X_{k}}(\tau))
-l_{xv}(\mathbb{Y}_{tX}(\tau),u_{tX}(\tau))
\Big]Du_{k} (\tau)\varphi d\tau \right\}
+\mathbb{E} \left\{\int_s^T\Big[
l_{xv}(\mathbb{Y}_{tX}(\tau),u_{tX}(\tau))
\Big]Du_{k} (\tau)\varphi d\tau \right\}\label{E(DZk)phi, line3}\\
&+\mathbb{E} \bigg\{\h{-3pt}\int_s^T\h{-5pt}
\Big\{\pig[l_{xx}(\mathbb{Y}_{t_{k}X_{k}}(\tau),u_{t_{k}X_{k}}(\tau))
-l_{xx}(\mathbb{Y}_{tX}(\tau),u_{tX}(\tau))\pig]
+\pig[D_{X}^{2}F(\mathbb{Y}_{t_{k}X_{k}}(\tau) \otimes  m)
-D_{X}^{2}F(\mathbb{Y}_{tX}(\tau) \otimes  m)\pig]\Big\}(D\mathbb{Y}_k(\tau))\varphi d\tau \bigg\}\label{E(DZk)phi, line4}\\
&+\mathbb{E} \bigg\{\int_s^T
\Big[l_{xx}(\mathbb{Y}_{tX}(\tau),u_{tX}(\tau))
+D_{X}^{2}F(\mathbb{Y}_{tX}(\tau) \otimes  m)\Big](D\mathbb{Y}_k(\tau))\varphi d\tau \bigg\},
\label{E(DZk)phi, line5}
\end{align}}
\h{-3.6pt}see Remark 3.7 and the proof of Proposition 5.2 in \cite{BGY20} for the change of conditioning $\sigma$-algebra in the above. The $\mathcal{H}_m$-boundedness of $D\mathbb{Y}_k$ in (\ref{bdd Y, Z, u, r, epsilon}), the continuities of the second-order derivatives of $l$, $h$, $F$ and $F_T$ in Assumptions \textbf{A(iv)}'s (\ref{assumption, cts of lxx, lvx, lvv, hxx}), \textbf{B(iii)}'s (\ref{assumption, cts of D^2F}) and \textbf{B(iv)}'s (\ref{assumption, cts of D^2F_T}), the strong convergences of $\mathbb{Y}_{t_{k}X_{k}}$ and $\mathbb{Z}_{t_{k}X_{k}}$ in (\ref{eq:8-110}) deduce the respective convergences to zero in, (1): line (\ref{E(DZk)phi, line1}), (2): first term in (\ref{E(DZk)phi, line3}), (3): line (\ref{E(DZk)phi, line4}), see more details in Step 1 of the proof of Lemma \ref{lem, Existence of J flow}. The term in (\ref{E(DZk)phi, line2}), second term in (\ref{E(DZk)phi, line3}), and (\ref{E(DZk)phi, line5}) weakly converge as linear functionals; therefore $D \mathbb{Z}_k(s)$ weakly converges to the right hand of (\ref{DZ_infty}) in $L_{\widetilde{\mathcal{W}}_{tX\Psi}}^{2}(\tau^*,T;\mathcal{H}_{m})$.
\mycomment{
Next, we aim to show that
{\small
\begin{equation}
\h{-10pt}\begin{aligned}
\mathscr{D}\mathbb{Z}_{\infty}(s)= \mathbb{E}&\Bigg[h_{xx}(\mathbb{Y}_{tX}(T))\mathscr{D}\mathbb{Y}_{\infty} (T)
+D_{X}^2F_{T}(\mathbb{Y}_{tX}(T)  \otimes  m)(\mathscr{D}\mathbb{Y}_{\infty} (T))\\
&\h{-7pt}+\displaystyle\int^T_sl_{xx}(\mathbb{Y}_{tX}(\tau),u_{tX}(\tau))\mathscr{D}\mathbb{Y}_{\infty} (\tau)
+l_{xv}(\mathbb{Y}_{tX}(\tau),u_{tX}(\tau))
\mathscr{D}u_{\infty} (\tau)
+D^2_{X}F(\mathbb{Y}_{tX}(\tau)  \otimes  m)\pig(
\mathscr{D}\mathbb{Y}_{\infty}  (\tau)\pig) d\tau 
\Bigg|\widetilde{\mathcal{W}}_{tX\Psi}^{s}\Bigg].\h{-30pt}
\end{aligned}
\label{DZ_infty}
\end{equation}}
Using the continuities of the second-order derivatives of $l$, $h$, $F$ and $F_T$ in Assumptions \textbf{A(iv)}'s (\ref{assumption, cts of lxx, lvx, lvv, hxx}), Assumption \textbf{B(iii)}'s (\ref{assumption, cts of D^2F}) and Assumption \textbf{B(iv)}'s (\ref{assumption, cts of D^2F_T}), the strong convergences of $\mathbb{Y}_{t_{k}X_{k}}$ and $\mathbb{Z}_{t_{k}X_{k}}$ in (\ref{eq:8-110}), we obtain, for any bounded random vector $\varphi$ adapted to $\widetilde{\mathcal{W}}_{tX\Psi}^{s}$,
{\small
\begin{align*}
&\mathbb{E}\Bigg\{\mathbb{E}\bigg[\int_s^T
l_{xx}(\mathbb{Y}_{t_{k}X_{k}}(\tau),u_{t_{k}X_{k}}(\tau))
D\mathbb{Y}_k(\tau)
+l_{xv}(\mathbb{Y}_{t_{k}X_{k}}(\tau),u_{t_{k}X_{k}}(\tau))
Du_{k} (\tau)+D_{X}^{2}F(\mathbb{Y}_{t_{k}X_{k}}(\tau) \otimes  m)(D\mathbb{Y}_k(\tau)) d\tau\Bigg|\widetilde{\mathcal{W}}_{tX\Psi}^{s}\bigg]\varphi\Bigg\}\\
&\longrightarrow 
\mathbb{E}\Bigg\{\mathbb{E}\bigg[\int_s^T l_{xx}(\mathbb{Y}_{tX}(\tau),u_{tX}(\tau))
\mathscr{D}\mathbb{Y}_{\infty} (\tau)
+l_{xv}(\mathbb{Y}_{tX}(\tau),u_{tX}(\tau))
\mathscr{D}u_{\infty} (\tau)
+D^2_{X}F(\mathbb{Y}_{tX}(\tau)  \otimes  m)\pig(
\mathscr{D}\mathbb{Y}_{\infty}  (\tau)\pig)d\tau\Bigg|\widetilde{\mathcal{W}}_{tX\Psi}^{s}\bigg]\varphi\Bigg\}.
\end{align*}}
 For instance, consider the integral
\begin{align*}
&\mathbb{E}\left\{\mathbb{E}\Bigg[
\int_s^T l_{xx}(\mathbb{Y}_{t_{k}X_{k}}(\tau),u_{t_{k}X_{k}}(\tau))
D\mathbb{Y}_k(\tau)d\tau
\Bigg|\widetilde{\mathcal{W}}_{tX\Psi}^{s}\Bigg]\varphi\right\}\\
&=\mathbb{E}\left\{\mathbb{E}\Bigg[\int_s^T\Big[l_{xx}(\mathbb{Y}_{t_{k}X_{k}}(\tau),u_{t_{k}X_{k}}(\tau))
-l_{xx}(\mathbb{Y}_{tX}(\tau),u_{tX}(\tau))
\Big]
D\mathbb{Y}_k(\tau)
+l_{xx}(\mathbb{Y}_{tX}(\tau),u_{tX}(\tau))
D\mathbb{Y}_k(\tau)d\tau\Bigg|\widetilde{\mathcal{W}}_{tX\Psi}^{s}\Bigg]\varphi\right\}\\
\end{align*}
where the first term strongly converges to zero in $\mathcal{H}_m$ due to the $\mathcal{H}_m$-boundedness of $D\mathbb{Y}_k$ in (\ref{bdd Y, Z, u, r, epsilon}), the continuity of $l_{xx}$ in Assumptions \textbf{A(iv)}'s (\ref{assumption, cts of lxx, lvx, lvv, hxx}), and the strong convergences of $\mathbb{Y}_{t_{k}X_{k}}$ and $\mathbb{Z}_{t_{k}X_{k}}$ in (\ref{eq:8-110}); and the second term  converges weakly to $\int_s^Tl_{xx}(\mathbb{Y}_{tX}(\tau),u_{tX}(\tau))
\mathscr{D}\mathbb{Y}_\infty(\tau) d\tau$ as a linear operator. Likewise, we also have the weak convergence of
{\footnotesize
$$
h_{xx}(\mathbb{Y}_{t_kX_k}(T))D\mathbb{Y}_{k} (T)
+D_{X}^2F_{T}(\mathbb{Y}_{t_kX_k}(T)  \otimes  m)(D\mathbb{Y}_{k} (T))
\longrightarrow
h_{xx}(\mathbb{Y}_{tX}(T))D\mathbb{Y}_{\infty} (T)
+D_{X}^2F_{T}(\mathbb{Y}_{tX}(T)  \otimes  m)(D\mathbb{Y}_{\infty} (T)), \h{5pt} \text{as $k \to \infty$,}
$$}
\h{-4pt}in $L^2_{\widetilde{\mathcal{W}}_{tX\Psi}}$. }
Hence, (\ref{DZ_infty}) follows. Necessarily, due to the uniqueness of the linear FBSDE system (\ref{J flow of FBSDE}), we see that $\mathscr{D}\mathbb{Y}_{\infty}(s)=D\mathbb{Y}_{tX}^\Psi(s)$, $\mathscr{D}\mathbb{Z}_{\infty}(s)=D\mathbb{Z}_{tX}^\Psi(s)$ and $\mathscr{D}u_\infty = Du^\Psi_{tX}(s)$.
We next show the strong convergences.

\noindent {\bf Step 2. Estimate of $\pig\langle D\mathbb{Z}_k(t_k),\Psi \pigr\rangle_{\mathcal{H}_m} - \pig\langle D\mathbb{Z}^\Psi_{tX}(t),\Psi \pigr\rangle_{\mathcal{H}_m}$:}\\
For any bounded test function $\Psi \in L^2_{\mathcal{W}_t}(\mathcal{H}_m)$, we see that
\begin{align}
\lim_{k \to \infty}\pig\langle D\mathbb{Z}_k(t_k),\Psi \pigr\rangle_{\mathcal{H}_m}
\h{-5pt}=\lim_{s \to t^+}\lim_{k \to \infty}\pig\langle D\mathbb{Z}_k(t_k)-D\mathbb{Z}_k(s),\Psi \pigr\rangle_{\mathcal{H}_m} 
\h{-5pt}+\lim_{s \to t^+}\lim_{k \to \infty}\pig\langle D\mathbb{Z}_k(s),\Psi \pigr\rangle_{\mathcal{H}_m}
\label{<DZk(tk), Psi> to <DZ(t), Psi>+Jk}
\end{align}
Define
\begin{align*}
\mathscr{J}_k :&= \pig\langle D\mathbb{Z}_k(t_k)-D\mathbb{Z}_k(s),\Psi \pigr\rangle_{\mathcal{H}_m} \\
&=\h{-3pt}\displaystyle\int^{s}_{t_k}\h{-3pt}
\bigg\langle l_{xv}(\mathbb{Y}_{t_{k}X_{k}}(\tau),u_{t_{k}X_{k}}(\tau))
Du_{k} (\tau)
+\Big[l_{xx}(\mathbb{Y}_{t_{k}X_{k}}(\tau),u_{t_{k}X_{k}}(\tau))
+D_{X}^{2}F(\mathbb{Y}_{t_{k}X_{k}}(\tau) \otimes  m)\Big](D\mathbb{Y}_k(\tau)),
\Psi
\bigg\rangle_{\mathcal{H}_m}\h{-5pt} d\tau.
\end{align*}
We estimate $\mathscr{J}_k$ by using Assumptions \textbf{A(ii)}'s (\ref{assumption, bdd of lxx, lvx, lvv}) and \textbf{B(ii)}'s (\ref{assumption, bdd of D^2F, D^2F_T}), 
\begin{align*}
\big|\mathscr{J}_k\big| \leq\:&\int^{s}_{t_k}\Big( c_l\big\|
Du_{k} (\tau)\bigr\|_{\mathcal{H}_m} 
+(c_l+c)\big\|D\mathbb{Y}_k(\tau) \big\|_{\mathcal{H}_m}\Big)\|\Psi\|_{\mathcal{H}_m} d\tau.
\end{align*}
Due to the $\mathcal{H}_m$-boundedness of $D\mathbb{Y}_k(\tau)$, $Du_{k} (\tau)$ and $\Psi$, we obtain that $\mathscr{J}_k  \longrightarrow 0$ as $t_k \to t^+$ and then $s \to t^+$. The first term on the right hand side of (\ref{<DZk(tk), Psi> to <DZ(t), Psi>+Jk}) is zero. By It\^o's lemma, \eqref{bdd Y, Z, u, r, epsilon}, Assumptions \textbf{A(ii)}'s (\ref{assumption, bdd of lxx, lvx, lvv}) and \textbf{B(ii)}'s (\ref{assumption, bdd of D^2F, D^2F_T}), the equation in (\ref{J flow of FBSDE}) shows that for $s_1>s_2$,
\begin{align*}
&\pig\|D \mathbb{Z}_k (s_1) - D \mathbb{Z}_k (s_2)\pigr\|_{\mathcal{H}_m}^2\\
&\leq 2\int^{s_1}_{s_2}
\Big\langle 
l_{vx}\pig(\mathbb{Y}_{t_k X_k}(\tau),u_{t_k X_k}(\tau)\pig)
D \mathbb{Y}_{k} (\tau)
+l_{xv}\pig(\mathbb{Y}_{t_k X_k}(\tau),u_{t_k X_k}(\tau)\pig)
D u_{k}(\tau),D \mathbb{Z}_k (s_1) - D \mathbb{Z}_k (\tau)\Big\rangle_{\mathcal{H}_m} d\tau\\
&\h{10pt}+2\int^{s_1}_{s_2}\h{-3pt}
\bigg\langle 
D_{X}^{2}F(\mathbb{Y}_{t_{k}X_{k}}(\tau) \otimes  m)(D\mathbb{Y}_k(\tau)),
D \mathbb{Z}_k (s_1) - D \mathbb{Z}_k (\tau)\bigg\rangle_{\mathcal{H}_m} \h{-6pt} d\tau\\
&\leq 2(s_1-s_2)C_4'(4c_l^2+c^2)\|\Psi\|_{\mathcal{H}_m}^2 
+2\int^{s_1}_{s_2}\h{-3pt}
\pig\|
D \mathbb{Z}_k (s_1) - D \mathbb{Z}_k (\tau)\pigr\|_{\mathcal{H}_m}^2 d\tau.
\end{align*}
By Gr\"onwall's inequality, we have
$$\pig\|D \mathbb{Z}_k (s_1) - D \mathbb{Z}_k (s_2)\pigr\|_{\mathcal{H}_m}^2
\leq 2(s_1-s_2)C_4'(4c_l^2+c^2)\|\Psi\|_{\mathcal{H}_m}^2
\Big(1+2(s_1-s_2)e^{2(s_1-s_2)}\Big).$$
The process $D\mathbb{Z}_k(s)$ is equi-continuous for $s\in [t_k,T]$ (independent of $k$) by studying its equation in (\ref{J flow of FBSDE}), together with the uniform boundedness of $D\mathbb{Z}_k(s)$ in (\ref{bdd Y, Z, u, r, epsilon}), Arzel\`a-Ascoli theorem implies that $\displaystyle\lim_{k \to \infty}\pig\langle D\mathbb{Z}_k(s),\Psi \pigr\rangle_{\mathcal{H}_m}
=\pig\langle D\mathbb{Z}^\Psi_{tX}(s),\Psi \pigr\rangle_{\mathcal{H}_m}$ for each $s \in [\tau^*,T]$. Also, the weak limit $D\mathbb{Z}^\Psi_{tX}(s)$ is also continuous for $s\in [t,T]$ by studying its equation in (\ref{DZ_infty}), therefore it yields $\displaystyle\lim_{s \to t^+}\lim_{k \to \infty}\pig\langle D\mathbb{Z}_k(s),\Psi \pigr\rangle_{\mathcal{H}_m} = \pig\langle D\mathbb{Z}^\Psi_{tX}(t),\Psi \pigr\rangle_{\mathcal{H}_m}$. Hence, it yields from (\ref{<DZk(tk), Psi> to <DZ(t), Psi>+Jk}) that 
\begin{equation}
\lim_{k \to \infty}\pig\langle D\mathbb{Z}_k(t_k),\Psi \pigr\rangle_{\mathcal{H}_m} =\pig\langle D \mathbb{Z}^\Psi_{tX}(t),\Psi \pigr\rangle_{\mathcal{H}_m}.
\label{<DZk(tk), Psi> to <DZ(t), Psi>}
\end{equation}
On the other hand, by noting that $D\mathbb{Y}_k(s)$ is a finite variation process, we apply traditional It\^o lemma to $\pig\langle D\mathbb{Y}_k(s), D\mathbb{Z}_k(s) \pigr\rangle_{\mathcal{H}_m}$ to give the equality
\begin{equation}
\scalemath{0.95}{
\h{-15pt}\begin{aligned}
\pig\langle D&\mathbb{Z}_k(t_k),\Psi  \pigr\rangle_{\mathcal{H}_m}\\
=\:&
\Big\langle
h_{xx}(\mathbb{Y}_{t_kX_k}(T))D\mathbb{Y}_{k} (T)
+D_{X}^2F_{T}(\mathbb{Y}_{t_kX_k}(T)  \otimes  m)(D\mathbb{Y}_{k} (T))
,D\mathbb{Y}_k(T)
\Big\rangle_{\mathcal{H}_m}\\
&+\int_{t_k}^{T}
\Big\langle 
l_{vx}\pig(\mathbb{Y}_{t_k X_k}(\tau),u_{t_k X_k}(\tau)\pig)
D \mathbb{Y}_{k} (\tau)
+l_{vv}\pig(\mathbb{Y}_{t_k X_k}(\tau),u_{t_k X_k}(\tau)\pig)
D u_{k}(\tau),D u_k(\tau)\Big\rangle_{\mathcal{H}_m} d\tau\\
&+\int_{t_k}^{T}\h{-3pt}
\bigg\langle l_{xv}(\mathbb{Y}_{t_{k}X_{k}}(\tau),u_{t_{k}X_{k}}(\tau))
Du_{k} (\tau)
+\Big[l_{xx}(\mathbb{Y}_{t_{k}X_{k}}(\tau),u_{t_{k}X_{k}}(\tau))
+D_{X}^{2}F(\mathbb{Y}_{t_{k}X_{k}}(\tau) \otimes  m)\Big](D\mathbb{Y}_k(\tau)),
D\mathbb{Y}_k(\tau)
\bigg\rangle_{\mathcal{H}_m} \h{-6pt} d\tau.\h{-60pt}
\end{aligned}}
\label{<DZ_k(t_k),Psi>}
\end{equation}
By defining the operators $Q_{1k}:=h_{xx}(\mathbb{Y}_{t_kX_k}(T))
+D_{X}^2F_{T}(\mathbb{Y}_{t_kX_k}(T)  \otimes  m)$, $
Q_{2k}:=2l_{xv}(\mathbb{Y}_{t_{k}X_{k}}(\tau),u_{t_{k}X_{k}}(\tau))$,
$Q_{3k}:=l_{vv}(\mathbb{Y}_{t_{k}X_{k}}(\tau),u_{t_{k}X_{k}}(\tau))$, $Q_{4k}:=l_{xx}(\mathbb{Y}_{t_{k}X_{k}}(\tau),u_{t_{k}X_{k}}(\tau))
+D_{X}^{2}F(\mathbb{Y}_{t_{k}X_{k}}(\tau) \otimes  m)$, we can also write 
\begin{equation}
\begin{aligned}
\pig\langle\mathscr{D}\mathbb{Z}_\infty(t), \Psi \pigr\rangle_{\mathcal{H}_m}
=\:&
\pig\langle D\mathbb{Z}^\Psi_{tX}(t), \Psi \pigr\rangle_{\mathcal{H}_m}\\
=\:&\Big\langle
Q_{1\infty}(D\mathbb{Y}^\Psi_{tX} (T)),
D\mathbb{Y}^\Psi_{tX}(T)
\Big\rangle_{\mathcal{H}_m} 
+\int_{t}^{T}
\Big\langle Q_{2\infty}
(Du^\Psi_{tX} (\tau)),
D\mathbb{Y}^\Psi_{tX} (\tau)
\Big\rangle_{\mathcal{H}_m} d\tau
\\
&+\int_{t}^{T}
\Big\langle
Q_{3\infty}(Du^\Psi_{tX} (\tau)),
Du^\Psi_{tX}(\tau)
\Big\rangle_{\mathcal{H}_m} d \tau 
+\int_{t}^{T}\Big\langle Q_{4\infty}(D\mathbb{Y}^\Psi_{tX}(\tau)),
D\mathbb{Y}^\Psi_{tX} (\tau)
\Big\rangle_{\mathcal{H}_m} d\tau,\h{-40pt}
\end{aligned}
\label{<DZ_infty(t),Psi>}
\end{equation}
where $Q_{i\infty}$ represents the strong limit of $Q_{ik}$ in $\mathcal{H}_m$, for example, $Q_{1\infty} = h_{xx}(\mathbb{Y}_{tX}(T))
+D_{X}^2F_{T}(\mathbb{Y}_{tX}(T)  \otimes  m)$, where $\mathbb{Y}_{tX}$ is the strong limit of $\mathbb{Y}_{t_kX_k}$.

Referring to (\ref{<DZ_k(t_k),Psi>}) and (\ref{<DZ_infty(t),Psi>}), we consider $\pig\langle D\mathbb{Z}_k(t_k),\Psi \pigr\rangle_{\mathcal{H}_m} - \pig\langle D\mathbb{Z}^\Psi_{tX}(t),\Psi \pigr\rangle_{\mathcal{H}_m}$ by first checking the term

\begin{align}
&\h{-10pt}\Big\langle
Q_{1k}(D\mathbb{Y}_{k} (T)),
D\mathbb{Y}_k(T)
 \Big\rangle_{\mathcal{H}_m} 
 -\Big\langle
Q_{1\infty}(D\mathbb{Y}^\Psi_{tX} (T)),
D\mathbb{Y}^\Psi_{tX}(T)
 \Big\rangle_{\mathcal{H}_m} \nonumber\\
=\:&\Big\langle
Q_{1k}\pig(D\mathbb{Y}_{k} (T)-D\mathbb{Y}^\Psi_{tX} (T)\pig)
 ,D\mathbb{Y}_{k} (T)-D\mathbb{Y}^\Psi_{tX} (T)
 \Big\rangle_{\mathcal{H}_m} 
+\Big\langle
Q_{1k}(D\mathbb{Y}_{k} (T))
-Q_{1\infty}(D\mathbb{Y}^\Psi_{tX} (T)),D\mathbb{Y}^\Psi_{tX}(T)
 \Big\rangle_{\mathcal{H}_m} \nonumber\\
&
+\Big\langle
Q_{1k}(D\mathbb{Y}^\Psi_{tX} (T)),D\mathbb{Y}_{k}(T)
-D\mathbb{Y}^\Psi_{tX}(T)
\Big\rangle_{\mathcal{H}_m}.
\label{Q1k_Q1infty}
\end{align}
Recalling the $\mathcal{H}_m$-boundedness of $D\mathbb{Y}_k$ in (\ref{bdd Y, Z, u, r, epsilon}), the continuities of $h_{xx}$ and $D^2_X F_T$ in Assumptions \textbf{A(iv)}'s (\ref{assumption, cts of lxx, lvx, lvv, hxx}) and \textbf{B(iv)}'s (\ref{assumption, cts of D^2F_T}), together with the strong convergence of $\mathbb{Y}_{t_{k}X_{k}}$ in (\ref{eq:8-110}), it guarantees that \\$Q_{1k}(D\mathbb{Y}^\Psi_{tX} (T))$ converges strongly to $Q_{1\infty}(D\mathbb{Y}^\Psi_{tX} (T))$, and $Q_{1k}(D\mathbb{Y}_{k} (T))$ converges weakly to $Q_{1\infty}(D\mathbb{Y}^\Psi_{tX} (T))$ as a linear functional, see more details in Step 2A in the proof of Lemma \ref{lem, Existence of J flow}. Using these two convergences and (\ref{Q1k_Q1infty}), it yields that
$$
\scalemath{0.94}{
\Big\langle
Q_{1k}(D\mathbb{Y}_{k} (T)),
D\mathbb{Y}_k(T)
 \Big\rangle_{\mathcal{H}_m} 
 -\Big\langle
Q_{1\infty}(D\mathbb{Y}^\Psi_{tX} (T)),
D\mathbb{Y}^\Psi_{tX}(T)
 \Big\rangle_{\mathcal{H}_m} 
-\Big\langle
Q_{1k}\pig(D\mathbb{Y}_{k} (T)-D\mathbb{Y}^\Psi_{tX} (T)\pig)
 ,D\mathbb{Y}_{k} (T)-D\mathbb{Y}^\Psi_{tX} (T)
 \Big\rangle_{\mathcal{H}_m} }
$$
tends to $0$ as $k \to \infty$. By estimating the remaining terms in $ \pig\langle D\mathbb{Z}_k(t_k),\Psi \pigr\rangle_{\mathcal{H}_m} - \pig\langle D\mathbb{Z}^\Psi_{tX}(t),\Psi \pigr\rangle_{\mathcal{H}_m}$ involving $Q_{2k}$, $Q_{3k}$ and $Q_{4k}$ similarly, we can deduce
$$ \pig\langle D\mathbb{Z}_k(t_k),\Psi \pigr\rangle_{\mathcal{H}_m} - \pig\langle D\mathbb{Z}^\Psi_{tX}(t),\Psi \pigr\rangle_{\mathcal{H}_m}
-\mathscr{J}'_k
-\mathscr{J}''_k\longrightarrow 0, $$
where
\begin{align*}
\mathscr{J}'_{k}:=&\Big\langle \pig[h_{xx}(\mathbb{Y}_{t_{k}X_{k}}(T))
+D_{X}^{2}F_{T}(\mathbb{Y}_{t_{k}X_{k}}(T) \otimes  m)\pig](D\mathbb{Y}_k(T)-D\mathbb{Y}^\Psi_{tX}(T)),D\mathbb{Y}_k(T)-D\mathbb{Y}^\Psi_{tX}(T)\Big\rangle_{\mathcal{H}_m}\\
&+\int_{t_{k}}^{T}\Big\langle
l_{xx}(\mathbb{Y}_{t_{k}X_{k}}(s),u_{t_{k}X_{k}}(s))
(D\mathbb{Y}_{k}(s)-D\mathbb{Y}^\Psi_{tX}(s)),D\mathbb{Y}_{k}(s)-D\mathbb{Y}^\Psi_{tX}(s)\Big\rangle_{\mathcal{H}_m}\\
&\h{20pt}+2\Big\langle l_{xv}(\mathbb{Y}_{t_{k}X_{k}}(s),u_{t_{k}X_{k}}(s))(Du_k(s)-Du^\Psi_{tX}(s)),D\mathbb{Y}_k(s)-D\mathbb{Y}^\Psi_{tX}(s)\Big\rangle_{\mathcal{H}_m}\\
&\h{20pt}+\Big\langle l_{vv}\pig(\mathbb{Y}_{t_kX_k}(s),u_{t_kX_k}(s)\pig)(Du_k(s)-Du^\Psi_{tX}(s)),Du_k(s)-Du^\Psi_{tX}(s)\Big\rangle_{\mathcal{H}_m}\\
&\h{20pt}+\Big\langle D_{X}^{2}F(\mathbb{Y}_{t_{k}X_{k}}(s) \otimes  m)(D\mathbb{Y}_k(s)-D\mathbb{Y}^\Psi_{tX}(s)),D\mathbb{Y}_k(s)-D\mathbb{Y}^\Psi_{tX}(s)\Big\rangle_{\mathcal{H}_m}ds
\end{align*}
and
\begin{align*}
\mathscr{J}''_{k}:=&
\int^{t_k}_t
\Big\langle 
l_{vx}\pig(\mathbb{Y}_{tX}(s),u_{tX}(s)\pig)
D\mathbb{Y}^\Psi_{tX}(s) 
+l_{vv}\pig(\mathbb{Y}_{tX}(s),u_{tX}(s)\pig)
Du^\Psi_{tX}(s),D u^\Psi_{tX}(s)\Big\rangle_{\mathcal{H}_m} \\
&+
\bigg\langle l_{xv}(\mathbb{Y}_{tX}(s),u_{tX}(s))
D u^\Psi_{tX}(s)
+\Big[l_{xx}(\mathbb{Y}_{tX}(s),u_{tX}(s))
+D_{X}^{2}F(\mathbb{Y}_{tX}(s) \otimes  m)\Big](D\mathbb{Y}^\Psi_{tX}(s)),
D\mathbb{Y}^\Psi_{tX}(s)
\bigg\rangle_{\mathcal{H}_m} \h{-6pt} ds.
\end{align*}
Assumptions \textbf{A(ii)}'s (\ref{assumption, bdd of lxx, lvx, lvv}), \textbf{B(ii)}'s (\ref{assumption, bdd of D^2F, D^2F_T}) and the bounds in (\ref{bdd Y, Z, u, r, epsilon}) yield
\begin{align}
|\mathscr{J}''_{k}|\leq\:&
\int^{t_k}_t
\pig\|l_{vx}\pig(\mathbb{Y}_{tX}(s),u_{tX}(s)\pig)
D\mathbb{Y}^\Psi_{tX}(s) 
+l_{vv}\pig(\mathbb{Y}_{tX}(s),u_{tX}(s)\pig)
Du^\Psi_{tX}(s)\pigr\|_{\mathcal{H}_m}
\pig\|D u^\Psi_{tX}(s)\pigr\|_{\mathcal{H}_m}\nonumber \\
&+
\Big\|l_{xv}(\mathbb{Y}_{tX}(s),u_{tX}(s))
D u^\Psi_{tX}(s)
+\Big[l_{xx}(\mathbb{Y}_{tX}(s),u_{tX}(s))
+D_{X}^{2}F(\mathbb{Y}_{tX}(s) \otimes  m)\Big](D\mathbb{Y}^\Psi_{tX}(s))\Bigr\|_{\mathcal{H}_m}\pig\|
D\mathbb{Y}^\Psi_{tX}(s)
\pigr\|_{\mathcal{H}_m} \h{-6pt} ds\nonumber \\
\leq\:&\int^{t_k}_t
2 c_l\pig\|
D\mathbb{Y}^\Psi_{tX}(s) \pigr\|_{\mathcal{H}_m}
\pig\|D u^\Psi_{tX}(s)\pigr\|_{\mathcal{H}_m} 
+c_l
\pig\|D u^\Psi_{tX}(s)\pigr\|_{\mathcal{H}_m}^2
+(c_l+c)\pig\|
D\mathbb{Y}^\Psi_{tX}(s)
\pigr\|_{\mathcal{H}_m}^2 \h{-6pt} ds\nonumber \\
&\h{-10pt}\longrightarrow 0, \h{20pt} \text{ as $k \to \infty$.}\label{J'' to 0}
\end{align}
Together with the fact that $\pig\langle D\mathbb{Z}_k(t_k),\Psi \pigr\rangle_{\mathcal{H}_m} - \pig\langle D\mathbb{Z}^\Psi_{tX}(t),\Psi \pigr\rangle_{\mathcal{H}_m} \longrightarrow 0$ in (\ref{<DZk(tk), Psi> to <DZ(t), Psi>}) and $\mathscr{J}''_{k}\longrightarrow 0$ in (\ref{J'' to 0}), we have $\mathscr{J}'_k \longrightarrow 0$ as $k \to \infty$. Assumptions \textbf{A(v)}'s (\ref{assumption, convexity of l}), \textbf{A(vi)}'s (\ref{assumption, convexity of h}) and \textbf{B(v)(b)}'s (\ref{assumption, convexity of D^2F, D^2F_T}) imply
\begin{equation}
\begin{aligned}
\mathscr{J}'_{k}\geq\int_{t_{k}}^{T}
\lambda\big\|Du_k(s)-Du^\Psi_{tX}(s)\big\|_{\mathcal{H}_m}^{2}
-(c'_{l}+c') \big\|D\mathbb{Y}_k(s)-D\mathbb{Y}^\Psi_{tX}(s)\big\|_{\mathcal{H}_m}^{2}ds
-(c'_{h}+c'_{T})\big\|D\mathbb{Y}_k(T)-D\mathbb{Y}^\Psi_{tX}(T)\big\|_{\mathcal{H}_m}^{2}.
\end{aligned}
\label{est Jk >}
\end{equation}

\noindent {\bf Step 3. Strong Convergence:}\\
The equations of $D\mathbb{Y}_k$ and $D\mathbb{Y}^\Psi_{tX}$ give
\begin{align*}
D\mathbb{Y}_k(s) - D\mathbb{Y}^\Psi_{tX}(s)
=\int^s_{t_k} Du_k(\tau) - Du^\Psi_{tX}(\tau) d\tau
-\int^{t_k}_tDu^\Psi_{tX}(\tau) d\tau,
\end{align*}
hence, it yields
\begin{equation}
\scalemath{0.94}{
\begin{aligned}
\big\|D\mathbb{Y}_k(T)-D\mathbb{Y}^\Psi_{tX}(T)\big\|_{\mathcal{H}_m}^{2}
&\leq2T\int_{t_{k}}^{T}
\big\|Du_{k}(s)-Du^\Psi_{tX}(s)\big\|^{2}_{\mathcal{H}_m}ds
+2(t_{k}-t)^{2}\sup_{s}\big\|Du^\Psi_{tX}(s)\big\|^{2}_{\mathcal{H}_m} \h{5pt} \text{ and}\\
\int_{t_{k}}^{T}\big\|D\mathbb{Y}_k(s)-D\mathbb{Y}^\Psi_{tX}(s)\big\|_{\mathcal{H}_m}^{2}ds
&\leq T^{2}\int_{t_{k}}^{T}\big\|Du_{k}(s)-Du^\Psi_{tX}(s)\big\|^{2}_{\mathcal{H}_m}ds
+2T(t_{k}-t)^{2}
\sup_{s}\big\|Du^\Psi_{tX}(s)\big\|^{2}_{\mathcal{H}_m}.
\end{aligned}}
\label{est of yk}
\end{equation}
By putting (\ref{est of yk}) into (\ref{est Jk >}), the assumption (\ref{uniqueness condition}), the fact that $t_{k}\downarrow t$ and
$\mathscr{J}'_{k}\rightarrow0$ give the convergence $\int_{t_k}^{T}\pig\|Du_k (s)-Du^\Psi_{tX}(s)\pigr\|^{2}_{\mathcal{H}_m}ds\longrightarrow 0$. Therefore, the second inequality in (\ref{est of yk}) shows the strong convergence of $D\mathbb{Y}_k(s)$ to $D\mathbb{Y}^\Psi_{tX}(s)$ in $L_{\widetilde{\mathcal{W}}_{tX\Psi}}^{2}(\tau^*,T;\mathcal{H}_{m})$. Hence, it is then easy to conclude that
\begin{equation}
\h{-5pt}\begin{aligned}
D\mathbb{Z}_{k}&(s)\\ =\mathbb{E}\Bigg[&h_{xx}(\mathbb{Y}_{t_kX_k}(T))D\mathbb{Y}_{k} (T)
+D_{X}^2F_{T}(\mathbb{Y}_{t_kX_k}(T)  \otimes  m)(D\mathbb{Y}_{k} (T))\\
&\h{-7pt}+\displaystyle\int^T_s 
l_{xv}(\mathbb{Y}_{t_kX_k}(\tau),u_{t_kX_k}(\tau))
Du_{k} (\tau)
+\pig[l_{xx}(\mathbb{Y}_{t_kX_k}(\tau),u_{t_kX_k}(\tau))
+D^2_{X}F(\mathbb{Y}_{t_kX_k}(\tau)  \otimes  m)\pig]
\pig(
D\mathbb{Y}_{k}  (\tau)\pig) d\tau 
\Bigg|\widetilde{\mathcal{W}}_{tX\Psi}^{s}\Bigg]\\
&\h{-25pt}\longrightarrow D\mathbb{Z}^\Psi_{tX}(s)
\end{aligned}
\end{equation}
strongly in $L_{\widetilde{\mathcal{W}}_{tX\Psi}}^{2}(\tau^*,T;\mathcal{H}_{m})$, by using the continuities of the second-order derivatives of $l$, $h$, $F$ and $F_T$ in Assumptions \textbf{A(iv)}'s (\ref{assumption, cts of lxx, lvx, lvv, hxx}), \textbf{B(iii)}'s (\ref{assumption, cts of D^2F}) and \textbf{B(iv)}'s (\ref{assumption, cts of D^2F_T}), the strong convergences of $\mathbb{Y}_{t_{k}X_{k}}$ and $\mathbb{Z}_{t_{k}X_{k}}$ in (\ref{eq:8-110}), and the strong convergences of $D\mathbb{Y}_k$ and $Du_k$. Moreover, It\^o lemma yields
\begin{equation}
\begin{aligned}
&\h{-15pt}\big\| D\mathbb{Z}_k(s) - D\mathbb{Z}^\Psi_{tX}(s) \big\|_{\mathcal{H}_m}^2
+\int^T_s\big\| D\mathbbm{r}_{k,j}(\tau) - D\mathbbm{r}_{tX,j}^\Psi(\tau) \big\|_{\mathcal{H}_m}^2 d \tau\\
=\:&\Big\| \pig[h_{xx}(\mathbb{Y}_{t_kX_k}(T))
+D_{X}^2F_{T}(\mathbb{Y}_{t_kX_k}(T)  \otimes  m)\pig](D\mathbb{Y}_{k} (T)) \\
&\h{150pt}- \pig[h_{xx}(\mathbb{Y}_{tX}(T))D\mathbb{Y}_{k} (T)
+D_{X}^2F_{T}(\mathbb{Y}_{tX}(T)  \otimes  m)\pig](D\mathbb{Y}_{tX}^\Psi (T)) \big\|_{\mathcal{H}_m}^2\\
&+\int^T_s\pig\langle
l_{xv}(\mathbb{Y}_{t_kX_k}(\tau),u_{t_kX_k}(\tau))Du_k(\tau)
-l_{xv}(\mathbb{Y}_{tX}(\tau),u_{tX}(\tau))
Du^\Psi_{tX}(\tau),D\mathbb{Z}_k(\tau) - D\mathbb{Z}^\Psi_{tX}(\tau)
\pigr\rangle_{\mathcal{H}_m}d\tau \\
&+\int^T_s\pig\langle
\pig[l_{xx}(\mathbb{Y}_{t_kX_k}(\tau),u_{t_kX_k}(\tau))+D^2_{X}F(\mathbb{Y}_{t_kX_k}(\tau)  \otimes  m)\pig]
\pig(D\mathbb{Y}_{k}  (\tau)\pig)
,D\mathbb{Z}_k(\tau) - D\mathbb{Z}^\Psi_{tX}(\tau)
\pigr\rangle_{\mathcal{H}_m}d\tau\\
&-\int^T_s\pig\langle
\pig[l_{xx}(\mathbb{Y}_{tX}(\tau),u_{tX}(\tau))+D^2_{X}F(\mathbb{Y}_{tX}(\tau)  \otimes  m)\pig]
\pig(D\mathbb{Y}_{tX}^\Psi(\tau)\pig)
,D\mathbb{Z}_k(\tau) - D\mathbb{Z}^\Psi_{tX}(\tau)
\pigr\rangle_{\mathcal{H}_m}d\tau\\
\leq\:&2\Big\|\pig[ h_{xx}(\mathbb{Y}_{t_kX_k}(T))
+D_{X}^2F_{T}(\mathbb{Y}_{t_kX_k}(T)  \otimes  m) - h_{xx}(\mathbb{Y}_{tX}(T))
-D_{X}^2F_{T}(\mathbb{Y}_{tX}(T)  \otimes  m)\pig](D\mathbb{Y}_{tX}^\Psi (T)) \Big\|_{\mathcal{H}_m}^2\\
&+2\Big\| \pig[h_{xx}(\mathbb{Y}_{tX}(T))D\mathbb{Y}_{k} (T)
+D_{X}^2F_{T}(\mathbb{Y}_{tX}(T)  \otimes  m)\pig](D\mathbb{Y}_{k}(T)-D\mathbb{Y}_{tX}^\Psi (T)) \Big\|_{\mathcal{H}_m}^2\\
&+c_l\int^T_s\Big(\pig\|Du_k(\tau)\pigr\|_{\mathcal{H}_m}
+\pig\|Du^\Psi_{tX}(\tau)\pigr\|_{\mathcal{H}_m}\Big)
\pig\|D\mathbb{Z}_k(\tau) - D\mathbb{Z}^\Psi_{tX}(\tau)
\pigr\|_{\mathcal{H}_m}d\tau \\
&+(c_l+c)\int^T_s\Big(\pig\|
D\mathbb{Y}_{k}  (\tau)\pigr\|_{\mathcal{H}_m}
+\pig\|D\mathbb{Y}_{tX}^\Psi(\tau)\pigr\|_{\mathcal{H}_m}
\Big)\pig\|D\mathbb{Z}_k(\tau) - D\mathbb{Z}^\Psi_{tX}(\tau)
\pigr\|_{\mathcal{H}_m}d\tau.
\end{aligned}
\label{|DZ_k-DZ|}
\end{equation}
Since $D\mathbb{Y}_k (\cdot)$, $D\mathbb{Z}_k (\cdot)$, $Du_k (\cdot)$ strongly converge to $D\mathbb{Y}_{tX}^\Psi (\cdot)$, $D\mathbb{Z}_{tX}^\Psi (\cdot)$, $Du_{tX}^\Psi (\cdot)$ in $L_{\widetilde{\mathcal{W}}_{tX\Psi}}^{2}(\tau^*,T;\mathcal{H}_{m})$ respectively, then for almost every $s \in [\tau^*,T]$, $\big\|D\mathbb{Y}_k(s)-D\mathbb{Y}^\Psi_{tX}(s)\big\|_{\mathcal{H}_m}$, $\big\|Du_k(s)-Du^\Psi_{tX}(s)\big\|_{\mathcal{H}_m}$ and $\big\|D\mathbb{Z}_k(s)-D\mathbb{Z}^\Psi_{tX}(s)\big\|_{\mathcal{H}_m}$ converge to zero, along some subsequence due to an usual application of Borel-Cantelli lemma. Due to the $\mathcal{H}_m$-boundedness of $D\mathbb{Y}_k$, $Du_k$ in (\ref{bdd Y, Z, u, r, epsilon}), the continuities of the second-order derivatives of $h$, $F_T$ in Assumptions \textbf{A(iv)}'s (\ref{assumption, cts of lxx, lvx, lvv, hxx}) and \textbf{B(iv)}'s (\ref{assumption, cts of D^2F_T}), the strong convergences of $\mathbb{Y}_{t_{k}X_{k}}$ and $\mathbb{Z}_{t_{k}X_{k}}$ in (\ref{eq:8-110}) and the convergence in (\ref{est of yk}), an application of  dominated convergence theorem to (\ref{|DZ_k-DZ|}) implies that $\big\|D\mathbb{Z}_k(s)-D\mathbb{Z}^\Psi_{tX}(s)\big\|_{\mathcal{H}_m}$ converges to zero for every $s \in [\tau^*,T]$. Hence, for $s\in [\tau^*,T]$, \begin{align*}
&\lim_{k \to \infty}\pig\|
D\mathbb{Z}_k(t_k)-D\mathbb{Z}^\Psi_{tX}(t)\pigr\|_{\mathcal{H}_m}\\
&\leq\lim_{k \to \infty}\pig\|
D\mathbb{Z}_k(t_k)-D\mathbb{Z}_k(s)\pigr\|_{\mathcal{H}_m}
+\lim_{k \to \infty}\pig\|
D\mathbb{Z}_k(s)-D\mathbb{Z}^\Psi_{tX}(s)\pigr\|_{\mathcal{H}_m}
+\pig\|
D\mathbb{Z}^\Psi_{tX}(s)-D\mathbb{Z}^\Psi_{tX}(t)\pigr\|_{\mathcal{H}_m}\\ 
&=\lim_{k \to \infty}\pig\|
D\mathbb{Z}_k(t_k)-D\mathbb{Z}_k(s)\pigr\|_{\mathcal{H}_m}
+\pig\|D\mathbb{Z}^\Psi_{tX}(s)
-D\mathbb{Z}^\Psi_{tX}(t)\pigr\|_{\mathcal{H}_m}.
\end{align*}
By taking $s\to t^+$, we have
\begin{align}
\lim_{k \to \infty}\pig\|
D\mathbb{Z}_k(t_k)-D\mathbb{Z}^\Psi_{tX}(t)\pigr\|_{\mathcal{H}_m}
\leq\lim_{s \to t^+}\lim_{k \to \infty}\pig\|
D\mathbb{Z}_k(t_k)-D\mathbb{Z}_k(s)\pigr\|_{\mathcal{H}_m}.
\label{ineq. of |DZ_k(t_k)-DZ(t)|}
\end{align}
Using Jensen's and Cauchy-Schwarz inequalities, we consider the term
\begin{align}
&\h{-10pt}\pig\|
D\mathbb{Z}_k(t_k)-D\mathbb{Z}_k(s)\pigr\|_{\mathcal{H}_m}^2\nonumber\\
=\:& 
\int_{\mathbb{R}^n}\mathbb{E}\Bigg\{\bigg|\mathbb{E}\bigg[\displaystyle\int^s_{t_k}
l_{xv}(\mathbb{Y}_{t_kX_k}(\tau),u_{t_kX_k}(\tau))
Du_{k} (\tau)\nonumber\\
&\h{120pt}+\pig[l_{xx}(\mathbb{Y}_{t_kX_k}(\tau),u_{t_kX_k}(\tau))+D^2_{X}F(\mathbb{Y}_{t_kX_k}(\tau)  \otimes  m)\pig]
\pig(D\mathbb{Y}_{k}  (\tau)\pig) d\tau
\bigg|\widetilde{\mathcal{W}}_{tX\Psi}^{s}\bigg]\bigg|^2\Bigg\} dm(x)\nonumber\\
\leq\:& 
(s-t_k)\int_{\mathbb{R}^n}\mathbb{E}\Bigg\{\mathbb{E}\bigg[\displaystyle\int^s_{t_k}\Big|
l_{xv}(\mathbb{Y}_{t_kX_k}(\tau),u_{t_kX_k}(\tau))
Du_{k} (\tau)\nonumber\\
&\h{120pt}+\pig[l_{xx}(\mathbb{Y}_{t_kX_k}(\tau),u_{t_kX_k}(\tau))+D^2_{X}F(\mathbb{Y}_{t_kX_k}(\tau)  \otimes  m)\pig]
\pig(D\mathbb{Y}_{k}  (\tau)\pig)\Big|^2 d\tau
\bigg|\widetilde{\mathcal{W}}_{tX\Psi}^{s}\bigg]\Bigg\} dm(x)\nonumber\\
\leq\:& 
(s-t_k)\int_{\mathbb{R}^n}
\mathbb{E}\bigg[\displaystyle\int^s_{t_k}\Big|
l_{xv}(\mathbb{Y}_{t_kX_k}(\tau),u_{t_kX_k}(\tau))
Du_{k} (\tau)\nonumber\\
&\h{120pt}+\pig[l_{xx}(\mathbb{Y}_{t_kX_k}(\tau),u_{t_kX_k}(\tau))+D^2_{X}F(\mathbb{Y}_{t_kX_k}(\tau)  \otimes  m)\pig]
\pig(D\mathbb{Y}_{k}  (\tau)\pig)\Big|^2 d\tau
\bigg] dm(x)\nonumber\\
\leq\:& 
3(s-t_k)\displaystyle\int^s_{t_k}
c^2_l\big\|
Du_{k} (\tau)\big\|_{\mathcal{H}_m}^2 +(c_l^2+c^2)
\big\|D\mathbb{Y}_{k}  (\tau)\big\|_{\mathcal{H}_m}^2 d\tau\nonumber\\
\leq\:& 
3(s-t_k)\displaystyle\int^s_{t_k}
c^2_l\sup_k\big\|
Du_{k} (\tau)\big\|_{\mathcal{H}_m}^2 +(c_l^2+c^2)\sup_k
\big\|D\mathbb{Y}_{k}  (\tau)\big\|_{\mathcal{H}_m}^2 d\tau.
\label{DZ_k(t_k)-DZ_k(s) to 0}
\end{align}
Therefore,
\begin{align*}
\sup_k\pig\|
D\mathbb{Z}_k(t_k)-D\mathbb{Z}_k(s)\pigr\|_{\mathcal{H}_m}^2
\leq
3(s-t_k)\displaystyle\int^s_{t_k}
c^2_l\sup_k\big\|
Du_{k} (\tau)\big\|_{\mathcal{H}_m}^2 +(c_l^2+c^2)\sup_k
\big\|D\mathbb{Y}_{k}  (\tau)\big\|_{\mathcal{H}_m}^2 d\tau.
\end{align*}
Since $\big\|Du_{k}(\tau)\big\|_{\mathcal{H}_m}$ and $\big\|D\mathbb{Y}_{k}  (\tau)\big\|_{\mathcal{H}_m}$ are uniformly bounded, then putting (\ref{DZ_k(t_k)-DZ_k(s) to 0}) into (\ref{ineq. of |DZ_k(t_k)-DZ(t)|}) yields the convergence in  (\ref{eq:8-86}).

\mycomment{
Let us set 
$\mathcal{Y}_k^*(s)
=D\mathbb{Y}^{\Psi_k}_{t_kX_k}(s)
-D\mathbb{Y}^{\Psi_k}_{t_kX}(s)$, $\mathcal{Z}_k^*(s)
=D\mathbb{Z}^{\Psi_k}_{t_kX_k}(s)
-D\mathbb{Z}^{\Psi_k}_{t_kX}(s)$, $\mathcal{U}_k^*(s)
=Du^{\Psi_k}_{t_kX_k}(s)
-Du^{\Psi_k}_{t_kX}(s)$ and $\mathcal{R}_{k,j}^*(s)
=D\mathbbm{r}^{\Psi_k}_{t_kX_k,j}(s)
-D\mathbbm{r}^{\Psi_k}_{t_kX,j}(s)$.
As usual we will denote by $\widetilde{\mathcal{W}}_{tX\Psi}$ the filtration
of $\sigma$-algebras $\widetilde{\mathcal{W}}_{tX\Psi}^{s}$. The processes $\mathcal{Y}^*_{k}(s),\mathcal{Z}^*_{k}(s),\mathcal{U}_{k}^*(s),\mathcal{R}^*_{k,j}(s)$
belong to $L_{\widetilde{\mathcal{W}}_{tX\Psi}}^{2}(t_{k},T;\mathcal{H}_{m})$. It is convenient to extend the processes for $s \in [t,t_{k})$ as follows 

\begin{equation}
\mathcal{Y}^*_{k}(s)=0,
\mathcal{Z}^*_{k}(s)=\mathbb{E}[\widetilde{\mathcal{Z}}^{k}(t_{k})|\widetilde{\mathcal{W}}_{tX\Psi}^{s}],
\mathcal{U}^*_{k}(s)=0,
\mathcal{R}^*_{k,j}(s)=0
\label{eq:8-155}
\end{equation}
By the equations satisfied by $\mathcal{Y}^*_k(s)$ and $\mathcal{Z}^*_k(s)$, as well as the first order condition, we have

\begin{equation}
\begin{aligned}
0=\:&l_{vx}(\mathbb{Y}_{t_{k}X_{k}}(s),u_{t_{k}X_{k}}(s))\mathcal{Y}^*_{k}(s)+l_{vv}(\mathbb{Y}_{t_{k}X_{k}}(s),u_{t_{k}X_{k}}(s))\mathcal{U}^*_{k}(s)\\
&+\mathbb{E}\Bigg\{ \int_{s}^{T}l_{xx}(\mathbb{Y}_{t_{k}X_{k}}(\tau),u_{t_{k}X_{k}}(\tau))\mathcal{Y}^*_{k}(\tau)+l_{xv}(\mathbb{Y}_{t_{k}X_{k}}(\tau),u_{t_{k}X_{k}}(\tau))\mathcal{U}^*_{k}(\tau)+D_{X}^{2}F(\mathbb{Y}_{t_{k}X_{k}}(\tau) \otimes  m)(\mathcal{Y}^*_{k}(\tau))d\tau\\
&\h{30pt}+h_{xx}(\mathbb{Y}_{t_{k}X_{k}}(T))\mathcal{Y}^*_{k}(T)+D_{X}^{2}F(\mathbb{Y}_{t_{k}X_{k}}(T) \otimes  m)(\mathcal{Y}^*_{k}(T))\;\Bigg|\:\widetilde{\mathcal{W}}_{tX\Psi}^{s}\Bigg\} 
+\mathbb{E}\left\{\mathscr{J}^*_k(s)\middle|\:\widetilde{\mathcal{W}}_{tX\Psi}^{s}\right\},
\end{aligned}
\label{eq:8-156}
\end{equation}
where 
{\footnotesize
\begin{equation}
\begin{aligned}
\mathscr{J}^*_k&(s)\\
=\:&\Big[l_{vx}(\mathbb{Y}_{t_{k}X_{k}},u_{t_{k}X_{k}})(s)
-l_{vx}(\mathbb{Y}_{t_{k}X},u_{t_{k}X})(s)\Big]
D\mathbb{Y}^{\Psi_{k}}_{t_k X }(s)
+\Big[l_{vv}(\mathbb{Y}_{t_{k}X_{k}},u_{t_{k}X_{k}})(s)
-l_{vv}(\mathbb{Y}_{t_{k}X},u_{t_{k}X})(s)\Big]
Du^{\Psi_k}_{t_kX}(s)\\
&+\int_{s}^{T}\Big[l_{xx}(\mathbb{Y}_{t_{k}X_{k}},u_{t_{k}X_{k}})(\tau)
-l_{xx}(\mathbb{Y}_{t_{k}X},u_{t_{k}X})(\tau)\Big]
D\mathbb{Y}^{\Psi_k}_{t_k X_{k}}(\tau)
+\Big[l_{xv}(\mathbb{Y}_{t_{k}X_{k}}(\tau),u_{t_{k}X_{k}}(\tau))
-l_{xv}(\mathbb{Y}_{t_{k}X}(\tau),u_{t_{k}X}(\tau))\Big]Du^{\Psi_k}_{t_{k}X}(\tau)\\
&\h{255pt}+\Big[D_{X}^{2}F(\mathbb{Y}_{t_{k}X_{k}}(\tau) \otimes m)-D_{X}^{2}F(\mathbb{Y}_{t_{k}X}(\tau) \otimes  m)\Big]
(D\mathbb{Y}^{\Psi_k}_{t_{k}X}(\tau))
d\tau\\
&+\Big[h_{xx}(\mathbb{Y}_{t_{k}X_{k}}(T))
-h_{xx}(\mathbb{Y}_{t_{k}X}(T))\Big]
D\mathbb{Y}^{\Psi_k}_{t_{k}X}(T)
+\Big[D_{X}^{2}F(\mathbb{Y}_{t_{k}X_{k}}(T) \otimes  m)-D_{X}^{2}F(\mathbb{Y}_{t_{k}X}(T) \otimes  m)\Big](D\mathbb{Y}^{\Psi_k}_{t_{k}X}(T)).
\end{aligned}
\label{def J_k}
\end{equation}}
Since $\{\Psi_{k}\}_{k\in \mathbb{N}}$ is bounded in $\mathcal{H}_{m}$ and all quantities $D\mathbb{Y}^{\Psi_{k}}_{t_k X_k }(s),Du^{\Psi_{k}}_{t_k X_k }(s),D\mathbb{Y}^{\Psi_{k}}_{t_k X }(s),Du^{\Psi_{k}}_{t_k X }(s)$
are also bounded in $\mathcal{H}_{m}$ uniformly in $s$.

We aim to prove that
\begin{equation}
\mathbb{E}\left\{\mathscr{J}^*_k(s)\middle|\:\widetilde{\mathcal{W}}_{tX\Psi}^{s}\right\}
\longrightarrow0
\h{10pt}
\text{in $L^{1}(\Omega,\mathcal{A},P;L_{m}^{1}(\mathbb{R}^{n}))$, for any $s\in(t,T]$.}\label{eq:8-158}
\end{equation}
As 
\[
\mathbb{E}\left[
\int_{\mathbb{R}^n}
\left|\mathbb{E}\left\{\mathscr{J}^*_k(s)\middle|\:\widetilde{\mathcal{W}}_{tX\Psi}^{s}\right\}\right|dm(x)\right]
\leq \mathbb{E}\left[\int_{\mathbb{R}^n}\big|\mathscr{J}^*_k(s)\big|dm(x)\right],
\]
it is sufficient to show that $\mathbb{E}\left[\int_{\mathbb{R}^n}|\mathscr{J}^*_k(s)|dm(x)\right]\longrightarrow0$, for any $s \in (t,T]$. Consider, for instance, 

\[
\mathscr{J}^*_{1k}(s)
=\pig[l_{vx}(\mathbb{Y}_{t_{k}X_{k}}(s),u_{t_{k}X_{k}}(s))-l_{vx}(\mathbb{Y}_{t_{k}X}(s),u_{t_{k}X}(s))
\pig]D\mathbb{Y}_{t_k X}^{\Psi_{k}}(s),
\]
Cauchy-Schwarz inequality deduces

\begin{equation}
\mathbb{E}
\left[\int_{\mathbb{R}^n}
\pig|\mathscr{J}^*_{1k}(s)\pigr| dm(x)\right]
\leq
\pig\|l_{vx}(\mathbb{Y}_{t_{k}X_{k}}(s),u_{t_{k}X_{k}}(s))-l_{vx}(Y_{t_{k}X}(s),u_{t_{k}X}(s))\pigr\|_{\mathcal{H}_m}^2
\pig\|D\mathbb{Y}_{t_k X}^{\Psi_{k}}(s)\pigr\|_{\mathcal{H}_m}^2 \longrightarrow 0 \h{10pt} \text{as $k \to \infty$,}
\label{ineq E(L_k)<|l-l||DY| to 0}
\end{equation}
due to the continuity of $l_{vx}$, the strong convergence of $\mathbb{Y}_{t_{k}X_{k}}(s)$, $u_{t_{k}X_{k}}(s)$ and the $\mathcal{H}_m$-boundedness of $D\mathbb{Y}_{t_k X}^{\Psi_{k}}(s)$. Similarly, all terms in (\ref{def J_k}) tend to $0$ in $L^{1}(\Omega,\mathcal{A},P;L_{m}^{1}(\mathbb{R}^{n}))$. Since $\mathbb{E}\left[\int_{\mathbb{R}^n}|\mathscr{J}^*_k(s)|dm(x)\right]$ is also bounded for any $s \in (t_k,T]$ due to (\ref{ineq E(L_k)<|l-l||DY| to 0}), (\ref{def J_k}) and (\ref{bdd Y, Z, u, r, epsilon}), we can also
state, by dominated convergence theorem,

\begin{equation}
\int_{t}^{T}\mathbb{E}\left[\int_{\mathbb{R}^n}|\mathscr{J}^*_k(s)|\:dm(x)ds\right]
\longrightarrow 0.
\label{eq:8-159}
\end{equation}
Turning back to (\ref{eq:8-156}) and using the fact that $l_{vv}$
is an invertible matrix, we
obtain easily the inequality 
{\small
\begin{equation}
\begin{aligned}
\lambda \mathbb{E}\left[ \int_{\mathbb{R}^{n}}\big|\,\mathcal{U}^*_{k}(s)\big|dm(x) \right]
\leq\:& c_{l}\mathbb{E}\int_{\mathbb{R}^{n}}\big|\mathcal{Y}^*_{k}(s)\big|dm(x)
+(c_{l}+c)\int_{s}^{T}
\mathbb{E}\left[\int_{\mathbb{R}^{n}}\big|\mathcal{Y}^*_{k}(\tau)\big|dm(x)\right]d\tau\\
&+c_{l}\int_{s}^{T}\mathbb{E}\left[\int_{\mathbb{R}^{n}}\big|\,\mathcal{U}^*_{k}(\tau)\big|dm(x)\right]d\tau
+(c_{h}+c_{T})\mathbb{E}\left[
\int_{\mathbb{R}^{n}}\big|\mathcal{Y}^*_{k}(T)\big|dm(x)\right]
+\mathbb{E}\left[\int_{\mathbb{R}^{n}}\big|\mathscr{J}^*_k(s)\big|dm(x)\right]
\end{aligned}
\label{eq:8-160}
\end{equation}}
Using (\ref{eq:8-152}) and easy standard majorations we end up with 

\[
\left[\lambda-(2c_{l}+c_{h}+c_{T})T-(c_{l}+c)\dfrac{T^{2}}{2}\right]
\int_{t}^{T}(\mathbb{E}\int_{\mathbb{R}^{n}}\big|\,\mathcal{U}^*_{k}(s)\big|dm(x))\leq\int_{t}^{T}(\mathbb{E}\int_{\mathbb{R}^{n}}\big|\mathscr{J}^*_k(s)\big|dm(x))\rightarrow0
\]
From assumption (\ref{eq:5-2007}) and (\ref{eq:8-159}) we conclude
that $\int_{t}^{T}(\mathbb{E}\int_{\mathbb{R}^{n}}\big|\,\mathcal{U}^*_{k}(s)\big|dm(x))\rightarrow0.$
It is then easy to obtain $ \mathcal{Z}^*_k\longrightarrow0$
in $L^{1}(\Omega,\mathcal{A},P;L_{m}^{1}(\mathbb{R}^{n}))$ which concludes the
proof.} 

\hfill $\blacksquare$

\subsection{Proof of Statements in Section 5}

\subsubsection{Proof of Theorem \ref{prop bellman}}\label{app, prop bellman}
\noindent {\bf Part 1. Proof of (a):}\\
From the dynamical programming principle or optimality principle, we can write 
\[
0=\dfrac{1}{\epsilon}\int_{s}^{s+\epsilon}\int_{\mathbb{R}^{n}}
\mathbb{E}\pig[l(\mathbb{Y}_{tX}(\tau),u_{tX}(\tau))\pig]dm(x)
+F(\mathbb{Y}_{tX}(\tau) \otimes  m)d\tau+\dfrac{1}{\epsilon}\pig[V(\mathbb{Y}_{tX}(s+\epsilon) \otimes  m,s+\epsilon)-V(\mathbb{Y}_{tX}(s) \otimes  m,s)\pig].
\]
From the continuity of functions $s\longmapsto \mathbb{Y}_{tX}(s),u_{tX}(s)$, we have 

\begin{equation}
\mathbb{E}\left[
\int_{\mathbb{R}^{n}}l\pig(\mathbb{Y}_{tX}(s),u_{tX}(s)\pig)dm(x)\right]
+F(\mathbb{Y}_{tX}(s) \otimes  m)
+\dfrac{d}{ds}V(\mathbb{Y}_{tX}(s) \otimes  m,s)=0.
\label{eq:9-1}
\end{equation}
The assumptions in  (\ref{subspace of H_m indep. of W_t}) to (\ref{eq:3-112}) are fulfilled by the regularity properties of $V$ mentioned in (\ref{bdd V}), (\ref{lip cts of D_XV in X}), (\ref{cts of D_X V in t}), (\ref{eq:4-26}), (\ref{DX^2 V < psi}) and (\ref{ct of D^2_X V}) by Propositions \ref{prop bdd of V}-\ref{prop cts of DXX V}. Thus, it is legitimate to use Theorem \ref{ito thm} to replace the total derivative in time by the partial one, that is, for a.e. $s \in (t,T)$, 
\begin{align}
0=\:&\mathbb{E}\left[
\int_{\mathbb{R}^{n}}l\pig(\mathbb{Y}_{tX}(s),u_{tX}(s)\pig)dm(x)\right]
+F(\mathbb{Y}_{tX}(s) \otimes  m)+\dfrac{\partial}{\partial s}V(\mathbb{Y}_{tX}(s) \otimes  m,s)
+\pig\langle D_{X}V(\mathbb{Y}_{tX}(s) \otimes  m,s),u_{tX}(s)\pigr\rangle_{\mathcal{H}_m}\nonumber\\
&+\dfrac{1}{2}\sum_{j=1}^{n}
\Big\langle
D_{X}^{2}V(\mathbb{Y}_{tX}(s) \otimes  m,s)\pig(\eta^{j}\mathcal{N}^{j}_s\pig),\eta^{j}\mathcal{N}^{j}_s\Big\rangle_{\mathcal{H}_m}.\label{eq:9-3}
\end{align}
Recall from (\ref{D_X V(s)=Z(s)}) and the definition of Hamiltonian in (\ref{def. Hamiltonian}), the first and forth term in the first line of (\ref{eq:9-3}) are combined to give the Hamiltonian term, we thus obtain the equation in (\ref{eq. bellman eq.}).

\mycomment{
\begin{equation}
\begin{aligned}
\dfrac{\partial}{\partial s}V(\mathbb{Y}_{tX}(s) \otimes  m,s)
+\mathbb{E}\left[\int_{\mathbb{R}^{n}}H(\mathbb{Y}_{tX}(s),\mathbb{Z}_{tX}(s))dm(x)\right]
+F(\mathbb{Y}_{tX}(s) \otimes  m)&\\
+\dfrac{1}{2}\sum_{j=1}^{n}
\Big\langle
D_{X}^{2}V(\mathbb{Y}_{tX}(s) \otimes  m,s)\pig(\eta^{j}\mathcal{N}^{j}_s\pig),\eta^{j}\mathcal{N}^{j}_s\Big\rangle_{\mathcal{H}_m}&=0,
\end{aligned}
\label{eq:9-4}
\end{equation}
and 
\begin{equation}
V(\mathbb{Y}_{tX}(T) \otimes  m,T)=\mathbb{E}\left[\int_{\mathbb{R}^{n}}h(\mathbb{Y}_{tX}(T)dm(x)\right]
+F_{T}(\mathbb{Y}_{tX}(T) \otimes  m).
\label{eq:9-5}
\end{equation}}

\noindent {\bf Part 2. Proof of (b):}\\
Now if a functional $V^*(X \otimes  m,t)$ solves (\ref{eq. bellman eq.}) and satisfies the regularity properties in Propositions \ref{prop bdd of V}-\ref{prop cts of DXX V}, then $V^*(\mathbb{Y}_{tX}(s) \otimes  m,s)$ also satisfies  (\ref{eq:9-1}) by working backwards through the first use of mean-field It\^o Lemma in Theorem \ref{ito thm} and then the definition of Hamiltonian, together with equation (\ref{eq. bellman eq.}). By integrating (\ref{eq:9-1}) with respect to $s$ from $t$ to $T$, we obtain
\begin{equation}
V^*(X \otimes  m,t)=J_{tX}(u_{tX}).
\label{eq:9-6}
\end{equation}
Since the right hand side is the value function which is uniquely defined, the solution to (\ref{eq. bellman eq.}) is necessarily unique. Note that $X$ must be independent
of $\mathcal{W}_{t}$ by its definition; indeed, we can always construct the
Gaussian random variables $\mathcal{N}^j_t$ (for example, the construction in the proof of Theorem \ref{ito thm}; also see Remark 5.4 in \cite{BGY20}), so that this condition is satisfied. This concludes
the proof. \hfill $\blacksquare$

\addcontentsline{toc}{section}{References}
\bibliographystyle{abbrv} 
\bibliography{meanfieldtypecontrol}
\nocite{*} 

\end{document}